\theoremstyle{plain}
\newtheorem {Lem}{Lemma}
\newtheorem {The}[Lem]{Theorem}
\newtheorem {LemA}{Lemma}
\newtheorem {LemB}{Lemma}
\newtheorem {LemC}{Lemma}
\newtheorem {TheA}{Theorem}
\newtheorem {TheB}{Theorem}
\newtheorem {TheC}{Theorem}
\newtheorem {CorA}{Corollary}
\newtheorem {CorB}{Corollary}
\newtheorem {CorC}{Corollary}
\newtheorem {Cor}[Lem]{Corollary}
\newtheorem {Prop}[Lem]{Proposition}
\newtheorem {Prob}{Problem}
\theoremstyle{remark}
\theoremstyle{definition}
\def\ord{\operatorname{ord}}
\def\prim{\frak p}
\def\Int{{\Bbb Z}}
\def\bar{\overline}
\def\map{\longrightarrow}
\def\a{\alpha}
\def\b{\beta}
\def\e{\varepsilon}
\def\GL{\operatorname{GL}}
\def\SL{\operatorname{SL}}
\def\SK{\operatorname{SK}}
\def\Sp{\operatorname{Sp}}
\def\Ep{\operatorname{Ep}}
\def\EU{\operatorname{EU}}
\def\FU{\operatorname{FU}}
\def\CU{\operatorname{CU}}
\def\St{\operatorname{St}}
\def\Max{\operatorname{Max}}
\def\Ann{\operatorname{Ann}}
\def\Co{{\Bbb C}}
\def\E{\operatorname{E}}
\def\A{\operatorname{A}}
\def\B{\operatorname{B}}
\def\C{\operatorname{C}}
\def\E{\operatorname{E}}
\def\G{\operatorname{G}}
\def\K{\operatorname{K}}
\def\rk{\operatorname{rk}}
\def\unlhd{\trianglelefteq}
\def\aeq{\Longleftrightarrow}
\def\seq{\Longrightarrow}
\def\map{\longrightarrow}
\def\Form{{A,\Lambda}}
\def\ma{{\frak a}}
\def\mm{{\frak m}}
\newcommand{\FormR}{A,\Lambda}
\newcommand{\LF}{\lfloor}
\newcommand{\RF}{\rfloor}
\def\Cent{\operatorname{Cent}}
\def\Max{\operatorname{Max}}
\def\sr{\operatorname{sr}}
\def\map{\longrightarrow}
\def\bar{\overline}
\def\epsilon{\varepsilon}
\def\e{\varepsilon}
\def\a{\alpha}
\def\b{\beta}
\newcommand{\gm}{\mathfrak m}
\newcommand\Label[1]{\label{#1}}
\newcommand{\ep}{\varepsilon}
\newcommand\maxx{\operatorname{max}}
\newcommand\minn{\operatorname{min}}
\newcommand\Spec{\operatorname{Spec}}
\newcommand\asr{\operatorname{asr}}
\newcommand\lam{\Lambda}
\newcommand\gam{\Gamma}
\newcommand\formr{(A,\lam)}
\newcommand\formi{(I,\gam)}
\newcommand\KU{\operatorname{KU}}
\def\Ker{\text{\rm Ker}}
\def\rk{\operatorname{rk}}
\def\map{\longrightarrow}
\def\e{\varepsilon}
\def\a{\alpha}
\def\b{\beta}
\def\bar{\overline}
\def\tilde{\widetilde}
\def\A{\operatorname{A}}
\def\B{\operatorname{B}}
\def\C{\operatorname{C}}
\def\E{\operatorname{E}}
\def\G{\operatorname{G}}
\def\K{\operatorname{K}}
\def\SSL#1{\operatorname{S}^{#1}\!\operatorname{L}}
\def\Sp{\operatorname{Sp}}
\def\Ep{\operatorname{Ep}}
\def\SL{\operatorname{SL}}
\def\GL{\operatorname{GL}}
\def\GU{\operatorname{GU}}
\def\SU{\operatorname{SU}}
\def\Max{\operatorname{Max}}
\def\sr{\operatorname{sr}}
\def\Int{{\Bbb Z}}
\long\def\forget#1\forgotten{}
\title{The commutators of classical groups}
\author{R.~Hazrat}
\address{
Centre for Research in Mathematics,
Western Sydney University,
Australia}
\email{r.hazrat@WesternSydney.edu.au}
\author{N.~Vavilov}
\address{Department of Mathematics and Mechanics,
St.~Petersburg State University, St.~Petersburg, Russia}
\email{nikolai-vavilov@yandex.ru}
\author{Z.~Zhang}
\address{Department of  Mathematics, Beijing Institute
of Technology, Beijing, China}
\email{zuhong@gmail.com}
\keywords{
General linear groups, unitary groups, Chevalley groups, elementary subgroups,
elementary generators, localisation, relative subgroups,
conjugation calculus, commutator calculus, Noetherian reduction,
the Quillen--Suslin lemma, localisation-completion, commutator
formulae, commutator width, nilpotency of $\K_1$, nilpotent
filtration}
\begin{document}

\begin{abstract}
In his seminal paper, half a century ago, Hyman Bass established a commutator formula in the setting of (stable) general linear group which was the key step in defining the $K_1$ group. Namely, he proved that for an associative ring $A$ with identity, 
\[ E(A)=[E(A),E(A)]=[\GL(A),\GL(A)], \]
where $\GL(A)$ is the stable general linear group and $E(A)$ is its elementary subgroup. Since then, various commutator formulas have been studied in stable and non-stable settings, and for  a range of classical and algebraic like-groups, mostly in relation to subnormal subgroups of these groups. 
The major classical theorems and methods developed include some of the splendid results of the heroes of classical algebraic $K$-theory; Bak, Quillen, Milnor, Suslin, Swan and Vaserstein, among others.

One of the dominant techniques in establishing commutator type results is localisation. 
In this note we describe some recent applications of
localisation methods to the study  (higher/relative) commutators in the groups of
points of algebraic and algebraic-like groups, such as general linear groups, $\GL(n,A)$,
unitary groups $\GU(2n,A,\Lambda)$ and Chevalley groups
$G(\Phi,A)$. 
We also state some of the intermediate
results as well as some corollaries of these results.

This note provides a general overview of the subject and covers the current activities. It contains complete proofs of several main results to give the reader a self-contained source. We have borrowed the proofs from our previous papers and expositions 
~\cite{33}--\cite{47},\cite{87,88},\cite{112}--\cite{115}. 

\end{abstract}

\maketitle

\setcounter{tocdepth}{1}
\tableofcontents

\begin{flushright}
{\it Everybody knows there is no fineness or accuracy of suppression;\\ if you hold down one thing you hold down the adjoining.}\\
Saul Bellow
\end{flushright}

\section{Introduction}

Let $A$ be a ring and $I$ be a two sided ideal of $A$. In his seminal paper~\cite{14}, fifty years ago, Bass laid out a theory now known as the classical algebraic $K$-theory (as opposed to the higher algebraic $K$-theory introduced by Quillen~\cite{Quillen72}). 
He considered the stable general linear group $\GL(A)=\bigcup_{n=1}^\infty \GL(n,A)$ and its stable elementary subgroup 
$E(A)=\bigcup_{n=1}^\infty E(n,A)$ and defined the stable $K_1(A)$ as the quotient $\GL(A)/E(A)$ (see~\S\ref{sec22} for details). Relating the group structure of $\GL(A)$ to the ideal structure of $A$, he went on to establish an exact sequence naturally relating $K_1$ to the group $K_0$, previously defined by Grothendieck and Serre. In order the coset space $K_1(A)$ to be a well-defined group, Bass proved his famous ``Whitehead
lemma'' (\cite[Theorem~3.1]{14}, see Lemma~\ref{whiteee}), i.e., 
\[ E(A,I)=[E(A),E(A,I)]=[\GL(A),\GL(A,I)]. \]
In particular when $I=A$, it follows that $E(A)$ is a normal subgroup of $\GL(A)$. 

He further proved that if $n\ge \max\{\sr(A),3\}$, where $\sr(A)$ is the stable range of $A$, then 
\begin{equation}\label{hggsy2}
 E(n,A,I)=[\GL(n,A),E(n,A,I)].
 \end{equation}
Again, when $I=A$, it follows that $E(n,A)$ is a normal subgroup of $\GL(n,A)$.

The next natural question arose was whether $E(n,A)$ is a normal subgroup of $\GL(n,A)$ below the stable range as well. 
In the non-stable case,  there is no ``room'' available for
manoeuvring as in the  stable case (see the proof of Whitehead
Lemma~\ref{whiteee}). Thus, one is forced to put
some finiteness assumption on the ring. Indeed, Gerasimov~\cite{28} produced examples of rings $A$
for which, for any $n\geq 2$,  $E(n,A)$ is as far from being normal in $\GL(n,R)$, as one
can imagine.

A major contribution in this direction came with the work of Suslin~\cite{90},
\cite{TUL} who showed that if $A$ is a {\it module finite ring}, namely, a ring
that is finitely generated as module over its centre, and $n\ge 3$, 
then $E(n,A)$ is a normal subgroup of $\GL(n,A)$. That Suslin's
normality theorem (and the methods developed to prove it) implies the
standard commutator formulae of the type~(\ref{hggsy2}) in full force was somewhat later
observed independently by Borewicz--Vavilov \cite{borvav} and
Vaserstein~\cite{98}.  In these work it was established that, for a 
 module finite ring $A$ and a two-sided ideal $I$ of $A$ and  $n\ge 3$, we have (see~\S\ref{hyhyhy})
\[
\big[E(n,A),\GL(n,A,I)\big]=E(n,A,I).
\] 
The focus then shifted to the relative commutators with two ideals. 
In his paper, Bass already proved that for a ring $A$ and two sided ideals $I,J$, and $n \geq \max(\sr(A)+1,3)$, 
\begin{equation}\label{uu11}
 \big [E(n,A,I),\GL(n,A,J) \big ]=\big [E(n,A,I),E(n,A,J) \big ]. 
\end{equation}

Mason and Stothers, building on Bass' result improved the formula, with the same assumptions, to 
\[ \big [\GL(n,A,I),\GL(n,A,J) \big ]=\big [E(n,A,I),E(n,A,J) \big ]. \]
Later, in a series of the papers, the authors with A.~Stepanov proved that the commutator formula~(\ref{uu11}) is valid for any module finite ring $A$ and $n\geq 3$ (see Theorem~\ref{gcformula1}). 

Since Suslin's work,  five major noticeably different methods have been developed for arbitrary rings to prove such commutator formulae results (and carried out in different classical groups):

\smallskip\noindent
$\bullet$ Suslin's direct factorisation method~\cite{90},
\cite{91}, \cite{56} (see also \cite{32});

\smallskip\noindent $\bullet$ Suslin's factorisation and patching
method~\cite{TUL}, \cite{khleb}, \cite{13};

\smallskip\noindent
$\bullet$ Quillen--Suslin--Vaserstein's localisation and patching
method, \cite{90}, \cite{98}, \cite{95}, \cite{92};

\smallskip\noindent
$\bullet$ Bak's localisation-completion method~\cite{8},
\cite{33}, \cite{12};

\smallskip\noindent
$\bullet$ Stepanov--Vavilov--Plotkin's decomposition of unipotents
\cite{107}, \cite{111}, \cite{87}, \cite{108}.

Suslin's result makes it possible to define the non-stable
$K_{1,n}:=\GL(n,A)/E(n,A)$, when $n \geq 3$, for module finite rings. The study of
these non-stable $K_1$'s is known to be very difficult. There are
examples due to van der Kallen~\cite{53} and Bak~\cite{8} which
show that non-stable $K_1$ can be non-abelian and the natural
question is how non-abelian it can be?

The breakthrough came with the work of Bak~\cite{8}, who showed
that $K_{1,n}$ is nilpotent by abelian 
if $n\geq 3$ and the ring satisfies some dimension condition (e.g.
has a centre with finite Krull dimension).
His method  consists of some ``conjugation calculus'' on
elementary elements, plus simultaneously applying
localisation-patching and completion. This is the method which opened doors to establishing the so called, higher commutator formulas and will be employed in this paper. 

Localisation is one of the most powerful ideas in the study of classical groups over rings. 
It allows to reduce many important problems over
various classes of rings subject to commutativity conditions, to
similar problems for semi-local rings. 
Both methods --the Quillen-Suslin and Bak's approach (particularly the latter)-- rely on a large body of common calculations,
and technical facts, known as {\it conjugation calculus} and
{\it commutator calculus}. Often times these calculations are even referred
to as the {\it yoga of conjugation}, and the {\it yoga of commutators},
to stress the overwhelming feeling of technical strain and exertion. We use variations of these methods to prove multiple commutator formulas for general linear group  of the following type (see~\S\ref{nervsue}):
\begin{multline}\label{hhhyhy}
\Big [E(n,A,I_0),\GL(n,A,I_1), \GL(n,A, I_2),\ldots, \GL(n,A, I_m) \Big]=\\
 \Big[E(n,A,I_0),E(n,A,I_1),E(n,A, I_2),\ldots, E(n,A, I_m) \Big].
\end{multline}
First note that one can produce examples of a commutative ring $A$ and ideals $I,J$ and $K$ such that (see~\S\ref{gbdtmu43})
\[ [E(n,A,I),E(n,A,J)] \not = E(n,A,IJ),\]
and (see~\S\ref{ziuso1})
\begin{equation*} 
\Big [[E(A,I),E(A,J)],E(A,K) \Big ] \not =\Big [E(A,I),[E(A,J),E(A,K)]\Big].
\end{equation*}
So higher commutator formulas of the form (\ref{hhhyhy}) is far from trivial. We will observe that using some commutator calculus, and induction, the proof of  (\ref{hhhyhy}) reduces to prove the base of induction, i.e., to prove 
\begin{equation}\label{uijiuj}
 \Big[[E(n,A,I),\GL(n,A,J)],\GL(n,A,K)\Big]=
\Big[[E(n,A,I),E(n,A,J)],E(n,A,K)\Big]. 
\end{equation}
The proof of~(\ref{uijiuj}) constitutes the bulk of work and uses a variation of  localisation method first developed in~\cite{8}.

The path to full-scale generalisation of these results from general linear groups to other
classical groups was anything but straightforward. For instance, in the unitary case, due to the following circumstances,
\par\smallskip
$\bullet$ the presence of long and short roots,
\par\smallskip
$\bullet$ complicated elementary relations, 
\par\smallskip
$\bullet$ non-commutativity,
\par\smallskip
$\bullet$ non-trivial involution,
\par\smallskip
$\bullet$ non-trivial form parameter,
\par\smallskip\noindent
these yoga calculations tend to be especially lengthy, and highly involved. In this paper, for a comparison, we only provide one proof in the case of unitary groups (which has not been appeared before). Namely, whereas the proof of 
Lemma~\ref{Lem:Habdank} in the setting of general linear groups is only a half of a page, the proof of its counterpart in the unitary setting, Lemma~\ref{yyqq1}, constitutes more than 4 pages.

The aim of this note is to start with the original Bass' Whitehead lemma and continue to establish  the (higher) commutator formulas. We trace the literature on this theme, provide proofs to the main results in the setting of the general linear group and formulate the results in  
other classical-like groups. We aim to provide a self-contained source from the results scattered in the literature.

\section{The groups, an overview}

In this paper we consider algebraic-like
or classical-like group functors $G$. We let $G(A)$ to be
the group of points of $G$ over a ring $A$. Note that
groups of types other than $\A_l$ only exist over commutative
rings. Typically, $G(A)$ is one of the following groups.
\begin{itemize}
\item[{\bf A.}]  General linear group $\GL(n,A)$ of degree $n$
over a ring $A$.
\end{itemize}

In this context the ring $A$ does not have to be commutative.
However, we have to impose {\it some\/}
commutativity conditions for our results to hold. One of
the well behaved classes is the class of quasi-finite rings. Recall, that
a ring $A$ is called {\it module finite\/} if it is finitely
generated as a module over its centre. {\it Quasi-finite}
rings are direct limits of inductive systems of module finite rings (see~\S\ref{sub:1.2}).
To avoid unnecessary repetitions, in the sequel, speaking of
ideals of an associative ring $A$, we always mean
{\it two-sided} ideals of $A$.
\par

\begin{itemize}
\item[{\bf B.}]  Unitary groups $\GU(2n,A,\Lambda)$ over a
form ring $(A,\Lambda)$.
\end{itemize}

In this setting A is a [not necessarily commutative] ring with involution $\bar{\phantom{\a}}:A\rightarrow A $ and a 
form parameter $\Lambda$ (see~\S\ref{sec3}). As in the case of general linear groups, we usually assume that A is module finite over a commutative ring R. 
In general, $\Lambda$ is not an $R$-module. Thus, $R$ has to be replaced by its subring $R_0$, generated by all $\xi\bar\xi$ with $\xi\in R$. 

\begin{itemize}
\item [{\bf C.}]  Chevalley groups $G(\Phi,A)$ of type $\Phi$ over a commutative ring $A$.
\end{itemize}
Chevalley groups are indeed {\it algebraic\/}, and the ground
rings are {\it commutative\/} in this case, which usually makes
life easier. 

Together with the algebraic-like group $G(A)$ we consider the
following subgroups.
\begin{itemize}
\item[$\bullet$] First of all, the elementary group $E(A)$,
generated by elementary unipotents.
\item[$\circ$] In the linear case, the elementary generators are
elementary [linear] transvec\-tions $e_{ij}(\xi)$,
$1\le i\neq j\le n$, $\xi\in A$.
\item[$\circ$] In the unitary case, the elementary generators are elementary
unitary transvec\-tions $T_{ij}(\xi)$, $1\le i\neq j\le -1$, $\xi\in A$.
In the even hyperbolic case they come in two modifications. They can
be short root type, $i\neq\pm j$, when the parameter $\xi$ can be
any element of $A$. On the other hand, for the long root type $i=-j$
and the parameter $\xi$ must belong to [something defined in terms of]
the form parameter $\Lambda$.
\item[$\circ$] Finally, for Chevalley groups, the elementary generators are
the elementary root unipotents $x_{\a}(\xi)$ for a root $\a\in\Phi$
and a ring element $\xi\in A$.
\end{itemize}

Further, let $I\unlhd A$ be an ideal of $A$. We also consider
the following relative subgroups.
\begin{itemize}
\item[$\bullet$] The elementary group $E(I)$ of level $I$, generated by
elementary unipotents of level $I$.
\item[$\bullet$] The relative elementary group $E(A,I)=E(I)^{E(A)}$
of level $I$.
\item[$\bullet$] The principal congruence subgroups $G(A,I)$ of level $I$,
the kernel of reduction homomorphism $\rho_I:G(A)\map G(A/I)$.
\item[$\bullet$] The full congruence subgroups $C(A,I)$ of level $I$, the
inverse image of the centre of $G(A/I)$ with respect to $\rho_I$.
\end{itemize}
We use the usual notation for these groups in the above contexts
A--C as shown below.
$$ \begin{array}{cccc}
G(A)\qquad\qquad\hfill&\GL(n,A)\qquad\hfill&\GU(n,A,\Lambda)\qquad\hfill&G(\Phi,A)\hfill\\
\noalign{\vskip 5truept}
E(A)\qquad\qquad\hfill&E(n,A)\qquad\hfill&\EU(n,A,\Lambda)\qquad\hfill&E(\Phi,A)\hfill\\
\noalign{\vskip 5truept}
E(I)\qquad\qquad\hfill&E(n,I)\qquad\hfill&\FU(n,I,\Gamma)\qquad\hfill&E(\Phi,I)\hfill\\
\noalign{\vskip 5truept}
E(A,I)\qquad\qquad\hfill&E(n,A,I)\qquad\hfill&\EU(n,I,\Gamma)\qquad\hfill&E(\Phi,A,I)\hfill\\
\noalign{\vskip 5truept}
G(A,I)\qquad\qquad\hfill&\GL(n,A,I)\qquad\hfill&\GU(n,I,\Gamma)\qquad\hfill&G(\Phi,A,I)\hfill\\
\noalign{\vskip 5truept}
C(A,I)\qquad\qquad\hfill&C(n,A,I)\qquad\hfill&\CU(n,I,\Gamma)\qquad\hfill&C(\Phi,A,I)\hfill\\
\end{array} $$
\par
There are two more general contexts, where 
localisation methods have been successfully used, in particular, 
\begin{itemize}
\item[{\bf D.}]  Isotropic reductive groups $G(A)$,
\smallskip
\item[{\bf E.}]  Odd unitary groups $U(V,q)$,
\end{itemize}
however we don't pursue these groups here (see~\cite{75,76,77,78}, \cite{68}, \cite{84}). 


\section{Preliminaries}

We gather here basic results in group and ring theory, which will be used throughout this note.

\subsection{Commutators}\Label{sub:1.4} Let $G$ be a group. For any $x,y\in G$,  $^xy=xyx^{-1} $  denotes the left $x$-conjugate of $y$. Let  $[x,y]=xyx^{-1}y^{-1}$ denote the commutator of $x$ and $y$.  Sometimes the double commutator $[[x, y], z]$ will be denoted simply by $[x, y, z]$ and $$\big[[A,B],C\big]=[A,B,C].$$ Thus we write $[A_1,A_2,A_3,\dots,A_n]$ for $\big[\dots\big[[A_1,A_2],A_3\big],\dots,A_n\big]$ and call it the {\it standard form} of the multiple commutator formulas.

The following formulas will be used frequently (sometimes without giving a reference to them),
\begin{itemize}
\item[(C1)] $[x,yz]=[x,y]({}^y[x,z])$;
\smallskip
\item[(C$1^+$)] 
An easy induction, using identity (C1), shows that 
$$\big [x,\prod_{i=1}^k u_i]=\prod_{i=1}^k {}^{\prod_{j=1}^{i-1}u_j}[x,u_{i}],$$ where by convention $\prod_{j=1}^0 u_j=1$.
\item[(C2)] $[xy,z]=(^x[y,z])[x,z]$;
\smallskip
\item[(C$2^+$)] 
As in (C$1^+$), we have  
$$\big [\prod_{i=1}^k u_i,x\big]=\prod_{i=1}^k {}^{\prod_{j=1}^{k-i}u_j}[u_{k-i+1},x].$$ 
\smallskip
\item[(C3)] (the Hall-Witt identity): 
 ${}^{x}\big[[x^{-1},y],z\big]\, \, {}^{z}\big[[z^{-1},x],y\big]\, \, {}^{y}\big[[y^{-1},z],x\big]=1$;
\smallskip
\item[(C4)] $[x,^yz]=^y[^{y^{-1}}x,z]$;
\smallskip
\item[(C5)] $[^yx,z]=^{y}[x,^{y^{-1}}z]$.
\smallskip
\item[(C6)] If $H$ and $K$ are subgroups of $G$, then $[H,K]=[K,H]$. 
\smallskip
\item[(C7)] If $F$, $H$ and $K$ are subgroups of $G$, then
\[ \Big[[F, H], K\Big] \le  \Big[[F, K], H\Big]\Big[F, [H, K]\Big].\]

In~\S\ref{jalg7} we will provide an example that even in the setting of elementary subgroups of a linear group \[\Big[[F, H], K\Big] \not = \Big[F, [H, K]\Big].\] 

\item[(C8)] $(xy)^2=x^2y^2 \Big[y^{-1},x^{-1}\Big]\Big[[x^{-1},y^{-1}]y^{-1}\Big].$
\end{itemize}

One can write numerous identities involving commutators. The reader is referred to~\cite{48,49} for more samples of these identities.

\subsection{}\label{symhf483}
Let $A$ be a ring and $I,J$ and $K$ be two sided ideals. We denote by 
\[I\circ J:=IJ+JI,\] the symmetrised product
of ideals $I,J\unlhd A$. In the commutative case it coincides
with their usual product. In general, the symmetrised product
is not associative. Thus, when writing something like 
$I\circ J\circ K$ we have to specify the order in which 
products are formed.

\subsection{Limit of rings}\Label{sub:1.2} An $R$-algebra $A$ is called {\em module finite} over $R$, if $A$ is finitely generated as an $R$-module. An $R$-algebra $A$ is  called {\em quasi-finite} over $R$ if there is a direct system of module finite  $R$-subalgebras $A_i$ of $A$ such that $\varinjlim A_i=A$. 

Suppose $A$ is an $R$-algebra and $I$ is an index set. By a direct system of subalgebras $A_i/R_i$, $i \in I$, of $A$, we shall mean a set of subrings $R_i$ of $R$ and a set of subrings $A_i$ of $A$ such that each $A_i$ is naturally an $R_i$-algebra and such that given 
$i,j \in I$, there is a $k\in I$ such that $R_i\le  R_k$, $R_j\le  R_k$, $A_i\le  A_k$, and $A_j \le  A_k$.

\begin{Prop}\Label{Prop:01}
An $R$-algebra $A$ is quasi-finite over R if and only if it satisfies the following equivalent conditions:
\begin{itemize}
\item[(1)] There is a direct system of subalgebras $A_i/R_i$ of $A$ such that each $A_i$ is module finite over $R_i$ and such that 
$\varinjlim R_i=R$ and $\varinjlim A_i=A$.
\item[(2)] There is a direct system of subalgebras $A_i/R_i$ of $A$ such that each $A_i$ is module finite over $R_i$ and each $R_i$ is finitely generated as a $\mathbb Z$-algebra and such that $\varinjlim R_i=R$ and $\varinjlim A_i=A$.
\end{itemize}
\end{Prop}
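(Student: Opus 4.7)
My strategy is to verify the four implications qf $\Rightarrow (1)$, $(1)\Rightarrow\text{qf}$, $(2)\Rightarrow(1)$ and $(1)\Rightarrow(2)$. Of these, $(2)\Rightarrow(1)$ is a simple weakening of the hypothesis on the $R_i$, and qf $\Rightarrow (1)$ is immediate by taking $R_i=R$ for every $i$ in the witnessing system of $R$-module-finite subalgebras.

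For $(1)\Rightarrow\text{qf}$, given a direct system $(A_i/R_i)_{i\in I}$ as in (1), I replace each $A_i$ by $A_i^{\circ}:=R\cdot A_i$, the $R$-subalgebra of $A$ generated by $A_i$. Fixing a finite module generating set $a_1^i,\dots,a_{n_i}^i$ of $A_i$ over $R_i$ (with $1$ among them) and structure constants $r_{jkl}^i\in R_i$ with $a_j^i a_k^i=\sum_l r_{jkl}^i a_l^i$, one gets $A_i^{\circ}=Ra_1^i+\cdots+Ra_{n_i}^i$, which is a module-finite $R$-subalgebra of $A$ since the $r_{jkl}^i$ already lie in $R_i\subseteq R$. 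Directedness is inherited from $\{A_i\}$, and $\bigcup_i A_i^{\circ}\supseteq\bigcup_i A_i=A$.

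The main work is $(1)\Rightarrow(2)$. Keep the $a_k^i$ and $r_{jkl}^i$ as above, and for each pair $i\le i'$ in $I$ fix embedding constants $s_{l,p}^{i\to i'}\in R_{i'}$ such that $a_l^i=\sum_p s_{l,p}^{i\to i'}\,a_p^{i'}$. Let
\[
J:=\bigl\{(i,B)\ :\ i\in I,\ B\text{ a finitely generated $\mathbb Z$-subring of $R_i$ containing every $r_{jkl}^i$}\bigr\},
\]
and for $(i,B)\in J$ set $A_{i,B}:=Ba_1^i+\cdots+Ba_{n_i}^i$, which is a $B$-subalgebra of $A_i$ module finite over $B$. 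Declare $(i,B)\le(i',B')$ iff $i\le i'$ in $I$ and $B'$ contains $B\cup\{r_{jkl}^{i'}\}\cup\{s_{l,p}^{i\to i'}\}$; one checks that these inclusions force $A_{i,B}\subseteq A_{i',B'}$ inside $A$. To see $J$ is directed, given $(i_1,B_1),(i_2,B_2)\in J$, pick $i_3\ge i_1,i_2$ in $I$ and let $B_3$ be the $\mathbb Z$-subring of $R_{i_3}$ generated by $B_1\cup B_2$ together with the (finite) set of structure and embedding constants demanded above; this $B_3$ is still finitely generated over $\mathbb Z$. The colimit identities $\varinjlim_J B=R$ and $\varinjlim_J A_{i,B}=A$ then follow because for each fixed $i$ one has $\bigcup_B B=R_i$ and $\bigcup_B A_{i,B}=A_i$, combined with $\bigcup_i R_i=R$ and $\bigcup_i A_i=A$.

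The principal obstacle is the bookkeeping in this last implication: the $\mathbb Z$-subring $B'$ must simultaneously absorb the previous ring $B$, the structure constants governing multiplication in $A_{i'}$, and the embedding constants relating the fixed generating sets of $A_i$ and $A_{i'}$, all while staying finitely generated. Because each of these contributions is finite, this is possible, but the partial order on $J$ has to be set up carefully so that the transition maps $A_{i,B}\hookrightarrow A_{i',B'}$ are well-defined and $J$ is genuinely directed.
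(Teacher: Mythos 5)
The paper states this proposition without proof, so there is no reference argument to compare against; your proof is correct and essentially complete. The chain qf $\Rightarrow$ (1) (take all $R_i=R$), $(2)\Rightarrow(1)$ (a weakening), $(1)\Rightarrow$ qf (saturate $A_i$ to $R\cdot A_i=\sum_k Ra^i_k$, which remains a subalgebra precisely because the structure constants already lie in $R_i\subseteq R$ and $R$ is central), and $(1)\Rightarrow(2)$ (descend to finitely generated $\mathbb{Z}$-subrings $B\subseteq R_i$ absorbing the finitely many structure and embedding constants) covers all four needed implications, and the directedness and union checks are carried out correctly.

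One small presentational caveat: the relation you declare on $J$ in $(1)\Rightarrow(2)$ need not be transitive, since for $(i,B)\le(i',B')\le(i'',B'')$ nothing forces the fixed embedding constants $s^{i\to i''}_{l,p}$ to lie in $B''$; so calling it a partial order is not quite accurate. This is immaterial for the argument, though: the paper's notion of a direct system of subalgebras $A_i/R_i$ only demands that the indexed family be directed under inclusion, and your verification that any $(i_1,B_1)$, $(i_2,B_2)$ are dominated by some $(i_3,B_3)$ establishes precisely that. It would be cleaner simply to order $J$ by inclusion of the pairs $(A_{i,B},B)$ from the outset, with your auxiliary relation used only as a sufficient criterion for such inclusions.
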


\subsection{Stable rank of rings}\label{srmhgy1}
Let us recall the linear case first. These results are most conveniently
stated in terms of the new type of dimension for rings, introduced by
Bass, stable rank. Since later we shall discuss generalisations of this notion,
we recall here its definition.
\par
A row $(a_1,\ldots,a_n)\in{}^{n}A$ is called {\it unimodular\/}
if the elements $a_1,\ldots,a_n$ generate $A$ as a {\it right\/} ideal,
i.e.\ $a_1A+\cdots+a_nA=A$, or, what is the same, there exist
$b_1,\ldots,b_n\in A$ such that $a_1b_1+\cdots+a_nb_n=1$.
\par
A row $(a_1,\ldots,a_{n+1})\in{}^{n+1}A$ is called {\it stable\/},
if there exist  $b_1,\ldots,b_n\in A$ such that the {\it right\/}
ideal generated by $a_1+a_{n+1}b_1,\ldots,a_n+a_{n+1}b_n$
coincides with the {\it right\/} ideal generated by
$a_1,\ldots,a_{n+1}$.
\par
One says that the {\it stable rank\/} of the ring $A$ equals $n$ and
writes $\sr(A)=n$ if every unimodular row of length $n+1$ is stable,
but there exists a non-stable unimodular row of length $n$. If such
$n$ does not exist (i.e.\ there are non-stable unimodular rows of arbitrary
length) we say that the stable rank of $A$ is infinite.
\par
It turned out that stable rank, on one hand,  most naturally arises
in the proof of results pertaining to linear groups and, on the other
hand, it can be easily estimated in terms of other known dimensions
of a commutative ring $A$, say of its Krull dimension $\dim(A)$,
or its Jacobson dimension $j(A)=\dim(\Max(A))$. Here, $\Max(A)$ is
the subspace of all maximal ideals of the topological space $\Spec(A)$, the set of all
prime ideals of $A$, equipped with the  Zariski Topology. Then $j(A)$ is the
dimension of the topological space $\Max(A)$. Let us state a typical
result in this spirit due to Bass.
\begin{The}\label{bassbound}
Let $A$ be a ring finitely generated as a module
over a commutative ring $R$. Then $\sr(A)\le\dim(\Max(R))+1$.
\end{The}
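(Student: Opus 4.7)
The plan is to establish the bound in two conceptually independent steps: first prove Bass's original estimate $\sr(R)\le\dim\Max(R)+1$ for the commutative base ring $R$, then prove the comparison inequality $\sr(A)\le\sr(R)$ valid for any module-finite $R$-algebra $A$, and finally chain the two.

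For Bass's estimate, set $d:=\dim\Max(R)$ and induct on $d$. The base case $d=0$ forces $\Max(R)$ to be a Noetherian topological space of dimension zero, hence finite and discrete; after factoring out $\mathrm{rad}(R)$ (which does not change the stable rank, since radical elements can be absorbed into the stabilising $b_i$'s) the ring $R$ becomes semilocal, and a direct computation gives $\sr(R)\le 1$. For the inductive step I take a unimodular row $(a_1,\dots,a_{d+2})$ in $R$ and examine the failure locus of a candidate stabilisation,
\begin{equation*}
V(a_1+a_{d+2}b_1,\dots,a_{d+1}+a_{d+2}b_{d+1})\subseteq\Max(R),
\end{equation*}
which is contained in $V(a_1,\dots,a_{d+1})\cap V(a_{d+2})$. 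Unimodularity of the original row means no maximal ideal contains all $d+2$ entries, so this intersection has dimension at most $d-1$. Prime avoidance then permits choosing $b_1,\dots,b_{d+1}$ one by one so that the failure locus drops in dimension at each stage, and the inductive hypothesis, applied to the lower-dimensional subspace, forces it to become empty.

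For the comparison $\sr(A)\le\sr(R)$, module-finiteness makes $A$ integral over the image of $R$, and lying-over and going-up supply a surjective, finite-fibre map $\Max(A)\twoheadrightarrow\Max(R)$. Given a unimodular row $(\alpha_1,\dots,\alpha_{n+1})$ in $A$ with $n\ge\sr(R)$, at each maximal ideal $\mathfrak{M}$ of $A$ one works in the simple Artinian quotient $A/\mathfrak{M}$, uses unimodularity there to produce candidate stabilising elements, and lifts back to $A$ via Nakayama's lemma, which is available precisely because $A$ is finitely generated as an $R$-module. Non-commutativity is handled by interpreting all ideals and unimodularity as right-sided throughout, which is exactly the convention in \S\ref{srmhgy1}.

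The main obstacle is the prime-avoidance step in the induction for Bass's theorem: one must arrange the $b_i$ simultaneously so that the failure locus drops in dimension at each stage, and then apply the inductive hypothesis to a closed subspace of $\Max(R)$. Such a subspace need not be of the form $\Max(R')$ for a quotient $R'=R/J$, but it is still a Noetherian topological space of strictly smaller dimension, and the same inductive argument goes through once one phrases everything purely in terms of $\Max$-dimension rather than Krull dimension of a quotient ring. Once both sub-estimates are in hand, chaining them yields $\sr(A)\le\sr(R)\le\dim\Max(R)+1$, as claimed.
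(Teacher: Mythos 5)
The paper itself gives no proof; Theorem~\ref{bassbound} is simply quoted from Bass, so the only thing to assess is whether your sketch is sound, and both halves have gaps. In the commutative induction, the asserted inclusion
\[
V(a_1+a_{d+2}b_1,\dots,a_{d+1}+a_{d+2}b_{d+1})\subseteq V(a_1,\dots,a_{d+1})\cap V(a_{d+2})
\]
is false: for $R=\mathbb{Z}$, $(a_1,a_2,a_3)=(2,3,5)$ and $(b_1,b_2)=(-1,0)$ the left side is $V(3)$ while the right side is $V(2,3)\cap V(5)=\emptyset$. Worse, by unimodularity the right-hand side is \emph{always} empty, so the containment you claim would say that every perturbation is automatically unimodular and there is nothing to prove. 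The actual content of Bass's argument is that $V(a_1,\dots,a_{d+1})$ is disjoint from $V(a_{d+2})$ and can be shrunk coordinate by coordinate, but each perturbation may create new failure points away from $V(a_{d+2})$, and controlling those is exactly what the prime-avoidance lemma and the dimension hypothesis are for; your sketch skips the only step that uses $d$.

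The comparison $\sr(A)\le\sr(R)$ is not established by your outline either. Unimodularity of $(\alpha_1,\dots,\alpha_{n+1})$ is a single global condition — the row must generate $A$ as a right ideal — and producing stabilising elements separately in each simple Artinian quotient $A/\mathfrak{M}$ gives infinitely many mutually incompatible local candidates with no mechanism to merge them into one tuple $\beta_1,\dots,\beta_n\in A$; Nakayama's lemma treats one maximal ideal at a time and does not patch over all of $\Max(A)$. Symptomatically, your argument never actually invokes $n\ge\sr(R)$. The inequality $\sr(A)\le\sr(R)$ for a module-finite $R$-algebra is a genuine theorem (Vaserstein), but it requires a real proof, and Bass's own proof of the statement at hand does not route through it: he works directly with failure loci in $\Max(R)$ of rows over $A$, via a non-commutative avoidance lemma, which is also how one obtains the sharper bound on the absolute stable rank $\asr(A)$ quoted immediately after the theorem.
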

 The right hand side
should be thought of as a condition expressing (a weaker form of)
stability for not necessarily unimodular rows. In \cite{estesohm}
and \cite{kallenmagurn} it is shown that already
$\asr(A)\le\dim(\Max(R))+1$, where $\asr(A)$ stands for the {\it
absolute stable rank}.

\section{General Linear Groups}\label{sec22}

Let $G=\GL(n,A)$ be the general linear group of degree $n$ over an
associative ring $A$ with 1. Recall that $\GL(n,A)$ is the group of
all two-sided invertible square matrices of degree $n$ over $A$,
or, in other words, the multiplicative group of the full matrix
ring $M(n,A)$. When one thinks of $A\mapsto\GL(n,A)$ as a functor from
rings to groups, one writes $\GL_n$. In the sequel for a matrix
$g\in G$ we denote by $g_{ij}$ its matrix entry in the position
$(i,j)$, so that $g=(g_{ij})$, $1\le i,j\le n$. The inverse of
$g$ will be denoted by $g^{-1}=(g'_{ij})$, $1\le i,j\le n$.
\par

A crucial role is played by the elementary subgroup of $\GL(n,A)$.
As usual we denote by $e$ (or sometimes 1) the identity matrix of degree $n$
and by $e_{ij}$ a standard matrix unit, i.e., the matrix
that has 1 in the position $(i,j)$ and zeros elsewhere.
An elementary matrices $e_{i,j}(\xi)$ is a matrix of the
form
$$ e_{i,j}(\xi)=e+\xi e_{ij},\qquad \xi\in A,\quad 1\le i\neq j\le n. $$
\noindent
An elementary matrices $e_{i,j}(\xi)$ only differs from the
identity matrix in the position $(i,j)$, $i\neq j$, where it has
$\xi$ instead of 0.  In other words, multiplication by
an elementary matrix on the left/right performs
what in an undergraduate linear algebra course would be called
a row/column elementary transformation `of the first kind'.

If there is no danger we simply write $e_{ij}(\xi)$ instead of $e_{i,j}(\xi)$.

The {\it elementary subgroup} $E(n,A)$ of the
general linear group $\GL(n,A)$ is generated by all the
elementary matrices. That is, 
$$ E(n,A)=\langle e_{ij}(\xi),\ \xi\in A,\ 1\le i\neq j\le n\rangle. $$

Both for the development of the theory and for the sake of
applications one has to extend these definitions to include
relative groups. For a two-sided ideal $I$ of $A$, one defines
the corresponding reduction homomorphism
$$ \pi_I:\GL(n,A)\map\GL(n,A/I),\qquad
(g_{ij})\mapsto(g_{ij}+I). $$
\noindent
Now the {\it principal congruence subgroup} $\GL(n,A,I)$ of level $I$
is the kernel of reduction homomorphism $\pi_I$, while the {\it full
congruence subgroup} $C(n,A,I)$ of level $I$ is the inverse image
of the centre of $\GL(n,A/I)$ with respect to this homomorphism.
Clearly both are normal subgroups of $\GL(n,A)$.
\par
Again, let $I\unlhd A$ be a two-sided ideal of $A$, and let $x=e_{ij}(\xi)$ be an
elementary matrix. Somewhat loosely we say that $x$ is of
level $I$, provided $\xi\in I$. One can consider the subgroup
generated in $\GL(n,A)$ by all the elementary matrices of level~$I$:
$$ E(n,I)=\langle e_{ij}(\xi), \xi\in I, 1\le i\neq j\le n\rangle. $$
\noindent This group is contained in the absolute elementary
subgroup $E(n,A)$ and does not depend on the choice of an ambient
ring $A$ with 1. However, in general $E(n,I)$ has little chances to
be normal in $\GL(n,A)$. The relative elementary subgroup $E(n,A,I)$
is defined as the normal closure of $E(n,I)$ in $E(n,A)$:
$$ E(n,A,I)=\langle e_{ij}(\xi), \xi\in I, 1\le i\neq j\le n\rangle^{E(n,A)}. $$

We have the following relations among elementary matrices which will be used in the paper. We refer to these relations in the text by (E). 
\begin{itemize}
\item[(E1)] $e_{i,j}(a)e_{i,j}(b)=e_{i,j}(a+b).$
\par\smallskip
\item[(E2)] $[e_{i,j}(a),e_{k,l}(b)]=1$ if $i\not = l, j \not = k$.
\par\smallskip
\item[(E3)] $[e_{i,j}(a),e_{j,k}(b)]=e_{i,k}(ab)$ if $i \not = k$.
\end{itemize}

{\it Essentially\/}, the following result was first established in
the context of Chevalley groups by Michael Stein \cite{85}.
The next approximation is the paper by Jacques Tits \cite{96},
where it is proven that $E(n,A,I)$ is generated by its intersections
with the fundamental $\SL_2$. Nevertheless, the earliest reference,
where we could trace this result, was the paper by Leonid Vaserstein
and Andrei Suslin \cite{104}. We follow the proof given in ~\cite[Lemma 4.8]{8} (see also 
in~\cite[Theorem 11]{115}).

\begin{Lem}\label{Engenerator}
Let $A$ be a ring and $I$ be a two-sided ideal of $A$. Then $E(n,A,I)$ is generated as a group by the elements 
$$z_{ij}(a,\alpha):={}^{e_{ji}(a)}e_{ij}(\alpha)=e_{ji}(a)e_{ij}(\alpha)e_{ji}(-a),$$
where $i\ne j$, $a\in A$ and $\alpha\in I$.
\end{Lem}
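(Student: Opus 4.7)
Let $H\subseteq\GL(n,A)$ denote the subgroup generated by all $z_{ij}(a,\alpha)$ with $i\ne j$, $a\in A$, $\alpha\in I$. The inclusion $H\subseteq E(n,A,I)$ is immediate: each generator is a conjugate of $e_{ij}(\alpha)\in E(n,I)$ by $e_{ji}(a)\in E(n,A)$. Setting $a=0$ gives $e_{ij}(\alpha)=z_{ij}(0,\alpha)\in H$, hence $E(n,I)\subseteq H$.

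For the opposite inclusion I would use the definition $E(n,A,I)=\langle E(n,I)\rangle^{E(n,A)}$. Since $E(n,I)\subseteq H$, the problem reduces to showing that $H$ is normalised by $E(n,A)$; and since both subgroups have explicit generating sets, this reduces further to verifying
\[
{}^{e_{kl}(b)}z_{ij}(a,\alpha)\in H\qquad\text{for every }i\ne j,\ k\ne l,\ a,b\in A,\ \alpha\in I. \tag{$\star$}
\]
To attack $(\star)$ I would expand
\[
{}^{e_{kl}(b)}z_{ij}(a,\alpha)=\bigl({}^{e_{kl}(b)}e_{ji}(a)\bigr)\bigl({}^{e_{kl}(b)}e_{ij}(\alpha)\bigr)\bigl({}^{e_{kl}(b)}e_{ji}(-a)\bigr),
\]
and apply the identity ${}^x u=[x,u]\,u$ together with the Steinberg relations (E1)--(E3). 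Each commutator $[e_{kl}(b),e_{pq}(c)]$ is either trivial, a single elementary factor produced by (E3), or an $\SL_2$-block element arising when $(k,l)=(q,p)$. A case analysis on how $\{k,l\}$ meets $\{i,j\}$ handles almost everything: the disjoint case gives back $z_{ij}(a,\alpha)$; the case $(k,l)=(j,i)$ collapses by (E1) to $z_{ij}(a+b,\alpha)$; and in the four ``single-overlap'' configurations the extra factor produced by (E3) always multiplies $\alpha\in I$, hence lies in $E(n,I)\subseteq H$. So in each of these cases the result is a visible product of generators of $H$.

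The only delicate configuration is $(k,l)=(i,j)$, where $e_{ij}(b)$ and $e_{ji}(a)$ generate an $\SL_2$-block and $[e_{ij}(b),e_{ji}(a)]$ is \emph{not} elementary. The key unblocking observation is that the $\SL_2$-commutator involving an $I$-parameter already lies in $H$: a direct rearrangement gives
\[
[e_{ji}(a),e_{ij}(\alpha)]=z_{ij}(a,\alpha)\cdot z_{ij}(0,-\alpha).
\]
Using this, I would expand ${}^{e_{ij}(b)}z_{ij}(a,\alpha)=e_{ij}(b)\,e_{ji}(a)\,e_{ij}(\alpha)\,e_{ji}(-a)\,e_{ij}(-b)$ via (C1)--(C2) and re-route every intermediate commutator into either an elementary factor of level $I$ or an $\SL_2$-commutator of the tractable form above. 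For $n\ge 3$ a cleaner alternative is available: choose an auxiliary index $r\notin\{i,j\}$, rewrite $e_{ij}(\alpha)=[e_{ir}(\alpha),e_{rj}(1)]$ via (E3), and distribute the conjugation through the commutator, reducing to configurations already handled.

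The main obstacle is not conceptual but rather the sheer amount of index bookkeeping in the case analysis; conceptually, everything rests on the two observations that (i) the Chevalley relation (E3) transfers the ideal $I$ to any factor it produces, and (ii) $\SL_2$-commutators in which one parameter lies in $I$ dissolve into products of $z$-generators. The case $n=2$, lacking an auxiliary index, would need a separate direct $2\times 2$ argument along the same lines.
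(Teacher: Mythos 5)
Your overall scaffolding---set $H=\langle z_{ij}(a,\alpha)\rangle$, note $E(n,I)\le H$, then show $H$ is normalised by $E(n,A)$ by conjugating a single $z$-generator by a single elementary---is sound, and is a reorganisation of the paper's induction on the length of the conjugating element. The disjoint case and $(k,l)=(j,i)$ are handled correctly, and the single-overlap cases do go through, but not for the reason you state: conjugating the wings $e_{ji}(\pm a)$ by $e_{kl}(b)$ produces (E3) factors such as $e_{jr}(\mp ab)$ with no $\alpha$ in them, and it is only after commuting these past $e_{ij}(\alpha)$ that one picks up the ideal, e.g.\ ${}^{e_{ir}(b)}z_{ij}(a,\alpha)=e_{jr}(a\alpha ab)\,e_{ir}(\alpha ab)\,z_{ij}(a,\alpha)$. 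So the claim "the extra factor produced by (E3) always multiplies $\alpha\in I$'' is literally false and hides the actual computation.

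The real gap is the case $(k,l)=(i,j)$, which you rightly flag as the delicate one but do not close. Your "cleaner alternative'' for $n\ge3$---rewriting $e_{ij}(\alpha)=[e_{ir}(\alpha),e_{rj}(1)]$ and distributing the conjugation---turns ${}^{e_{ij}(b)}z_{ij}(a,\alpha)$ into a commutator $[u,v]$ with $u={}^{e_{ij}(b)e_{ji}(a)}e_{ir}(\alpha)\cdot\cdots\in E(n,I)$ and $v={}^{e_{ij}(b)e_{ji}(a)}e_{rj}(1)\in E(n,A)$, and this is \emph{not} a configuration already handled: $[u,v]=u\,({}^v u^{-1})$, and ${}^v u^{-1}$ is again an $E(n,A)$-conjugate of an element of $E(n,I)$, which is exactly what you set out to prove lies in $H$. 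The reduction does eventually succeed, but only because of structural features you neither state nor use: all factors of $u$ sit in column $r$ and all factors of $v$ in row $r$, so each $\SL_2$-collision that arises when peeling $v$ one elementary at a time has its $I$-parameter on the $u$-side and dissolves into a $z$-generator; moreover the $z$-generators so produced carry the index $r$ and only ever meet row-$r$ conjugators, so the bad sub-case $(k,l)=(i,j)$ never recurs. None of this is automatic. The paper closes this step with the explicit identity~(\ref{gbgdq1}), which writes ${}^{e_{ij}(a)e_{ji}(b)}e_{ij}(\alpha)$ directly as a product of $z$-generators and $E(n,I)$-factors using precisely such an auxiliary index $k$---in effect, it \emph{is} your auxiliary-index computation carried to completion. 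Until you either carry that computation through or supply an identity playing the role of~(\ref{gbgdq1}), the argument is incomplete at the one point that carries all the difficulty.
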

\begin{proof}
By definition, $E(n,A, I)$ is generated by the elements ${}^{e}e_{ij}(\alpha)$, where $i\not =j$, $e\in E(n,A)$, and $\alpha \in I$. If $e$ is the identity matrix, let $l(e) = 0$ and otherwise, let $l(e)$ denote the least number of elementary matrices required to write $e$ as a product of elementary matrices. The proof is by induction on $l(e)$.

We need the following identity in order to reduce the length of $e$ in the induction proof. 
Let $i$, $j$, $k$ be distinct natural numbers and $a, b \in  A$ and $\alpha \in  I$.  Then one can check by straightforward multiplication that
 \begin{multline}\label{gbgdq1}
{}^{e_{ij}(a)e_{ji}(b)}e_{ij}(\alpha)
= e_{kj}(- \alpha(1 + ba)) e_{ki}(\alpha b) e_{ik} (-ab \alpha b)e_{ij}(ab \alpha) \times\\
 ({}^{e_{jk}(b)}e_{kj}(\alpha))e_{ij}(\alpha)e_{ik}((ab -1) \alpha b)e_{jk}(b \alpha b)({}^{e_{ij}(a)}e_{ji}(-b \alpha b)) \times\\
({}^{e_{ki}(1)}e_{ik}( \alpha b))e_{kj}(\alpha ba)e_{ij}(\alpha ba).
\end{multline}

We proceed by induction.  If $l(e) = 0$, there is nothing to prove.
Suppose $l(e)= 1$. Then $e =e_{kl}(a)$ for some $1\leq k\not = l \leq n$. If $(k,l) =(j,i)$, there is nothing to prove. If $(k,l) \not = (j,i)$ then by (E), ${}^{e_{kl}(a)} e_{ij}(\alpha)$ is either $e_{ij}(\alpha)$ or $e_{i'j'}(\alpha')e_{ij}(\alpha)$ for an elementary matrix $e_{i'j'}(\alpha')$ such that $\alpha'\in I$.

Suppose $l(e)\ge 2$. Write $e = e' e_{mn}(b)e_{kl}(a)$, where $l(e') = l(e) - 2$. If $(k, l) \not = (j, i)$, then applying the paragraph above, one can finish by induction on $l(e)$. Suppose $(k, l) = (l, i)$. If $(m, n) = (i,j)$ then applying~(\ref{gbgdq1}), one can finish by induction on $l(e)$. Suppose $(m,n)\not = (i,j)$. If $m \not = i$ and $n \not = j$ then by (E) \[e_{mn}(b)e_{ji}(a) = e_{ji}(a) e_{mn}(b).\] It is not possible that $(m, n) = (j, i)$, because then it would follow that $e = e' e_{ji}(b + a)$ and thus, that $l(e)\leq  l(e')+ 1$. Since $(m, n)\not = (j, i)$, it follows from (E) that
${}^{e_{mn}(b)}e_{ij}(q)$ is either  $e_{ij}(\alpha)$ or $e_{i'j'}(\alpha')e_{ij}(\alpha)$, for an elementary matrix $e_{i'j'}(\alpha')$, where
$\alpha' \in  I$ and one is done again by induction on $l(e)$. There remain now two cases to check; namely, $(m, n) = (m,j)$ with $m \not = i$ and $(m, n) = (i, n)$ with $n \not = j$. In the first case,

\begin{align*}
{}^{e_{mj}(b) e_{ji}(a)} e_{ij}(\alpha)&=
{}^{e_{mi}(ba) e_{ji}(a) e_{mj}(b)} e_{ij}(\alpha)\\
&={}^{e_{mi}(ba) e_{ji}(a)} e_{ij}(\alpha)\\
&={}^{e_{ji}(a) e_{mi}(ba)} e_{ij}(\alpha)\\
&= {}^{e_{ji}(a)}(e_{mj}(ba\alpha)e_{ij}(\alpha)).
\end{align*}

Thus, one can finish by induction on $l(e)$. The second case is checked similarly.
\end{proof}

Using Lemma~\ref{Engenerator}, it is not hard to prove that $E(n,A, I^2)\le  E(n,I)$ (see~\cite[Corollary~4.9]{8}  and~\cite[Proposition~2]{96}).
This containment can be slightly generalised to the case of two ideals. This will be established in Lemma~\ref{Lem:Habdank} which  will be used throughout the paper.

The first step in the construction of algebraic $K$-theory was done
by Hyman Bass in \cite{14} almost half century ago. There is a standard embedding
\begin{equation}\label{pfsy12}
 \GL(n,A)\map\GL(n+1,A),\qquad g\mapsto
\left(\begin{array}{cc} g&0 \\ 0&1 \\ \end{array}\right), 
\end{equation}
called the stabilisation map, which allows us to identify
$\GL(n,A)$ with a subgroup in $\GL(n+1,A)$. Now we can consider
the stable general linear group
$$ \GL(A)=\varinjlim_n\GL(n,A), $$
\noindent which is the direct limit (effectively the union) of the
$\GL(n,A)$ under the stabilisation embeddings.

Since the stabilisation map sends $E(n,A)$ to
$E(n+1,A)$, we can define the stable elementary group $E(A)=
\varinjlim E(n,A)$.  This subgroup is called the (absolute) {\it elementary group} of
degree $n$ over $A$.

Applying the stabilisation embeddings to the families $\GL(n,A,I)$
and $E(n,A,I)$ generates stable versions $\GL(A,I)$ and $E(A,I)$,
respectively, of these groups. There is no stable version of
$C(n,A,I)$, though, as the stability map does not send $C(n,A,I)$
into $C(n+1,A,I)$. 

A crucial observation known as the Whitehead
lemma, asserts that modulo $E(A)$ the product of two matrices in
$\GL(n,A)$ is the same as their direct sum, and in particular,
$E(A)=[\GL(A),\GL(A)]$. Such identities in the stable case can be established easily, as there is enough room to arrange the matrices inside $\GL(A)$. For the pedagogical reason we include the proof of the following identity (see Lemma~\ref{whiteee})
\begin{equation*}
 E(A,I)=[E(A),E(A,I)]=[\GL(A),E(A,I)]=[\GL(A),\GL(A,I)]. 
 \end{equation*} 
The main theme of this note is to establish the non-stable identities of this type.  

First, we need some lemmas.

\begin{Lem}\label{triang5}
Let $A$ be a ring and $I$ be a two sided ideal of $A$. Any $n\times n$ upper/lower triangular matrix with $1$ on the main diagonal and elements of $I$ as non-zero entires belong to $E(n,I)$.  
\end{Lem}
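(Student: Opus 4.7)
My plan is to induct on $n$, treating the upper triangular case and noting that the lower triangular case follows by a symmetric argument. The base case $n=1$ is the identity matrix, and $n=2$ is immediate: an upper triangular unipotent $\begin{pmatrix} 1 & a \\ 0 & 1\end{pmatrix}$ with $a\in I$ is exactly $e_{12}(a)\in E(2,I)$.

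For the inductive step, given an $n\times n$ upper triangular unipotent $U$ with above-diagonal entries in $I$, I would split off the last column via the block factorisation
$$U \;=\; \begin{pmatrix} U' & 0 \\ 0 & 1 \end{pmatrix}\begin{pmatrix} I_{n-1} & v \\ 0 & 1\end{pmatrix},$$
where $U'$ is the upper-left $(n-1)\times(n-1)$ block (again an upper triangular unipotent with entries in $I$) and $v=(v_1,\dots,v_{n-1})^T\in I^{n-1}$ is the truncated last column. For the right factor, I would use relation (E2) to note that the elementary matrices $e_{in}(v_i)$ with $1\le i\le n-1$ pairwise commute, so their product in any order equals $I+\sum_{i=1}^{n-1} v_i e_{in}$, which is exactly the right factor; each $e_{in}(v_i)$ lies in $E(n,I)$ since $v_i\in I$. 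For the left factor, the stabilisation embedding (\ref{pfsy12}) sends each elementary generator $e_{ij}(a)\in E(n-1,I)$ to $e_{ij}(a)\in E(n,I)$, so it maps $E(n-1,I)$ into $E(n,I)$; by the inductive hypothesis applied to $U'$, the left factor is in $E(n,I)$.

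The lower triangular case is handled by the mirror decomposition (splitting off the first column, or equivalently conjugating by the long Weyl element), and the same argument goes through with $e_{i1}$'s in place of $e_{in}$'s; the commutativity again follows from (E2).

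I do not foresee a real obstacle here: the only substantive point is verifying that the last-column factor is a commuting product of level-$I$ elementary matrices, which is a one-line consequence of (E2). Everything else is bookkeeping via the stabilisation embedding.
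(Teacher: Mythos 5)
Your argument is correct in substance and takes a genuinely different route from the paper's. The paper avoids induction on $n$ entirely: it multiplies $x$ on the right by level-$I$ elementary matrices to clear the first superdiagonal (entries with $j-i=1$), then the second ($j-i=2$), and so on, reaching the identity after $n-1$ sweeps. Your approach instead peels off the last column and inducts on $n$, pushing the smaller block through the stabilisation embedding. Both work; the paper's version has the minor merit of being a closed-form factorisation of $x$ into elementary matrices without an induction, while yours makes the block structure transparent and needs only the trivial commutativity relation (E2) plus the inductive hypothesis.

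One small point to tidy: with the factor order you wrote,
$$\begin{pmatrix} U' & 0 \\ 0 & 1 \end{pmatrix}\begin{pmatrix} I_{n-1} & v \\ 0 & 1\end{pmatrix} = \begin{pmatrix} U' & U'v \\ 0 & 1\end{pmatrix},$$
so $v$ is $U'^{-1}$ applied to the truncated last column of $U$, not the truncated last column itself. This does not break the proof --- $U'^{-1} = I - N + N^2 - \dots$ with $N$ strictly upper triangular over $I$, so $v$ still lies in $I^{n-1}$ --- but as stated the identification of $v$ is off by a factor of $U'^{-1}$. The cleaner fix is simply to reverse the order of the two block factors, writing $U = \begin{pmatrix} I_{n-1} & w \\ 0 & 1\end{pmatrix}\begin{pmatrix} U' & 0 \\ 0 & 1 \end{pmatrix}$; then $w$ literally is the truncated last column and nothing further needs to be checked.
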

\begin{proof}
Let $x$ be an upper triangular matrix with $1$ on the diagonal and elements of $I$ as non-zero entires, i.e., 
$x=(a_{ij})\in M_n(A)$ with $a_{ii}=1$, $1\leq i \leq n$ and $a_{ij} \in I$ for $j>i$. 
Then the matrix 
\begin{equation}\label{gbgtrd2}
x'=(a'_{ij})=xe_{12}(-a_{12})e_{23}(-a_{23})\dots e_{n-1,n}(-a_{n-1,n})
\end{equation}
is still upper triangular with $1$ on the main diagonals and $0$ on $j-i=1$. Note that since $a_{ij}\in I$, all the elementary matrices in~(\ref{gbgtrd2}) are in $E(n,I)$. 

Now the matrix 
\[x{''}=(a{''}_{ij})=x'e_{13}(-a'_{13})e_{24}(-a'_{24})\dots e_{n-2,n}(-a'_{n-2,n}),\]
is again upper triangular with $1$ on the main diagonals and $0$ on $j-i=1,2$. Here also $a'_{ij}\in I$ and so all the elementary matrices are in $E(n,I)$. 
Continuing in this fashion,  by induction, $x^{(n-1)}$ is the identity matrix. Note that all elementary matrices involved are in $E(n,I)$. It follows that $A\in E(n,I)$. The lower triangular case is similar.
\end{proof}

\begin{Lem}\label{Whitehead}
Let $A$ be an associative ring and let $I\trianglelefteq A$
be a two-sided ideal of $A$. Then for any $x,y\in\GL(n,A,I)$
one has
\begin{equation}\label{uhbd41}
\begin{pmatrix} xyx^{-1}y^{-1}&0\\ 0&1\\ \end{pmatrix} \in 
E(2n,A,I). 
\end{equation}
\end{Lem}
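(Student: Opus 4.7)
The plan is to reduce the statement to the key claim that $\diag(g,g^{-1})\in E(2n,A,I)$ for every $g\in\GL(n,A,I)$, and then to prove this claim by an explicit block-matrix factorisation.

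For the reduction, one verifies by a direct block multiplication the identity
\[
\begin{pmatrix} xyx^{-1}y^{-1} & 0 \\ 0 & 1 \end{pmatrix}
=
\begin{pmatrix} x & 0 \\ 0 & x^{-1} \end{pmatrix}
\begin{pmatrix} y & 0 \\ 0 & y^{-1} \end{pmatrix}
\begin{pmatrix} (yx)^{-1} & 0 \\ 0 & yx \end{pmatrix},
\]
since the first block entry multiplies to $xy\cdot(yx)^{-1}=xyx^{-1}y^{-1}$ and the second to $x^{-1}y^{-1}\cdot yx=1$. As $x$, $y$ and $yx$ all lie in $\GL(n,A,I)$ and $E(2n,A,I)$ is a group, the key claim is enough.

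For the key claim, work with $n\times n$ block matrices and write $e_{12}(a)=\bigl(\begin{smallmatrix}1&a\\0&1\end{smallmatrix}\bigr)$, $e_{21}(a)=\bigl(\begin{smallmatrix}1&0\\a&1\end{smallmatrix}\bigr)$ for $a\in M_n(A)$. The classical Whitehead factorisation
\[
\diag(g,g^{-1})=e_{12}(g)\,e_{21}(-g^{-1})\,e_{12}(g)\cdot\begin{pmatrix} 0 & -1 \\ 1 & 0 \end{pmatrix},
\]
together with the identity $\bigl(\begin{smallmatrix}0&-1\\1&0\end{smallmatrix}\bigr)=e_{12}(-1)\,e_{21}(1)\,e_{12}(-1)$, presents $\diag(g,g^{-1})$ as a product of seven block-unipotent factors. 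Setting $h=g-1\in M_n(I)$ and $k=g^{-1}-1\in M_n(I)$, the additivity $e_{12}(a)e_{12}(b)=e_{12}(a+b)$ and its $e_{21}$-analogue allow one to split $e_{12}(g)=e_{12}(h)\,e_{12}(1)$ and $e_{21}(-g^{-1})=e_{21}(-k)\,e_{21}(-1)$, and to collapse the central $e_{12}(g)\,e_{12}(-1)$ to $e_{12}(h)$. A short bookkeeping then puts everything in the compact three-factor form
\[
\diag(g,g^{-1})=e_{12}(h)\cdot{}^{e_{12}(1)}e_{21}(-k)\cdot{}^{e_{12}(1)\,e_{21}(-1)}e_{12}(h).
\]
By Lemma~\ref{triang5}, the block-triangular matrices $e_{12}(h)$ and $e_{21}(-k)$ belong to $E(2n,I)$, and their conjugates by elements of $E(2n,A)$ remain in the normal closure $E(2n,A,I)\trianglelefteq E(2n,A)$. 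Hence each of the three factors lies in $E(2n,A,I)$, and so does their product.

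The only real obstacle is the algebraic bookkeeping needed to collapse the seven-factor expression to the three-factor one; no commutator identities are needed beyond the additivity of the $e_{ij}$'s and the splittings $g=1+h$, $g^{-1}=1+k$.
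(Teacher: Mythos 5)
Your proof is correct, and it takes a genuinely different, somewhat more streamlined route than the paper's. You first reduce the commutator matrix to a product of three balanced diagonal blocks $\diag(g,g^{-1})$ with $g\in\GL(n,A,I)$, and then show $\diag(g,g^{-1})\in E(2n,A,I)$ directly via the classical Whitehead block factorisation $\diag(g,g^{-1})=e_{12}(g)\,e_{21}(-g^{-1})\,e_{12}(g)\,\bigl(\begin{smallmatrix}0&-1\\1&0\end{smallmatrix}\bigr)$, cleverly regrouping it into the three-term expression $e_{12}(h)\cdot{}^{e_{12}(1)}e_{21}(-k)\cdot{}^{e_{12}(1)e_{21}(-1)}e_{12}(h)$, each factor lying in $E(2n,A,I)$ by Lemma~\ref{triang5} and the normal-closure definition of the relative elementary group. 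The paper, following Bass, instead proves the congruence $\diag(yx,1)\equiv\diag(x,y)\pmod{E(2n,A,I)}$ head-on, writing the conjugating element as $\tau_1\sigma^{-1}\tau_2\sigma\tau_3$ for explicit block-unipotent $\tau_i$ with entries in $I$; the case $x=y^{-1}$ then yields $\diag(y^{-1},y)\in E(2n,A,I)$, from which $\diag(xy,1)\equiv\diag(x,y)$ and hence the lemma follow. Both arguments are elementary and rely on the same two tools (Lemma~\ref{triang5} and the definition of $E(2n,A,I)$), but yours isolates the single fact $\diag(g,g^{-1})\in E(2n,A,I)$ as the engine and avoids the back-and-forth between the two congruences (\ref{uhbd412}) and (\ref{uhbd413}), whereas the paper's version establishes those congruences as useful facts in their own right (they are the genuine content of the Whitehead lemma). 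A minor bookkeeping remark: your initial product before splitting consists of six, not seven, block-unipotent factors; it becomes seven after the splittings $e_{12}(g)=e_{12}(h)e_{12}(1)$ and $e_{21}(-g^{-1})=e_{21}(-k)e_{21}(-1)$. This does not affect the argument.
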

\begin{proof}
Following Bass~\cite[Lemma~1.7]{14}, we first show that 
\begin{equation}\label{uhbd412}
\begin{pmatrix} xy&0\\ 0&1\\ \end{pmatrix}\equiv
\begin{pmatrix} x&0\\ 0&y\\ \end{pmatrix}\pmod{E(2n,A,I)},
\end{equation}
and 
\begin{equation}\label{uhbd413}
\begin{pmatrix} yx&0\\ 0&1\\ \end{pmatrix}\equiv
\begin{pmatrix} x&0\\ 0&y\\ \end{pmatrix}\pmod{E(2n,A,I)}, 
\end{equation}
which then immediately implies~(\ref{uhbd41}).

Write $y=1+q$, where $q\in M_n(I)$. Furthermore, let 
\begin{align*}
\alpha=\begin{pmatrix} yx&0\\ 0&1\\ \end{pmatrix}, \beta=
\begin{pmatrix} x&0\\ 0&y\\ \end{pmatrix},\tau_1=\begin{pmatrix} 1&(yx)^{-1}q\\ 0&1\\ \end{pmatrix},\\
\tau_2=\begin{pmatrix} 1&-x^{-1}q\\ 0&1\\ \end{pmatrix},\tau_3=\begin{pmatrix} 1&0\\ -y^{-1}qx&1\\ \end{pmatrix},
\sigma=\begin{pmatrix} 1&0\\ x&1\\ \end{pmatrix}.
\end{align*}
By Lemma~\ref{triang5}, $\tau_1,\tau_2,\tau_3 \in E(2n,I)$, $\sigma \in E(2n,A)$  and thus by definition
$\sigma^{-1}\tau_2\sigma \in E(2n,A,I)$. We get $\tau:=\tau_1\sigma^{-1}\tau_2\sigma\tau_3 \in E(2n,A,I)$. 
Now a simple matrix calculation shows  
\begin{align*}
\alpha\tau_1=\begin{pmatrix} yx&q\\ 0&1\\ \end{pmatrix}, \alpha\tau_1\sigma^{-1}=\begin{pmatrix} yx-qa&q\\ -x&1\\ \end{pmatrix}=\begin{pmatrix} x&q\\ -x&1\\ \end{pmatrix},\\
\alpha\tau_1\sigma^{-1}\tau_2=\begin{pmatrix} x&-q+q\\ -x&1+q\\ \end{pmatrix}=\begin{pmatrix} x&0\\ -x&y\\ \end{pmatrix},\\
\alpha\tau_1\sigma^{-1}\tau_2\sigma=\begin{pmatrix} x&0\\ yx-x&y\\ \end{pmatrix}=
\begin{pmatrix} x&0\\ qx&b\\ \end{pmatrix}.
\end{align*}
Finally
\begin{equation*}
\alpha\tau=\alpha\tau_1\sigma^{-1}\tau_2\sigma\tau_3=\begin{pmatrix} x&0\\ 0&y\\ \end{pmatrix}=\beta
\end{equation*}
This shows the Identity~(\ref{uhbd413}). Plugging $x=y^{-1}$ into this identity we obtain
\[\left(\begin{array}{cc} y^{-1}&0 \\ 0&y \\ \end{array}\right) \in E(2n,A,I).\]
Thus 
\begin{equation*}
\begin{pmatrix} xy&0\\ 0&1\\ \end{pmatrix}\equiv \begin{pmatrix} xy&0\\ 0&1\\ \end{pmatrix} \begin{pmatrix} y^{-1}&0\\ 0&y\\ \end{pmatrix}\equiv \begin{pmatrix} x&0\\ 0&y\\ \end{pmatrix}, 
\end{equation*}
which is Identity~(\ref{uhbd412}). 
\end{proof}


\begin{Lem}\label{whiteee}
For an associative ring $A$ and an ideal $I\unlhd A$ one has
\begin{equation}\label{fluck1}
 E(A,I)=[E(A),E(A,I)]=[\GL(A),E(A,I)]=[\GL(A),\GL(A,I)]. 
 \end{equation}
\end{Lem}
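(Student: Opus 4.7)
The plan is to prove the four-term equality by sandwiching
\[ E(A,I) \subseteq [E(A),E(A,I)] \subseteq [\GL(A),E(A,I)] \subseteq [\GL(A),\GL(A,I)] \subseteq E(A,I), \]
in which the two middle inclusions are immediate from the monotonicity of the commutator bracket. Only the outermost inclusions require work.

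For the opening inclusion $E(A,I) \subseteq [E(A),E(A,I)]$, I would exploit the unbounded supply of indices available in the stable group. By Lemma~\ref{Engenerator}, $E(A,I)$ is generated by the conjugates $z_{ij}(a,\alpha) = {}^{e_{ji}(a)}e_{ij}(\alpha)$ with $\alpha\in I$. Pick any third index $k$ distinct from $i,j$ and apply (E3) to get $e_{ij}(\alpha) = [e_{ik}(\alpha),e_{kj}(1)]$; then conjugate by $e_{ji}(a) \in E(A)$ and invoke the $E(A)$-normality of $E(A,I)$ (built into its definition) to obtain
\[ z_{ij}(a,\alpha) = \big[{}^{e_{ji}(a)}e_{ik}(\alpha),\,{}^{e_{ji}(a)}e_{kj}(1)\big] \in \big[E(A,I),E(A)\big] = \big[E(A),E(A,I)\big]. \]

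The closing inclusion $[\GL(A),\GL(A,I)] \subseteq E(A,I)$ is where Lemma~\ref{Whitehead} enters. Given $x \in \GL(n,A)$ and $y \in \GL(n,A,I)$, the crux is the observation that the proof of identity~(\ref{uhbd413}), and hence of~(\ref{uhbd412}), used the hypothesis $y \in \GL(n,A,I)$ only through the decomposition $y = 1+q$ with $q \in M_n(I)$; the factor $x$ enters solely through $\sigma \in E(2n,A)$, and the $E(2n,A)$-normality of $E(2n,A,I)$ absorbs the conjugation $\sigma^{-1}\tau_2\sigma$. Consequently
\[
 \begin{pmatrix} xy & 0 \\ 0 & 1 \end{pmatrix} \equiv \begin{pmatrix} x & 0 \\ 0 & y \end{pmatrix} \equiv \begin{pmatrix} yx & 0 \\ 0 & 1 \end{pmatrix} \pmod{E(2n,A,I)},
\]
so that $\diag(xyx^{-1}y^{-1},1) \in E(2n,A,I)$, and passing to the stable direct limit gives $[x,y] \in E(A,I)$. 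The main obstacle is precisely this mild strengthening of Lemma~\ref{Whitehead} to allow $x\in\GL(n,A)$ rather than $\GL(n,A,I)$; its failure in the non-stable setting is the ``no room to maneuver'' phenomenon that motivates the rest of the paper.
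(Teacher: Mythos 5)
Your proof is correct and follows essentially the same route as the paper: the same chain of inclusions, the same commutator trick with an extra stable index $k$ for the opening inclusion, and the same appeal to Lemma~\ref{Whitehead} for the closing one. The one genuinely useful thing you add is the explicit observation that Lemma~\ref{Whitehead}, as stated in the paper, requires $x,y\in\GL(n,A,I)$, yet the paper's proof of Lemma~\ref{whiteee} invokes it with $x\in\GL(n,A)$; you correctly point out that the proof of Lemma~\ref{Whitehead} only ever uses $y\in\GL(n,A,I)$ (to write $y=1+q$ with $q\in M_n(I)$, keeping $\tau_1,\tau_2,\tau_3$ of level $I$), while $x$ enters only through $\sigma\in E(2n,A)$, which is absorbed by the $E(2n,A)$-normality of $E(2n,A,I)$, so the strengthened version holds and the application is legitimate.
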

\begin{proof}
The elements of $E(A,I)$ are generated by $xe_{ij}(\alpha)x^{-1}$, where $e_{ij}(\alpha)\in E(I)$ and $ x\in E(A)$. Writing
\[xe_{ij}(\alpha)x^{-1}=[x,e_{ij}(\alpha)]e_{ij}(\alpha)=[x,e_{ij}(\alpha)][e_{ik}(1),e_{kj}(\alpha)] \in [E(A),E(A,I)],\] it follows that 
\[E(A,I) \le   [E(A),E(A,I)].\] Thus we have 
\begin{equation*}
 E(A,I)\le  [E(A),E(A,I)] \le  [\GL(A),E(A,I)] \le  [\GL(A),\GL(A,I)]. 
 \end{equation*}
 We show $[\GL(A),\GL(A,I)] \le  E(A,I)$. 
 Let $x\in \GL(A)$ and $y\in \GL(A,I)$. Then for a sufficiently large $n$,
$x\in \GL(n,A)$ and $y \in \GL(n,A,I)$. By Lemma~\ref{Whitehead}, 
\begin{equation*}
\begin{pmatrix} xyx^{-1}y^{-1}&0\\ 0&1\\ \end{pmatrix} \in 
E(2n,A,I) \le  E(A,I). 
\end{equation*}
This finishes the proof. 
\end{proof}




At this point Bass defines
$$ K_1(A)=\GL(A)/E(A)=\GL(A)/[\GL(A),\GL(A)] $$
\noindent as the abelianisation of $\GL(A)$. Indeed algebraic
$K$-theory was born as Bass observed that the functors $K_0$ and
$K_1$ together with their relative versions fit into a unified
theory with important applications in algebra, algebraic geometry
and number theory. In the same manner, 
the relative $K_1$-functor of a pair $(A,I)$ defined as 
$$ K_1(A,I)=\GL(A,I)/E(A,I). $$
\noindent

As one of important applications in algebra, Bass \cite{14}
relates the normal subgroup structure of $\GL(A)$ to the ideal
structure of $A$. This leap in generality is  considered
as the starting point of the modern theory of linear groups.
\begin{The}
Let $A$ be an arbitrary associative ring and $H\le\GL(A)$ be
a subgroup normalised by the elementary group $E(A)$. Then
there exists a unique ideal $I\unlhd A$ such, that
$$ E(A,I)\le H\le\GL(A,I). $$
\noindent Conversely, any subgroup $H$ satisfying these inclusions
is (by Lemma~\ref{whiteee}) normal in $\GL(A)$.
\end{The}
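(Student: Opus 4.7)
The plan is to associate to $H$ a canonical two-sided ideal $I=I(H)$ of $A$, verify the two inclusions $E(A,I)\le H\le\GL(A,I)$, and then read off both uniqueness and the converse from general principles. I define $I$ to be the two-sided ideal of $A$ generated by all off-diagonal matrix entries $h_{ij}$ with $i\neq j$ and all differences $h_{ii}-1$, as $h$ ranges over $H$; equivalently, $I$ is the smallest $J\unlhd A$ with $H\le\GL(A,J)$. With this choice the inclusion $H\le\GL(A,I)$ is tautological. Uniqueness is immediate: if $E(A,I')\le H\le\GL(A,I')$ for some other ideal $I'$, then $H\le\GL(A,I')$ forces every generator of $I$ into $I'$, hence $I\subseteq I'$; conversely, for $a\in I'$ the elementary matrix $e_{12}(a)$ lies in $E(A,I')\le H\le\GL(A,I)$, so $a\in I$, giving $I'\subseteq I$.

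The substantive step is the inclusion $E(A,I)\le H$. Since $H$ is normalised by $E(A)$, it suffices to produce, for each generator $a$ of $I$, some pair of indices $p\neq q$ with $e_{pq}(a)\in H$: the $E(A)$-conjugates of such elements, together with the relations (E), then generate $E(A,I)$. To extract elementary matrices from a given $h\in H\cap\GL(n,A)$ I exploit the infinite room of the stable group $\GL(A)$ by picking an index $k>n$ outside the support of $h$, so that the $k$-th row of $h^{-1}$ is the standard basis row. A direct computation using (E) then gives
\[
[h,\,e_{jk}(1)] \;=\; \prod_{l}\, e_{lk}(h^{*}_{lj}) \;\in\; H,
\]
where $h^{*}_{lj}=h_{lj}$ for $l\neq j$ and $h^{*}_{jj}=h_{jj}-1$, and the factors commute because they share the common column $k$. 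A further commutator with $e_{km}(s)$ for a fresh index $m$ yields $\prod_{l} e_{lm}(h^{*}_{lj}\,s)\in H$, supplying arbitrary right multipliers $s$; the mirror computation $[e_{pj}(r),h]$ introduces arbitrary left multipliers $r$; and an inductive cleanup, varying the auxiliary indices $k,m$ and dividing out previously-extracted elementary members of $H$, isolates a single elementary factor $e_{pq}(r\,h_{ij}\,s)\in H$. The diagonal generators $h_{ii}-1$ of $I$ are captured by the same computation at $j=i$.

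Finally, the converse statement is an immediate consequence of the Whitehead lemma: if $E(A,I)\le H\le\GL(A,I)$, then for $g\in\GL(A)$ and $h\in H$,
\[
[g,h] \;\in\; [\GL(A),\GL(A,I)] \;=\; E(A,I) \;\le\; H
\]
by Lemma~\ref{whiteee}, so $ghg^{-1}=[g,h]\cdot h\in H$. The main technical obstacle is the elementary-matrix extraction of the previous paragraph: with no finiteness hypothesis on $A$ available, one cannot invoke Noetherian reduction or localisation, and must instead orchestrate a sequence of commutators with auxiliary elementary generators indexed outside the support of $h$, arranging matters so that one ring-theoretic generator of $I$ is isolated at a time while the residue from the other entries is cancelled using members of $H$ constructed at earlier stages.
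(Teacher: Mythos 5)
The paper does not actually give a proof of this theorem — it is stated without argument, citing Bass's original work, with only the converse direction pinned to Lemma~\ref{whiteee}. Your proposal supplies a genuine proof, and the overall architecture is the classical one: take $I$ to be the level of $H$, note $H\le\GL(A,I)$ is then automatic, deduce uniqueness by comparing levels, prove $E(A,I)\le H$ by extracting elementary transvections from $H$ using the room available in the stable group, and read off the converse from $[\GL(A),\GL(A,I)]=E(A,I)$. The stable commutator computation $[h,e_{jk}(1)]=\prod_{l\le n}e_{lk}(h^*_{lj})$ for a fresh index $k$ is correct (the factors commute since they all lie in column $k$, with row indices below $k$), and the further commutator with $e_{km}(s)$ gives $\prod_l e_{lm}(h^*_{lj}s)\in H$ as you say. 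The uniqueness and converse arguments are complete.

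The one spot I would tighten is the final isolation step. As phrased — ``an inductive cleanup, varying the auxiliary indices $k,m$ and dividing out previously-extracted elementary members of $H$, isolates a single elementary factor'' — this is circular: to divide out $e_{lm}(h^*_{lj}s)$ for $l\neq i$ you would already need to know that those individual factors lie in $H$, which is exactly what you are trying to prove. Fortunately no induction is needed. Having $g:=\prod_{l\le n}e_{lm}(h^*_{lj}s)\in H$, pick a fresh index $p$ (outside the support of $h$, of $k$, and of $m$) and compute directly with (C1) and (E2), (E3):
\[
[e_{pi}(r),g]\;=\;e_{pm}\bigl(r\,h^*_{ij}\,s\bigr)\;\in\;H,
\]
since the only factor of $g$ that fails to commute with $e_{pi}(r)$ is $e_{im}(h^*_{ij}s)$, and the inner conjugating factors $e_{lm}(\cdot)$ with $l<i$ commute with the resulting $e_{pm}(\cdot)$. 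So the ``mirror'' row commutator you mention, applied to $g$ rather than to $h$, isolates a single factor in one step, supplying both multipliers $r,s$ at once. Combined with the additivity (E1) to pass from generators to the whole ideal, and with conjugation by monomial elements of $E(A)$ to move any $e_{pm}(\alpha)\in H$ to an arbitrary position $e_{p'q'}(\alpha)$, this closes the inclusion $E(I)\le H$, hence $E(A,I)=E(I)^{E(A)}\le H$. With that repair your proof is complete and is essentially Bass's original argument for this theorem.
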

Quite remarkably this result holds
for arbitrary associative rings. Thus, an explicit enumeration of
all normal subgroups of $\GL(A)$ amounts to the calculation of
$K_1(A,I)$ for all ideals $I$ in $A$.
\par
The group $K_1$ answers essentially the question as to how far
$\GL(n,A)$ falls short of being spanned by elementary generators.
A few years later Milnor \cite{milnor70}, \cite{milnor},
building on the work of Steinberg \cite{steinberg62},
\cite{steinberg67}
and Moore \cite{moore}, introduced
the group $K_2$, which measures essentially to which extent all
relations among elementary generators follow from the obvious ones.

For any associative ring $A$, a two-sided ideal $I\unlhd A$
and a fixed $n$ we consider the quotient
$$ K_1(n,A,I)=\GL(n,A,I)/E(n,A,I). $$
\noindent
In general, the elementary subgroup $E(n,R,I)$ does not have
to be normal in the congruence subgroup $\GL(n,A,I)$. In 
particular, $K_1(n,A,I)$ is a pointed set, rather than a group.
However, we will see when $A$ is quasi-finite 
and $n\ge 3$, the $K_1(n,A,I)$ is a group. 
Similarly, we
define 
$$ \SK_1(n,A,I)=\SL(n,A,I)/E(n,A,I), $$
\noindent
consult~\cite{8} for the definition of $\SL(n,A,I)$
for quasi-finite rings.
\par
The stability embedding of the general linear groups (see~(\ref{pfsy12}))
sends $E(n,A,I)$ inside $E(n+1,A,I)$. In particular, by the
homomorphism theorem it induces stability map
$$ \psi_n:K_1(n,A,I)\map K_1(n+1,A,I), $$
which is a group homomorphism when both sides are groups. 
Clearly, $\psi_n$ restricts to a map between $\SK_1(n,A,I)$'s.
\par
The following results, known as the surjective and injective
stability for $K_1$ are due to Bass and to Bass---Vaserstein,
respectively.

\begin{Lem}\label{BassBV}
Let $A$ be an associative ring and let $I\trianglelefteq A$
be a two-sided ideal of $A$. Consider the stability map 
\begin{equation*}
\psi_n: K_1(n,A,I)\map K_1(n+1,A,I). 
\end{equation*}
Then 
\begin{enumerate}
\item 
If $n\ge\sr(A)$ , then $\psi_n$ is surjective. In other words
$$ \GL(n+1,A,I)=\GL(n,A,I)E(n+1,A,I). $$

\item If $n\ge\sr(A)+1$, then $\psi_n$ is injective. In other words
$$ \GL(n,A,I)\cap E(n+1,A,I)=E(n,A,I). $$
\end{enumerate}
\end{Lem}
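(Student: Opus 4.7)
The plan is to argue the two parts separately, in each case using the stable range condition to reduce a matrix in $\GL(n+1,A,I)$ to one concentrated in the top-left $n\times n$ block. The main work is to ensure that all elementary moves used in the reduction lie in $E(n+1,A,I)$ and not merely in $E(n+1,A)$, which requires a careful combination of the stable range hypothesis with the fact that the $(n+1,n+1)$-entry of any matrix in the congruence subgroup is already congruent to $1$ modulo $I$.

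For the surjectivity of $\psi_n$, I take $g=(g_{ij})\in\GL(n+1,A,I)$ and focus on its last row $(g_{n+1,1},\ldots,g_{n+1,n+1})$. The congruence condition gives $g_{n+1,i}\in I$ for $i\le n$ and $g_{n+1,n+1}\in 1+I$, while invertibility of $g$ forces this row to be right-unimodular. The stable range hypothesis $\sr(A)\le n$ supplies $b_1,\ldots,b_n\in A$ with $(g_{n+1,1}+g_{n+1,n+1}b_1,\ldots,g_{n+1,n}+g_{n+1,n+1}b_n)$ still right-unimodular; since $g_{n+1,n+1}\equiv 1\pmod I$ the modified entries are congruent to $b_i$ modulo $I$, so the shortened unimodular row is a unit-modulo-$I$ vector. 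Building an elementary matrix $\varepsilon\in E(n+1,A,I)$ out of these modifications (adjusting each $e_{n+1,i}(b_i)$ into the relative subgroup by Lemma~\ref{Engenerator} and the unit-modulo-$I$ property of $g_{n+1,n+1}$), I can arrange that $g\varepsilon$ has last row $(0,\ldots,0,1)$. A symmetric argument on the last column produces $\varepsilon'\in E(n+1,A,I)$ with $\varepsilon'g\varepsilon=\begin{pmatrix}h&0\\0&1\end{pmatrix}$ for some $h\in\GL(n,A,I)$, which is precisely the desired factorization $g\in\GL(n,A,I)E(n+1,A,I)$.

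For the injectivity of $\psi_n$, suppose $h\in\GL(n,A,I)$ with $\begin{pmatrix}h&0\\0&1\end{pmatrix}=\epsilon\in E(n+1,A,I)$. The strict inequality $n\ge\sr(A)+1$ now lets me apply part~(1) both at level $n\to n+1$ and at level $n+1\to n+2$. Stabilizing to $\diag(h,1,1)\in\GL(n+2,A,I)$ and invoking Lemma~\ref{Whitehead}, which shows $\diag(x,x^{-1})$ lies in the relative elementary subgroup, I would trade the block $\diag(h,1,1)$ against a product of elementary matrices whose indices can all be chosen $\le n$. Decomposing the result through the generators of Lemma~\ref{Engenerator} then exhibits $h$ as an element of $E(n,A,I)$.

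The principal obstacle throughout is the relative bookkeeping. The stable range condition returns $b_i\in A$ rather than $b_i\in I$, so the elementary matrices used to clean the last row of $g$ are not a priori in $E(n+1,A,I)$. Converting these ambient elementary moves into moves of level $I$ rests crucially on $g_{n+1,n+1}\in 1+I$ behaving as a unit modulo $I$, combined with the normal-closure description of Lemma~\ref{Engenerator} and the relations (E1)--(E3), so that each non-relative $e_{n+1,i}(b_i)$ can be absorbed, modulo an element of $E(n+1,A,I)$, into a product of conjugate generators of the form $z_{ij}(a,\alpha)$ with $\alpha\in I$. The same circle of ideas, applied one dimension higher, is what the extra stable range hypothesis in part~(2) buys us.
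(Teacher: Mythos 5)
The paper does not actually prove Lemma~\ref{BassBV}; it merely cites Bass and Bass--Vaserstein. So there is no internal proof to compare against, and I have to assess your argument on its own terms.

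Your argument for part~(1) has a genuine gap, and it is precisely at the spot you flag as the ``principal obstacle.'' You propose to shorten the last row $u=(u_1,\ldots,u_{n+1})$ of $g$ by replacing $u_i\mapsto u_i+u_{n+1}b_i$ for $i\le n$, and then to ``absorb'' the matrices $e_{n+1,i}(b_i)$ into $E(n+1,A,I)$. This cannot work. If $b_i\in I$, then $u_i+u_{n+1}b_i\in I+(1+I)I\subseteq I$, so the shortened row of length $n$ has all entries in $I$ and hence can never be right-unimodular unless $I=A$. Conversely, if $b_i\notin I$, then $e_{n+1,i}(b_i)\notin\GL(n+1,A,I)\supseteq E(n+1,A,I)$, so there is no sense in which it can be ``adjusted into the relative subgroup'' via Lemma~\ref{Engenerator}: that lemma exhibits generators of $E(n,A,I)$, it does not provide a device for trading an absolute elementary generator for a relative one. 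The claim that $g_{n+1,n+1}\in 1+I$ is a ``unit modulo $I$'' does not rescue the step, because it supplies no way to produce a unimodular shortened row via $I$-level column operations along the $(n+1,i)$ positions.

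The actual proof uses a different shortening. Bass's relative form of the stable range lemma deletes $u_1$ (an entry lying in $I$) rather than $u_{n+1}$: one modifies $u_j\mapsto u_j+u_1 b_j$ for $j\ge 2$, and the ``moreover'' clause of Bass's lemma says one may take $b_j\in I$. This keeps the shortened row congruent to $(0,\ldots,0,1)\bmod I$, which is consistent with it being unimodular, and the moves $e_{1,j}(b_j)$ really do lie in $E(n+1,A,I)$. Even so, further work is needed to finish the reduction; one typically first lands at last row $(0,\ldots,0,*)$ with $*$ a unit in $1+I$, then absorbs $\diag(1,\ldots,1,*)$ using $\diag(*^{-1},1,\ldots,1,*)\in E(n+1,A,I)$ (via the Whitehead computation of Lemma~\ref{Whitehead}) rather than aiming directly for $(0,\ldots,0,1)$. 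None of this appears in your sketch.

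For part~(2), the argument is not yet a proof: ``I would trade the block $\diag(h,1,1)$ against a product of elementary matrices whose indices can all be chosen $\le n$'' is a restatement of the conclusion, not a derivation of it. Vaserstein's injective stability is substantially harder than surjective stability and relies on careful manipulation of a matrix factorisation; appealing to Lemma~\ref{Whitehead} and Lemma~\ref{Engenerator} alone does not reach it.
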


\section{Unitary Groups}\label{sec3}

The notion of $\Lambda$-quadratic forms, quadratic modules and generalised
unitary groups over a form ring $(A,\Lambda)$ were introduced by Anthony
Bak in his Thesis who studied
their $K$-theory (see~\cite{B1,B2}).

Although the quadratic setting
is much more complicated than the linear one, it is being gradually
established that most results
concerning the $K$-theory of general linear groups can be carried over to the
$K$-theory of general quadratic groups.

In this section we
{\it briefly\/} review the most fundamental notation and results
that will be constantly used in the present paper. We refer to
\cite{B2,32,55,13,33,41,tang,lavrenov} for details, proofs, and
further references.


\subsection{}\label{form algebra}
Let $R$ be a commutative ring with $1$, and $A$ be an (not necessarily
commutative) $R$-algebra. An involution, denoted by $\bar{\phantom{\a}}$, 
is an
anti-homomorphism of $A$ of order $2$. Namely, for $\alpha,\beta\in A$,
one has $\overline{\alpha+\beta}=\bar\alpha+\bar\beta$, \
$\overline{\alpha\beta}=\bar\beta\bar\alpha$ and $\bar{\bar\alpha}=\alpha$.
Fix an element $\lambda\in\Cent(A)$ such that $\lambda\bar\lambda=1$. One may
define two additive subgroups of $A$ as follows:
$$ \Lambda_{\min}=\{\alpha-\lambda\bar\alpha\mid\alpha\in A\}, \qquad
\Lambda_{\max}=\{\alpha\in A\mid \alpha=-\lambda\bar\alpha\}. $$
\noindent
A {\em form parameter} $\Lambda$ is an additive subgroup of $A$ such that
\begin{itemize}
\item[(1)] $\Lambda_{\min}\le \Lambda\le \Lambda_{\max}$,
\smallskip
\item[(2)] $\alpha\Lambda\bar\alpha\le \Lambda$ for all $\alpha\in A$.
\end{itemize}
The pair $(A,\Lambda)$ is called a {\em form ring}.


\subsection{}\label{form ideals}
Let $I\unlhd A$ be a two-sided ideal of $A$. We assume $I$ to be
involution invariant, i.e., such that $\bar I=I$. Set
$$ \Gamma_{\max}(I)=I\cap \Lambda, \qquad
\Gamma_{\min}(I)=\{\xi-\lambda\bar\xi\mid\xi\in I\}+
\langle\xi\alpha\bar\xi\mid \xi\in I,\alpha\in\Lambda\rangle. $$
\noindent
A {\em relative form parameter} $\Gamma$ in $(\FormR)$ of level $I$ is an
additive group of $I$ such that
\begin{itemize}
\item[(1)] $\Gamma_{\min}(I)\le  \Gamma \le \Gamma_{\max}(I)$,
\smallskip
\item[(2)] $\alpha\Gamma\bar\alpha\le  \Gamma$ for all $\alpha\in A$.
\end{itemize}
The pair $(I,\Gamma)$ is called a {\em form ideal}.
\par
In the level calculations we will use sums and products of form
ideals. Let $(I,\Gamma)$ and $(J,\Delta)$ be two form ideals. Their sum
is artlessly defined as $(I+J,\Gamma+\Delta)$, it is immediate to verify
that this is indeed a form ideal.
\par
Guided by analogy, one is tempted to set
$(I,\Gamma)(J,\Delta)=(IJ,\Gamma\Delta)$. However, it is considerably
harder to correctly define the product of two relative form parameters.
The papers~\cite{30,31,33} introduce the following definition
$$ \Gamma\Delta=\Gamma_{\min}(IJ)+{}^J\Gamma+{}^I\Delta, $$
\noindent
where
$$ {}^J\Gamma=\big\langle \xi\Gamma\bar\xi\mid \xi\in J\big\rangle,\qquad
{}^I\Delta=\big\langle \xi\Delta\bar\xi\mid \xi\in I\big\rangle. $$
\noindent
One can verify that this is indeed a relative form parameter of level $IJ$
if $IJ=JI$.
\par
However, in the present paper we do not wish to impose any such
commutativity assumptions. Thus, we are forced to consider the
symmetrised products
$$ I\circ J=IJ+JI,\qquad
\Gamma\circ\Delta=\Gamma_{\min}(IJ+JI)+{}^J\Gamma+{}^I\Delta\big. $$
\noindent
The notation $\Gamma\circ\Delta$ -- as also $\Gamma\Delta$ is
slightly misleading, since in fact it depends on $I$ and $J$, not
just on $\Gamma$ and $\Delta$. Thus, strictly speaking, one should
speak of the symmetrised products of {\it form ideals\/}
$$ (I,\Gamma)\circ (J,\Delta)=
\big(IJ+JI,\Gamma_{\min}(IJ+JI)+{}^J\Gamma+{}^I\Delta\big). $$
\noindent
Clearly, in the above notation one has
$$ (I,\Gamma)\circ (J,\Delta)=(I,\Gamma)(J,\Delta)+(J,\Delta)(I,\Gamma). $$


\subsection{}\label{quasi-finite}
A {\em form algebra over a commutative ring $R$} is a form ring $(A,\Lambda)$,
where $A$ is an $R$-algebra and the involution leaves $R$ invariant, i.e.,
$\bar R=R$. A form algebra $(\FormR)$ is called {\it module finite}, if $A$ is
finitely generated as an $R$-module.
 A form algebra $(\FormR)$ is called {\it quasi-finite},
if there is a direct system of module finite  $R$-subalgebras $A_i$ of
$A$ such that $\varinjlim A_i=A$ (see~\S\ref{sub:1.2}).

In general $\Lambda$ is not an $R$-module. This forces us to
replace $R$ by its subring $R_0$, generated by all $\alpha\bar\alpha$
with $\alpha\in R$. Clearly, all elements in $R_0$ are invariant with
respect to the involution, i.~e.\ $\bar r=r$, for $r\in R_0$.
\par
It is immediate, that any form parameter $\Lambda$ is an $R_0$-module.
This simple fact will be used throughout. This is precisely why we have
to localise in multiplicative subsets of $R_0$, rather than in those of
$R$ itself (see~\S\ref{hgbdg43a}).



We now recall the basic notation and facts related to
Bak's generalised unitary groups and their elementary subgroups.


\subsection{}\Label{general}
Let, as above, $A$ be an associative ring with 1. For natural $m,n$
we denote by $M(m,n,A)$ the additive group of $m\times n$ matrices
with entries in $A$. In particular $M(m,A)=M(m,m,A)$ is the ring of
matrices of degree $m$ over $A$. For a matrix $x\in M(m,n,A)$ we
denote by $x_{ij}$, $1\le i\le m$, $1\le j\le n$, its entry in the
position $(i,j)$. Let $e$ be the identity matrix and $e_{ij}$,
$1\le i,j\le m$, be a standard matrix unit, i.e.\ the matrix which has
1 in the position $(i,j)$ and zeros elsewhere.
\par
As usual, $\GL(m,A)=M(m,A)^*$ denotes the general linear group
of degree $m$ over $A$. The group $\GL(m,A)$ acts on the free right
$A$-module $V\cong A^{m}$ of rank $m$. Fix a base $e_1,\ldots,e_{m}$
of the module $V$. We may think of elements $v\in V$ as columns with
components in $A$. In particular, $e_i$ is the column whose $i$-th
coordinate is 1, while all other coordinates are zeros.
\par
In the unitary setting, we are only interested in the case,
when $m=2n$ is even. We usually number the base
as follows: $e_1,\ldots,e_n,e_{-n},\ldots,e_{-1}$. All other
occurring geometric objects will be numbered accordingly. Thus,
we write \[v=(v_1,\ldots,v_n,v_{-n},\ldots,v_{-1})^t,\] where $v_i\in A,$
for vectors in $V\cong A^{2n}$.
\par
The set of indices will be always ordered accordingly,
$\Omega=\{1,\ldots,n,-n,\ldots,-1\}$. Clearly, $\Omega=\Omega^+\sqcup\Omega^-$,
where $\Omega^+=\{1,\ldots,n\}$ and $\Omega^-=\{-n,\ldots,-1\}$. For an
element $i\in\Omega$ we denote by $\e(i)$ the sign of $\Omega$, i.e.\
$\e(i)=+1$ if $i\in\Omega^+$, and $\e(i)=-1$ if $i\in\Omega^-$.

\subsection{}\Label{unitary} For a form ring $(\FormR)$, one considers the
{\it hyperbolic unitary group\/} $\GU(2n,\FormR)$, see~\cite[\S2]{13}.
This group is defined as follows:
\par
One fixes a symmetry $\lambda\in\Cent(A)$, $\lambda\bar\lambda=1$ and
supplies the module $V=A^{2n}$ with the following $\lambda$-hermitian form
$h:V\times V\map A$,
$$ h(u,v)=\bar u_1v_{-1}+\ldots+\bar u_nv_{-n}+
\lambda\bar u_{-n}v_n+\ldots+\lambda\bar u_{-1}v_1. $$
\noindent
and the following $\Lambda$-quadratic form $q:V\map A/\Lambda$,
$$ q(u)=\bar u_1 u_{-1}+\ldots +\bar u_n u_{-n} \mod\Lambda. $$
\noindent
In fact, both forms are engendered by a sesquilinear form $f$,
$$ f(u,v)=\bar u_1 v_{-1}+\ldots +\bar u_n v_{-n}. $$
\noindent
Now, $h=f+\lambda\bar{f}$, where $\bar f(u,v)=\bar{f(v,u)}$, and
$q(v)=f(u,u)\mod\Lambda$.
\par
By definition, the hyperbolic unitary group $\GU(2n,A,\Lambda)$ consists
of all elements from $\GL(V)\cong\GL(2n,A)$ preserving the $\lambda$-hermitian
form $h$ and the $\Lambda$-quadratic form $q$. In other words, $g\in\GL(2n,A)$
belongs to $\GU(2n,A,\Lambda)$ if and only if
$$ h(gu,gv)=h(u,v)\quad\text{and}\quad q(gu)=q(u),\qquad\text{for all}\quad u,v\in V. $$
\par
When the form parameter is not maximal or minimal, these groups are not
algebraic. However, their internal structure is very similar to that
of the usual classical groups. They are also often times called {\it general
quadratic groups}, or {\it classical-like groups}.

The groups introduced by Bak in his Thesis~\cite{B1} gather all {\it even\/}
classical groups under one umbrella. Linear groups, symplectic
groups, (even) orthogonal groups, (even) classical unitary groups,
are all special cases of his construction. Not only that, Bak's
construction allows to introduce a whole new range of {\it classical
like groups\/}, taking into account hybridisation, defect groups,
and other such phenomena in characteristic 2, which before~\cite{B1}  were
considered pathological, and required separate analysis outside of
the general theory. 

\par
To give the idea of how it works, let us illustrate how Bak's
construction specialises in the case of hyperbolic groups.
\par\smallskip
$\bullet$ In the case when involution is trivial, $\lambda=-1$,
$\Lambda=\Lambda_{\maxx}=R$, one gets the split symplectic group
$G(2n,R,\Lambda)=\Sp(2n,R)$.
\par\smallskip
$\bullet$ In the case when involution is trivial, $\lambda=1$,
$\Lambda=\Lambda_{\minn}=0$, one gets the split even orthogonal
group $G(2n,R,\Lambda)=O(2n,R)$.
\par\smallskip
$\bullet$ In the case when involution is non-trivial, $\lambda=-1$,
$\Lambda=\Lambda_{\maxx}$, one gets the classical quasi-split even
unitary group $G(2n,R,\Lambda)=U(2n,R)$.
\par\smallskip
$\bullet$ Let $R^o$ be the ring opposite to $R$ and
$R^e=R\oplus R^o$. Define an involution on $R^e$ by
$(x,y^o)\mapsto(y,x^o)$ and set $\lambda=(1,1^o)$.
Then there is a unique form parameter $\Lambda=\{(x,-x^o)\mid x\in R\}$.
The resulting unitary group
$$ G(2n,R^e,\Lambda)=\{(g,g^{-t})\mid g\in\GL(n,R)\} $$
\noindent
may be identified with the general linear group $\GL(n,R)$.
\par\smallskip
Thus, in particular the hyperbolic unitary groups cover Chevalley
groups of types $A_l$, $C_l$ and $D_l$. 

\subsection{}\Label{elementary1}
{\it Elementary unitary transvections\/} $T_{ij}(\xi)$
correspond to the pairs $i,j\in\Omega$ such that $i\neq j$. They come in
two stocks. Namely, if, moreover, $i\neq-j$, then for any $\xi\in A$ we set
$$ T_{ij}(\xi)=e+\xi e_{ij}-\lambda^{(\e(j)-\e(i))/2}\bar\xi e_{-j,-i}. $$
\noindent
These elements are also often called {\it elementary short root unipotents\/}.
\noindent
On the other side for $j=-i$ and $\a\in\lambda^{-(\e(i)+1)/2}\Lambda$ we set
$$ T_{i,-i}(\a)=e+\a e_{i,-i}. $$
\noindent
These elements are also often called {\it elementary long root elements\/}.
\par
Note that $\bar\Lambda=\bar\lambda\Lambda$. In fact, for any element
$\a\in\Lambda$ one has $\bar\a=-\bar\lambda\a$ and thus $\bar\Lambda$ coincides with
the set of products $\bar\lambda\a$, $\a\in\Lambda$. This means that in the
above definition $\a\in\bar\Lambda$ when $i\in\Omega^+$ and $\a\in\Lambda$
when $i\in\Omega^-$.
\par
Subgroups $X_{ij}=\{T_{ij}(\xi)\mid \xi\in A\}$, where $i\neq\pm j$, are
called {\it short root subgroups\/}. Clearly, $X_{ij}=X_{-j,-i}$.
Similarly, subgroups $X_{i,-i}=\{T_{ij}(\a)\mid
\a\in\lambda^{-(\e(i)+1)/2}\Lambda\}$ are called {\it long root subgroups\/}.
\par
The {\it elementary unitary group\/} $\EU(2n,\FormR)$ is generated by
elementary unitary transvections $T_{ij}(\xi)$, $i\neq\pm j$, $\xi\in A$,
and $T_{i,-i}(\a)$, $\a\in\lambda^{-(\e(i)+1)/2}\Lambda$, see~\cite[\S3]{13}.

\subsection{}\Label{elementary2}
Elementary unitary transvections $T_{ij}(\xi)$ satisfy the following
{\it elementary relations\/}, also known as {\it Steinberg relations\/}.
These relations will be used throughout this paper.
\par\smallskip
(R1) \ $T_{ij}(\xi)=T_{-j,-i}(-\lambda^{(\varepsilon(j)-\varepsilon (i))/2}\bar{\xi})$,
\par\smallskip
(R2) \ $T_{ij}(\xi)T_{ij}(\zeta)=T_{ij}(\xi+\zeta)$,
\par\smallskip
(R3) \ $[T_{ij}(\xi),T_{hk}(\zeta)]=e$, where $h\ne j,-i$ and $k\ne i,-j$,
\par\smallskip
(R4) \ $[T_{ij}(\xi),T_{jh}(\zeta)]=
T_{ih}(\xi\zeta)$, where $i,h\ne\pm j$ and $i\ne\pm h$,
\par\smallskip
(R5) \ $[T_{ij}(\xi),T_{j,-i}(\zeta)]=
T_{i,-i}(\xi\zeta-\lambda^{-\varepsilon(i)}\bar{\zeta}\bar{\xi})$, where $i\ne\pm j$,
\par\smallskip
(R6) \ $[T_{i,-i}(\alpha),T_{-i,j}(\xi)]=
T_{ij}(\alpha\xi)T_{-j,j}(-\lambda^{(\ep(j)-\ep(i))/2}\bar\xi\alpha\xi)$,
where $i\ne\pm j$.
\par\smallskip
Relation (R1) coordinates two natural parameterisations of the same short
root subgroup $X_{ij}=X_{-j,-i}$. Relation (R2) expresses additivity of
the natural parameterisations. All other relations are various instances
of the Chevalley commutator formula. Namely, (R3) corresponds to the
case, where the sum of two roots is not a root, whereas (R4), and (R5)
correspond to the case of two short roots, whose sum is a short root,
and a long root, respectively. Finally, (R6) is the Chevalley commutator
formula for the case of a long root and a short root, whose sum is a root.
Observe that any two long roots are either opposite, or orthogonal, so
that their sum is never a root.

\subsection{}
There is a standard embedding
$$ G(2n,A,\Lambda)\map G\Big(2(n+1),A,\Lambda\Big),\quad
\left(\begin{array}{cc}a&b\\c&d\\ \end{array}\right)
\mapsto
\left(\begin{array}{cccc}
a&0&0&b\\
0&1&0&0\\
0&0&1&0\\
c&0&0&d\\
\end{array}\right)$$
\noindent called the stabilisation map. In fact some other sources
may give a slightly different picture of the right hand side. How the right hand side  exactly looks, depends on the ordered basis we
choose. With the ordered basis which is used in~\cite{B2}, the standard embedding has the form
$$ G(2n,R,\Lambda) \map G\Big(2(n+1),R,\Lambda\Big),\quad
 \left(\begin{array}{cc}
  a & b \\
  c & d \\
\end{array}\right)
\mapsto \left(
\begin{array}{cccc}
  a & 0 & b& 0\\
  0 & 1 & 0& 0\\
  c & 0 & d& 0\\
  0 & 0 & 0& 1\\
\end{array}\right).$$

Define
$$G\formr=\varinjlim_n G(2n,R,\Lambda)$$ and $$E\formr=
\varinjlim_n E(2n,R,\Lambda).$$ The groups $G(I,\gam)$ and
$E(I,\gam)$ are defined similarly.

One could ask, whether one can carry over Bass' results discussed in~\S\ref{sec22} to the unitary case? Bak, and in a slightly narrower situation, Vaserstein, established unitary versions of Whitehead's lemma, which
in particular implies the following result.
\begin{The}
Let $\formr$ be an arbitrary form ring, and $\formi$
be its form ideal, then
$$ E\formi=[E\formr,E\formi]=[G\formr,E\formi]=[G\formr,G\formi]. $$
\end{The}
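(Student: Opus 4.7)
The plan is to prove the chain of containments
\begin{equation*}
E\formi \subseteq [E\formr, E\formi] \subseteq [G\formr, E\formi] \subseteq [G\formr, G\formi] \subseteq E\formi,
\end{equation*}
exactly mirroring the strategy used for Lemma~\ref{whiteee} in the linear case. From this all four groups are equal. The two middle containments are tautological because elementary subgroups lie inside the ambient unitary groups, so only the outer two containments require work.

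For the first containment $E\formi \subseteq [E\formr, E\formi]$, I would express each generator of $E\formi$ as a commutator using the Steinberg relations (R1)--(R6) of \S\ref{elementary2}. In the stable group there are always indices available outside any finite set, so for a short-root generator $T_{ij}(\xi)$ with $\xi \in I$ I can pick $h \neq \pm i, \pm j$ and apply (R4) to write $T_{ij}(\xi) = [T_{ih}(1), T_{hj}(\xi)]$, which sits in $[E\formr, E\formi]$. For a long-root generator $T_{i,-i}(\alpha)$ with $\alpha \in \Gamma$, I would decompose $\alpha$ along the defining generators of the relative form parameter $\Gamma = \Gamma_{\min}(I) + \dots$: elements of the form $\xi - \lambda^{-\epsilon(i)}\bar\xi$ with $\xi \in I$ arise from (R5) as $[T_{ij}(1), T_{j,-i}(\xi)]$, while elements of the form $\xi\beta\bar\xi$ with $\xi \in I$, $\beta \in \Lambda$ arise from (R6) applied to $[T_{i,-i}(\beta), T_{-i,j}(\xi)]$ after absorbing the accompanying short-root factor into a conjugate by an element of $E\formr$. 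This covers all generators of $E\formi$.

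For the last containment $[G\formr, G\formi] \subseteq E\formi$, I would establish the unitary analogue of the stable Whitehead lemma (Lemma~\ref{Whitehead}): for $x, y \in \GU(2n,A,\Lambda)$ with $y \in G(I,\Gamma)$, the stabilised image of $[x,y]$ inside $\GU(2(n+k),A,\Lambda)$ lies in $E(2(n+k),I,\Gamma)$ for some $k$. Mimicking Bass's matrix calculation, one writes $y = e + q$ with $q$ having entries in $I$ (and the long-root diagonal entries of $q$ in $\Gamma$), and constructs an explicit product of ``hyperbolic triangular'' unitary matrices --- each a product of short and long root transvections of level $(I,\Gamma)$ --- that multiplies the stabilised commutator into a diagonal form already known to be elementary. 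Passing to the direct limit in $n$ and invoking Lemma~\ref{whiteee}'s style of telescoping yields $[x,y] \in E\formi$. The key auxiliary fact needed is the unitary analogue of Lemma~\ref{triang5}, namely that any block-triangular hyperbolic unitary matrix whose off-diagonal entries lie in $I$ and long-root diagonal entries lie in $\Gamma$ is a product of elementary unitary transvections of level $(I,\Gamma)$; this follows by successive elimination of off-diagonal entries using (R2), each such elimination forced by (R1) to be paired with its involution-dual.

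The main obstacle is the last step. In the linear case, row operations can be applied independently, but in the unitary setting every row operation must be paired with the dual column operation imposed by the symmetry $\lambda\bar{\phantom{x}}$, so short-root and long-root transvections must be interleaved carefully, and one has to track the long-root ``diagonal'' corrections lying in $\Gamma$ (not merely in $I$) at every stage of the reduction. Once this triangular reduction lemma is secured, the rest of the proof is a routine translation of Bass's $2\times 2$-block identity for $\alpha\tau = \beta$ into the hyperbolic block shape of \S\ref{unitary}.
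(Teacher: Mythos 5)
The paper itself offers no proof of this theorem --- it states the result and cites Bak's Thesis and, in a narrower situation, Vaserstein --- so there is nothing in the paper to compare your argument against; but both outer containments in your chain contain genuine gaps.

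For the first containment $E\formi \subseteq [E\formr, E\formi]$, the short-root argument via (R4) is correct, but the long-root argument only produces the generators $\xi - \lambda^{-\ep(i)}\bar\xi$ and $\bar\xi\beta\xi$ from (R5) and (R6), which is exactly $\Gamma_{\min}(I)$; since $\Gamma$ is an arbitrary relative form parameter and may be strictly larger, the claim ``this covers all generators of $E\formi$'' is false. The fix is cheap: for any admissible long-root parameter $\alpha$ of level $\Gamma$, take $\xi=1$ in (R6), so that $[T_{i,-i}(\alpha),T_{-i,j}(1)]=T_{ij}(\alpha)T_{-j,j}(-\lambda^{(\ep(j)-\ep(i))/2}\alpha)$ lies in $[E\formi,E\formr]$; since $\alpha\in I$, the short-root factor $T_{ij}(\alpha)$ already lies in $[E\formr,E\formi]$ by the (R4) argument, hence so does the long-root factor, and up to the harmless rescaling by the central unit $\lambda$ this realises the missing generator.

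The last containment $[G\formr,G\formi]\subseteq E\formi$ is the crux, and your sketch does not establish it. The plan of transplanting Lemma~\ref{Whitehead} hits a structural obstruction: the factor $\sigma=\begin{pmatrix}1&0\\x&1\end{pmatrix}$ with $x\in\GL(n,A)$ arbitrary has no unitary analogue. Doubling the hyperbolic space as $V_1\perp V_2$ to stack $x$ and $y$ makes the stabiliser of $V_1$ in $\GU(V_1\perp V_2)$ the reductive subgroup $\GU(V_1)\times\GU(V_2)$, which has trivial unipotent radical, so there is no block-triangular unitary matrix interpolating $\diag(yx,1)$ and $\diag(x,y)$; and working in the Lagrangian block decomposition of $\GU(4n)$ does not help either, because the stable images of $x$, $y$ and $[x,y]$ are not block-diagonal there, so Bass's explicit factorisation of $\alpha^{-1}\beta$ has no direct analogue. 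The ``triangular reduction lemma'' you invoke is likewise not a routine analogue of Lemma~\ref{triang5}: the entries of a unitary unipotent are coupled by the $\lambda$-hermitian and $\Lambda$-quadratic constraints, so eliminations are never independent, and your sketch gives no argument that the diagonal long-root parameters stay in $\Gamma$ (not merely in $I$) through the reduction. The classical proof (Bak's Thesis, Bass's unitary $K$-theory) avoids all of this: it works with the hyperbolic map $H\colon\GL(n,A)\to\GU(2n,A,\Lambda)$ and a unitary version of the $2\times 2$ Whitehead factorisation of $H(u)$, thereby reducing the unitary Whitehead lemma to the linear one rather than re-deriving it by block-triangular row reduction.
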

Now, similarly to the linear case, this allows one to introduce the
{\it unitary $K$-functor}
$$ K_1\formi=G\formi/E\formi. $$
\noindent
A version of unitary $K$-theory modelled upon the unitary
groups has been systematically developed by Bass in~\cite{bass73}. Note that, in some literature, the notation $\KU$ is used to denote the unitary $K$-groups. In other literature, the functor above is called a {\it quadratic $K$-functor} and the notation $\operatorname{KQ}$ is used. (For a lexicon of notations, see~\cite[\S14]{B2}).

As another piece of structure, parallel to the linear situation, let
us mention the description of normal subgroups in $G\formr$, that
holds over an arbitrary ring.
\begin{The}\label{GU-sandwich}
Let $\formr$ be an arbitrary form ring. If $H\le G\formr$
is a subgroup normalised by $E\formr$, then for a unique form
ideal $\formi$, one has
$$ E\formi\le H\le G\formi $$
\noindent
Conversely, these inclusions guarantee that $H$ is automatically
normal in $G\formr$.
\end{The}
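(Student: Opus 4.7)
The plan is to mimic the classical sandwich classification for $\GL(A)$ due to Bass~\cite{14}, replacing the elementary relations (E1)--(E3) by the Steinberg relations (R1)--(R6) for the unitary group, and treating short-root and long-root generators simultaneously. Throughout the argument the stable setting supplies an extra hyperbolic plane on demand, which is the unitary counterpart of the ``enough room'' that made the proof of Lemma~\ref{whiteee} work in the linear stable case.

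The first step is to extract a form ideal $(I,\Gamma)$ from $H$. Let $I$ be the two-sided ideal of $A$ generated by all off-diagonal entries and all diagonal defects $h_{ii}-1$ of elements $h\in H$, regarded inside some $G(2n,A,\Lambda)$. Invariance $\bar I=I$ follows because conjugation by an elementary unitary element of the form $T_{ij}(1)T_{-j,-i}(\ld)T_{ij}(1)$ permutes a matrix position $(i,j)$ with $(-j,-i)$ up to involution and sign, and $H$ is normalised by $E(A,\Lambda)$. For the form parameter set
\[
\Gamma=\Gamma_{\min}(I)+\bigl\langle\alpha\in\Lambda\mid T_{i,-i}(\alpha)\in H\text{ for some }i\bigr\rangle.
\]
One then has to check $\Gamma_{\min}(I)\le\Gamma\le\Gamma_{\max}(I)$ and $\alpha\Gamma\bar\alpha\le\Gamma$ for every $\alpha\in A$, which is forced by (R6) and by conjugating long-root generators lying in $H$ by short-root elements of $E(A,\Lambda)$.

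Next I would prove the lower inclusion $E(I,\Gamma)\le H$. By (R4) a short-root transvection $T_{ih}(\xi\zeta)$ with $\xi\zeta\in I$ is a commutator $[T_{ij}(\xi),T_{jh}(\zeta)]$; choosing an auxiliary index $j$ (available stably) and taking one of the two factors from $H$, this realises every short-root generator of $E(I,\Gamma)$ as a commutator between an element of $E(A,\Lambda)$ and an element of $H$, hence inside $H$. By (R6), long-root generators $T_{i,-i}(\alpha)$ with $\alpha\in\Gamma$ are similarly produced from commutators between long- and short-root transvections. The normal closure that defines $E(I,\Gamma)$ is then absorbed by the fact that $H$ itself is normalised by $E(A,\Lambda)$.

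For the upper inclusion $H\le G(I,\Gamma)$, take $h\in H$, stabilise to include a fresh hyperbolic plane spanned by $e_k,e_{-k}$, and compute the commutators $[T_{ij}(1),h]$ and $[T_{i,-i}(1),h]$ for indices touching this new plane. Each such commutator has a tightly restricted shape, and unwinding the matrix entries shows that all off-diagonal entries of $h-1$ sit in $I$ by construction and that the long-root ``diagonal defects'' of $h$ lie in $\Gamma$ (and not merely in $\Gamma_{\max}(I)$); thus $h\in G(I,\Gamma)$. Uniqueness is then automatic: any form ideal $(I_1,\Gamma_1)$ with $E(I_1,\Gamma_1)\le H\le G(I_1,\Gamma_1)$ must satisfy $I_1\supseteq$\,(ideal generated by entries of $H-1$)\,$=I$ and $I_1\subseteq I$ because every $T_{ij}(\xi)\in E(I_1,\Gamma_1)\le H$ contributes $\xi$ to $I$, and similarly $\Gamma_1=\Gamma$. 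Finally, the converse is an immediate consequence of the stable unitary Whitehead lemma stated just above: for any $H$ with $E(I,\Gamma)\le H\le G(I,\Gamma)$, any $x\in E(A,\Lambda)$ and any $h\in H$,
\[
xhx^{-1}=[x,h]\,h\in\bigl[E(A,\Lambda),G(I,\Gamma)\bigr]\,H=E(I,\Gamma)\,H\le H.
\]

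The main obstacle is the upper inclusion: one has to mix $h$ with enough elementary transvections to pin all off-diagonal and long-root entries of $h$ into the correct parts of the form ideal, and in particular one must show that the diagonal defects land in $\Gamma$ rather than only in $\Lambda\cap I=\Gamma_{\max}(I)$. The stable hypothesis is essential here, since it provides the auxiliary indices required in (R4)--(R6) and removes any room obstruction; the non-stable analogue is precisely what demands the localisation and quasi-finite machinery developed in the rest of the paper.
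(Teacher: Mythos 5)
The paper does not actually prove Theorem~\ref{GU-sandwich}; it is stated as a known structure result (due to Bak, Vaserstein, et al.) parallel to the linear sandwich classification, so there is no ``paper proof'' to compare against. Your general strategy --- extract a candidate form ideal from $H$, prove the two inclusions by commutator manipulations with elementary transvections at the stable level, and deduce the converse from the unitary Whitehead lemma --- is indeed the standard route, and in broad outline it is the right one.

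The genuine gap is in the upper inclusion, and it is precisely where the form--ring setting differs from $\GL$. You define $I$ as the ideal generated by the entries of $h-1$, so $H\le\GL(A,I)$ is automatic; but $\GU(I,\Gamma)\subsetneq\GU(A,\Lambda)\cap\GL(A,I)$, because membership in $\GU(I,\Gamma)$ additionally requires the quadratic defects of $h$ to lie in the relative form parameter $\Gamma$, not merely in $\Gamma_{\max}(I)=\Lambda\cap I$. Your $\Gamma$ is generated only by those $\alpha$ for which an actual long-root transvection $T_{i,-i}(\alpha)$ already sits in $H$, so there is no reason a priori that the quadratic defect of a generic $h\in H$ lands there; the sentence claiming this follows ``by unwinding the matrix entries'' is asserting the hardest step of the whole theorem without argument. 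Relatedly, the verification that your $\Gamma$ is closed under $\alpha\mapsto\beta\alpha\bar\beta$ is not immediate from (R6): conjugating $T_{i,-i}(\alpha)\in H$ by $T_{ji}(\beta)$ produces a \emph{product} of transvections in $H$, from which the long-root factor $T_{j,-j}(\pm\beta\alpha\bar\beta)$ can be isolated only after you already know that the short-root factors $T_{ji}(\beta\alpha)$, $\beta\alpha\in I$, lie in $H$; that is a piece of the lower inclusion, so the order of quantifiers in your argument needs to be laid out carefully to avoid circularity. Finally, a small but real slip in the converse: to conclude that $H$ is normal in $G\formr$ you must take $x\in G\formr$, not $x\in E\formr$; the Whitehead identity $[G\formr,G\formi]=E\formi$ which you invoke does supply exactly this, so the fix is to replace $E\formr$ by $G\formr$ in that line.
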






\section{Towards non-stable $K$-theory}\label{hyhyhy}

One of the major contributions toward non-stable $K$-theory of rings is the work of Suslin~\cite{90,TUL}.  He proved that if $A$ is a  module finite ring, namely, a ring
that is finitely generated as a module over its center, and $n\ge 3$, 
then $E(n, A)$ is a normal subgroup of $\GL(n,A)$. Therefore the non-stable $K_1$-group, i.e., $\GL(n,A)/E(n,A)$,  is well-defined.  
Later, Borevich and Vavilov \cite{borvav} and Vaserstein~\cite{98}, building on Suslin's method,    
established the {\it standard commutator formula}:

\begin{The}[Suslin, Borevich-Vavilov, Vaserstein]\label{standard}
Let $A$ be a module finite ring, $I$ a two-sided ideal of $A$ 
 and $n\ge 3$. Then $E(n,A,I)$ is normal in $\GL(n,A)$, i.e., 
 \[
\big[E(n,A,I),\GL(n,A)\big]=E(n,A,I).
\] 
 Furthermore,
\[
\big[E(n,A),\GL(n,A,I)\big]=E(n,A,I).
\] 
\end{The}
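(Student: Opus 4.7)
The plan is to first establish normality $[E(n,A,I),\GL(n,A)] = E(n,A,I)$ and then deduce the commutator formula $[E(n,A),\GL(n,A,I)] = E(n,A,I)$ by an analogous argument. The inclusion $E(n,A,I) \subseteq [E(n,A,I),\GL(n,A)]$ is straightforward from Lemma~\ref{Engenerator}: each generator $z_{ij}(a,\alpha) = {}^{e_{ji}(a)}e_{ij}(\alpha)$ with $\alpha \in I$ equals $[e_{ji}(a),e_{ij}(\alpha)] \cdot e_{ij}(\alpha)$, and since $n \ge 3$ we may pick $k \ne i,j$ and write $e_{ij}(\alpha) = [e_{ik}(1),e_{kj}(\alpha)]$ by (E3); both factors then lie in $[E(n,A,I),\GL(n,A)]$ after applying (C6).

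The substantial direction $[E(n,A,I),\GL(n,A)] \subseteq E(n,A,I)$ reduces to showing that $\GL(n,A)$ normalises $E(n,A,I)$, which in turn reduces to verifying that $g e_{ij}(\alpha) g^{-1} \in E(n,A,I)$ for every $g \in \GL(n,A)$ and every $\alpha \in I$. Direct matrix computation yields
\[ g e_{ij}(\alpha) g^{-1} = I_n + c_i \alpha r_j, \]
where $c_i$ is the $i$-th column of $g$ and $r_j$ is the $j$-th row of $g^{-1}$. Note that $r_j c_i = 0$ since $i \ne j$, and $c_i$ is unimodular because $g$ is invertible (the $i$-th row of $g^{-1}$ provides a left-inverse row).

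The crux is Suslin's factorisation lemma: for a module finite ring $A$, $n \ge 3$, a unimodular column $u \in A^n$, a row $v \in A^n$ satisfying $vu = 0$, and $\alpha \in I$, the matrix $I_n + u\alpha v$ belongs to $E(n,A,I)$. Module finiteness enters decisively here. Pick a commutative subring $R$ over which $A$ is generated by $b_1, \ldots, b_m$ as a module, expand the entries of $u$, $v$ and $\alpha$ in the $b_k$, and exploit the commutativity of $R$ together with the Steinberg relations (E1)--(E3) to factor $I_n + u\alpha v$ as a product of conjugates $z_{ij}(a,\beta)$ with $\beta \in I$. The hypothesis $n \ge 3$ supplies the auxiliary third index indispensable for splitting the rank-one perturbation $u\alpha v$ into tractable elementary pieces.

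Once normality is in hand, the commutator formula follows from the same circle of ideas. For $e_{ij}(\xi) \in E(n,A)$ and $g \in \GL(n,A,I)$, write $[e_{ij}(\xi),g] = e_{ij}(\xi) \cdot (g e_{ij}(-\xi) g^{-1}) = e_{ij}(\xi)(I_n - c_i \xi r_j)$; since $g \equiv I_n \pmod{I}$, the column $c_i - e_i$ and row $r_j - e_j^{\mathrm{t}}$ have all entries in $I$. Expanding, the ``diagonal'' contribution $e_{ij}(\xi) \cdot e_{ij}(-\xi)$ cancels against the identity, and what remains lands in $E(n,A,I)$ by a relative version of Suslin's lemma applied with the roles of the level-$I$ data played now by $c_i - e_i$ and $r_j - e_j^{\mathrm{t}}$ rather than by $\alpha$. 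The main obstacle throughout is the module finite Suslin factorisation itself: without that hypothesis Gerasimov's examples show normality fails spectacularly, and the explicit combinatorial decomposition of $I_n + u\alpha v$ as a product of generators $z_{ij}(a,\beta)$ of $E(n,A,I)$, while simultaneously tracking which factors carry level $I$, is the technically demanding core of the argument.
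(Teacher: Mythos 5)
Your treatment of the normality half, $[E(n,A,I),\GL(n,A)] = E(n,A,I)$, is correct: reducing $g\,e_{ij}(\alpha)\,g^{-1}$ to $I_n + c_i\alpha r_j$ with $c_i$ unimodular, $r_j c_i = 0$, $\alpha \in I$, and then appealing to Suslin's relative factorization lemma for module finite rings is exactly the standard route; the paper cites \cite{90}, \cite{TUL}, \cite{borvav}, \cite{98} for this without reproducing a proof.

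However, your argument for the second equality $[E(n,A),\GL(n,A,I)] = E(n,A,I)$ has a genuine gap. You assert that after expanding, the ``diagonal'' factor $e_{ij}(\xi)e_{ij}(-\xi)$ cancels against the identity ``and what remains lands in $E(n,A,I)$ by a relative version of Suslin's lemma.'' But the additive decomposition $c_i\xi r_j = e_i\xi e_j^t + (\text{terms with entries in }I)$ gives no corresponding \emph{multiplicative} split of $e_{ij}(\xi)(I_n - c_i\xi r_j)$. Carrying out the product with $c_i = e_i + a$ and $r_j = e_j^t + v$ (where $a,v$ have entries in $I$, $a_j = g_{ji}$, and $\mu := (g^{-1})_{ji}$), one gets
\[ [e_{ij}(\xi),g] \;=\; I_n - e_i\bigl(\xi v + \xi a_j\xi r_j\bigr) - a\xi\, r_j, \]
which is the identity minus a sum of \emph{two} rank-one matrices with distinct rows, not a single rank-one perturbation of Suslin type $I_n + u\alpha v'$. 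Moreover, the ``row times column'' quantities for the individual pieces, $(\xi v + \xi a_j\xi r_j)e_i = \xi\mu + \xi a_j\xi\mu$ and $r_j(a\xi) = -\mu\xi$, are nonzero in general, so neither factor on its own satisfies the orthogonality hypothesis $v'u = 0$ that Suslin's lemma requires. The second commutator formula is true, but extracting it from normality demands a more delicate multiplicative factorization into a product of several Suslin-type and elementary matrices (or a reduction through commutator identities), which is precisely the technical content of the Borevich--Vavilov \cite{borvav} and Vaserstein \cite{98} arguments the paper cites; the ``plug different level data into Suslin's lemma'' step as you describe it does not go through.
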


One natural question that arises here is whether one has a ``finer'' mixed commutator formula involving two ideals. In fact this had already been established by Bass for general linear groups of degrees sufficiently larger than the stable rank, when he proved his celebrated classification of subgroups of $\GL_n$ normalized by $E_n$ (see \cite[Theorem~4.2]{14}):

\begin{The}[Bass]\label{hhggff}
Let $A$ be a ring, $I,J$ two-sided ideals of $A$ and $n \geq \max(\sr(A)+1,3)$. Then 
\[ \big [E(n,A,I),\GL(n,A,J) \big ]=\big [E(n,A,I),E(n,A,J) \big ]. \]
\end{The}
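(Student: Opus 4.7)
The plan is to prove only the nontrivial inclusion $[E(n,A,I),\GL(n,A,J)]\subseteq N$, where $N:=[E(n,A,I),E(n,A,J)]$, the reverse being automatic. In the stable range $n\ge\sr(A)+1$ I will use Bass's earlier normality theorem, which gives $E(n,A,I),E(n,A,J)\trianglelefteq\GL(n,A)$, and in turn $N\trianglelefteq\GL(n,A)$ (conjugating a generator $[a,b]$ by $g\in\GL(n,A)$ yields $[\,{}^ga,{}^gb\,]$, another generator of $N$). With $N$ normal, a routine verification using (C1), (C2) shows that
\[
H:=\{y\in\GL(n,A):[y,g]\in N\text{ for every }g\in\GL(n,A,J)\}
\]
is a normal subgroup of $\GL(n,A)$; since in the stable range $E(n,A,I)$ is also the $\GL(n,A)$-normal closure of the elementary generators $e_{ij}(\alpha)$ ($i\ne j$, $\alpha\in I$), it will suffice to show $e_{ij}(\alpha)\in H$. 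For this I would fix such a generator together with arbitrary $g\in\GL(n,A,J)$, apply surjective stability (Lemma~\ref{BassBV}(1), valid at level $n-1\ge\sr(A)$) to write $g=g_0u$ with $g_0=\operatorname{diag}(h,1)$, $h\in\GL(n-1,A,J)$, and $u\in E(n,A,J)$, and then use (C1) to split $[e_{ij}(\alpha),g]=[e_{ij}(\alpha),g_0]\cdot{}^{g_0}[e_{ij}(\alpha),u]$. The second factor is in ${}^{g_0}N=N$, so the whole problem reduces to showing $[e_{ij}(\alpha),g_0]\in N$ for every embedded $g_0$ of this form.

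When $j=n$ (with $i=n$ symmetric) I would proceed by a direct matrix computation. The block form of $g_0$ together with $h\equiv 1\pmod J$ gives
\[
[e_{in}(\alpha),g_0]=e_{in}\!\bigl(\alpha(1-h_{ii})\bigr)\prod_{k\ne i,\,k<n}e_{kn}(-\alpha h_{ki}),
\]
whose factors all have the form $e_{pn}(\alpha\beta)$ with $\alpha\in I$, $\beta\in J$. Since $n\ge 3$ I can pick $m\notin\{p,n\}$ and apply the Steinberg relation (E3) to rewrite $e_{pn}(\alpha\beta)=[e_{pm}(\alpha),e_{mn}(\beta)]\in N$, putting the entire product in $N$. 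The identical block calculation run with an arbitrary ring element $s\in A$ instead of $\alpha$ also gives $[e_{nj}(s),g_0]\in E(n,A,J)$; this auxiliary fact is what I will need in the hard case.

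The remaining case $i,j<n$ is where I expect the main difficulty: now $e_{ij}(\alpha)$ lives entirely in the $(n-1)$-block, where no stability hypothesis is available, so a direct computation can at best place $[e_{ij}(\alpha),g_0]$ inside $\GL(n-1,A,I\cdot J)$, without the finer commutator description one needs. The remedy is to borrow the free index $n$: by (E3), $e_{ij}(\alpha)=[e_{in}(\alpha),e_{nj}(1)]$, whence $[e_{ij}(\alpha),g_0]=\bigl[[e_{in}(\alpha),e_{nj}(1)],g_0\bigr]$. I would then apply the Hall--Witt containment (C7) with $F=\langle e_{in}(\alpha)\rangle$, $H_0=\langle e_{nj}(1)\rangle$, $K=\langle g_0\rangle$, yielding $[[F,H_0],K]\le[[F,K],H_0]\cdot[F,[H_0,K]]$. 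The previous paragraph gives $[F,K]\subseteq N$ and $[H_0,K]\subseteq E(n,A,J)$; normality of $N$ in $\GL(n,A)$ then forces $[[F,K],H_0]\subseteq N$, while $[F,[H_0,K]]\subseteq[E(n,A,I),E(n,A,J)]=N$ is immediate. Combining, $[e_{ij}(\alpha),g_0]\in N$, which closes the argument. The hardest part is precisely this last case: only the interplay between stability (to supply the free $n$-th index), the Steinberg relation (E3), and the Hall--Witt identity (C7) makes the commutator tractable.
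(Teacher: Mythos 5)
Your proof is correct but takes a genuinely different route from the paper. The paper does not prove Theorem~\ref{hhggff} on its own; it derives it as an immediate corollary of the stronger Mason--Stothers formula (Theorem~\ref{mat73}), whose proof stabilizes from degree $n$ to $2n$ via the Whitehead lemma (Lemma~\ref{MS-1}), then passes to the quotient $K_1(n,A,IJ+JI)$ and invokes \emph{injective} stability of $K_1$ (Lemma~\ref{BassBV}(2), packaged as Lemma~\ref{MS-2}) together with the generation result Theorem~\ref{gen-2}. Your argument proves Bass's formula directly: it uses \emph{surjective} stability (Lemma~\ref{BassBV}(1) one step down) to peel off an elementary piece and reduce to an embedded block $g_0=\diag(h,1)$, disposes of the boundary cases $i=n$ or $j=n$ by an explicit block computation that lands in $E(n,JI)\subseteq N$, and then resolves the genuinely hard case $i,j<n$ by writing $e_{ij}(\alpha)=[e_{in}(\alpha),e_{nj}(1)]$ and applying the Hall--Witt identity against the auxiliary fact $[e_{nj}(s),g_0]\in E(n,A,J)$. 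What the paper's route buys is the stronger $\GL$--$\GL$ collapse in one stroke; what your route buys is a more elementary and self-contained argument (no injective stability, no appeal to the structure of relative commutator subgroups), and one that transparently holds over an arbitrary associative ring, which is exactly the generality at which Theorem~\ref{hhggff} is stated.

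Two small points. In a noncommutative ring the block computation places $\alpha$ on the right: $[e_{in}(\alpha),g_0]=e_{in}((1-h_{ii})\alpha)\prod_{k\neq i,\,k<n}e_{kn}(-h_{ki}\alpha)$, so the factors are $e_{pn}(\beta\alpha)$ with $\beta\in J$, $\alpha\in I$; one should then use $e_{pn}(\beta\alpha)=[e_{pm}(\beta),e_{mn}(\alpha)]$, which still lies in $N=[E(n,A,J),E(n,A,I)]$ by (C6), so nothing is lost. And the appeal to (C7) with the cyclic subgroups $F,H_0,K$ is informal, since that inclusion really requires normality on the right-hand side; the rigorous version of your step is the Three Subgroups Lemma applied to the normal subgroup $N$: since both $[[F,K],H_0]$ and $[F,[H_0,K]]$ lie inside $N\trianglelefteq\GL(n,A)$, the Hall--Witt identity (C3) applied to $a=e_{in}(\alpha)$, $b=e_{nj}(1)$, $c=g_0$ forces $[[a,b],c]\in N$, exactly as you intend.
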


Later, Mason and Stothers, building on Bass' result, proved (\cite[Theorem~3.6, Corollary~3.9]{72}, and~\cite[Theorem~1.3]{70}):

\begin{The}[Mason-Stothers] \label{mat73}
Let $A$ be a ring, $I,J$ two-sided ideals of $A$ and $n \geq \max(\sr(A)+1,3)$. Then 
\[ \big [\GL(n,A,I),\GL(n,A,J) \big ]=\big [E(n,A,I),E(n,A,J) \big ]. \]
\end{The}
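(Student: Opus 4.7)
The inclusion $\supseteq$ is immediate. For the reverse, set $H := [E(n,A,I), E(n,A,J)]$. Applying Theorem~\ref{hhggff} to the pairs $(I,J)$ and $(J,I)$, combined with $[X,Y]=[Y,X]$ at the level of subgroup commutators, gives
\[
[E(n,A,I), \GL(n,A,J)] \;=\; H \;=\; [\GL(n,A,I), E(n,A,J)].
\]
In particular $H$ contains all commutators of $E(n,A,I)$ with $\GL(n,A)$ (take $J=A$), so $E(n,A,I)$ is normal in $\GL(n,A)$ and, by symmetry, so is $E(n,A,J)$; hence $H$ itself is normal in $\GL(n,A)$. Thus in $\GL(n,A)/H$ the image of $E(n,A,I)$ commutes elementwise with that of $\GL(n,A,J)$, and symmetrically for $\GL(n,A,I)$ and $E(n,A,J)$.

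Given $x \in \GL(n,A,I)$ and $y \in \GL(n,A,J)$, I apply surjective stability (Lemma~\ref{BassBV}(1), valid because $n-1 \ge \sr(A)$) to decompose $x = x'\epsilon$ and $y = y'\eta$, with $x' \in \GL(n-1,A,I)$ and $y' \in \GL(n-1,A,J)$ both stabilised in $\GL(n,A)$ (a $1$ in the $(n,n)$ position), $\epsilon \in E(n,A,I)$, and $\eta \in E(n,A,J)$. Expanding $[x,y]$ via (C1) and (C2) yields
\[
[x,y] \;=\; {}^{x'}[\epsilon,y'] \cdot [x',y'] \cdot {}^{y'}\!\bigl({}^{x'}[\epsilon,\eta] \cdot [x',\eta]\bigr),
\]
and every factor other than $[x',y']$ involves at least one elementary entry and therefore lies in $H$ by the previous paragraph (with normality of $H$ absorbing the outer conjugations). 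The problem reduces to showing $[x',y'] \in H$.

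A direct calculation in $A/(IJ+JI)$ shows that $\bar{x'}$ and $\bar{y'}$ commute there, because $uv$ and $vu$ lie in $IJ+JI$ for $u\in I, v\in J$; hence $[x',y'] \in \GL(n-1,A,IJ+JI) \subseteq \GL(n,A,IJ+JI)$. Moreover $E(n,A,IJ+JI) \subseteq H$: by relation (E3), each generator $e_{ij}(\alpha\beta)$ with $\alpha \in I, \beta \in J$ (or $\alpha\in J,\beta\in I$) equals $[e_{ik}(\alpha), e_{kj}(\beta)] \in [E(n,A,I), E(n,A,J)]$ for a third index $k$ (available since $n \ge 3$), and normality of $H$ in $\GL(n,A)$ absorbs the conjugations by $E(n,A)$ needed to generate the relative subgroup. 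So it suffices to establish $[x',y'] \in E(n,A,IJ+JI)$.

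I expect this last step to be the main obstacle. My plan is a two-ideal refinement of the Whitehead manipulation of Lemma~\ref{Whitehead}: mimicking its triangular factorisation (recognised as elementary via Lemma~\ref{triang5}) for the mixed pair $(x',y')$, a careful audit of the auxiliary factors $\tau_i$ reveals that each has entries either in $I$ or in $J$ according to whether $p := x'-1$ or $q := y'-1$ contributes; performing this in parallel with its dual (swapping the roles of $x'$ and $y'$) places $\operatorname{diag}([x',y'],1) \in \GL(2n,A)$ inside $E(2n,A,I)\cdot E(2n,A,J) \cap \GL(2n,A,IJ+JI)$ and, with further tracking, exhibits it as a commutator from $[E(2n,A,I), E(2n,A,J)]$. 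Iterated injective stability (Lemma~\ref{BassBV}(2); valid at every intermediate level between $2n$ and $n$ because $n \ge \sr(A)+1$) then descends the conclusion to $[x',y'] \in E(n,A,IJ+JI) \subseteq H$. The crux is the bookkeeping of which auxiliary factors belong to which of the two ideals: the single-ideal Whitehead lemma can concentrate all triangular entries in one ideal, whereas here the slack must be distributed between $I$ and $J$ so that the final product is visibly a commutator at level $IJ+JI$ rather than merely a product at level $I+J$; the stability hypothesis $n \ge \sr(A)+1$ is used once in Step 2 to stabilise $x,y$ and again in the descent from $E(2n,\cdot)$ back to $E(n,\cdot)$.
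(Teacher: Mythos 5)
Your plan breaks at the crux.  The pivotal claim---that $[x',y'] \in E(n,A,IJ+JI)$ for $x' \in \GL(n-1,A,I)$ and $y' \in \GL(n-1,A,J)$---is false in general, so no ``two-ideal refinement of the Whitehead manipulation'' can establish it.  The Bass--Milnor--Serre computation recalled in \S\ref{countexam} for $A=\Int[i]$, $I=J=\prim^3$ gives $[E(n,A,\prim^3),E(n,A,\prim^3)] \supsetneq E(n,A,\prim^6)$; since the left side is generated by commutators and $E(n,A,\prim^6)$ is normal, some single commutator $[x',y']$ with $x',y'\in\SL(\cdot,\prim^3)$ lies outside $E(\cdot,\prim^6)$.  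Taking $n$ large enough to stabilise such $x',y'$ to degree $n-1$ produces a direct counterexample to your step 5.  The overshoot is the mistake: you need $[x',y']\in H=[E(n,A,I),E(n,A,J)]$, which can be strictly larger than $E(n,A,IJ+JI)$, and you cannot afford to force membership in the smaller group.

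The descent mechanism you invoke also does not do what you want.  Lemma~\ref{BassBV}(2) is single-ideal: to conclude $[x',y']\in E(n,A,IJ+JI)$ you would first need $\operatorname{diag}([x',y'],1)\in E(2n,A,IJ+JI)$, whereas the Whitehead manipulation with $y'=1+q$, $q\in M(n-1,J)$ only produces entries of level $J$ (or symmetrically $I$), never the product level $IJ+JI$; and membership in $[E(2n,A,I),E(2n,A,J)]$ is not something to which Lemma~\ref{BassBV}(2) applies at all.  What is actually needed is a \emph{mixed-commutator} injective stability, and this is exactly what the paper's Lemma~\ref{MS-2} supplies: the stability map $[E(n,A,I),E(n,A,J)]/E(n,A,IJ+JI)\to[E(n+1,A,I),E(n+1,A,J)]/E(n+1,A,IJ+JI)$ is an isomorphism for $n\ge\max(\sr(A)+1,3)$.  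Its proof leans on Theorem~\ref{gen-2} (that $[E(n,A,I),E(n,A,J)]$ is generated by elements from fundamental $\GL_2$'s as a normal subgroup) and on centrality of $K_1(n,A,IJ+JI)$---substantial inputs that are not ``iterated injective stability.''  The paper's actual route avoids your degree-$(n-1)$ decomposition entirely: Lemma~\ref{MS-1} puts $[\GL(n,A,I),\GL(n,A,J)]$ inside $[\GL(n,A,I),E(2n,A,J)]$, Bass's Theorem~\ref{hhggff} at degree $2n$ converts that to $[E(2n,A,I),E(2n,A,J)]$, and Lemma~\ref{MS-2} together with the containment $[\GL(n,A,I),\GL(n,A,J)]\le\GL(n,A,IJ+JI)$ (Lemma~\ref{Lem:Habdank}) pulls the result back to degree $n$.
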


As the Bass Theorem~\ref{hhggff} and the Mason and Stothers 
Theorem~\ref{mat73} are the starting point of this paper, below we present a new 
proof of Theorem~\ref{mat73}. 

\begin{Lem}\label{MS-1}
For any $n\ge 1$ one has
$$ [\GL(n,A,I),\GL(n,A,J)]\le[\GL(n,A,I),E(2n,A,J)]. $$
\end{Lem}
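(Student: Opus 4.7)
The plan is to reduce the commutator $[x,y]$ with $x\in\GL(n,A,I)$, $y\in\GL(n,A,J)$, to a commutator in $\GL(2n,A)$ where one factor can be arranged, via Whitehead's trick, to lie in $E(2n,A,J)$. First I would pass to $\GL(2n,A)$ through the stabilisation embedding (\ref{pfsy12}), writing $\tilde x=\diag(x,1)$ and $\tilde y=\diag(y,1)$. Since $\tilde x$ acts trivially on the last $n$ coordinates and $\diag(1,y)$ acts trivially on the first $n$, these two matrices commute.

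The key observation is the factorisation
\[
\tilde y=\diag(y,1)=\diag(y,y^{-1})\cdot\diag(1,y).
\]
By Lemma~\ref{Whitehead} applied to the ideal $J$ (i.e.\ the diagonal Whitehead identity), the first factor $\diag(y,y^{-1})$ lies in $E(2n,A,J)$. The second factor $\diag(1,y)$ commutes with $\tilde x$, so applying the commutator identity (C1) gives
\[
[\tilde x,\tilde y]=[\tilde x,\diag(y,y^{-1})\cdot\diag(1,y)]=[\tilde x,\diag(y,y^{-1})]\cdot{}^{\diag(y,y^{-1})}[\tilde x,\diag(1,y)]=[\tilde x,\diag(y,y^{-1})],
\]
since the final commutator is trivial.

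On the other hand, the stabilisation is a group homomorphism, so $[\tilde x,\tilde y]$ is precisely the image of $[x,y]$ under the embedding $\GL(n,A)\hookrightarrow\GL(2n,A)$, and under this embedding $x$ is naturally regarded as an element of $\GL(n,A,I)\le\GL(2n,A)$. Thus
\[
[x,y]=[\tilde x,\diag(y,y^{-1})]\in\bigl[\GL(n,A,I),E(2n,A,J)\bigr],
\]
which is exactly the inclusion claimed. Taking the group generated by all such commutators $[x,y]$ gives the required containment $[\GL(n,A,I),\GL(n,A,J)]\le[\GL(n,A,I),E(2n,A,J)]$.

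There is no real obstacle here: the argument is essentially a bookkeeping application of the Whitehead lemma together with the trivial fact that stabilised copies of $x$ and $y$ sit in orthogonal blocks, so the only nontrivial input is Lemma~\ref{Whitehead}. The same technique is the standard device by which Bass originally passed from $[\GL,\GL]$ to elementary commutators in the stable range; the lemma records that even in the non-stable setting, a single stabilisation already suffices to replace the second factor by an elementary one.
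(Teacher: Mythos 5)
Your proof is correct and follows essentially the same route as the paper: stabilise to $\GL(2n,A)$, use the Whitehead congruence $\diag(y,1)\equiv\diag(1,y)\pmod{E(2n,A,J)}$, and exploit that $\diag(1,y)$ commutes with $\tilde x$. The only cosmetic difference is that you exhibit the elementary factor explicitly as $\diag(y,y^{-1})$ (which is the intermediate fact established inside the proof of Lemma~\ref{Whitehead}, rather than its stated conclusion about commutators), while the paper works with an abstract $z\in E(2n,A,J)$ and quotes normality; this makes your version marginally more concrete but mathematically the same.
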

\begin{proof}
Indeed, if $x\in\GL(n,A,I)$ and $y\in\GL(n,A,J)$. By Whitehead
lemma one has
$$ y=\begin{pmatrix} y&0\\ 0&e\\ \end{pmatrix}\equiv
\begin{pmatrix} e&0\\ 0&y\\ \end{pmatrix}\pmod{E(2n,A,J)}. $$
\noindent
Since $E(2n,A,J)$ is normal in $\GL(2n,A,J)$, one has
$$ y=\begin{pmatrix} y&0\\ 0&e\\ \end{pmatrix}=
\begin{pmatrix} e&0\\ 0&y\\ \end{pmatrix}z, $$
\noindent
for some $z\in E(2n,A,J)$. Since the first factor on
the right commutes with $x=\begin{pmatrix} x&0\\ 0&e\\ \end{pmatrix}$
one has $[x,y]=[x,z]$, as claimed.
\end{proof}
\begin{Lem}\label{MS-2}
For any $n\ge \max(\sr(A)+1,3)$ the stability map
\begin{multline*}
[E(n,A,I),E(n,A,J)]/E(n,A,IJ+JI)\map\\
[E(n+1,A,I),E(n+1,A,J)]/E(n+1,A,IJ+JI)
\end{multline*}
\noindent
is an isomorphism.
\end{Lem}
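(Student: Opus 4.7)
The plan is to verify injectivity and surjectivity of the stability map separately. Both quotients are well-defined groups: $E(n,A,IJ+JI)$ is normal in $E(n,A)$, and hence in $[E(n,A,I), E(n,A,J)]$, and the containment $E(n,A,IJ+JI) \subseteq [E(n,A,I), E(n,A,J)]$ holds because for $n \ge 3$ the Steinberg relation (E3) factors any generator $e_{ij}(ab)$ of $E(n, IJ)$ (with $a \in I, b \in J$) as $[e_{ik}(a), e_{kj}(b)]$, and the $E(n,A)$-normal closure of such commutators stays inside $[E(n,A,I), E(n,A,J)]$ since both $E(n,A,I)$ and $E(n,A,J)$ are $E(n,A)$-normal.

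For injectivity, the key elementary observation is that $[\GL(n, A, I), \GL(n, A, J)] \subseteq \GL(n, A, IJ+JI)$. Writing $x = 1+\alpha$, $y = 1+\beta$ with $\alpha \in M_n(I)$, $\beta \in M_n(J)$, the computation
\[
([x,y] - 1)\, yx \;=\; xy - yx \;=\; \alpha\beta - \beta\alpha
\]
gives $[x,y] - 1 = (\alpha\beta - \beta\alpha)(yx)^{-1}$, whose entries lie in the two-sided ideal $IJ+JI$. Hence any $z \in [E(n,A,I), E(n,A,J)]$ mapping to the identity on the right, i.e.\ $z \in E(n+1, A, IJ+JI)$, satisfies $z \in \GL(n, A, IJ+JI) \cap E(n+1, A, IJ+JI) = E(n, A, IJ+JI)$ by injective $K_1$-stability (Lemma~\ref{BassBV}(2), available since $n \ge \sr(A)+1$), so $z$ is trivial in the LHS.

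For surjectivity, the goal is to show every element of $[E(n+1, A, I), E(n+1, A, J)]$ is congruent modulo $E(n+1, A, IJ+JI)$ to an element of $[E(n, A, I), E(n, A, J)]$. Expand a generic commutator $[x,y]$ with $x \in E(n+1, A, I)$, $y \in E(n+1, A, J)$ into a product of commutators of normal generators ${}^{s}e_{ij}(a)$ and ${}^{t}e_{kl}(b)$ via identities (C1), (C2). Using surjective $K_1$-stability (Lemma~\ref{BassBV}(1)) together with the conjugation identities (C4), (C5), one reduces the conjugating elements $s, t$ modulo $E(n+1, A, IJ+JI)$ so that basic commutators with all four indices in $\{1,\ldots,n\}$ can be recognized inside $[E(n, A, I), E(n, A, J)]$. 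The remaining basic commutators --- those involving the boundary index $n+1$ --- are handled directly by the Steinberg relations: (E2) renders them trivial when the index patterns are disjoint, while (E3) identifies them with elementary matrices of the form $e_{pq}(ab)$ or $e_{pq}(ba)$ at level $IJ$ or $JI$, hence elements of $E(n+1, A, IJ+JI)$.

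The main obstacle is clearly the surjectivity step: the careful commutator bookkeeping needed to move the conjugators $s, t$ and to absorb every boundary contribution into $E(n+1, A, IJ+JI)$ requires the full strength of the commutator calculus (C1)--(C7) together with the Steinberg relations (E1)--(E3). The hypothesis $n \ge \sr(A)+1$ is used essentially here, both for the injective $K_1$-stability in the injectivity argument and for the surjective stability that provides enough room in $\GL(n+1, A, \cdot)$ to carry out the reductions.
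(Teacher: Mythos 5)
Your injectivity argument is essentially the paper's, and it is correct: you show $[E(n,A,I),E(n,A,J)]\subseteq\GL(n,A,IJ+JI)$ (your inline computation is a slick re-derivation of the relevant inclusion of Lemma~\ref{Lem:Habdank}), so the LHS quotient embeds in $K_1(n,A,IJ+JI)$, and an element killed after stabilization lies in $\GL(n,A,IJ+JI)\cap E(n+1,A,IJ+JI)=E(n,A,IJ+JI)$ by Lemma~\ref{BassBV}(2). That matches the paper's mechanism for injectivity exactly.

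Your surjectivity argument, however, has two genuine gaps. First, the tool you invoke to ``reduce the conjugating elements $s,t$ modulo $E(n+1,A,IJ+JI)$'' is not surjective $K_1$-stability together with (C4)/(C5). Surjective stability (Lemma~\ref{BassBV}(1)) only gives a coset decomposition $\GL(n+1,A,I)=\GL(n,A,I)E(n+1,A,I)$; it says nothing about suppressing the conjugating prefixes that (C1)/(C2) produce. The correct mechanism is the \emph{centrality} of $K_1(n+1,A,IJ+JI)$ in $\GL(n+1,A)/E(n+1,A,IJ+JI)$ (available for $n+1\ge\sr(A)+1$), under which conjugation by any element of $\GL(n+1,A)$ --- in particular by $s,t$ and by permutation matrices that move the boundary index $n+1$ into $\{1,\dots,n\}$ --- is invisible in the quotient. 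This is the device the paper actually uses. Second, and more seriously, your treatment of boundary commutators omits the crucial case $[e_{i,n+1}(\alpha),e_{n+1,i}(\beta)]$ (opposite off-diagonal positions). This commutator satisfies neither the hypothesis $i\ne l,\,j\ne k$ of (E2) nor the hypothesis $i\ne k$ of (E3), and it is \emph{not} an elementary matrix --- as the counterexamples of \S\ref{countexam} make clear, such commutators in general represent nontrivial classes in $\SK_1$. These are precisely the elements that Theorem~\ref{gen-2} singles out as the new generators of $[E(n,A,I),E(n,A,J)]$ beyond $E(n,A,IJ+JI)$. The paper's proof sidesteps both problems by quoting Theorem~\ref{gen-2}: once you know the relative commutator subgroup is generated (as a normal subgroup) by elements sitting in fundamental $\GL_2$'s, centrality does the rest. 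Without that generating set or the centrality argument, the direct expansion you propose runs into exactly the commutators your case analysis leaves out.
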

\begin{proof}
Clearly,
\begin{multline*}
[E(n,A,I),E(n,A,J)]/E(n,A,IJ+JI)\le \\
\GL(n,A,IJ+JI)/E(n,A,IJ+JI)=K_1(n,A,IJ+JI).
\end{multline*}
\noindent
By Theorem~\ref{gen-2},  $[E(n,A,I),E(n,A,J)]$ is
generated by $[E(2,A,I),E(2,A,J)]$ as a normal subgroup of
$\GL(n,A)$. Since $K_1(n,A,IJ+JI)$ is central in the quotient
$\GL(n,A)/E(n,A,IJ+JI)$, for $n\ge\sr(A)$, the stability map
is surjective and becomes an isomorphism one step further,
when the stability map
$$ K_1(n,A,IJ+JI)\map K_1(n+1,A,IJ+JI) $$
\noindent
becomes an isomorphism by Lemma~\ref{BassBV}.
\end{proof}

\begin{LemA}\label{Lem:Habdank}
Let $A$ be a ring and $I, J$ be two-sided ideals of $A$. Then 
\begin{multline*}
E(n,A, IJ+JI)\le  \big[E(n,I),E(n,J) \big] \le  \big[E(n,A,I),E(n,A,J)\big] \le  \\ \big[E(n,A,I),\GL(n,A,J)\big] \le  \big[\GL(n,A,I),\GL(n,A,J)\big] \le    \GL(n,A,IJ+JI).
\end{multline*}
\end{LemA}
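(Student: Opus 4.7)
The four inner inclusions
\[[E(n,I),E(n,J)] \le  [E(n,A,I),E(n,A,J)] \le  [E(n,A,I),\GL(n,A,J)] \le  [\GL(n,A,I),\GL(n,A,J)]\]
are tautological from $E(n,I)\le  E(n,A,I)\le  \GL(n,A,I)$ and the analogous chain for $J$. The rightmost inclusion $[\GL(n,A,I),\GL(n,A,J)]\le  \GL(n,A,IJ+JI)$ is a short matrix calculation: if $x=e+a\in\GL(n,A,I)$ and $y=e+b\in\GL(n,A,J)$ with the entries of $a$ and $b$ lying in $I$ and $J$ respectively, then $xy-yx=ab-ba$ has all entries in $IJ+JI$, whence $[x,y]-e=(xy-yx)y^{-1}x^{-1}$ has entries in $IJ+JI$, so $[x,y]\in\GL(n,A,IJ+JI)$.

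The content of the lemma is the leftmost inclusion $E(n,A,IJ+JI)\le  [E(n,I),E(n,J)]$, where I assume $n\ge 3$. The plan is to invoke Lemma~\ref{Engenerator}: $E(n,A,IJ+JI)$ is generated by the elements $z_{ij}(a,\alpha)={}^{e_{ji}(a)}e_{ij}(\alpha)$ with $a\in A$ and $\alpha\in IJ+JI$. By additivity via (E1), it suffices to handle the two cases $\alpha=bc$ with $b\in I,\,c\in J$ and $\alpha=cb$ with $c\in J,\,b\in I$. Fix any index $k\ne i,j$, which exists because $n\ge 3$. The Steinberg relation (E3) supplies the decisive factorisation
\[ e_{ij}(bc)=[e_{ik}(b),e_{kj}(c)],\]
with $e_{ik}(b)\in E(n,I)$ and $e_{kj}(c)\in E(n,J)$, placing $e_{ij}(bc)$ itself inside $[E(n,I),E(n,J)]$.

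To promote this to the conjugate $z_{ij}(a,bc)$, I use that conjugation distributes over commutators, so
\[{}^{e_{ji}(a)}e_{ij}(bc)=\bigl[{}^{e_{ji}(a)}e_{ik}(b),\,{}^{e_{ji}(a)}e_{kj}(c)\bigr],\]
and a direct application of (E2)--(E3) computes
\[ {}^{e_{ji}(a)}e_{ik}(b)=e_{jk}(ab)\,e_{ik}(b)\in E(n,I),\qquad {}^{e_{ji}(a)}e_{kj}(c)=e_{ki}(-ca)\,e_{kj}(c)\in E(n,J),\]
so the conjugate lies in $[E(n,I),E(n,J)]$. The case $\alpha=cb$ is treated by the same argument with the roles of $I$ and $J$ interchanged. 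The main obstacle is essentially the bookkeeping of indices and signs in the Steinberg relations after conjugation, together with the need for a third index $k\ne i,j$; the latter is precisely why the key factorisation $e_{ij}(bc)=[e_{ik}(b),e_{kj}(c)]$ forces the (implicit) hypothesis $n\ge 3$.
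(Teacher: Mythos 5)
Your proposal is correct and follows essentially the same route as the paper: reduce via Lemma~\ref{Engenerator} to the conjugated generators, factor $e_{ij}(bc)=[e_{ik}(b),e_{kj}(c)]$ using a third index $k$ (hence $n\ge 3$), distribute the conjugation over the commutator, and check the two conjugated factors land in $E(n,I)$ and $E(n,J)$; the rightmost inclusion is the same direct matrix computation. (A cosmetic slip: $[x,y]-e=(xy-yx)x^{-1}y^{-1}$, not $(xy-yx)y^{-1}x^{-1}$, but this does not affect the conclusion.)
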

\begin{proof}
We first show 
\begin{equation}\label{shahrivar}
E(n,A, IJ+JI)\le  \big[E(n,I),E(n,J) \big].
\end{equation}
By Lemma~\ref{Engenerator}, let ${}^{e_{i,j}(a)}e_{j,i}(\beta)$ be a generator of $E(n,A, IJ+JI)$, where $a\in A$ and $\beta \in IJ+JI$. It suffices to show that ${}^{e_{i,j}(a)}e_{j,i}(\alpha \beta) \in  \big[E(n,I),E(n,J) \big]$, where $\alpha \in I$ and $\beta \in J$. 
Using (E3), we have 
\begin{multline*}
{}^{e_{i,j}(a)}e_{j,i}(\alpha \beta)={}^{e_{i,j}(a)}\Big[e_{j,k}(\alpha),e_{k,i}(\beta)\Big]=\\\Big[{}^{e_{i,j}(a)}e_{j,k}(\alpha),{}^{e_{i,j}(a)}e_{k,i}(\beta)\Big]=
\Big[[e_{i,j}(a),e_{j,k}(\alpha)]e_{j,k}(\alpha),e_{k,i}(\beta)[e_{k,i}(-\beta),e_{i,j}(a)]\Big]=\\
\Big[e_{i,k}(a\alpha)e_{j,k}(\alpha),e_{k,i}(\beta)e_{k,j}(-\beta a)\Big ]\in \big[E(n,I),E(n,J) \big].
\end{multline*}
This shows~(\ref{shahrivar}). We are left to show that  
\begin{equation}\label{hgtedis}
\big[\GL(n,A,I),\GL(n,A,J)\big] \le   \GL(n,A,IJ+JI).
\end{equation}

Let $x \in \GL(n, A, I)$ and  $y \in \GL(n, A, J)$. Then $x = e + x_1$ and $x^{-1} = e + x_2$ for some 
$x_1, x_2 \in M(n, I)$ such that $x_1 + x_2 + x_1x_2 = 0$. Similarly, $y = e + y_1$ and $y^{-1} = e + y_2$ for some $y_1, y_2 \in M(n, J)$ such that $y_1 + y_2 + y_1y_2 = 0$. Then the following equality holds modulo $IJ+JI$. 
\[[x,y] = (e+x_1)(e+y_1)(e+x_2)(e+y_2)=e+x_1 +x_2 +x_1x_2 +y_1 +y_2 +y_1y_2 = e\] 
which proves~(\ref{hgtedis}).
\end{proof}

A stable version of Lemma~\ref{Lem:Habdank} implies that 
\[[E(R,I),E(R,J)]/E(R,I\circ J)\] lives inside $K_1(R,I\circ J)$.

\begin{proof}[Proof of Theorem~\ref{mat73}]
By Lemma~\ref{MS-1} one has
\begin{multline*}
E(n,A,IJ+JI)\le [E(2n,A,I),E(2n,A,J)]\le\\
[\GL(n,A,I),\GL(n,A,J)]\le[\GL(n,A,I),E(2n,A,J)]\le\\
[\GL(2n,A,I),E(2n,A,J)]=[E(2n,A,I),E(2n,A,J)].
\end{multline*}
\noindent
By Lemma~\ref{Lem:Habdank} one has $[\GL(n,A,I),\GL(n,A,J)]\le\GL(n,R,IJ+JI)$.
On the other hand, by Lemma~\ref{MS-2}
$$ [E(2n,A,I),E(2n,A,J)]\cap\GL(n,R,IJ+JI)\le [E(n,A,I),E(n,A,J)],  $$
\noindent
so that $[\GL(n,A,I),\GL(n,A,J)]=[E(n,A,I),E(n,A,J)]$, as claimed.
\end{proof}

There are (counter)examples that the Mason-Stothers Theorem does not hold for an arbitrary module finite ring~\cite{8}. However, recently Stepanov and Vavilov~\cite{112} proved Bass' Theorem~\ref{hhggff} for any commutative ring and $n\geq 3$. The authors, using Bak's localisation and patching method, extended the theorem to all module finite rings~\cite{46}. Then 
in~\cite{114}, using the Hall-Witt identity, a very short proof for this theorem was found. We include this proof here. We refer to Bass' Theorem in this setting as the {\it generalised commutator formula}.

\begin{TheA}[Generalized commutator formula]\label{gcformula1}
Let $A$ be a module finite $R$-algebra and $I,J$  be two-sided ideals of $A$. Then 
\[ \big [E(n,A,I),\GL(n,A,J) \big ]=\big [E(n,A,I),E(n,A,J) \big ]. \]
\end{TheA}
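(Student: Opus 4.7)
The plan is to reduce the claim to a single elementary-commutator identity and then to run the Hall--Witt identity (C3) twice, exploiting a cancellation coming from the Steinberg relation (E2). The containment $\supseteq$ is immediate, so I focus on $\subseteq$. By Theorem~\ref{standard}, both $E(n,A,I)$ and $E(n,A,J)$ are normal in $\GL(n,A)$, and hence so is their mutual commutator $[E(n,A,I),E(n,A,J)]$. Lemma~\ref{Engenerator} presents $E(n,A,I)$ as generated by the elements ${}^{e_{ji}(a)}e_{ij}(\alpha)$; combining this with the commutator identities (C2) and (C4) and the normality just noted, it suffices to prove that for every $i\ne j$, every $\alpha\in I$, and every $g\in\GL(n,A,J)$,
\[
[e_{ij}(\alpha),g]\in[E(n,A,I),E(n,A,J)].
\]

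Since $n\ge 3$, I would pick some $k\notin\{i,j\}$ and use relation (E3) to write $e_{ij}(\alpha)=[e_{ik}(1),e_{kj}(\alpha)]$. Applying Hall--Witt (C3) with $x^{-1}=e_{ik}(1)$, $y=e_{kj}(\alpha)$, $z=g$ then expresses an $e_{ki}(-1)$-conjugate of $[e_{ij}(\alpha),g]$ as the inverse of the product of
\[
{}^{g}[[g^{-1},e_{ki}(-1)],e_{kj}(\alpha)] \quad\text{and}\quad {}^{e_{kj}(\alpha)}[[e_{kj}(-\alpha),g],e_{ki}(-1)].
\]
The first factor is straightforward: by the standard commutator formula $[g^{-1},e_{ki}(-1)]\in E(n,A,J)$, and since $e_{kj}(\alpha)\in E(n,A,I)$, the inner bracket lies in $[E(n,A,J),E(n,A,I)]=[E(n,A,I),E(n,A,J)]$, which is preserved by $g$-conjugation.

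The main obstacle is the second factor: its inner commutator $[e_{kj}(-\alpha),g]$ a priori lies only in $[E(n,A,I),\GL(n,A,J)]$---the very group one is trying to pin down---so a direct argument would be circular. My plan is to apply Hall--Witt a second time to $[[e_{kj}(-\alpha),g],e_{ki}(-1)]$, now with $x^{-1}=e_{kj}(-\alpha)$, $y=g$, $z=e_{ki}(-1)$. The crucial feature of this choice is that the middle term of the second identity contains $[e_{ik}(1),e_{jk}(\alpha)]$; since these two elementary matrices share the column index $k$, relation~(E2) forces them to commute and the middle term vanishes. The surviving factor has the form ${}^{g}[[g^{-1},e_{ki}(-1)],e_{jk}(\alpha)]$, which by exactly the same analysis as for the first factor lies in $[E(n,A,I),E(n,A,J)]$. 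Undoing the outer conjugations and invoking the normality of $[E(n,A,I),E(n,A,J)]$ in $\GL(n,A)$ then yields the required containment.
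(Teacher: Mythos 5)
Your reduction to the generator-level claim $[e_{ij}(\alpha),g]\in[E(n,A,I),E(n,A,J)]$ is sound, and the first Hall--Witt application, together with the analysis of its first factor via Theorem~\ref{standard}, is correct. But the handling of the second factor has a genuine gap that the argument does not survive.

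With $x=e_{ik}(-1)$, $y=e_{kj}(\alpha)$, $z=g$, the first Hall--Witt identity leaves you with the factor $A_2={}^{e_{kj}(\alpha)}\bigl[[e_{kj}(-\alpha),g],e_{ik}(-1)\bigr]$. Applying (C3) a second time to the trio $e_{kj}(\alpha)$, $g$, $e_{ik}(-1)$ simply reproduces the same three-term identity in a cyclically relabeled form: the middle term is ${}^{e_{ik}(-1)}\bigl[[e_{ik}(1),e_{kj}(\alpha)],g\bigr]$, whose inner bracket is $[e_{ik}(1),e_{kj}(\alpha)]=e_{ij}(\alpha)$ by (E3), not the identity. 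Relation (E2) would force a cancellation only for a pair such as $[e_{ik}(1),e_{jk}(\alpha)]$ sharing the same \emph{second} index, but $e_{jk}$ never occurs here; you appear to have transposed $e_{kj}$ and $e_{jk}$. Since the only elementary matrices in play are $e_{kj}(\pm\alpha)$ and $e_{ik}(\pm1)$, and these do \emph{not} commute (they are exactly an (E3)-pair), no assignment in a second Hall--Witt can make the middle term vanish. Thus the second application is both circular and inert: it returns the original identity without eliminating the problematic term.

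The gap is easily closed, but only by invoking Lemma~\ref{Lem:Habdank}, which your argument never uses. By that lemma $[e_{kj}(-\alpha),g]\in\GL(n,A,IJ+JI)$, hence by Theorem~\ref{standard} the outer commutator $\bigl[[e_{kj}(-\alpha),g],e_{ik}(-1)\bigr]\in[\GL(n,A,IJ+JI),E(n,A)]=E(n,A,IJ+JI)$, and by Lemma~\ref{Lem:Habdank} again $E(n,A,IJ+JI)\le[E(n,A,I),E(n,A,J)]$; normality of this last group in $\GL(n,A)$ absorbs the outer conjugation. This is precisely the step the paper performs at the subgroup level through the three-subgroup lemma (C7): your Hall--Witt computation is a generator-level shadow of that argument, and once the Habdank lemma is inserted for $A_2$ the two proofs become essentially the same.
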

\begin{proof}
We first prove 
\begin{equation}\label{gdhsja}
\big [E(n,A,I),\GL(n,A,J) \big ] \le  \big [E(n,A,I),E(n,A,J) \big ].
\end{equation}
Writing $E(n,A,I)=\big[E(n,A),E(n,A,I) \big ]$ by Theorem~\ref{standard} and then using the three subgroup lemma,  
i.e., $\big [ [F,H],L\big]\le  \big [ [F,L ],H\big ] \big[F, [H,L ]\big]$ for three normal subgroups $F,H$ and $L$ of a group $G$, we have 
\begin{multline*}
\big [E(n,A,I),\GL(n,A,J) \big ]  =\Big [ \big[E(n,A),E(n,A,I) \big ], \GL(n,A,J) \Big ]  \le \\
 \Big[ \big [E(n,A),\GL(n,A,J)\big ], E(n,A,I) \Big] \Big [E(n,A), \big [E(n,A,I),\GL(n,A,J)\big ]\Big].
\end{multline*}
But using Theorem~\ref{standard}, $\Big[ \big [E(n,A),\GL(n,A,J)\big ], E(n,A,I) \Big]=\big [E(n,A,I),E(n,A,J) \big ]$. On the other hand using Theorem~\ref{Lem:Habdank} twice, along with Theorem~\ref{standard} again, we get 
\begin{multline*}
\Big [E(n,A), \big [E(n,A,I),\GL(n,A,J)\big ]\Big]  \le  \big [E(n,A), \GL(n,A,IJ+JI)\big] \le  \\
  E(n,A,IJ+JI) \le  \big [E(n,A,I),E(n,A,J) \big ]. 
\end{multline*}
The inclusion~(\ref{gdhsja}) now follows. The opposite inclusion is obvious.
\end{proof}

In the similar manner one can establish the generalised commutator formula in the setting of unitary groups and Chevalley  groups. Again, in these setting the calculations are more challenging. We include the proof of the unitary version of Lemma~\ref{Lem:Habdank} as an indication of complexity of calculations. Recall from~\S\ref{sec3} that 
$$ (I,\Gamma)\circ (J,\Delta)=
\big(IJ+JI,\Gamma_{\min}(IJ+JI)+{}^J\Gamma+{}^I\Delta\big). $$

 \begin{LemB}\label{yyqq1}
 Let\/ $(I,\Gamma)$ and\/ $(J,\Delta)$ be two form ideals
of a form ring\/ $(A,\Lambda)$. Then
\begin{multline*} \EU(2n,(I,\Gamma)\circ(J,\Delta))\le
[\FU(2n,I,\Gamma),\FU(2n,J,\Delta)]\le \\
[\EU(2n,I,\Gamma),\EU(2n,J,\Delta)]\le [\GU(2n,I,\Gamma),\GU(2n,J,\Delta)] \\\le  \GU(2n,(I,\Gamma)\circ(J,\Delta)).
\end{multline*} 
\end{LemB}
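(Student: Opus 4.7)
The plan is to mirror the strategy of Lemma~\ref{Lem:Habdank}, adapted to the unitary setting. The five-term chain reduces to two genuinely new assertions: the rightmost inclusion
\[[\GU(2n,I,\Gamma),\GU(2n,J,\Delta)] \le \GU(2n,(I,\Gamma)\circ(J,\Delta)),\]
which is a ``level computation'', and the leftmost inclusion
\[\EU(2n,(I,\Gamma)\circ(J,\Delta)) \le [\FU(2n,I,\Gamma),\FU(2n,J,\Delta)],\]
which expresses elementary generators of level $(I,\Gamma)\circ(J,\Delta)$ as commutators. The two middle inclusions are immediate from $\FU\le\EU\le\GU$.

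For the rightmost inclusion, fix $x\in\GU(2n,I,\Gamma)$ and $y\in\GU(2n,J,\Delta)$. Writing $x = e+x_1$, $x^{-1}=e+x_2$ with $x_1,x_2\in M(2n,I)$, and similarly $y=e+y_1$, $y^{-1}=e+y_2$ with $y_1,y_2\in M(2n,J)$, the same direct expansion as in Lemma~\ref{Lem:Habdank} shows that $[x,y]\equiv e\pmod{M(2n,IJ+JI)}$, settling the matrix-entry level. The additional work specific to the unitary setting is to verify that the ``quadratic tail'' of $[x,y]$, responsible for its membership in $\GU$, lies in $\Gamma_{\min}(IJ+JI)+{}^J\Gamma+{}^I\Delta$. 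This uses $\lambda\bar\lambda=1$, the definitions of ${}^J\Gamma$ and ${}^I\Delta$, and the fact that the ``diagonal corrections'' of $x$ and $y$ lie in $\Gamma$ and $\Delta$ respectively, so their mutual conjugation lands in the symmetrised product.

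For the leftmost inclusion, which is the main work, I would first show that every elementary generator of $\FU(2n,(I,\Gamma)\circ(J,\Delta))$ lies in $[\FU(2n,I,\Gamma),\FU(2n,J,\Delta)]$, by case analysis on the type of generator using the Steinberg relations (R3)--(R6). Since $n\ge 3$, we can always pick an auxiliary index $k$ distinct from $\pm i,\pm j$. (a)~For a short-root transvection $T_{ij}(\alpha\beta)$ with $\alpha\in I$, $\beta\in J$ and $i\ne\pm j$, relation (R4) gives $T_{ij}(\alpha\beta)=[T_{ik}(\alpha),T_{kj}(\beta)]$, and the case $\xi=\beta\alpha\in JI$ is symmetric. (b)~For a long-root transvection $T_{i,-i}(\alpha\beta-\lambda^{-\ep(i)}\bar\beta\bar\alpha)$ corresponding to a $\Gamma_{\min}$-type generator, relation (R5) provides it as $[T_{ij}(\alpha),T_{j,-i}(\beta)]$. (c)~For a long-root transvection with parameter $\bar\beta\gamma\beta\in{}^J\Gamma$ (respectively $\bar\alpha\delta\alpha\in{}^I\Delta$), relation (R6) applied to $[T_{i,-i}(\gamma),T_{-i,j}(\beta)]$ produces the product $T_{ij}(\gamma\beta)T_{-j,j}(-\lambda^{(\ep(j)-\ep(i))/2}\bar\beta\gamma\beta)$; since the short-root factor is already in $[\FU(I,\Gamma),\FU(J,\Delta)]$ by case~(a), the long-root factor is as well. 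The remaining $\xi\mu\bar\xi$ generators of $\Gamma_{\min}(IJ+JI)$, with $\xi\in IJ+JI$ and $\mu\in\Lambda$, are extracted by iterated application of (R5) and (R6).

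Second, I would show that $[\FU(I,\Gamma),\FU(J,\Delta)]$ is closed under conjugation by $\EU(2n,A,\Lambda)$, so that it contains the full normal closure $\EU(2n,(I,\Gamma)\circ(J,\Delta))$. This follows the strategy of Lemma~\ref{Engenerator} adapted to the unitary setting: induction on the length of a conjugating element $\sigma\in\EU(2n,A,\Lambda)$ written as a product of elementary unitary transvections, rewriting each $\sigma T\sigma^{-1}$ as a product of transvections of the same level using (R3)--(R6). This is where the unitary proof will depart drastically from the linear one and constitutes the main obstacle: the presence of both short and long roots, the non-trivial form parameter, and in particular the ``quadratic tail'' in (R6), produce an explosion of sub-cases -- precisely the phenomenon foreshadowed in the introduction (half a page linearly versus four pages unitarily).
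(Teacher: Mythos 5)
Your architecture for the five-term chain is right — two genuinely new inclusions at the ends, trivial ones in the middle — but both substantive steps diverge from the paper's proof, and the second half of your plan for the leftmost inclusion has a gap.

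The gap is in your ``normality'' step. To deduce $\EU(2n,(I,\Gamma)\circ(J,\Delta))\le[\FU(I,\Gamma),\FU(J,\Delta)]$ from your step~1, you propose to show that $[\FU(I,\Gamma),\FU(J,\Delta)]$ is closed under $\EU(2n,A,\Lambda)$-conjugation by ``rewriting each $\sigma T\sigma^{-1}$ as a product of transvections of the same level using (R3)--(R6).'' That rewriting does not exist for opposite roots: there is no Steinberg relation expressing ${}^{T_{ji}(\xi)}T_{ij}(\alpha)$ (short root, $\xi\in A$ arbitrary) or ${}^{T_{-i,i}(\xi)}T_{i,-i}(\alpha)$ (long root) as a product of \emph{unconjugated} transvections of level $(I,\Gamma)$ — that absence is precisely why $\FU\ne\EU$. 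The best you can do with (R3)--(R6) is land in $\EU(I,\Gamma)$, which would only give $[\EU(I,\Gamma),\EU(J,\Delta)]$ on the right, not $[\FU(I,\Gamma),\FU(J,\Delta)]$. So the induction on the length of $\sigma$ does not close.

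The paper avoids this by never conjugating a finished commutator. Instead it invokes the unitary analogue of Lemma~\ref{Engenerator}: $\EU(2n,(I,\Gamma)\circ(J,\Delta))$ is generated by the \emph{already-conjugated} elements $T_{ij}(\alpha)^{T_{ji}(\xi)}$ (with $\alpha\in I\circ J$ for short roots, $\alpha$ in the appropriate $\lambda$-twist of $\Gamma\circ\Delta$ for long roots, $\xi$ ranging over $A$ or $\Lambda$). For each such generator it \emph{first} factors $T_{ij}(\alpha)$ as a commutator or product of commutators using (R4)--(R6) (for instance $T_{ij}(a_1b_1)=[T_{ik}(a_1),T_{kj}(b_1)]$, or $T_{i,-i}(a\gamma\bar a)$ expressed via (R6)), and only \emph{then} conjugates by $T_{ji}(\xi)$. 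All the resulting conjugations are of non-opposite root pairs, so (R4)--(R6) do unravel them into products of $\FU(I,\Gamma)$- and $\FU(J,\Delta)$-elements, and the lengthy case analysis (short root; long root split into ${}^J\Gamma$, ${}^I\Delta$, and $\Gamma_{\min}(IJ+JI)$ pieces, the last split further) carries the day. Your step~1(a)--(c) is essentially the $\xi=0$ slice of this; the missing content is precisely the handling of nonzero $\xi$ by factor-then-conjugate.

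On the rightmost inclusion: you propose a direct matrix computation for the $\GU$-level part and a hand-wave for the form-parameter part. The paper instead first proves the inclusion at the \emph{stable} level using the sandwich Theorem~\ref{GU-sandwich} (there is a unique form ideal $(K,\Omega)$ with $\EU(K,\Omega)\le[\GU(I,\Gamma),\GU(J,\Delta)]\le\GU(K,\Omega)$), pins down $(K,\Omega)=(I,\Gamma)\circ(J,\Delta)$ via the three-subgroup lemma and the absolute commutator formula, and then descends to $\GU(2n,\cdot)$ by injectivity of the stabilisation embedding. Your direct route might be feasible, but the ``quadratic tail'' verification that $[x,y]$ satisfies the $\Gamma\circ\Delta$-condition is exactly the delicate computation the paper is designed to avoid, and as written you haven't done it.
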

\begin{proof}
We first show
\begin{equation}\label{yyqq1_1}
\EU(2n,(I,\Gamma)\circ (J,\Delta))\le  [\FU (2n, I, \Gamma), \FU(2n, J, \Delta)].
\end{equation}

It is well known that $\EU(2n,(I,\Gamma)\circ (J,\Delta))$ is generated by $T_{i,j}(\alpha)^{T_{j,i}(\xi)}$ with $\alpha \in I\circ J$, $\xi\in A$ when $i\ne \pm j$ and with $\alpha \in\lambda^{-(\ep(i)+1)/2} \Gamma\circ\Delta$ and $\xi\in \lambda^{(\ep(i)-1)/2}\Lambda$ when $i=-j$. We divide the proof into cases according the length of the elementary element.

Case I.  $T_{i,j}(\alpha) $ is a short root, namely $i\ne\pm j$. Then $\alpha \in I\circ J$. It is sufficient show that $T_{i,j}(a_1b_1+a_2b_2)^{T_{j,i}(\xi)}\in [\FU (2n, I, \Gamma), \FU(2n, J, \Delta)]$ for any $a_1,a_2\in I$ and $ b_1, b_2\in J$.
By (R2), we have
$$
T_{i,j}(a_1b_1+a_2b_2)^{T_{j,i}(\xi)}=T_{i,j}(a_1b_1)^{T_{i,j}(\xi)}T_{i,j}(a_2b_2)^{T_{j,i}(\xi)}.
$$
We will show the first factor of the right hand side of the above equation, and left the second to the reader.
Choose a $k\ne\pm i,\pm j$. Using (R4), the first factor can be rewrite as a commutator
\begin{eqnarray*}
T_{i,j}(a_1b_1)^{T_{j,i}(\xi)}&=&[T_{i,k}(a_1),T_{k,j}(b_1)]^{T_{j,i}(\xi)}\\
&=&[T_{i,k}(a_1)^{T_{j,i}(\xi)},T_{k,j}(b_1)^{T_{j,i}(\xi)}].\\
&=&\Big[[T_{i,k}(a_1),{T_{j,i}(-\xi)}]T_{i,k}(a_1), [T_{k,j}(b_1),{T_{j,i}(-\xi)}]T_{k,j}(b_1)\Big]
\end{eqnarray*}
Again by (R4), we have
\begin{multline*}
\Big[[T_{i,k}(a_1),{T_{j,i}(-\xi)}]T_{i,k}(a_1), [T_{k,j}(b_1),{T_{j,i}(-\xi)}]T_{k,j}(b_1)\Big]=\\\Big[T_{j,k}(a_1\xi)T_{i,k}(a_1), T_{k,i}(-\xi b_1)T_{k,j}(b_1)\Big].
\end{multline*}
Clearly $a_1\xi ,a_1 \in I$  and $-\xi b_1, b_1\in J$, thus 
\begin{multline}\label{eqn:c2}
T_{i,j}(a_1b_1)^{T_{j,i}(\xi)}=\\\Big[T_{j,k}(a_1\xi)T_{i,k}(a_1), T_{k,i}(-\xi b_1)T_{k,j}(b_1)\Big]\in [\FU (2n, I, \Gamma), \FU(2n, J, \Delta)].
\end{multline}
This finishes the proof of Case I.

Case II. $T_{i,j}(\alpha) $ is a long root, namely $i\ne - j$. Therefore we have $\alpha\in \lambda^{-(\ep(i)+1)/2} \Gamma\circ\Delta$. Without loss of generality, we may assume that $i<0$. Hence $\alpha\in  \Gamma\circ\Delta$. By definition, 
$$
\Gamma\circ\Delta=\Gamma^J+\Delta^I+\Gamma_{min}(IJ+JI).
$$
It suffices to show that $T_{i,-i}(\alpha_1+\alpha_2+\alpha_3)^{T_{-i,i}(\xi)}\in [\FU (2n, I, \Gamma), \FU(2n, J, \Delta)]$ with 
$\alpha_1\in \Gamma^J$, $\alpha 2\in \Delta^I$ and $\alpha_2\in \Gamma_{min}(IJ+JI)$. 
By (R2), 
$$
T_{i,-i}(\alpha_1+\alpha_2+\alpha_3)^{T_{-i,i}(\xi)}=T_{i,-i}(\alpha_1)^{T_{-i,i}(\xi)}T_{i,-i}(\alpha_2)^{T_{-i,i}(\xi)}T_{i,-i}(\alpha_3)^{T_{-i,i}(\xi)}.
$$
We prove one by one that each of the factors above belongs to \[[\FU (2n, I, \Gamma), \FU(2n, J, \Delta)].\] 

Since $\alpha_1\in \Gamma^J$, we may rewrite $\alpha_1=  a \gamma\bar a$ with $a\in J$ and $\gamma\in \Gamma$.  Therefore
$$
T_{i,-i}(\alpha_1)^{T_{-i,i}(\xi)}=T_{i,-i}( a\gamma\bar a)^{T_{-i,i}(\xi)}. 
$$
Choose a $j\ne i$ and $j<0$.  Equation (R6) implies that
\begin{eqnarray}
T_{i,-i}( a\gamma\bar a)^{T_{-i,i}(\xi)}&=&\Big(T_{i,-j}(-a\gamma )\big[T_{i,j}(a), T_{j,-j}(\gamma)\big]\Big){}^{T_{-i,i}(\xi)}\notag\\
&=&T_{i,-j}(-a\gamma ){}^{T_{-i,i}(\xi)}\big[T_{i,j}(a), T_{j,-j}(\gamma)\big]{}^{T_{-i,i}(\xi)}\notag\\
&=&\big[T_{i,-j}(-a\gamma ),{T_{-i,i}(\xi)}\big] T_{i,-j}(-a\gamma ) \big[T_{i,j}(a), T_{j,-j}(\gamma)\big]{}^{T_{-i,i}(\xi)}\label{eqn:c1}
\end{eqnarray}
 Again by (R6), the first factor 
 $$
 \big[T_{i,-j}(-a\gamma ),{T_{-i,i}(\xi)}\big]=T_{-j,j}(-\bar\lambda \gamma a\xi\overline{\gamma a})T_{-i,-j}(\bar\lambda\xi\overline{\gamma a} ) .
 $$
 Because $ \gamma a\xi\overline{\gamma a}\in\Gamma_{min}(I\circ J)$ and $\bar\lambda\xi\overline{\gamma a}\in I\circ J$, we have 
 $$
 T_{-j,j}(-\bar\lambda \gamma a\xi\overline{\gamma a})T_{-i,-j}(\bar\lambda\xi\overline{\gamma a} )\in \FU(2n,(I,\Gamma)\circ (J,\Delta))\le 
 [\FU (2n, I, \Gamma), \FU(2n, J, \Delta)].
 $$
 Furthermore, $\gamma a\in I\circ J$ implies the second factor of (\ref{eqn:c1})
 $$
 T_{i,j}(-\gamma a)\in \FU(2n,(I,\Gamma)\circ (J,\Delta))\le  [\FU (2n, I, \Gamma), \FU(2n, J, \Delta)].
 $$
 As for the last factor of (\ref{eqn:c1}),
\begin{eqnarray*}
\big[T_{i,j}(a), T_{j,-j}(\gamma)\big]{}^{T_{-i,i}(\xi)}&=&\big[T_{i,j}(a){}^{T_{-i,i}(\xi)}, T_{j,-j}(\gamma){}^{T_{-i,i}(\xi)}\big]\\
&=&\Big[\big[T_{i,j}(a),{T_{-i,i}(\xi)}\big]T_{i,j}(a), T_{j,-j}(\gamma)\Big]
\end{eqnarray*}
Apply (R6) to the first component of the commutator above shows that
\begin{eqnarray*}
\big[T_{i,j}(a),{T_{-i,i}(\xi)}\big]T_{i,j}(a)=T_{-j,j}(\lambda\bar a\xi a)T_{-i,j}(-\xi a) T_{i,j}(a)\in \FU(2n, J,\Delta).
\end{eqnarray*}
Thus 
$$T_{i,-i}(\alpha_1)^{T_{-i,i}(\xi)}\in [\FU (2n, I, \Gamma), \FU(2n, J, \Delta)].$$

 A similar argument, which is left to the reader, shows that  
$$T_{i,-i}(\alpha_2)^{T_{-i,i}(\xi)}\in [\FU (2n, I, \Gamma), \FU(2n, J, \Delta)].$$

For the third factor of (\ref{eqn:c2}),  we have  
$$
T_{i,-i}(\alpha_3)^{T_{-i,i}(\xi)}\in T_{i,-i}(\Gamma_{min}(IJ+JI))^{T_{-i,i}(\xi)}.
$$
By definition, 
$$
\Gamma_{min}(IJ+JI)=\{a-\lambda \bar a\mid a\in IJ+JI\}+\langle b\gamma \bar b\mid b\in IJ+JI, \gamma\in\Lambda\rangle.
$$
Hence, we shall show that for any given $\alpha_4$ and $\alpha_5$ which belong to the first and second summands of the above equation respectively,  both
$$
T_{i,-i}(\alpha_4)^{T_{-i,i}(\xi)} \text{ and } T_{i,-i}(\alpha_5)^{T_{-i,i}(\xi)} \in [\FU (2n, I, \Gamma), \FU(2n, J, \Delta)].
$$
For the first inclusion, take a typical generator $c_1d_1+d_2c_1-\lambda\overline{c_1d_1+d_2c_2} $ of $\{a-\lambda \bar a\mid a\in IJ+JI\}$ with $c_1,c_2\in I$ and $d_1,d_2\in J$. It suffices to prove that 
\begin{multline*}
T_{i,-i}(c_1d_1+d_2c_2-\lambda\overline{c_1d_1+d_2c_2} )^{T_{-i,i}(\xi)}\\=T_{i,-i}(c_1d_1-\lambda\overline{c_1d_1} )^{T_{-i,i}(\xi)}T_{i,-i}(d_2c_2-\lambda\overline{d_2c_2} )^{T_{-i,i}(\xi)}
\end{multline*}
We shall prove that $T_{i,-i}(c_1d_1-\lambda\overline{c_1d_1} )^{T_{-i,i}(\xi)}$ and the rest follows by the same augments.  

Choose a $j\ne i$ and $j<0$. Using (R5), we get
\begin{eqnarray*}
T_{i,-i}(c_1d_1-\lambda\overline{c_1d_1} )^{T_{-i,i}(\xi)}&=&\big[T_{i,j}(c_1), T_{j,-i}(d_1)\big]^{T_{-i,i}(\xi)}\\
&=&\big[T_{i,j}(c_1)^{T_{-i,i}(\xi)}, T_{j,-i}(d_1)^{T_{-i,i}(\xi)}\big]\\
&=&\Big[\big[T_{i,j}(c_1),{T_{-i,i}(\xi)}\big]T_{i,j}(c_1), \big[T_{j,-i}(d_1),{T_{-i,i}(\xi)}\big]T_{j,-i}(d_1)\Big]
\end{eqnarray*}
By (R6), $\big[T_{i,j}(c_1),{T_{-i,i}(\xi)}\big]$ can be write as a product of elements from $\FU (2n, I, \Gamma)$ and 
$\big[T_{j,-i}(d_1),{T_{-i,i}(\xi)}\big]$ a product of elements from $\FU(2n, J, \Delta)$. Thus 
$$
T_{i,-i}(\alpha_4)^{T_{-i,i}(\xi)} \in [\FU (2n, I, \Gamma), \FU(2n, J, \Delta)].
$$

Finally, as $\alpha_5\in \langle b\gamma \bar b\mid b\in IJ+JI, \gamma\in\Lambda\rangle$, we reduce our proof by (R2) to the case $$\alpha_5
=\Big(\sum_k a_k \Big)\gamma\Big(\overline{\sum_k a_k}\Big), \quad \text{ with} \quad a_k\in IJ+JI.
$$ 
By induction, it can be further reduce to 
\begin{eqnarray*}
\alpha_5&=&(a_1b_1+b_2a_2)\gamma\overline{a_1b_1+b_2a_2}
\end{eqnarray*}
with $a_1,a_2\in I$ and $b_1,b_2\in J$. The above equation can be rewritten as
\begin{eqnarray*}
\alpha_5&=&a_1b_1\gamma\overline{a_1b_1}+b_2a_2\gamma\overline{b_2a_2}+a_1b_1\gamma\overline{a_2b_2} + a_2b_2\gamma\overline{a_1b_1}\\
&=&a_1b_1\gamma\overline{a_1b_1}+b_2a_2\gamma\overline{b_2a_2}+(a_1b_1\gamma\overline{a_2b_2} -\lambda\overline{ a_1b_1\gamma\overline{a_2b_2}}).
\end{eqnarray*}
The last summand is of the same form as $\alpha_4$'s, hence it follows immediately by the proof of $\alpha_4$ that
$$
T_{i,-i}(a_1b_1\gamma\overline{a_2b_2} -\lambda\overline{ a_1b_1\gamma\overline{a_2b_2}})^{T_{-i,i}(\xi)} \in [\FU (2n, I, \Gamma), \FU(2n, J, \Delta)].
$$
Now consider the first two summands. Note that 
$$a_1b_1\gamma\overline{a_1b_1}= a_1(b_1\gamma\bar b_1)\bar a_1$$
and 
$$b_2a_2\gamma\overline{b_2a_2}=b_2(a_2\gamma\bar a_2)\bar b_2.
$$
By the definition of relative form parameter, $a_1(b_1\gamma\bar b_1)\bar a_1$ and $b_2(a_2\gamma\bar a_2)\bar b_2$ belong to 
$\Delta^I $ and $\Gamma^J$ respectively. The proofs for $\alpha_1$ and $\alpha_2$ show that 
\begin{eqnarray*}
T_{i,-i}(a_1(b_1\gamma\bar b_1)\bar a_1)^{T_{-i,i}(xi)}T_{i,-i}(a_2(b_2\gamma\bar b_2)\bar a_2)^{T_{-i,i}(xi)}\in [\FU (2n, I, \Gamma), \FU(2n, J, \Delta)].
\end{eqnarray*}
This proves (\ref{yyqq1_1}). We are left to show that 
\begin{equation}
\label{yyqq1_2}
[\GU(2n,I,\Gamma),\GU(2n,J,\Delta)] \le  \GU(2n,(I,\Gamma)\circ(J,\Delta)).
\end{equation}
We first show that (\ref{yyqq1_2}) holds for the {\it stable\/}
unitary groups, namely that
\begin{equation}\label{le_stable1}
[\GU(I,\Gamma),\GU(J,\Delta)]\le \GU((I,\Gamma)\circ(J,\Delta)).
\end{equation}
\noindent
In the stable level, we have inclusions
\begin{equation}\label{le_stable2}
        \EU((I,\Gamma)\circ(J,\Delta))\le
[\EU(I,\Gamma),\EU(J,\Delta)]\le[\GU(I,\Gamma),\GU(J,\Delta)]
\end{equation}
and
\begin{equation}\label{le_stable3}
[\EU(I,\Gamma),\EU(J,\Delta)]\le \GU((I,\Gamma)\circ(J,\Delta)).
\end{equation}

Since the subgroup $[\GU(I,\Gamma),\GU(J,\Delta)]$ is normalized by
$E(\FormR)$, applying Theorem~\ref{GU-sandwich},
we can conclude that there exists a unique form ideal $(K,\Omega)$ such
that
\begin{equation}\label{le_stable4}
\EU(K,\Omega)\le [\GU(I,\Gamma),\GU(J,\Delta)]\le \GU(K,\Omega).
\end{equation}
By Identity (C7), we get
\begin{multline*}
\big[[\GU(I,\Gamma),\GU(J,\Delta)],\EU(\FormR)\big]\le \\
\big[[\GU(I,\Gamma),\EU(\FormR)],\GU(J,\Delta)\big]\cdot
\big[[\GU(J,\Delta),\EU(\FormR)],\GU(I,\Gamma)\big].
\end{multline*}
\noindent
But the absolute commutator formula implies that
\begin{multline}
\big[[\GU(I,\Gamma),\EU(\FormR)],\GU(J,\Delta)\big]\cdot
\big[[\GU(J,\Delta),\EU(\FormR)],\GU(I,\Gamma)\big]=\\
[\EU(I,\Gamma),\EU(J,\Delta)].
\end{multline}
\noindent
Thus,
\begin{equation}\label{pjhenis}
\big[[\GU(I,\Gamma),\GU(J,\Delta)],\EU(\FormR)\big]
\le [\EU(I,\Gamma),\EU(J,\Delta)].
\end{equation}
\noindent
Again by the general commutator formula and (\ref{le_stable3}), we have
\begin{multline}\label{assme}
\EU((I,\Gamma)\circ(J,\Delta))=
[\EU((I,\Gamma)\circ(J,\Delta)),\EU(\FormR)]\\
\le \big[[\EU(I,\Gamma),\EU(J,\Delta)],\EU(\FormR)\big]\\
\le [\GU((I,\Gamma)\circ(J,\Delta)),\EU(\FormR)]
=\EU((I,\Gamma)\circ(J,\Delta)).
\end{multline}
\noindent
Forming another commutator of~(\ref{pjhenis}) with $\EU(\FormR)$
and applying the inequalities obtained in~(\ref{assme}) we get
$$ \Big[\big[[\GU(I,\Gamma),\GU(J,\Delta)],\EU(\FormR)\big],
\EU(\FormR)\Big]=\EU((I,\Gamma)\circ(J,\Delta)). $$
\noindent
Using inclusions (\ref{le_stable4}), we see that
\begin{multline*}
\EU(K,\Omega)=\big[[\EU(K,\Omega),\EU(\FormR)],\EU(\FormR)\big]\\
\le  \Big[\big[[\GU(I,\Gamma),\GU(J,\Delta)],\EU(\FormR)\big],
\EU(\FormR)\Big]
=\EU((I,\Gamma)\circ(J,\Delta))\\
=\big[[\EU((I,\Gamma)\circ(J,\Delta)),\EU(\FormR)],\EU(\FormR)\big]\\
\le  \Big[\big[[\GU(I,\Gamma),\GU(J,\Delta)],\EU(\FormR)\big],
\EU(\FormR)\Big]\\
\le \big[[\GU(K,\Omega),\EU(\FormR)],\EU(\FormR)\big]
=\EU(K,\Omega).
\end{multline*}
\noindent
Thus, we can conclude that $\EU(K,\Omega)=\EU((I,\Gamma)\circ(J,\Delta))$.
This implies that $(K,\Omega)=(I,\Gamma)\circ(J,\Delta)$, see the
second paragraph of the proof of~\cite[Theorem~5.4.10]{32}.
Substituting this equality in (\ref{le_stable4}), we see that inclusion
(\ref{le_stable1}) holds at the stable level, as claimed.
\par
Let $\varphi$ denote the usual stability embedding
$\varphi:\GU(2n,\FormR)\to\GU(\FormR)$. Then
\begin{multline*}
\varphi\big(\big[\GU(2n,I,\Gamma),\GU(2n,J,\Delta)\big]\big)=
\big[\varphi\big(\GU(2n,I,\Gamma)\big),
\varphi\big(\GU(2n,J,\Delta)\big)\big]<\\
\big[\GU(I,\Gamma),\GU(J,\Delta)\big].
\end{multline*}
\noindent
In particular, the result at the stable level implies that
\[ \varphi\big(\big[\GU(2n,I,\Gamma),\GU(2n,J,\Delta)\big]\big)
\le \varphi\big(\GU(2n,\FormR)\big)\cap
\GU((I,\Gamma)\circ(J,\Delta)). \]
\noindent
On the other hand,
$$ \varphi\big(\GU(2n,\FormR)\big)\cap\GU((I,\Gamma)\circ(J,\Delta))=
\varphi\big(\GU(2n,(I,\Gamma)\circ(J,\Delta))\big). $$
\noindent
Since $\varphi$ is injective, we can conclude that
$$ [\GU(2n,I,\Gamma),\GU(2n,J,\Delta)]\le
\GU(2n,(I,\Gamma)\circ(J,\Delta)). $$
\noindent
This finishes the proof.

\end{proof}

We can show state the unitary version of generalised commutator formula. 

\begin{TheB}
Let\/ $n\ge 3$,\/ $R$ be a commutative ring,\/ $(A,\Lambda)$ be a form
ring such that\/ $A$ is a module finite\/ $R$-algebra. Further, let\/
$(I,\Gamma)$ and\/ $(J,\Delta)$ be two form ideals of the form ring\/
$(A,\Lambda)$. Then
$$ \big[\EU(2n,I,\Gamma),\GU(2n,J,\Delta)\big]=
\big[\EU(2n,I,\Gamma),\EU(2n,J,\Delta)\big]. $$
\end{TheB}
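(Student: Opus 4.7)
The plan is to mimic the proof of Theorem A in the unitary setting, since the argument there is purely formal manipulation of commutators and does not depend on any root-subgroup computations peculiar to the linear case. The nontrivial inclusion is
$$ \big[\EU(2n,I,\Gamma),\GU(2n,J,\Delta)\big] \le \big[\EU(2n,I,\Gamma),\EU(2n,J,\Delta)\big], $$
since the reverse direction is immediate from $\EU(2n,J,\Delta)\le\GU(2n,J,\Delta)$. The three ingredients I would use are: the absolute unitary normality result $[\EU(2n,A,\Lambda),\GU(2n,K,\Omega)]=\EU(2n,K,\Omega)$ for any form ideal $(K,\Omega)$ (this is the unitary analogue of Theorem~\ref{standard} which is available under the module-finite hypothesis and $n\ge 3$), the Hall--Witt-style three-subgroup identity (C7), and the unitary Habdank-type Lemma B just proved.

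First I would rewrite $\EU(2n,I,\Gamma)=[\EU(2n,A,\Lambda),\EU(2n,I,\Gamma)]$ using the absolute commutator formula. Then (C7) gives
\begin{multline*}
\big[\EU(2n,I,\Gamma),\GU(2n,J,\Delta)\big]=\Big[\big[\EU(2n,A,\Lambda),\EU(2n,I,\Gamma)\big],\GU(2n,J,\Delta)\Big]\\
\le\Big[\big[\EU(2n,A,\Lambda),\GU(2n,J,\Delta)\big],\EU(2n,I,\Gamma)\Big]\cdot\Big[\EU(2n,A,\Lambda),\big[\EU(2n,I,\Gamma),\GU(2n,J,\Delta)\big]\Big].
\end{multline*}

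Next I would handle the two factors separately. The first factor simplifies immediately by the absolute commutator formula to $\big[\EU(2n,J,\Delta),\EU(2n,I,\Gamma)\big]=\big[\EU(2n,I,\Gamma),\EU(2n,J,\Delta)\big]$, which is exactly the target. For the second factor, Lemma B yields
$$ \big[\EU(2n,I,\Gamma),\GU(2n,J,\Delta)\big]\le\GU\big(2n,(I,\Gamma)\circ(J,\Delta)\big), $$
so applying the absolute commutator formula once more and then Lemma B again gives
$$ \Big[\EU(2n,A,\Lambda),\GU\big(2n,(I,\Gamma)\circ(J,\Delta)\big)\Big]=\EU\big(2n,(I,\Gamma)\circ(J,\Delta)\big)\le\big[\EU(2n,I,\Gamma),\EU(2n,J,\Delta)\big]. $$
Combining the two estimates proves the desired inclusion.

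The main obstacle will not be the formal rearrangement above but rather making sure the absolute unitary commutator formula $[\EU(2n,A,\Lambda),\GU(2n,K,\Omega)]=\EU(2n,K,\Omega)$ is available in the required generality, namely for module finite $R$-algebras and $n\ge 3$; this is the unitary counterpart of Suslin's normality, and is precisely where the module-finiteness hypothesis and the restriction $n\ge 3$ enter. Once that input is granted, the remaining work is bookkeeping: the heavy lifting has already been absorbed into Lemma B, whose proof, in sharp contrast to its linear counterpart, occupied several pages of Steinberg-relation calculations.
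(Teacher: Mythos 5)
Your proposal is correct and takes exactly the approach the paper intends: the paper states that Theorem B "can be established in the similar manner" to the general linear case (Theorem~\ref{gcformula1}), whose proof is precisely the three-subgroup-lemma rearrangement you reproduce, with Lemma~\ref{yyqq1} supplying the unitary Habdank-type level bounds and the unitary analogue of Theorem~\ref{standard} supplying the absolute normality input. You have also correctly identified that all the substantive work was already absorbed into Lemma~\ref{yyqq1}, and that the remaining steps are formal.
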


Actually, in the {\it commutative\/} case the principal congruence
subgroup in the left hand side of the equalities can be replaced by
the full congruence subgroup. In other words, when $R$ is commutative,
one has
$$ [E(n,R,I),C(n,R,J)]=[E(n,R,I),E(n,R,J)]. $$
\noindent
Similarly, when $A$ is commutative, one has
$$ \big[\EU(2n,I,\Gamma),\CU(2n,J,\Delta)\big]=
\big[\EU(2n,I,\Gamma),\EU(2n,J,\Delta)\big]. $$
\noindent
On the other hand, it is easy to construct non-commutative
counter-examples to these stronger assertions, see \cite{70}.
\par

Finally, for Chevalley groups the corresponding result was first
officially stated by You Hong~\cite[Theorem~1]{121}, see also ~\cite[Lemmas~17,19]{43}.

\begin{LemC}\label{gbsinea6}
Let\/ $\rk(\Phi)\ge 2$. In the cases\/ $\Phi=\B_2,\G_2$ assume
that $R$ does not have residue fields\/ ${\Bbb F}_{\!2}$ of\/ $2$ elements and
in the case\/ $\Phi=\B_2$ assume additionally that any\/ $c\in R$ is
contained in the ideal\/ $c^2R+2cR$.
\par
Then for any two ideals\/ $I$ and\/ $J$ of the ring\/ $R$ one has
the following inclusion
\begin{multline*}
E(\Phi,R,IJ)\le\big [E(\Phi,R,I),E(\Phi,R,J)\big]\le
\big [E(\Phi,R,I),G(\Phi,R,J)\big]\le \\
\big[G(\Phi,R,I),C(\Phi,R,J)\big]\le G(\Phi,R,IJ).
\end{multline*}
\end{LemC}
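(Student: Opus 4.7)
The plan is to imitate the structure of Lemma~\ref{Lem:Habdank} (general linear case) and Lemma~\ref{yyqq1} (unitary case), using the Chevalley commutator formula in place of the Steinberg relations (E1)--(E3) and the unitary relations (R1)--(R6). The middle two inclusions are automatic from the containments $E(\Phi,R,J)\le G(\Phi,R,J)\le C(\Phi,R,J)$ together with monotonicity of commutator subgroups, so the substantive content lies in the leftmost inclusion $E(\Phi,R,IJ)\le[E(\Phi,R,I),E(\Phi,R,J)]$ and the rightmost inclusion $[G(\Phi,R,I),C(\Phi,R,J)]\le G(\Phi,R,IJ)$.

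For the leftmost inclusion, the first step is to establish a Chevalley analog of Lemma~\ref{Engenerator}, reducing the problem to showing that every generator ${}^{w}x_\alpha(\xi\eta)$ with $\xi\in I$, $\eta\in J$, $w\in E(\Phi,R)$ lies in $[E(\Phi,R,I),E(\Phi,R,J)]$. Since $\rk(\Phi)\ge 2$, the root $\alpha$ sits inside a rank-two subsystem of type $\A_2$, $\B_2$, $\C_2$ or $\G_2$, and the Chevalley commutator formula
\[
[x_\beta(\xi),x_\gamma(\eta)] = x_{\beta+\gamma}(\pm\xi\eta)\cdot\prod_{\substack{i\beta+j\gamma\in\Phi\\ i+j\ge 3}} x_{i\beta+j\gamma}\!\left(N_{ij}\,\xi^{i}\eta^{j}\right)
\]
applied to a decomposition $\alpha=\beta+\gamma$ writes $x_\alpha(\pm\xi\eta)$ as an element of $[E(\Phi,R,I),E(\Phi,R,J)]$ up to correction terms $x_\delta(N_{ij}\xi^{i}\eta^{j})$. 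In simply-laced types the product is empty; in types $\B,\C,\F,\G$ the correction terms have to be absorbed recursively back into $[E(\Phi,R,I),E(\Phi,R,J)]$. Finally, the conjugating factor $w$ is handled by normality of $[E(\Phi,R,I),E(\Phi,R,J)]$ in $E(\Phi,R)$, which is the Chevalley analog of Theorem~\ref{standard}.

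For the rightmost inclusion I would pass to the adjoint representation $\phi\colon G(\Phi,R)\to G_{\mathrm{ad}}(\Phi,R)\hookrightarrow\GL(N,R)$. Under $\phi$, the subgroup $C(\Phi,R,J)$ maps into $G_{\mathrm{ad}}(\Phi,R,J)$, because its action on the Lie algebra modulo $J$ is trivial. Writing $\phi(g)=1+a$ with $a\in M_N(I)$ and $\phi(h)=1+b$ with $b\in M_N(J)$, and expanding $\phi([g,h])-1$ using $\phi(g)^{-1}=1+a'$ with $a+a'+aa'=0$, one sees that every surviving term after cancellation of the $I$-only relation $a+a'+aa'=0$ and the $J$-only relation $b+b'+bb'=0$ contains at least one factor from $M_N(I)$ and at least one from $M_N(J)$. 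Since $R$ is commutative, this places $\phi([g,h])-1$ in $M_N(IJ)$, so $\phi([g,h])\in G_{\mathrm{ad}}(\Phi,R,IJ)$. A separate argument using that $[g,h]$ lies in the derived subgroup while $\ker(\phi)\le Z(G(\Phi,R))$ is finite then lifts this to $[g,h]\in G(\Phi,R,IJ)$; alternatively, one argues stably with a Chevalley sandwich theorem for a subgroup $T=[G(\Phi,R,I),C(\Phi,R,J)]$ normalised by $E(\Phi,R)$, writes $E(\Phi,R,K)\le T\le G(\Phi,R,K)$ for a unique ideal $K$, and pins down $K=IJ$ by iterated commutation with $E(\Phi,R)$ in the spirit of the final step of the proof of Lemma~\ref{yyqq1}.

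The principal obstacle will be the leftmost inclusion in types $\B_2$ and $\G_2$. The correction terms $x_\delta(N_{ij}\xi^{i}\eta^{j})$ there have ring parameters in $I^{i}J^{j}$ with $i+j\ge 3$, which a priori fall outside $IJ$; absorbing them inside $[E(\Phi,R,I),E(\Phi,R,J)]$ requires the structure constants $N_{ij}$ (divisible by $2$ and, for $\G_2$, by $3$) to be controllable. The hypothesis that no residue field of $R$ is $\BF_2$ makes $2$ locally invertible where it matters, and the condition $c\in c^{2}R+2cR$ for $\B_2$ is exactly the ring-theoretic identity needed to rewrite the offending $\xi^{2}\eta$ and $\xi\eta^{2}$ parameters inside $IJ$ by playing the two summands of $c^{2}R+2cR$ off against the structure constants. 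These rank-two case distinctions, parallel in spirit to and no shorter than the four-page analysis of Lemma~\ref{yyqq1}, form the technical heart of the proof.
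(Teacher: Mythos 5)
Your plan for the leftmost inclusion is essentially the strategy the paper follows for the unitary analogue (Lemma~\ref{yyqq1}), and is sound. One precision: for $\B_2$ the correction parameters $\xi^i\eta^j$ with $i,j\ge 1$ are automatically in $IJ$, so that is not where the hypothesis $c\in c^2R+2cR$ enters. The real issue, as the paper's remark after the statement spells out, is at the level of admissible pairs: without the extra hypothesis the long-root level of $[E(\Phi,R,I),E(\Phi,R,J)]$ is only $I^2J+2IJ+IJ^2$, and the hypothesis applied to $c=\xi\eta$ gives $\xi\eta\in(\xi\eta)^2R+2\xi\eta R\subseteq I^2J+2IJ+IJ^2$, which is what forces $IJ$ into the long-root level. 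The $\BF_2$ hypothesis supplies perfectness, needed already at $I=J=R$.

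Your primary route for the rightmost inclusion, however, has a genuine gap at the lifting step. Computing in the adjoint representation gives $\phi([g,h])\in G_{\rm ad}(\Phi,R,IJ)$; since the kernel of $G(\Phi,R/IJ)\to G_{\rm ad}(\Phi,R/IJ)$ is the centre, this only places $[g,h]$ in $C(\Phi,R,IJ)$, not in $G(\Phi,R,IJ)$ --- exactly the gap between the full and the principal congruence subgroup that you still need to cross. The remark that $[g,h]$ lies in a derived subgroup and that $\ker\phi$ is finite central does not close it: a commutator can equal a nontrivial central element (in $\SL_n$ over a field containing nontrivial $n$-th roots of unity, every central element is a commutator), so ``central and a commutator'' does not imply trivial. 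Your alternative route --- apply the Chevalley sandwich theorem to $T=[G(\Phi,R,I),C(\Phi,R,J)]$, obtain $E(\Phi,R,K)\le T\le G(\Phi,R,K)$ for a unique ideal $K$, and then pin down $K=IJ$ by iterated commutation with $E(\Phi,R)$ together with the absolute standard commutator formula --- is the correct one, and is precisely the second half of the paper's proof of Lemma~\ref{yyqq1}; it should be the main argument, not the fallback. (For the record, the paper gives no proof of Lemma~\ref{gbsinea6} itself: it only cites~\cite{121} and~\cite{43}, so the relevant comparison is with the unitary proof.)
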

For groups of rank 2, these additional assumptions are indeed necessary.
It is classically known that when the ground ring $R$ has residue
fields of 2 elements, the groups of types $\B_2$ and $\G_2$ are
not perfect. Thus, the left-most inclusion fails even at the
absolute level, when $I=J=R$.

The second assumption for $\B_2$ is not visible at the absolute
level. But without that assumption the upper and lower levels of
the relative commutator subgroup $\big [E(\Phi,R,I),E(\Phi,R,J)\big]$
do not coincide, so that the left-most inclusion in the above lemma
should be replaced by
$$ E(\Phi,R,IJ,I^2J+2IJ+IJ^2)\le\big[E(\Phi,R,I),E(\Phi,R,J)\big]. $$
Here, $E(\Phi,R,I,J)$ is the elementary subgroup corresponding to
an admissible pair $(I,J)$ in the sense of Abe, where $I$ is an
ideal of $R$, expressing the short root level (= upper level),
whereas a Jordan ideal $J$, expressing the long root level (=
lower level), plays the role of a form parameter. Not to complicate
things any further, in the sequel we {\it always\/} impose these
additional restrictions on $R$, when $\Phi=\B_2,\G_2$. These two
cases, especially that of the group $\Sp(4,R)$, require separate
analysis anyway, [CK1], [CK2].

Since Chevalley groups of types other than $\A_l$ are only defined
over commutative rings, we can state the next result with the
full congruence subgroup right from the outset. It is (essentially)~\cite[Theorem~3]{43}, with slightly weaker assumptions for Chevalley 
groups of rank $2$.

\begin{TheC}\label{hhggaw1}
Let\/ $\Phi$ be a reduced irreducible root system,\/ $rk(\Phi)\ge 2$.
Further, let\/ $R$ be a commutative ring, and\/ $I,J\trianglelefteq R$
be two ideals of\/ $R$. In the cases\/ $\Phi=\B_2,\G_2$ assume
that\/ $R$ does not have residue fields\/ ${\Bbb F}_{\!2}$ of\/ $2$
elements and in the case\/ $\Phi=\B_2$ assume additionally that
any\/ $c\in R$ is contained in the ideal\/ $c^2R+2cR$. Then
$$ [E(\Phi,R,I),C(\Phi,R,J)]=[E(\Phi,R,I),E(\Phi,R,J)]. $$
\end{TheC}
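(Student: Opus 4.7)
The inclusion $[E(\Phi,R,I),E(\Phi,R,J)]\le[E(\Phi,R,I),C(\Phi,R,J)]$ is immediate from $E(\Phi,R,J)\le G(\Phi,R,J)\le C(\Phi,R,J)$, so only the reverse inclusion requires work. The plan is to mimic the Hall--Witt/three subgroup argument used to prove Theorem~\ref{gcformula1} in the linear case, transporting it to the Chevalley setting where the main substitutes for the ingredients of that proof are (i)~the absolute standard commutator formula for Chevalley groups with the full congruence subgroup, namely $[E(\Phi,R),C(\Phi,R,J)]=E(\Phi,R,J)$, which under our rank and characteristic assumptions is part of the Abe--Suzuki--Vavilov sandwich classification of subgroups normalised by $E(\Phi,R)$, and (ii)~Lemma~\ref{gbsinea6} itself.

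First, I rewrite $E(\Phi,R,I)$ as a commutator using the absolute standard formula,
\[
E(\Phi,R,I)=\bigl[E(\Phi,R),E(\Phi,R,I)\bigr],
\]
so that
\[
\bigl[E(\Phi,R,I),C(\Phi,R,J)\bigr]
=\Bigl[\bigl[E(\Phi,R),E(\Phi,R,I)\bigr],C(\Phi,R,J)\Bigr].
\]
Now I apply the three subgroup identity~(C7) to the three subgroups $E(\Phi,R)$, $E(\Phi,R,I)$, and $C(\Phi,R,J)$, all of which are normalised by $E(\Phi,R)$ (for $C(\Phi,R,J)$ this uses that it is the preimage of the centre of $G(\Phi,R/J)$). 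This bounds the expression above by the product
\[
\Bigl[\bigl[E(\Phi,R),C(\Phi,R,J)\bigr],E(\Phi,R,I)\Bigr]\cdot
\Bigl[E(\Phi,R),\bigl[E(\Phi,R,I),C(\Phi,R,J)\bigr]\Bigr].
\]

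For the first factor, ingredient (i) collapses the inner commutator to $E(\Phi,R,J)$, yielding $[E(\Phi,R,J),E(\Phi,R,I)]$, which is already contained in the desired group. For the second factor, the inner commutator $[E(\Phi,R,I),C(\Phi,R,J)]$ lies inside $G(\Phi,R,IJ)$ by Lemma~\ref{gbsinea6}, so the whole factor is contained in $[E(\Phi,R),G(\Phi,R,IJ)]$; applying Lemma~\ref{gbsinea6} once more (or equivalently the absolute standard commutator formula at level $IJ$) gives $E(\Phi,R,IJ)$, and the left-most inclusion of Lemma~\ref{gbsinea6} lands this inside $[E(\Phi,R,I),E(\Phi,R,J)]$. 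Combining the two estimates yields the forward inclusion.

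The main obstacle is a bookkeeping one rather than a conceptual one: one must verify that all invocations of the standard commutator formula and of Lemma~\ref{gbsinea6} remain valid under the stated restrictions on~$R$ when $\Phi=\B_2$ or $\G_2$, in particular that the absolute identity $[E(\Phi,R),C(\Phi,R,J)]=E(\Phi,R,J)$ still holds in these exceptional cases (the hypothesis that residue fields of order~$2$ are excluded, and the extra condition $c\in c^2R+2cR$ in type $\B_2$, are exactly what make the elementary subgroup perfect and make the upper and lower levels of relative commutator subgroups coincide, so these are the places where the argument could otherwise break down). Once this is in place, the three subgroup lemma dispatches the rest uniformly, exactly as in the linear proof of Theorem~\ref{gcformula1}.
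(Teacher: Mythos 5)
Your proof is correct and follows essentially the same route the paper itself uses: the Hall--Witt/three-subgroup reduction that the paper spells out in its proof of Theorem~\ref{gcformula1} for $\GL_n$, transported word-for-word to the Chevalley setting with Lemma~\ref{gbsinea6} playing the role of Lemma~\ref{Lem:Habdank}. The paper does not actually write out a proof of Theorem~\ref{hhggaw1} --- it only cites the literature and remarks that one can either localise or ``reduce to the absolute standard commutator formulas by level calculations'' --- and your argument is precisely that second, level-calculation route, so it matches the paper's stated methodology and its worked linear analogue. Your final paragraph correctly identifies where the rank-$2$ hypotheses enter (perfectness of $E(\Phi,R)$ and the coincidence of upper and lower levels in $\B_2$), which is exactly the caveat the paper itself flags after Lemma~\ref{gbsinea6}.
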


Actually, relative standard commutator formulas can be proven
by localisation, as in~\cite{46,42,43}, and this is precisely
the proof on which most generalisations are based. Otherwise, they
can be reduced to the {\it absolute\/} standard commutator formulas
by level calculations, as in~\cite{121,114,42,43}. Of course, 
the usual proofs of the absolute commutator formulas themselves 
in this generality involve some forms of localisation, at least 
in the non-commutative case.
\par
Before proceeding to higher generalisations, we dwell a bit more on
the structure and generation of the relative commutator subgroups
$[E(R,I),E(R,J)]$ that appear in these theorems. These results are
essentially elementary, sheer abstract or algebraic group theory,
and do not use localisation. But they are useful and amusing,
and serve to motivate, prove or amplify our main theorems.


\section{Relative commutator subgroups are not elementary}\label{gbdtmu43}

In view of Theorem~\ref{gcformula1}, it is natural to ask, whether the commutators
of relative elementary subgroups are themselves elementary of
the corresponding level, in other words, whether 
\begin{equation}\label{psy62}
[E(A,I),E(A,J)]=E(A,I\circ J)
\end{equation}
holds?

This is known to be the case in many important classical situations,
for instance, at the absolute level, where $I=A$ or $J=A$. In fact,
this equality holds under much weaker assumptions. Specifically,
it is easily verified when the ideals $I$ and $J$ are comaximal,
$I+J=A$. We will reproduce the proof of this fact in the setting of general linear group from~\cite{114}. The proof in the setting of unitary groups and Chevalley groups can be now found in \cite[Theorem~3]{42}, and 
\cite[Theorem~3]{43}, respectively. 

\begin{TheA}\label{yy771}
Let\/ $A$ be a quasi-finite ring,\/ $n\ge 3$. Then for any
two comaximal ideals\/ $I,J\trianglelefteq A$, $I+J=A$, one has
$$ [E(n,A,I),E(n,A,J)]=E(n,A,I\circ J). $$
\end{TheA}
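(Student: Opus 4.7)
The inclusion $E(n,A,I\circ J)\subseteq [E(n,A,I),E(n,A,J)]$ is immediate from Lemma~\ref{Lem:Habdank} applied to the pair of ideals $(I,J)$, so the only nontrivial content is the reverse inclusion.

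The plan is to exploit the comaximality $I+J=A$ in two stages. First, a short calculation in ring theory shows that comaximality forces $I\cap J = I\circ J$: indeed, $I\cap J = (I\cap J)(I+J)\subseteq (I\cap J)I+(I\cap J)J\subseteq JI+IJ = I\circ J$, while the opposite containment $I\circ J\subseteq I\cap J$ is obvious. By the Chinese Remainder Theorem this gives a ring isomorphism
\[
\bar A := A/(I\circ J)\;\cong\; A/I \times A/J,
\]
and correspondingly $E(n,\bar A)\cong E(n,A/I)\times E(n,A/J)$. Under the reduction map $\pi\colon E(n,A)\to E(n,\bar A)$, the image of $E(n,A,I)$ lies in the factor $\{e\}\times E(n,A/J)$ (since generators $e_{ij}(\alpha)$ with $\alpha\in I$ become trivial modulo $I$), while the image of $E(n,A,J)$ lies in $E(n,A/I)\times\{e\}$. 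These two factor subgroups commute element-wise, so $\pi\bigl([E(n,A,I),E(n,A,J)]\bigr)=\{e\}$, and hence
\[
[E(n,A,I),E(n,A,J)]\subseteq E(n,A)\cap \GL(n,A,I\circ J).
\]

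The second stage, and the main obstacle, is to upgrade this containment from the principal congruence subgroup to the relative elementary subgroup, i.e.\ to show that $[E(n,A,I),E(n,A,J)]$ actually lies in $E(n,A,I\circ J)$. Here I would use the generalised commutator formula (Theorem~\ref{gcformula1}) together with a direct level calculation. Write $1=a+b$ with $a\in I$ and $b\in J$; then every $\xi\in A$ splits as $\xi=a\xi+b\xi$ with $a\xi\in I$, $b\xi\in J$, so each $e_{ij}(\xi)=e_{ij}(a\xi)e_{ij}(b\xi)$ lies in $E(n,A,I)\cdot E(n,A,J)$. Combined with normality of the relative elementary subgroups (Theorem~\ref{standard}), this gives the product decomposition $E(n,A)=E(n,A,I)\cdot E(n,A,J)$, which allows any $E(n,A)$-conjugation appearing in generators from Lemma~\ref{Engenerator} to be split into an $E(n,A,I)$-part and an $E(n,A,J)$-part. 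Using the commutator identities (C1$^+$), (C2$^+$), (C4), (C5) to reduce any bracket $[x,y]$ with $x\in E(n,A,I)$, $y\in E(n,A,J)$ to a product of conjugates of elementary commutators $[e_{ij}(\alpha),e_{kl}(\beta)]$ with $\alpha\in I$, $\beta\in J$, the Steinberg relations (E1)--(E3) show that each such elementary commutator is either trivial, or equal to $e_{mn}(\alpha\beta)$, or to $e_{mn}(-\beta\alpha)$---in every case an element of $E(n,I\circ J)$, since $\alpha\beta\in IJ$ and $\beta\alpha\in JI$ are both contained in $I\circ J$. Since $E(n,A,I\circ J)$ is normal in $E(n,A)$, taking the normal closure finishes the proof.

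The delicate point is the reduction of normal-closure generators to elementary-level commutators: the naive attempt to show $[E(n,I),E(n,J)]\subseteq E(n,A,I\circ J)$ suffices only for the inner commutators, and one really needs the comaximality decomposition $E(n,A)=E(n,A,I)\cdot E(n,A,J)$ to handle outer conjugations. Without $I+J=A$ the proof breaks down, consistent with the counterexamples referenced in Section~\ref{gbdtmu43}.
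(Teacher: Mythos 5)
Your first stage (the CRT observation $A/(I\circ J)\cong A/I\times A/J$ when $I+J=A$) is correct and pretty, but note that it only re-derives the containment $[E(n,A,I),E(n,A,J)]\subseteq\GL(n,A,I\circ J)$, which Lemma~\ref{Lem:Habdank} already gives for \emph{arbitrary} ideals $I,J$ without any comaximality. So the real work is entirely in your second stage, and that is where there is a genuine gap.

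The claim that (C$1^+$), (C$2^+$), (C4), (C5) together with the decomposition $E(n,A)=E(n,A,I)\cdot E(n,A,J)$ reduce every bracket $[x,y]$ (with $x\in E(n,A,I)$, $y\in E(n,A,J)$) to a product of $E(n,A)$-conjugates of \emph{elementary} commutators $[e_{ij}(\alpha),e_{kl}(\beta)]$, $\alpha\in I$, $\beta\in J$, does not go through. After applying (C$1^+$)/(C$2^+$) one is left with conjugates of terms of the form $\bigl[{}^{g}e_{ij}(\alpha),{}^{h}e_{kl}(\beta)\bigr]$ with $g,h\in E(n,A)$, and these equal ${}^{g}\bigl[e_{ij}(\alpha),{}^{g^{-1}h}e_{kl}(\beta)\bigr]$ by (C5). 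Now writing $g^{-1}h=pq$ with $p\in E(n,A,I)$, $q\in E(n,A,J)$ and pushing $q$ through, you arrive at ${}^{gp}\bigl[\,{}^{p^{-1}}e_{ij}(\alpha),\,{}^{q}e_{kl}(\beta)\,\bigr]$, i.e.\ again a conjugate of a bracket of an $E(n,A,I)$-element with an $E(n,A,J)$-element. The reduction is circular: it never terminates at level-$I\circ J$ elementary commutators unless $g=h$, and the decomposition $E(n,A)=E(n,A,I)E(n,A,J)$ alone cannot force that. You flag this as ``the delicate point,'' but the delicate point is precisely where the argument is missing.

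The paper's actual proof sidesteps direct generation entirely and uses the decomposition in a bootstrap instead. Since $E(n,A,I)E(n,A,J)=E(n,A)$, the absolute commutator formula gives
\[
E(n,A,I)=\bigl[E(n,A,I),E(n,A,I)E(n,A,J)\bigr]\le \bigl[E(n,A,I),E(n,A,I)\bigr]\cdot\GL(n,A,I\circ J),
\]
the last containment coming from Lemma~\ref{Lem:Habdank} applied to $[E(n,A,I),E(n,A,J)]$. Commuting this inclusion with $E(n,A,J)$ and using Lemma~\ref{Lem:Habdank} and Theorem~\ref{standard} on each resulting factor lands everything in $E(n,A,I\circ J)$. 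The crucial point is that the level bound $[E(n,A,I),E(n,A,J)]\le\GL(n,A,I\circ J)$ is used as an \emph{input} to the bootstrap, and then the standard commutator formula $[\GL(n,A,K),E(n,A)]=E(n,A,K)$ closes the loop; no attempt is made to chase generators through conjugations. If you want to save your approach, you would need something like the explicit generating sets of Lemma~\ref{Comgenerator}/Theorem~\ref{gen-2} to control the outer conjugations, but at that point the paper's bootstrap is both shorter and cleaner.
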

\begin{proof}
First observe that an application of (E1) shows that for any ideals $I$ and $J$ of $A$, we have 
\begin{equation}\label{pagisj1}
E(n,A,I)E(n,A,J) = E(n,A,I+J).
\end{equation}
Since $I$ and $J$ are comaximal, from~(\ref{pagisj1}) it follows $E(n,A,I)E(n,A,J) = E(n,A)$.

Now 
\[E(n, A, I ) = [E(n, A,  I), E(n, A)] = [E(n, A, I), E(n, A, I)E(n, A, J)].\]
Thus using Lemma~\ref{Lem:Habdank}  we can write 
\begin{multline*}
E(n, A, I) \le  [E(n, A, I), E(n, A, I)][E(n, A, I), E(n, A, J)]
\le \\  [E(n, A, I), E(n, A, I)]\GL(n, A, IJ+JI).
\end{multline*}
Commuting this inclusion with $E(n, A,J)$, we see that
\begin{multline*} 
[E(n, A, I), E(n, A, J)] \le  \\
\Big[[E(n, A, I), E(n, A, I)], E(n, A, J)\Big]\Big[\GL(n, A, IJ+JI), E(n, A, J)\Big]. 
\end{multline*}

Applied to the second factor, the standard commutator formula, Theorem~\ref{standard},  shows that 
\begin{multline*} 
[\GL(n,A,IJ+JI),E(n,A,J)] \le \\ [\GL(n,A,IJ+JI),E(n,A)] = E(n,A,IJ+JI).
\end{multline*}

On the other hand, applying Lemma~\ref{Lem:Habdank}  to the first factor, and then invoking the  standard commutator formula again, we have 
\begin{multline*} 
\Big[[E(n, A, I), E(n, A, J)], E(n, A, I)\Big] \le  [\GL(n, A, IJ+JI), E(n, A, I)] \le  \\ [\GL(n,A,IJ+JI),E(n,A)] = E(n,A,IJ+JI).
\end{multline*}
Thus we have 
\[ [E(n,A,I),E(n,A,J)] \le  E(n,A,IJ+JI). \]
Combining this with Lemma~\ref{Lem:Habdank}, the proof is complete. 
\end{proof}

\begin{TheB}
Let\/ $n\ge 3$, and\/ $(A,\Lambda)$ be an arbitrary form
ring for which absolute standard commutator formulae are satisfied.
Then for any two comaximal form ideals\/ $(I,\Gamma)$ and $(J,\Delta)$
of the form ring $(A,\Lambda)$, $I+J=A$, one has the following equality
$$ [\EU(2n,I,\Gamma),\EU(2n,J,\Delta)]=
\EU(2n,IJ+JI,{}^J\Gamma+{}^I\Delta+\Gamma_{\min}(IJ+JI)). $$
\end{TheB}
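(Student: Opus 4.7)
The plan is to mirror the proof of Theorem~\ref{yy771} (the linear comaximal case), substituting Lemma~\ref{yyqq1} for Lemma~\ref{Lem:Habdank} and the absolute unitary standard commutator formula (which is the standing hypothesis) for its linear counterpart. The inclusion
\[
\EU(2n,(I,\Gamma)\circ(J,\Delta))\le [\EU(2n,I,\Gamma),\EU(2n,J,\Delta)]
\]
is already granted by Lemma~\ref{yyqq1}, so the task reduces to the reverse containment
\[
[\EU(2n,I,\Gamma),\EU(2n,J,\Delta)]\le \EU(2n,(I,\Gamma)\circ(J,\Delta)).
\]

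The key preparatory step is the unitary analog of the identity $E(n,A,I)E(n,A,J)=E(n,A)$ used in the linear case, namely
\[
\EU(2n,I,\Gamma)\cdot\EU(2n,J,\Delta)=\EU(2n,A,\Lambda).
\]
Fix $1=a+b$ with $a\in I$ and $b\in J$. For a short root generator $T_{ij}(\xi)$ with $i\ne\pm j$ and $\xi\in A$, write $\xi=\alpha+\beta$ with $\alpha\in I$, $\beta\in J$, and use (R2) to split $T_{ij}(\xi)=T_{ij}(\alpha)T_{ij}(\beta)$. For a long root generator $T_{i,-i}(\alpha)$ with $\alpha\in\Lambda$ (the case $i<0$; the case $i>0$ is symmetric), expand
\[
\alpha=(a+b)\alpha(\bar a+\bar b)=a\alpha\bar a+(b\alpha\bar a+a\alpha\bar b)+b\alpha\bar b.
\]
By the form parameter axioms, $a\alpha\bar a\in\Gamma_{\min}(I)\le\Gamma$ and $b\alpha\bar b\in\Gamma_{\min}(J)\le\Delta$. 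The middle summand equals $\xi-\lambda\bar\xi$ for $\xi=b\alpha\bar a\in JI\subseteq J$ (using $\bar\alpha=-\bar\lambda\alpha$), so by (R5) applied with some $j\ne\pm i$ one has $T_{i,-i}(b\alpha\bar a+a\alpha\bar b)=[T_{ij}(b\alpha\bar a),T_{j,-i}(1)]\in\EU(2n,J,\Delta)$, since the latter is normal in $\EU(2n,A,\Lambda)$. Applying (R2) now exhibits $T_{i,-i}(\alpha)$ as a product of three factors belonging to $\EU(2n,I,\Gamma)\cdot\EU(2n,J,\Delta)$, and normality makes this product a subgroup.

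From here I follow the linear argument. By the absolute commutator formula,
\[
\EU(2n,I,\Gamma)=[\EU(2n,I,\Gamma),\EU(2n,A,\Lambda)]=[\EU(2n,I,\Gamma),\EU(2n,I,\Gamma)\cdot\EU(2n,J,\Delta)].
\]
Expanding with (C1) and bounding the mixed factor via Lemma~\ref{yyqq1} gives
\[
\EU(2n,I,\Gamma)\le [\EU(2n,I,\Gamma),\EU(2n,I,\Gamma)]\cdot\GU(2n,(I,\Gamma)\circ(J,\Delta)).
\]
Commuting both sides with $\EU(2n,J,\Delta)$ and using (C2) produces two types of factors. The factor $[\GU(2n,(I,\Gamma)\circ(J,\Delta)),\EU(2n,J,\Delta)]$ is absorbed into $\EU(2n,(I,\Gamma)\circ(J,\Delta))$ by the absolute standard commutator formula. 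The factor $\bigl[[\EU(2n,I,\Gamma),\EU(2n,I,\Gamma)],\EU(2n,J,\Delta)\bigr]$ is broken by the three-subgroup identity (C7) into pieces of the shape $[\GU(2n,(I,\Gamma)\circ(J,\Delta)),\EU(2n,I,\Gamma)]$ and $[\EU(2n,I,\Gamma),\GU(2n,(I,\Gamma)\circ(J,\Delta))]$ after one application of Lemma~\ref{yyqq1} to the inner bracket, and each of these is again absorbed into $\EU(2n,(I,\Gamma)\circ(J,\Delta))$ by the absolute commutator formula.

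The main obstacle I anticipate is the decomposition of the long root generators in the preparatory step: the cross term $b\alpha\bar a+a\alpha\bar b$ has no counterpart in the linear proof, where (E1) splits every generator trivially, and its treatment via (R5) is precisely the concrete source of the $\Gamma_{\min}(IJ+JI)$ summand in $(I,\Gamma)\circ(J,\Delta)$. Sign conventions, the separate case of $i>0$ (which requires the twist by $\lambda$ in the parameter set $\lambda^{-(\ep(i)+1)/2}\Lambda$), and the routine bookkeeping of short versus long root parameters will demand care, but should not affect the overall architecture of the proof.
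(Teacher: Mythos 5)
Your proposal is correct and is exactly the translation of the paper's proof of the linear comaximal case (Theorem~\ref{yy771}) that the text intimates but does not spell out; for Theorem~B the paper merely cites an external source. The one genuinely unitary step you isolate is the product formula $\EU(2n,I,\Gamma)\cdot\EU(2n,J,\Delta)=\EU(2n,A,\Lambda)$, and the decomposition $\alpha=a\alpha\bar a+(b\alpha\bar a+a\alpha\bar b)+b\alpha\bar b$ for a long-root parameter, with $a\alpha\bar a\in\Gamma_{\min}(I)\le\Gamma$ and $b\alpha\bar b\in\Gamma_{\min}(J)\le\Delta$, is the right device; once this is in place, the (C1)/(C2)/(C7) manipulation combined with Lemma~\ref{yyqq1} and the absolute standard commutator formula faithfully replicates the linear argument. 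Two minor remarks. First, the detour through (R5) for the cross term is unnecessary: since $b\alpha\bar a\in J$ and, using $\bar\alpha=-\bar\lambda\alpha$, one has $b\alpha\bar a+a\alpha\bar b=\xi-\lambda\bar\xi$ with $\xi=b\alpha\bar a$, this already lies in $\Gamma_{\min}(J)\le\Delta$, so $T_{i,-i}(b\alpha\bar a+a\alpha\bar b)$ is a long-root generator of $\FU(2n,J,\Delta)$ outright. Second, the closing heuristic is slightly off: the cross term is not the source of the $\Gamma_{\min}(IJ+JI)$ summand in the resulting level; that summand is built into the definition of $(I,\Gamma)\circ(J,\Delta)$ and already surfaces in the lower bound supplied by Lemma~\ref{yyqq1}, independently of comaximality.
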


\begin{TheC}\label{ttrr6d}
Let\/ $\Phi$ be a reduced irreducible root system,\/ $rk(\Phi)\ge 2$.
Further, let\/ $A$ be a commutative ring, and\/ $I,J\trianglelefteq A$
be two ideals of\/ $A$. In the cases\/ $\Phi=\B_2,\G_2$ assume
that\/ $A$ does not have residue fields\/ ${\Bbb F}_{\!2}$ of\/ $2$
elements. Then for any two comaximal ideals $I,J\trianglelefteq A$,
$I+J=A$, one has the following equality
$$ [E(\Phi,A,I),E(\Phi,A,J)]=E(\Phi,A,IJ). $$
\end{TheC}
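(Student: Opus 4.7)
The plan is to transport the proof of Theorem~\ref{yy771} into the Chevalley setting, replacing Lemma~\ref{Lem:Habdank} by Lemma~\ref{gbsinea6} and Theorem~\ref{standard} by Theorem~\ref{hhggaw1}. Commutativity of $A$ collapses $I\circ J$ to $IJ$, and Lemma~\ref{gbsinea6} already delivers the inclusion $E(\Phi,A,IJ)\le [E(\Phi,A,I),E(\Phi,A,J)]$, so it suffices to prove the reverse inclusion.

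The first step is the factorisation $E(\Phi,A,I)\cdot E(\Phi,A,J)=E(\Phi,A)$. Since every $\xi\in A=I+J$ decomposes as $\xi=\xi_I+\xi_J$, each root unipotent generator splits as $x_\alpha(\xi)=x_\alpha(\xi_I)x_\alpha(\xi_J)$, placing all generators of $E(\Phi,A)$ in the product. Closure under multiplication follows because $E(\Phi,A,I)$ is, by definition, normal in $E(\Phi,A)$ and hence is normalised by $E(\Phi,A,J)\le E(\Phi,A)$; the usual swap $(x_1y_1)(x_2y_2)=x_1(y_1x_2y_1^{-1})y_1y_2$ closes the product.

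Now mimic the linear proof. Theorem~\ref{hhggaw1} (absolute case) gives
$$E(\Phi,A,I)=[E(\Phi,A),E(\Phi,A,I)]=[E(\Phi,A,I)\,E(\Phi,A,J),\,E(\Phi,A,I)].$$
Expanding via (C2) and using Lemma~\ref{gbsinea6}, which yields $[E(\Phi,A,I),E(\Phi,A,J)]\le G(\Phi,A,IJ)$ (a normal subgroup, so the conjugation disappears), one deduces
$$E(\Phi,A,I)\le [E(\Phi,A,I),E(\Phi,A,I)]\cdot G(\Phi,A,IJ).$$
Commuting both sides with $E(\Phi,A,J)$ and applying (C2) once more,
$$[E(\Phi,A,I),E(\Phi,A,J)]\le \bigl[[E(\Phi,A,I),E(\Phi,A,I)],E(\Phi,A,J)\bigr]\cdot [G(\Phi,A,IJ),E(\Phi,A,J)].$$
The second factor collapses to $[G(\Phi,A,IJ),E(\Phi,A)]=E(\Phi,A,IJ)$ by Theorem~\ref{hhggaw1}. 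The first factor is handled by the three-subgroup lemma (C7), which reshuffles it into double commutators of the form $\bigl[[E(\Phi,A,I),E(\Phi,A,J)],E(\Phi,A,I)\bigr]$; each sits inside $[G(\Phi,A,IJ),E(\Phi,A,I)]\le [G(\Phi,A,IJ),E(\Phi,A)]=E(\Phi,A,IJ)$ by Lemma~\ref{gbsinea6} followed by Theorem~\ref{hhggaw1}. Combining finishes the reverse inclusion.

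The main obstacle is not the commutator yoga above, which is essentially mechanical, but rather securing the two Chevalley prerequisites in sufficient generality: the absolute and relative standard commutator formulas of Theorem~\ref{hhggaw1} and the Chevalley Habdank inclusion of Lemma~\ref{gbsinea6}. This is precisely where the restrictions on residue fields in the cases $\Phi=B_2,G_2$ must be imposed, and where the long-root/short-root distinction in the Chevalley commutator formula replaces the purely additive behaviour of elementary matrices exploited in Lemma~\ref{Lem:Habdank}. Once those prerequisites are in hand, the argument sketched above goes through uniformly.
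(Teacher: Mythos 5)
Your transport of the $\GL_n$ argument to the Chevalley setting is structurally sound and, for $\Phi\ne\B_2$, it is essentially the same argument as in the linear and unitary cases: the comaximal decomposition $E(\Phi,A,I)E(\Phi,A,J)=E(\Phi,A)$, the standard commutator formula, the Habdank-type sandwich of Lemma~\ref{gbsinea6}, and the three-subgroup lemma combine exactly as in Theorem~\ref{yy771}.

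However, there is a concrete gap for $\Phi=\B_2$. Your proof invokes Lemma~\ref{gbsinea6} three times (for the lower bound $E(\Phi,A,IJ)\le[E(\Phi,A,I),E(\Phi,A,J)]$, for the upper bound into $G(\Phi,A,IJ)$, and again after the three-subgroup shuffle), and that lemma carries the additional hypothesis for $\B_2$ that every $c\in R$ lies in $c^2R+2cR$. Theorem~\ref{ttrr6d} deliberately drops this hypothesis. As the paper notes immediately after the theorem statement, and again in the discussion following Lemma~\ref{gbsinea6}, without this extra condition the left-most inclusion of Lemma~\ref{gbsinea6} fails: one only has $E(\Phi,R,IJ,I^2J+2IJ+IJ^2)\le[E(\Phi,R,I),E(\Phi,R,J)]$, because the long-root level of the relative commutator subgroup is strictly larger than the Jordan ideal $(IJ)^2+2IJ$. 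Showing that the extra $\B_2$ hypothesis becomes redundant precisely in the comaximal case requires tracking the short-root and long-root levels separately through the whole argument in the language of admissible pairs $(I,J)$ in the sense of Abe, which your proof does not do. You flag this vaguely as ``securing the two Chevalley prerequisites in sufficient generality,'' but this is where the real work is; as written, your argument only proves the theorem under the stronger $\B_2$ hypothesis inherited from Lemma~\ref{gbsinea6}.
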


Observe, that unlike Theorem~\ref{hhggaw1}, in Theorem~\ref{ttrr6d} the extra
assumption on $R$ for type $\B_2$ turned out to be redundant
(due to more accurate level calculations in terms of admissible pairs).

\subsection{}\label{countexam}
Despite Theorem~\ref{yy771}, the relative commutator subgroup $[E(A,I),E(A,J)]$ cannot be
always ele\-mentary of the form (\ref{psy62}). We reproduce from~\cite{72,70} one
such example based on the calculation of relative $K_1$-functors
for Dedekind rings of arithmetic type by Hyman Bass, John Milnor
and Jean-Pierre Serre~\cite{15}. We do not make any attempt to recall
the explicit formula for
$$ \SK_1(n,A,I)=\SL(n,A,I)/E(n,A,I) $$
in the general case. Instead, we cite the explicit answer for the {\it first\/} non-trivial
case of Gaussian integers $A=\Int[i]$. Consider the prime ideal
$\prim=(1+i)A$. Then for any $n\ge 3$ and any ideal
$I\trianglelefteq A$ one has
$$ \SK_1(n,A,I)=\SK_1(n,A,\prim^s),\qquad s=\ord_{\prim}(I). $$
\noindent
On the other hand,
$$ \SK_1(n,A,\prim^s)=
\begin{cases} 1,&s\le 3,\\ 2,&s=4,5,\\ 4,&s\ge 6.\\ \end{cases} $$
\noindent
Now a straightforward calculation shows that
\begin{multline*}
E(n,\Int[i],\prim^6)<
[E(n,\Int[i],\prim^3),E(n,\Int[i],\prim^3)]=\\
[\SL(n,\Int[i],\prim^3),\SL(n,\Int[i],\prim^3)]<\SL(n,\Int[i],\prim^6),
\end{multline*}
where {\it both\/} inclusions are strict. In fact, both indices
are equal to 2.

This, and many further examples of arithmetic and algebra-geometric
nature show that in general the relative commutator subgroup
$[E(n,A,I),E(n,A,J)]$ is {\it strictly larger\/} than the relative
elementary subgroup $E(n,A,I\circ J)$.

In particular, it follows that in general
$$ [E(n,A,I),E(n,A,J)]\neq [E(n,A,K),E(n,A,L)] $$
for two pairs of ideals $(I,J)$ and $(K,L)$, such that
$I\circ J=K\circ L$. In fact, this already follows from the
previous example, for pairs $(I,J)$ and $(K,L)=(I\circ J,A)$,
but it is easy to construct many further examples, much fancier
than that.

Summarising the above, we can conclude that in general the
double relative commutator subgroups do not reduce to relative
elementary subgroups, and reveal some new layers of the internal
structure of $\K_1(A,I)$.

Amazingly, all higher multiple commutator
subgroups reduce to {\it double\/} commutator subgroups. In
other words, forming successive com\-mu\-tators of relative elementary
subgroups {\it never\/} results in anything new inside $K_1(A,K)$,
apart from the groups
$$ [E(A,I),E(A,J)]/E(A,K)\le K_1(A,K), $$
for some other ideals $I$ and $J$, such that $I\circ J=K$. We will discuss this in~\S\ref{ziuso1}.

\section{Generators of relative commutator subgroups}

Here, we describe generators of relative commutator subgroups
$[E(A,I),E(A,J)]$ as normal subgroups of $E(A)$. These results are
elementary algebraic group theory, but they are an essential
complement to Theorem~\ref{gcformula1}, an important tool in the proof of
multiple commutator formula, and the starting point for results
on relative commutator width.

By Lemma~\ref{Lem:Habdank} the relative commutator subgroup $[E(A,I),E(A,J)]$
contains the elementary subgroup $E(A,I\circ J)$.
In particular, it contains the generators of that group.
However, we know that in general $[E(A,I),E(A,J)]$ may be
strictly larger, than $E(A,I\circ J)$ (see~\S\ref{gbdtmu43}). Thus, we have to produce
the missing generators. As in the case of the relative elementary
subgroups $E(A,I)$ themselves, these generators will sit in the
fundamental $\SL_2$'s and are in fact commutators of {\it some\/}
elementary generators of $E(\Phi,A,I)$ and $E(\Phi,A,J)$.

\begin{LemA}\Label{Comgenerator}
Let $A$ be a ring and $I, J$ be two-sided ideals of $A$. Then \[\big[E(n,A,I), E(n,A,J)\big]\] is generated as a group by the elements of the form 
\begin{align}\label{yyttrree}
&^{c}\Big[e_{j,i}(\alpha),{}^{e_{i,j}(a)}e_{j,i}(\beta)\Big], \notag\\
&^{c}\big[e_{j,i}(\alpha), e_{i,j}(\beta)\big],\notag\\
&^{c}e_{i,j}(\alpha\beta),\notag\\ 
&^{c}e_{i,j}(\beta\alpha),
\end{align}
where $1\leq i\ne j\leq n$, $\alpha\in I$, $\beta\in J$, $a \in A$ and $c\in E(n,A)$.
\end{LemA}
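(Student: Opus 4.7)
The plan is as follows. Let $H$ denote the subgroup of $E(n,A)$ generated by the four listed families.

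\emph{Inclusion $H \subseteq [E(n,A,I),E(n,A,J)]$.} By Lemma~\ref{Engenerator}, ${}^{e_{ij}(a)}e_{ji}(\beta)$ is a generator of $E(n,A,J)$, so Types (1) and (2) are visibly commutators of an element of $E(n,A,I)$ with one of $E(n,A,J)$. Types (3) and (4) lie in $E(n,A,IJ+JI)$, which is contained in $[E(n,A,I),E(n,A,J)]$ by Lemma~\ref{Lem:Habdank}. Since $E(n,A,I)$ and $E(n,A,J)$ are both normal in $E(n,A)$, their commutator subgroup is also normal and absorbs the outer conjugation by $c \in E(n,A)$.

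\emph{Reverse inclusion.} I would prove this by a two-stage reduction followed by a case analysis on indices. First, $[E(n,A,I),E(n,A,J)]$ is generated, as a group, by plain commutators $[u,v]$ with $u \in E(n,A,I)$ and $v \in E(n,A,J)$. Writing $u=\prod_r z_{i_r j_r}(a_r,\alpha_r)$ and $v=\prod_s z_{k_s l_s}(b_s,\beta_s)$ via Lemma~\ref{Engenerator}, and then applying (C1$^+$) and (C2$^+$), one expresses $[u,v]$ as a product of $E(n,A)$-conjugates of elementary bi-generator commutators $[z_{ij}(a,\alpha), z_{kl}(b,\beta)]$. Since $H$ is $E(n,A)$-conjugation invariant (such conjugations are absorbed into the outer $c$ in each of the four listed types), the task reduces to showing every bi-generator commutator lies in $H$.

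Next, using identity (C5) one strips off the inner conjugation of the first factor,
$$\bigl[z_{ij}(a,\alpha), z_{kl}(b,\beta)\bigr] \;=\; {}^{e_{ji}(a)}\bigl[e_{ij}(\alpha), {}^{e_{ji}(-a)}z_{kl}(b,\beta)\bigr],$$
and since ${}^{e_{ji}(-a)}z_{kl}(b,\beta) \in E(n,A,J)$, a second application of Lemma~\ref{Engenerator} combined with (C1$^+$) reduces the problem, modulo $H$, to handling $[e_{ij}(\alpha), z_{kl}(b,\beta)]$. Now one runs through the index configurations: (a) if $\{i,j\}\cap\{k,l\}=\emptyset$, the commutator is trivial by (E2); (b) if $\{i,j\}=\{k,l\}$ as sets, then either $(k,l)=(i,j)$ and one obtains $[e_{ij}(\alpha), {}^{e_{ji}(b)}e_{ij}(\beta)]$, which is a Type (1) element after relabeling indices, or $(k,l)=(j,i)$ and (C4) together with the additivity (E1) (which forces ${}^{e_{ij}(-b)}e_{ij}(\alpha)=e_{ij}(\alpha)$) collapses the commutator to an $E(n,A)$-conjugate of $[e_{ij}(\alpha), e_{ji}(\beta)]$, i.e.\ a Type (2) element; (c) if exactly one of the indices is shared, repeated application of the Steinberg relations (E1)--(E3) together with (C1), (C2), and, where necessary, the Hall--Witt identity (C3) splits the commutator into a product of elementary matrices in positions of the form $e_{rs}(\alpha\beta)$ and $e_{rs}(\beta\alpha)$, i.e.\ Types (3) and (4), together with $E(n,A)$-conjugates that can be absorbed into $c$.

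The main obstacle is the bookkeeping in case (c): because $b$ in $z_{kl}(b,\beta)$ ranges over the whole ring $A$, intermediate factors such as $[e_{ij}(\alpha), e_{lk}(b)]$ produce elementary entries in $I$ only, which a priori do not lie in $IJ+JI$, so these ``off-level'' contributions must cancel across the product. The Hall--Witt identity (C3) is the natural tool for repackaging triple commutators and exhibiting these cancellations, in the same spirit as the multi-page length induction carried out in the proof of Lemma~\ref{Engenerator}; carrying this out systematically for every configuration of indices is the principal piece of work.
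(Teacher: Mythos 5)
Your proposal follows the same strategy as the paper: expand $u\in E(n,A,I)$ and $v\in E(n,A,J)$ into $z$-generators via Lemma~\ref{Engenerator}, apply (C1$^+$)/(C2$^+$) to reduce to $E(n,A)$-conjugates of commutators of pairs of $z$-generators, strip the inner conjugation off the first factor by (C4)/(C5), re-absorb the resulting conjugate of $v$ into $E(n,A,J)$ and re-expand via Lemma~\ref{Engenerator}, and finally run an index case analysis on $[e_{i'j'}(\alpha),{}^{e_{ij}(a)}e_{ji}(\beta)]$. Your inclusion $H\subseteq[E(n,A,I),E(n,A,J)]$ and your cases (a) and (b) are correct and match the paper's treatment.

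Your case (c), which you yourself flag as ``the principal piece of work,'' is left as a sketch, and your forecast of what is needed there is off. You anticipate off-level intermediate factors such as $e_{ij'}(-a\alpha)\in E(n,I)$ having to ``cancel across the product'' and propose the Hall--Witt identity as the natural tool. Neither issue actually arises. In each of the four case-(c) configurations, after pushing the conjugation inward with (C4) and expanding ${}^{e_{ij}(-a)}e_{i'j'}(\alpha)$ via (E3), the off-level factor commutes \emph{outright} with $e_{ji}(\beta)$ by (E2) -- the shared index sits in a non-interfering position -- and therefore drops out of that single commutator; what survives is a commutator of two elementary matrices of levels $I$ and $J$, which (E3) evaluates to a single $e_{rs}(\alpha\beta)$ or $e_{rs}(\beta\alpha)$ conjugated by $e_{ij}(a)$. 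For instance, with $i'=j$ and $j'\neq i$:
$$\big[e_{jj'}(\alpha),{}^{e_{ij}(a)}e_{ji}(\beta)\big]={}^{e_{ij}(a)}\big[e_{ij'}(-a\alpha)e_{jj'}(\alpha),e_{ji}(\beta)\big]={}^{e_{ij}(a)}\big[e_{ij'}(-a\alpha),e_{ji}(\beta)\big]={}^{e_{ij}(a)}e_{jj'}(\beta a\alpha),$$
the middle step being exactly (E2) applied to $[e_{jj'}(\alpha),e_{ji}(\beta)]$, and $\beta a\alpha\in JI$ so the result is of Type (4). The remaining three configurations collapse the same way in two or three lines each. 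Hall--Witt never enters, and there is no cross-term bookkeeping; this is why the paper's proof of this lemma is short, in contrast to the length induction in Lemma~\ref{Engenerator} that you compare it to.
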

\begin{proof}
A typical generator of $\big[E(n,A,I), E(n,A,J)\big]$ is of the form 
$[e,f]$, where $e \in E(n,A,I)$ and $f \in E(n,A,J)$. Thanks to Lemma~\ref{Engenerator}, we may assume that $e$ and $f$ are products of elements of the form 
$$
e_{i}=^{e_{p',q'}(a)} e_{q',p'}(\alpha)  \quad \text{and} \quad 
f_{j}=^{e_{p,q}(b)} e_{q,p}(\beta),
$$
where $a,b\in A$, $\alpha\in I $ and $\beta\in J$, respectively. Applying (C$1^+$) and then (C$2^+$), it follows that
$\big[E(n,A,I), E(n,A,J)\big]$ is generated by  elements of the form 
$$^{c}\big[^{e_{i',j'}(a)}e_{j',i'}(\alpha),{}^{e_{i,j}(b)}e_{j,i}(\beta)\big],$$
where $c\in E(n,A)$. Furthermore, 
$$
^c\big[^{e_{i',j'}(a)}e_{j',i'}(\alpha),{}^{e_{i,j}(b)}e_{j,i}(\beta)\big]=^{ce_{i',j'}(a)}\big[e_{j',i'}(\alpha),{}^{e_{i',j'}(-a)e_{i,j}(b)}e_{j,i}(\beta)\big].
$$
The normality of $E(n,A,J)$ implies that ${}^{e_{i',j'}(-a)e_{i,j}(b)}e_{j,i}(\beta)\in E(n,A,J)$, which is a product of  $^{e_{p,q}(a)}e_{q,p}(\beta)$, $a\in A$ and $\beta \in J$ by Lemma~\ref{Engenerator}. Again by (C$1^+$), one reduces the proof to the case 
of showing that
$$
\big[e_{i',j'}(\alpha),{}^{e_{i,j}(a)}e_{j,i}(\beta)\big]
$$
is a product of the generators listed in~(\ref{yyttrree}). We need to consider the following cases:
\begin{itemize}
\item If $i'=j,j'=i$: Then there is nothing to prove.

\smallskip 

\item if $i'=j, j'\ne i$:
\begin{eqnarray*}
\big[e_{j,j'}(\alpha),{}^{e_{i,j}(a)}e_{j,i}(\beta)\big]&=&{}^{e_{i,j}(a)}\big[{}^{e_{i,j}(-a)}e_{j,j'}(\alpha ), e_{j,i}(\beta)\big]\\
&=&{}^{e_{i,j}(a)}\big[[{e_{i,j}(-a)},e_{j,j'}(\alpha )]e_{j,j'}(\alpha ), e_{j,i}(\beta)\big]\\
&=&{}^{e_{i,j}(a)}\big[e_{i,j'}(-a\alpha )e_{j,j'}(\alpha ), e_{j,i}(\beta )\big].
\end{eqnarray*}
Applying now (C2),
\begin{eqnarray*}
[e_{i,j'}(-a\alpha )e_{j,j'}(\alpha ), e_{j,i}(\beta )]&=&
\big({}^{e_{i,j'}(-a\alpha )}[e_{j,j'}(\alpha ), e_{j,i}(\beta )]\big)[e_{i,j'}(-a\alpha ), e_{j,i}(\beta )]\\
&=&[e_{i,j'}(-a\alpha ), e_{j,i}(\beta )]\\
&=&[ e_{j,i}(\beta ),e_{i,j'}(-a\alpha )]^{-1}\\
&=&e_{j,j'}(-\beta a \alpha)^{-1}\\
&=&e_{j,j'}(\beta a \alpha)
\end{eqnarray*}
Thus 
$$
\big[e_{j,j'}(\alpha),{}^{e_{i,j}(a)}e_{j,i}(\beta)\big]={}^{e_{i,j}(a)}e_{j,j'}(\beta a \alpha)
$$
which satisfies the lemma.
\item if $i'\ne j, j'= i$: The argument is similar to the previous case.

\smallskip 

\item if $i'\not =j, j' \not = i$: We consider four cases: 
\begin{itemize} 

\smallskip 

\item if $i'=i,j'=j$: 
$$
\big[e_{i,j}(\alpha),{}^{e_{i,j}(a)}e_{j,i}(\beta)\big]={}^{e_{i,j}(a)}\big[e_{i,j}(\alpha),e_{j,i}(\beta)\big].
$$
\item if $i'=i,j'\ne j$:
\begin{eqnarray*}
\big[e_{i,j'}(\alpha),{}^{e_{i,j}(a)}e_{j,i}(\beta)\big]&=&{}^{e_{i,j}(a)}\big[e_{i,j'}(\alpha),e_{j,i}(\beta)\big]\\
&=&{}^{e_{i,j}(a)}e_{j,j'}(-\beta\alpha).
\end{eqnarray*}

\item if $i'\ne i,j'=j$:
 \begin{eqnarray*}
\big[e_{i',j}(\alpha),{}^{e_{i,j}(a)}e_{j,i}(\beta)\big]&=&{}^{e_{i,j}(a)}\big[e_{i',j}(\alpha),e_{j,i}(\beta)\big]\\
&=&{}^{e_{i,j}(a)}e_{i,i'}(\alpha\beta).
\end{eqnarray*}

\item if $i'\ne i, j'\ne j$: 
$$
\big[e_{i',j'}(\alpha),{}^{e_{i,j}(a)}e_{j,i}(\beta)\big]=1.
$$

\end{itemize}
\end{itemize}
This finishes the proof.
\end{proof}


\begin{TheA}\label{gen-2}
Let $A$ be a quasi-finite algebra with $1$, let $n\ge 3$, and let 
$I$, $J$ be two-sided ideals of $A$. Then the mixed commutator
subgroup $\big[E(n,A,I),E(n,A,J)\big]$
is generated as a group by the elements of the form
\begin{align}\label{yyttrree2}
&\big[e_{ji}(\alpha),{}^{e_{ij}(a)}e_{ji}(\beta)\big],\notag\\
&\big[e_{ji}(\alpha),e_{ij}(\beta)\big],\notag\\
&z_{ij}(\alpha\beta,a),\notag \\  
&z_{ij}(\beta\alpha,a),
\end{align}
where $1\le i\neq j\le n$, $\alpha\in I$, $\beta\in J$, $a\in A$.
\end{TheA}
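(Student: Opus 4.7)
The plan is to start from the generators given by Lemma~\ref{Comgenerator} and absorb the outer conjugation by $c\in E(n,A)$. Lemma~\ref{Comgenerator} already expresses $[E(n,A,I),E(n,A,J)]$ as generated (as a group) by $E(n,A)$-conjugates of four basic types of elements; the target list in~(\ref{yyttrree2}) differs by either removing the outer conjugation entirely (first two types) or replacing the arbitrary $c$ by the specific $e_{ji}(a)$ (last two types). I would split the reduction into these two cases.

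For the last two families $^{c}e_{ij}(\alpha\beta)$ and $^{c}e_{ij}(\beta\alpha)$, each such element lies in the relative elementary subgroup $E(n,A,IJ+JI)$. By Lemma~\ref{Engenerator}, this subgroup is generated as a group by the elements $z_{ij}(\gamma,a)$ with $\gamma\in IJ+JI$ and $a\in A$. Any such $\gamma$ can be written as a finite sum $\sum_{k}\alpha_{k}\beta_{k}+\sum_{l}\beta'_{l}\alpha'_{l}$ with $\alpha_k,\alpha'_l\in I$ and $\beta_k,\beta'_l\in J$; since (E1) inside the conjugation yields $z_{ij}(\gamma_{1}+\gamma_{2},a)=z_{ij}(\gamma_{1},a)z_{ij}(\gamma_{2},a)$, each $z_{ij}(\gamma,a)$ factors as a product of $z_{ij}(\alpha_{k}\beta_{k},a)$ and $z_{ij}(\beta'_{l}\alpha'_{l},a)$, which are exactly the third and fourth types of generators in~(\ref{yyttrree2}).

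For the first two families, let $w$ denote one of $\big[e_{ji}(\alpha),{}^{e_{ij}(a)}e_{ji}(\beta)\big]$ or $\big[e_{ji}(\alpha),e_{ij}(\beta)\big]$; note that $w$ itself already appears on the desired generator list. I would write $^{c}w=[c,w]\cdot w$, so it suffices to decompose $[c,w]$ into generators of the form~(\ref{yyttrree2}). Lemma~\ref{Lem:Habdank} yields $w\in[E(n,A,I),E(n,A,J)]\le\GL(n,A,IJ+JI)$, and therefore
$$[c,w]\in\big[E(n,A),\GL(n,A,IJ+JI)\big]=E(n,A,IJ+JI),$$
where the last equality is the standard commutator formula, Theorem~\ref{standard}, which applies because $A$ is quasi-finite and $n\ge 3$. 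The previous paragraph now expresses $[c,w]$ as a product of $z_{ij}(\alpha\beta,a)$ and $z_{ij}(\beta\alpha,a)$, completing the absorption.

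The principal obstacle—and the reason the hypotheses of quasi-finiteness and $n\ge 3$ enter—is precisely the step invoking the standard commutator formula to write $[c,w]$ as an element of $E(n,A,IJ+JI)$; without Suslin's theorem one could not pass from ``normal generators of $[E(n,A,I),E(n,A,J)]$ inside $E(n,A)$'' as produced by Lemma~\ref{Comgenerator} to ``ordinary group generators'' of the form listed in~(\ref{yyttrree2}). Everything else is bookkeeping with the commutator identities (C1), (C2) and the Steinberg relations (E1)--(E3).
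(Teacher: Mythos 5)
Your proposal is correct and follows essentially the same route as the paper: starting from the normal generators supplied by Lemma~\ref{Comgenerator}, absorbing a conjugation by $c\in E(n,A)$ via $cgc^{-1}=[c,g]\,g$, locating $[c,g]$ inside $E(n,A,I\circ J)$ through Lemma~\ref{Lem:Habdank} and the standard commutator formula (Theorem~\ref{standard}), and then invoking Lemma~\ref{Engenerator} together with additivity of $z_{ij}(\cdot,a)$ to decompose the result into generators of the third and fourth type. Your two-case split and the explicit decomposition of $\gamma\in IJ+JI$ simply spell out what the paper's shorter proof leaves implicit.
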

\begin{proof}
By Lemma~\ref{Comgenerator}, the current generating set~(\ref{yyttrree2}) generates $\big[E(n,A,I),E(n,A,J)\big]$ as a normal subgroup. Therefore, it suffices to show that any conjugates of the generators~(\ref{yyttrree2}) is a product of these generators. Let $g$ be a generator listed in~(\ref{yyttrree2}), and $c\in E(n,A)$. Lemma~\ref{Lem:Habdank}  shows that 
$g\in \GL(n,A, I\circ J)$. Now applying the general commutator formula (see Theorem~\ref{standard}), one obtains
$$[c,g]\in [\GL(n,A,I\circ J), E(n,A)]= E(n, A, I\circ J).$$
Therefore by Lemma~\ref{Engenerator}, $[c,g]$ is a product of  $z_{ij}(\alpha\beta,a)$ and $z_{ij}(\beta\alpha,a)$ with $\alpha\in I$, $\beta\in J$, $a\in A$. It follows immediately that $cgc^{-1}$ is a product of the generators listed in~\ref{yyttrree2}. This completes the proof. 
\end{proof}

A closer look at the generating set in Theorem~\ref{gen-2} reveals an interesting fact that  all the generators are taken from $\big[E(n,I),E(n,A,J)\big]$. This implies the following corollary. 

\begin{CorA}  Let $A$ be a module finite ring and $I$ and $J$ two sided ideals of $A$. Then 
\[\big[E(n,I),E(n,A,J)\big]=\big[E(n,A,I),E(n,J)\big]=\big[E(n,A,I),E(n,A,J)\big].\] 
\end{CorA}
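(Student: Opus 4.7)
The approach is to observe that Theorem~\ref{gen-2} already does all the work: the explicit generating set it provides for $\big[E(n,A,I),E(n,A,J)\big]$ visibly sits inside the a priori smaller subgroup $\big[E(n,I),E(n,A,J)\big]$, and (by the symmetry $[H,K]=[K,H]$) also inside $\big[E(n,A,I),E(n,J)\big]$. Since the reverse inclusions $\big[E(n,I),E(n,A,J)\big]\le\big[E(n,A,I),E(n,A,J)\big]$ and $\big[E(n,A,I),E(n,J)\big]\le\big[E(n,A,I),E(n,A,J)\big]$ are trivial from $E(n,I)\le E(n,A,I)$ and $E(n,J)\le E(n,A,J)$, matching both ends of the chain gives the two claimed equalities.

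For the first containment $\big[E(n,A,I),E(n,A,J)\big]\le\big[E(n,I),E(n,A,J)\big]$, I would run through the four families of generators listed in~(\ref{yyttrree2}). The two commutator-type generators $\big[e_{ji}(\alpha),{}^{e_{ij}(a)}e_{ji}(\beta)\big]$ and $\big[e_{ji}(\alpha),e_{ij}(\beta)\big]$ have their first entry $e_{ji}(\alpha)$ with $\alpha\in I$, hence in $E(n,I)$, while their second entry lies in $E(n,A,J)$; so they belong to $\big[E(n,I),E(n,A,J)\big]$ by inspection. For the remaining two families $z_{ij}(\alpha\beta,a)$ and $z_{ij}(\beta\alpha,a)$, I note that by Lemma~\ref{Engenerator} these are exactly the standard generators of $E(n,A,I\circ J)$; and Lemma~\ref{Lem:Habdank} provides the crucial inclusion $E(n,A,I\circ J)\le\big[E(n,I),E(n,J)\big]\le\big[E(n,I),E(n,A,J)\big]$.

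For the second equality $\big[E(n,A,I),E(n,A,J)\big]=\big[E(n,A,I),E(n,J)\big]$, I would apply Theorem~\ref{gen-2} after swapping the roles of $I$ and $J$, which is legitimate because $\big[E(n,A,I),E(n,A,J)\big]=\big[E(n,A,J),E(n,A,I)\big]$. This produces the same four types of generators but with $\alpha\in J$ and $\beta\in I$; an identical argument (with the roles of the two ideals reversed and Lemma~\ref{Lem:Habdank} applied to $E(n,A,J\circ I)=E(n,A,I\circ J)$) places each of them inside $\big[E(n,A,I),E(n,J)\big]$.

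There is no real obstacle here: this is a bookkeeping corollary of Theorem~\ref{gen-2}, whose only non-trivial input is the fact, already supplied by Lemma~\ref{Lem:Habdank}, that the ``leftover'' elementary generators of level $I\circ J$ can be produced as commutators of elements drawn one from $E(n,I)$ and the other from $E(n,J)$. The module-finite and $n\ge 3$ hypotheses are inherited directly from Theorem~\ref{gen-2}.
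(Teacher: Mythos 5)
Your proposal is correct and takes essentially the same approach as the paper: the paper simply remarks that ``a closer look at the generating set in Theorem~\ref{gen-2} reveals\ldots\ that all the generators are taken from $\big[E(n,I),E(n,A,J)\big]$'' and then invokes symmetry, which is exactly what you do, with the small verifications spelled out (in particular using Lemma~\ref{Lem:Habdank} to absorb the $z$-type generators via $E(n,A,I\circ J)\le[E(n,I),E(n,J)]$).
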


\begin{Cor}\label{cor_gen-2}
Let $A$ be an quasi-finite algebra with identity, $n\ge 3$, and let 
$I$, $J$ be two-sided ideals of $A$. Then the absolute mixed commutator
subgroup $\big[E(n,I),E(n,J)\big]$ is a normal subgroup of $E(n,A)$. 
\end{Cor}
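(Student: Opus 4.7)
The strategy is to establish the equality
\[ [E(n,I),E(n,J)] = [E(n,A,I),E(n,A,J)]. \]
Once this is in hand, the corollary is immediate: the right-hand side is the commutator of two subgroups each of which is normal in $E(n,A)$ (by Theorem~\ref{standard}), and the commutator of two normal subgroups is itself normal. The inclusion $[E(n,I),E(n,J)]\subseteq[E(n,A,I),E(n,A,J)]$ is clear, so only the reverse inclusion requires work.

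I invoke Theorem~\ref{gen-2}, which exhibits $[E(n,A,I),E(n,A,J)]$ as generated (as a group) by four explicit types of elements in the parameters $\alpha\in I$, $\beta\in J$, $a\in A$, $1\le i\ne j\le n$. It suffices to check that each such generator lies in $[E(n,I),E(n,J)]$. The generator $[e_{ji}(\alpha),e_{ij}(\beta)]$ is trivially of this form. For $z_{ij}(\alpha\beta,a)$, the hypothesis $n\ge 3$ permits me to pick an auxiliary index $k\notin\{i,j\}$; splitting $e_{ij}(\alpha\beta)=[e_{ik}(\alpha), e_{kj}(\beta)]$ by (E3), conjugation by $e_{ji}(a)$ and a second application of (E3) produce
\[ z_{ij}(\alpha\beta,a) = \bigl[e_{jk}(a\alpha)e_{ik}(\alpha),\; e_{ki}(-\beta a)e_{kj}(\beta)\bigr], \]
whose first factor lies in $E(n,I)$ (as $a\alpha,\alpha\in I$) and second in $E(n,J)$ (as $\beta a,\beta\in J$), using crucially that $I$ and $J$ are two-sided. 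The generator $z_{ij}(\beta\alpha,a)$ is dispatched symmetrically.

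The remaining generator $[e_{ji}(\alpha), {}^{e_{ij}(a)}e_{ji}(\beta)]$ is the main obstacle. Applying (C1) together with $[e_{ji}(\alpha),e_{ji}(\beta)]=1$ (which follows from (E1)) collapses it first to $[e_{ji}(\alpha),\, [e_{ij}(a),e_{ji}(\beta)]]$. My plan is then to decompose the inner $SL_2(i,j)$-commutator by introducing the auxiliary index $k$ once more: the relation $e_{ji}(\beta)=[e_{jk}(\beta), e_{ki}(1)]$ combined with (E3) gives
\[ {}^{e_{ij}(a)}e_{ji}(\beta) = \bigl[e_{ik}(a\beta)e_{jk}(\beta),\; e_{kj}(-a)e_{ki}(1)\bigr] = [u,v], \]
with $u\in E(n,J)$ and $v\in E(n,A)$. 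Expanding $[e_{ji}(\alpha),[u,v]]$ by iterated applications of (C1) yields a product of four factors. Two of them are commutators $[e_{ji}(\alpha),u']$ with $u'\in E(n,J)$, hence already belong to $[E(n,I),E(n,J)]$; the other two are commutators $[e_{ji}(\alpha), v']$ against the elementary matrices constituting $v$, which collapse via (E2) and (E3) to explicit elementary matrices of level $I$, exploiting $a\alpha,\alpha a\in I$.

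The delicate part is disposing of the conjugating factors produced by this expansion, which a priori lie only in the larger subgroup $[E(n,A,I),E(n,A,J)]$. My plan to absorb them is to combine the Hall--Witt identity (C3) with the inclusion $E(n,A,IJ+JI)\subseteq[E(n,I),E(n,J)]$ supplied by Lemma~\ref{Lem:Habdank}, together with the standard commutator formula of Theorem~\ref{standard} to migrate conjugating factors across the commutator bracket. Once this bookkeeping is complete, the equality $[E(n,I),E(n,J)]=[E(n,A,I),E(n,A,J)]$ holds, and the corollary follows.
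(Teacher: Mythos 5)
Your proposal takes a genuinely different --- and more ambitious --- route than the paper, and it contains a real gap. The paper never establishes the equality $[E(n,I),E(n,J)]=[E(n,A,I),E(n,A,J)]$; instead it argues normality directly from Lemma~\ref{Lem:Habdank} and Theorem~\ref{standard}: for $g\in[E(n,I),E(n,J)]$ and $c\in E(n,A)$, one has $g\in\GL(n,A,I\circ J)$, hence $[c,g]\in[\GL(n,A,I\circ J),E(n,A)]=E(n,A,I\circ J)\le[E(n,I),E(n,J)]$, so $cgc^{-1}=[c,g]g$ stays in $[E(n,I),E(n,J)]$. Three applications of quoted results, no generator analysis at all.

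The gap in your argument is precisely the generator $[e_{ji}(\alpha),{}^{e_{ij}(a)}e_{ji}(\beta)]$, the only one of the four types from Theorem~\ref{gen-2} whose second entry does \emph{not} visibly lie in $E(n,J)$, and hence the only one carrying the actual content of the equality you are after. You do not prove it lies in $[E(n,I),E(n,J)]$: expanding $[e_{ji}(\alpha),[u,v]]$ by (C1) produces conjugating prefixes that involve $v\in E(n,A)$, and the assertion that conjugation by $E(n,A)$ preserves $[E(n,I),E(n,J)]$ \emph{is the corollary being proved}, so no such absorption can be taken for granted at this point. You acknowledge this by calling it a ``plan'' with unfinished bookkeeping. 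In fact the absorption you sketch --- push a conjugation by $E(n,A)$ across the bracket using $[E(n,I),E(n,J)]\le\GL(n,A,I\circ J)$, the standard commutator formula, and $E(n,A,I\circ J)\le[E(n,I),E(n,J)]$ --- \emph{is} the paper's entire proof of the corollary; you have rediscovered the right mechanism but buried it, unexecuted, inside a strictly stronger statement. Note also that the stronger equality $[E(n,I),E(n,J)]=[E(n,A,I),E(n,A,J)]$ is not the same as the Corollary stated after Theorem~\ref{gen-2}, which normalizes only \emph{one} of the two factors ($[E(n,I),E(n,A,J)]=[E(n,A,I),E(n,A,J)]$), so it cannot simply be cited and would require its own proof --- which, if you inspect your own plan carefully, ends up presupposing the present corollary.
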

\begin{proof}
Let $g$ be a typical element in $\big[E(n,I),E(n,J)\big]$ and let $c\in E(n,A)$. As in the proof of Theorem~\ref{gen-2}, we have
$[c,g]\in E(n, A, I\circ J)\le  \big[E(n,I),E(n,J)\big] .$
It follows immediately that $cgc^{-1}\in\big[E(n,I),E(n,J)\big]$. Thus $\big[E(n,I),E(n,J)\big]$ is a normal subgroup of $E(n,A)$.
\end{proof}

A similar result for unitary groups is \cite[Theorem~9]{44}, which is 
more techni\-cal. To somewhat shorten the next statement, we describe
conditions on the generators in the form $T_{ji}(\xi)\in\EU(2n,I,\Gamma)$.
Recall that (as in~\cite{13,33,41}) that this means that $\xi\in I$, for 
$i\neq\pm j$, and $\xi\in\Gamma$, for $i=-j$.
\begin{LemB}
Let\/ $(\Form)$ be a form ring and\/ $(I,\Gamma)$, $(J,\Delta)$
be two form ideals of\/ $(\Form)$. Then as a normal subgroup
of\/ $\EU(2n,R,\Lambda)$, $n\ge 3$, the mixed commutator
subgroup\/ $\big[\EU(2n,I,\Gamma),\EU(2n,J,\Delta)\big]$
is generated by the elements of the form
\begin{itemize}
\item[$\bullet$] $\big[T_{ji}(\xi),{}^{T_{ij}(\eta)}T_{ji}(\zeta)\big]$,
\item[$\bullet$] $\big[T_{ji}(\xi),T_{ij}(\zeta)\big]$,
\item[$\bullet$] $T_{ij}(\xi\zeta)$ and $T_{ij}(\zeta\xi)$,
\end{itemize}
where\/ $T_{ji}(\xi)\in\EU(2n,I,\Gamma)$,
$T_{ji}(\zeta)\in\EU(2n,J,\Delta)$, $T_{ij}(\eta)\in\EU(2n,\Form)$,
and\/ $T_{ij}(\theta)\in\EU\big(2n,(I,\Gamma)\circ(J,\Delta)\big)$.
\end{LemB}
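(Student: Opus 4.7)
The plan is to follow the template of the proof of Theorem~\ref{gen-2} from the linear case, but transferred to the unitary setting using the Steinberg relations (R1)--(R6) in place of (E1)--(E3). The argument splits into two main stages: first, producing a group generating set, and second, upgrading to a normal generating set.

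For the first stage, I would establish a unitary analogue of Lemma~\ref{Comgenerator}, asserting that $[\EU(2n,I,\Gamma),\EU(2n,J,\Delta)]$ is generated \emph{as a group} by commutators of the four listed shapes. The starting point is the unitary analogue of Lemma~\ref{Engenerator} (see \cite{B2,13}), which describes $\EU(2n,I,\Gamma)$ as the group generated by conjugates ${}^{T_{ij}(\eta)}T_{ji}(\xi)$ with $T_{ji}(\xi)\in\FU(2n,I,\Gamma)$ and $\eta$ in the appropriate parameter set. Writing an arbitrary generator $[e,f]$ of the mixed commutator as a commutator of such products, and expanding by (C1), (C2), (C1$^+$), (C2$^+$), one pulls the conjugations outward and uses the normality of $\EU(2n,J,\Delta)$ in $\EU(2n,\Form)$ to reduce the problem to analysing a pure two-term commutator
\[
\bigl[T_{i'j'}(\xi),{}^{T_{ij}(\eta)}T_{ji}(\zeta)\bigr],
\]
with $T_{i'j'}(\xi)\in\FU(2n,I,\Gamma)$ and $T_{ji}(\zeta)\in\FU(2n,J,\Delta)$.

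The second sub-stage of the first stage is a case analysis on the position pattern of the indices $(i',j',i,j)$. This is the bulk of the work. In the short-short overlap cases the computation parallels the four cases in the proof of Lemma~\ref{Comgenerator}, with (R3), (R4) replacing (E2), (E3). When a long root is involved one has to invoke (R5) or (R6), which each produce an additional long-root term of the form $T_{i,-i}(\a\zeta-\lambda^{-\e(i)}\bar\zeta\bar\a)$ or $T_{-j,j}(-\lambda^{(\e(j)-\e(i))/2}\bar\xi\a\xi)$. These extras have to be identified as lying in $\FU\bigl(2n,(I,\Gamma)\circ(J,\Delta)\bigr)$ via the explicit description $\Gamma\circ\Delta=\Gamma_{\min}(IJ+JI)+{}^J\Gamma+{}^I\Delta$ of the symmetrised product form parameter; the arguments already carried out in the proof of Lemma~\ref{yyqq1} apply essentially verbatim to recognise these terms as elements of the third and fourth generator classes $T_{ij}(\xi\zeta),T_{ij}(\zeta\xi)$.

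For the second stage, one upgrades this group-generating set to a normal generating set in $\EU(2n,\Form)$ exactly as in the proof of Theorem~\ref{gen-2}. Let $g$ be any generator from the proposed list and $c\in\EU(2n,\Form)$. By Lemma~\ref{yyqq1} one has $g\in\GU\bigl(2n,(I,\Gamma)\circ(J,\Delta)\bigr)$, and by the unitary standard commutator formula (Theorem~2B) applied to the form ideal $(I,\Gamma)\circ(J,\Delta)$,
\[
[c,g]\in\bigl[\EU(2n,\Form),\GU\bigl(2n,(I,\Gamma)\circ(J,\Delta)\bigr)\bigr]=\EU\bigl(2n,(I,\Gamma)\circ(J,\Delta)\bigr).
\]
The unitary analogue of Lemma~\ref{Engenerator}, specialised to the symmetrised product form ideal, expresses $\EU\bigl(2n,(I,\Gamma)\circ(J,\Delta)\bigr)$ as a product of elements of the third and fourth generator types $T_{ij}(\theta)$ with $\theta\in$ the appropriate parameter set. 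Hence $cgc^{-1}=[c,g]\cdot g$ is a product of generators from the advertised list, completing the proof.

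The main obstacle is the case analysis carried out inside the first stage. Every complication listed in the paper --- long versus short roots, the two distinct Chevalley commutator formulas (R5) and (R6), the involution, and the nontrivial form parameter --- must be handled by hand, and the long-root ``side products'' emerging from (R5)--(R6) must be re-expressed as elements of $\FU\bigl(2n,(I,\Gamma)\circ(J,\Delta)\bigr)$ through the explicit form-parameter identities. This is precisely the bookkeeping that made the proof of Lemma~\ref{yyqq1} roughly four pages long, and one should expect a similar level of effort here.
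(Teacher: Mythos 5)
Your plan is essentially sound, but it overshoots what the lemma actually asks for, and the two ``stages'' in your own description are described in the wrong order. The statement asserts generation \emph{as a normal subgroup} of $\EU(2n,R,\Lambda)$; this is exactly the unitary analogue of Lemma~\ref{Comgenerator}, where the linear version achieves the same thing by listing the generators \emph{together with} arbitrary conjugates ${}^{c}(\cdot)$, $c\in E(n,A)$. So your ``Stage 1'' alone --- reducing via the unitary Lemma~\ref{Engenerator}, (C1$^{+}$)/(C2$^{+}$), and normality of $\EU(2n,J,\Delta)$ to a two-term commutator $[T_{i'j'}(\xi),{}^{T_{ij}(\eta)}T_{ji}(\zeta)]$, then doing index case analysis with (R3)--(R6) and recognising the long-root side products as lying in $\FU(2n,(I,\Gamma)\circ(J,\Delta))$ --- already proves the lemma. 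Your ``Stage 2'' is the step from a normal generating set to a genuine group generating set, i.e.\ the unitary analogue of Theorem~\ref{gen-2}, which is Theorem~\ref{5bbb}. That step invokes the unitary standard commutator formula (Theorem~2B), and hence silently imports hypotheses (module/quasi-finiteness of $A$) that the present lemma does not assume; it is not part of the proof of this statement and should be dropped. The paper itself gives no in-text proof of the unitary case (it defers to~\cite{44}), but it does sketch a noticeably slicker route for the parallel Chevalley lemma (Lemma~\ref{gytd83}): instead of exhaustive Steinberg-relation bookkeeping in the $\alpha\ne\pm\beta$ case, one chooses an ordering with $\beta$ fundamental and $\alpha$ positive, observes the commutator sits in the unipotent radical $U_{\beta}$ of a minimal parabolic, intersects with $G(\Phi,R,IJ)$ and uses $U_{\beta}\cap G(\Phi,R,IJ)\le E(\Phi,IJ)$. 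That parabolic argument replaces most of the by-hand case work; your proposal, by contrast, follows the computational template of Lemma~\ref{Comgenerator} and Lemma~\ref{yyqq1}, which is most likely what~\cite{44} does, but is considerably more laborious.
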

The proof for Chevalley groups is similar, with some additional
complications in the rank 2 case. The following result is
 of~\cite[Theorem~2]{45}.
\begin{LemC}\label{gytd83}
Let\/ $\rk(\Phi)\ge 2$ and let\/ $I$, $J$ be two ideals
of a commutative ring\/ $R$. In the cases\/ $\Phi=\B_2,\G_2$ assume
that $R$ does not have residue fields\/ ${\Bbb F}_{\!2}$ of\/ $2$
elements and in the case\/ $\Phi=\B_2$ assume additionally that
any\/ $c\in R$ is contained in the ideal\/ $c^2R+2cR$.

Then as a normal subgroup of the elementary Chevalley
group\/ $E(\Phi,R)$ the mixed commutator subgroup\/
$\big[E(\Phi,R,I),E(\Phi,R,J)\big]$ is generated by the elements
of the form
\begin{itemize}
\item[$\bullet$] $\big[x_{\alpha}(\xi),
{}^{x_{-\alpha}(\eta)}x_{\alpha}(\zeta)\big]$,
\item[$\bullet$] $\big[x_{\alpha}(\xi),x_{-\alpha}(\zeta)\big]$,
\item[$\bullet$] $x_{\alpha}(\xi\zeta)$,
\end{itemize}
where\/ $\alpha\in\Phi$, $\xi\in I$, $\zeta\in J$, $\eta\in R$.
\end{LemC}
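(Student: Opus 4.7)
The plan is to mimic the proof of Theorem~\ref{gen-2}, replacing elementary matrices by Chevalley root unipotents $x_\alpha(\xi)$ and the elementary relations (E1)--(E3) by the Chevalley commutator formula. The first step is to establish the root-system analogue of Lemma~\ref{Engenerator}: the relative elementary subgroup $E(\Phi,R,I)$ is generated as an abstract group by the fundamental $\SL_2$-conjugates $z_\alpha(\xi,\eta)={}^{x_{-\alpha}(\eta)}x_\alpha(\xi)$ with $\alpha\in\Phi$, $\xi\in I$, $\eta\in R$; this is essentially due to Stein and Tits. Given this, an arbitrary commutator $[e,f]$ with $e\in E(\Phi,R,I)$ and $f\in E(\Phi,R,J)$ can be expanded using identities (C1$^+$), (C2$^+$), (C4), (C5) into a product of $E(\Phi,R)$-conjugates of elementary commutators of the form $\big[x_\alpha(\xi),{}^{x_{-\alpha}(\eta)}x_\alpha(\zeta)\big]$ or $\big[x_\alpha(\xi),{}^{h}x_\beta(\zeta)\big]$ with $h\in E(\Phi,R)$, $\xi\in I$, $\zeta\in J$.

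Next I would do a case analysis on the pair $\alpha,\beta\in\Phi$. If $\alpha=-\beta$ the reduction falls into one of the first two families. If $\alpha+\beta$ is not a root and $\beta\ne-\alpha$, the commutator is trivial by Chevalley's formula. If $\alpha+\beta$ is a root, Chevalley's commutator formula expresses $[x_\alpha(\xi),x_\beta(\zeta)]$ as a product of elements $x_{i\alpha+j\beta}(c_{ij}\xi^i\zeta^j)$, all of which lie in $E(\Phi,R,IJ)$, and so--after moving the outer conjugations inside--they are absorbed into the third family $x_\gamma(\xi\zeta)$, possibly after a further expansion of $\xi^i\zeta^j$ as a product of elements of $IJ$. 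The inner conjugation by $h$ in $\big[x_\alpha(\xi),{}^{h}x_\beta(\zeta)\big]$ is eliminated by rewriting $[x_\alpha(\xi),{}^{h}x_\beta(\zeta)]={}^{h}\big[{}^{h^{-1}}x_\alpha(\xi),x_\beta(\zeta)\big]$ and noting that ${}^{h^{-1}}x_\alpha(\xi)\in E(\Phi,R,I)$ can again be decomposed through the generating set of Lemma~\ref{Engenerator}'s analogue.

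The main obstacle is the rank $2$ cases $\Phi=\B_2,\G_2$, where the Chevalley commutator formula contributes higher-degree terms such as $x_{2\alpha+\beta}(N\,\xi^2\zeta)$, $x_{\alpha+2\beta}(N\,\xi\zeta^2)$, and in $\G_2$ even $x_{3\alpha+\beta}$, $x_{3\alpha+2\beta}$. A priori these products live in $I^2J$ or $IJ^2$, not obviously in $IJ$, and the parameter does not split as a single product $\xi'\zeta'$ with $\xi'\in I$, $\zeta'\in J$. The hypothesis that $R$ has no residue field $\GF 2$ rules out the degenerate perfectness failure in $\B_2,\G_2$, while the technical assumption $c\in c^2R+2cR$ for every $c\in R$ in type $\B_2$ precisely ensures that the resulting admissible pair collapses back to a single ideal so that each such term can be rewritten as a product of generators $x_\gamma(\xi\zeta)$; see~\cite{43,45} for the level calculations. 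This is where the bulk of the work lies.

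Finally, it must be checked that the normal closure of the listed family in $E(\Phi,R)$ coincides with the mixed commutator subgroup. Each listed generator $g$ already lies in $\big[E(\Phi,R,I),E(\Phi,R,J)\big]\le G(\Phi,R,IJ)$ by Lemma~\ref{gbsinea6}, so for any $c\in E(\Phi,R)$ the commutator $[c,g]\in[E(\Phi,R),G(\Phi,R,IJ)]$; by the absolute standard commutator formula (Theorem~\ref{hhggaw1}) this sits in $E(\Phi,R,IJ)$, which is normally generated in $E(\Phi,R)$ by the third family $x_\alpha(\xi\zeta)$. Hence ${}^c g$ is itself a product of the listed generators, which closes the argument.
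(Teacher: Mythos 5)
Your outline is structurally the same as the paper's, up to the point where the argument is reduced (via normality of $E(\Phi,R,J)$ and the $z_\alpha$-generation of $E(\Phi,R,I)$) to controlling the elementary commutators $[x_\alpha(\xi),z_\beta(\zeta,\theta)]$. The divergence is in how you treat the case $\alpha\neq\pm\beta$. You propose to plug directly into the Chevalley commutator formula and show that each factor $x_{i\alpha+j\beta}(N_{ij}\xi^i\zeta^j)$ is of the third type by writing $\xi^i\zeta^j=\xi^i\cdot\zeta^j\in I\cdot J$. The paper explicitly mentions this option but then takes a different one: after discarding the orthogonal case, choose an ordering in which $\beta$ is simple and $\alpha$ is positive, so that $x_\alpha(\xi)\in U_\beta$ while $z_\beta(\zeta,\theta)\in L_\beta$; since the Levi subgroup $L_\beta$ normalises the unipotent radical $U_\beta$, the commutator lands in $U_\beta$, and combined with the inclusion $[E,E]\le G(\Phi,R,IJ)$ from Lemma~\ref{gbsinea6} one gets an element of $U_\beta\cap G(\Phi,R,IJ)\le E(\Phi,IJ)$, i.e.\ a product of third-type generators. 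The paper stresses that this parabolic trick is ``considerably less computational'': it sidesteps any case-by-case inspection of the structure constants in $\B_2$ and $\G_2$, as well as the recursive re-decomposition of the conjugate ${}^{h^{-1}}x_\alpha(\xi)$ that your route requires. Your route is workable — the key observation that $\xi^i\zeta^j$ factors through $I\cdot J$ is correct — but it is the harder of the two options the paper contemplates.

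Two smaller points. First, your explanation of the $\B_2$ hypothesis $c\in c^2R+2cR$ is slightly off target: the monomial $\xi^i\zeta^j$ lies in $IJ$ for free, so there is no need to ``collapse'' anything there. The hypothesis actually enters earlier, in Lemma~\ref{gbsinea6}: without it the long-root level of $[E(\Phi,R,I),E(\Phi,R,J)]$ is only $I^2J+2IJ+IJ^2$, so the containment $E(\Phi,R,IJ)\le[E(\Phi,R,I),E(\Phi,R,J)]$ — which is what makes the third family of generators actually lie in the mixed commutator subgroup — can fail. Second, your closing paragraph is in fact proving the stronger Theorem~\ref{tuboon1} (generation as an abstract group, not merely as a normal subgroup of $E(\Phi,R)$), and the appeal to the absolute standard commutator formula belongs there; for Lemma~\ref{gytd83} itself, once the reduction to the three families of commutators is complete, there is nothing left to check beyond the containment of those families in $[E(\Phi,R,I),E(\Phi,R,J)]$, which follows from Lemma~\ref{gbsinea6}.
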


Actually, the proof of this result in~\cite{45} replaces most of explicit
fiddling with the Chevalley commutator formula and commutator
identities, by a reference to some obvious properties of parabolic
subgroups, which makes it {\it considerably\/} less computational,
than the proofs of Lemma~\ref{Lem:Habdank} and Lemma~\ref{yyqq1} in~\cite{47,43}.

We sketch the proof of Lemma~\ref{gytd83} as well.  First of all, observe that these elements indeed belong
to the relative commutator subgroups $[E(R,I),E(R,J)]$ by Lemma~\ref{gbsinea6}.
Next, recall that the elementary generators of the elementary
groups $E(R,I)$ themselves are classical\-ly known, and look as
follows:
\begin{itemize}
\item[$\bullet$] $z_{ji}(\xi,\eta)=e_{ij}(\eta)e_{ji}(\xi)e_{ij}(-\eta)$,
for $\GL_n$, (see Lemma~\ref{Engenerator}). 
\item[$\bullet$]$Z_{ji}(\xi,\eta)=T_{ij}(\eta)T_{ji}(\xi)T_{ij}(-\eta)$,
for unitary groups, (see~\cite{13}).
\item[$\bullet$] $z_{\alpha}(\xi,\eta)=
x_{-\alpha}(\eta)x_{\alpha}(\xi)x_{-\alpha}(-\eta)$, for Chevalley
groups (see~\cite{85,96,100,3}).
\end{itemize}

Observe, that these generators are precisely the second factors of
the first type of generators in the above Lemma~\ref{gytd83}, and we use this
shorthand notation in the sequel.
 The usual commutator identities imply that
as a normal subgroup \[\big[E(\Phi,R,I),E(\Phi,R,J)\big]\] is generated
by the commutators of the form
$\big[z_{\a}(\xi,\eta),z_{\b}(\zeta,\theta)\big]$. Since we are working
up to elementary conjugation, we can replace these generators by
\[\big[x_{\a}(\xi),{}^{x_{-\a}(-\eta)}z_{\b}(\zeta,\theta)].\]
Since the groups $E(\Phi,R,J)$ are normal in $E(\Phi,R)$, the
conjugates ${}^{x_{-\a}(-\eta)}z_{\b}(\zeta,\theta)$ can be again
expressed as products of elementary generators. Once more applying
commutator identities, we see that as a normal subgroup
$\big[E(\Phi,R,I),E(\Phi,R,J)\big]$ is generated by the commutators
$\big[x_{\a}(\xi),z_{\b}(\zeta,\theta)]$. At this point, we are left
with three options:
\begin{itemize}
\item[$\bullet$]$\a=\b$, and we get the first type of generators,
\item[$\bullet$] $\a=-\b$, and we get the second type of generators,
up to conjugation,
\item[$\bullet$] $\a\neq\pm\b$. If $\a$ and $\b$ are strictly orthogonal,
then $\big[x_{\a}(\xi),z_{\b}(\zeta,\theta)]=e$. Thus, we can assume
that $\a$ and $\b$ generate an irreducible root system of rank 2, and
fiddle with the Chevalley commutator formula therein. Alternatively,
we can choose an order such that $\b$ is fundamental, whereas $\a$
is positive. Then $\big[x_{\a}(\xi),z_{\b}(\zeta,\theta)]$ sits
inside the unipotent radical $U_{\b}$ of the minimal (=rank 1)
standard  parabolic subgroup $P_{\b}$. On the other hand, by
Lemma~\ref{gbsinea6} it sits inside $G(\Phi,R,IJ)$. Clearly,
$U_{\b}\cap G(\Phi,R,IJ)\le E(\Phi,IJ)$. Thus, in this last case
$\big[x_{\a}(\xi),z_{\b}(\zeta,\theta)]$ is a product of generators of
the third type.
\end{itemize}

\begin{TheB}\label{5bbb}
Let\/ $n\ge 3$,\/ $R$ be a commutative ring,\/ $(A,\Lambda)$ be a form
ring such that\/ $A$ is a quasi-finite\/ $R$-algebra. Further, let\/
$(I,\Gamma)$ and\/ $(J,\Delta)$ be two form ideals of the form ring\/
$(A,\Lambda)$.
\par
Then the mixed commutator subgroup
$\big[\EU(2n,I,\Gamma),\EU(2n,J,\Delta)\big]$ is generated as a group
by the elements of the form
\begin{itemize}
\item[$\bullet$] $\big[T_{ji}(\xi),Z_{ji}(\zeta,\eta)\big]$,
\item[$\bullet$] $\big[T_{ji}(\xi),T_{ij}(\zeta)\big]$,
\item[$\bullet$]
 $Z_{ij}(\theta,\eta)$,
\end{itemize}
where\/ $T_{ji}(\xi)\in\EU(2n,I,\Gamma)$, while\/
$T_{ij}(\zeta),Z_{ji}(\zeta,\eta)\in\EU(2n,J,\Delta)$,
and\/ $Z_{ij}(\theta,\eta)\in\EU\big(2n,(I,\Gamma)\circ(J,\Delta)\big)$.
\end{TheB}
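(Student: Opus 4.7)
The plan is to mimic the strategy used in the proof of Theorem~\ref{gen-2} in the linear case, transferring it to the unitary setting by replacing each ingredient by its unitary counterpart stated earlier in the paper. Specifically, I would argue in two stages: first reduce to the assertion that the listed family generates $\big[\EU(2n,I,\Gamma),\EU(2n,J,\Delta)\big]$ \emph{as a normal subgroup} of $\EU(2n,A,\Lambda)$, and then upgrade ``normal generation'' to ``generation as a group'' by showing that conjugation of any listed generator by an elementary element lands inside the subgroup generated by the listed family.

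For the first stage, I would quote Lemma~B above (the unitary analogue of Lemma~\ref{Comgenerator}), which already gives exactly the three types of elements listed in the statement as normal generators of the mixed commutator subgroup in $\EU(2n,A,\Lambda)$. This takes care of the harder, bare-hands commutator calculus, and is where the bulk of the Steinberg-relation manipulations (R1)--(R6) from \S\ref{elementary2} have been absorbed.

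For the second stage, let $g$ be any of the three listed generators and $c\in\EU(2n,A,\Lambda)$; I need $[c,g]$ to be a product of the listed generators. By Lemma~B on form ideals (Lemma~\ref{yyqq1}), every such $g$ already lies in $\GU\bigl(2n,(I,\Gamma)\circ(J,\Delta)\bigr)$. Now apply the unitary generalised commutator formula (Theorem~B above) with the form ideal $(I,\Gamma)\circ(J,\Delta)$ in place of $(J,\Delta)$, giving
\[
[c,g]\in\bigl[\EU(2n,A,\Lambda),\,\GU\bigl(2n,(I,\Gamma)\circ(J,\Delta)\bigr)\bigr]
=\EU\bigl(2n,(I,\Gamma)\circ(J,\Delta)\bigr).
\]
But by Lemma~\ref{Engenerator} in its unitary incarnation (recorded in the passage right after Lemma~\ref{gytd83}), the group on the right is generated by elements of the form $Z_{ij}(\theta,\eta)$ with $\theta$ in the relevant component of the symmetrised product form parameter; these are precisely the third family of generators in the statement. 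Hence $cgc^{-1}=[c,g]\,g$ is a product of listed generators, which completes the upgrade from normal generation to plain generation.

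The main obstacle, as usual in the unitary case, is making sure the parameter bookkeeping is consistent: one must check that $\theta$ produced by the commutator $[c,g]$ really lands in $\Gamma_{\min}(IJ+JI)+{}^{J}\Gamma+{}^{I}\Delta$ (both for short and long root generators), and that the appeal to the unitary mixed commutator formula is legitimate under the standing quasi-finiteness hypothesis on $A$. Both of these are subsumed in the already established Lemma~\ref{yyqq1} and Theorem~B on generalised commutator formulae, so no additional yoga on the Steinberg relations should be required beyond what has already been carried out in Lemma~B above.
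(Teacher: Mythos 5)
Your proposal is correct and follows essentially the same strategy the paper uses to prove Theorem~\ref{gen-2} and to sketch Theorem~\ref{tuboon1}: invoke the normal-generation lemma (the unitary Lemma~4B), observe via Lemma~\ref{yyqq1} that each listed generator lies in $\GU\bigl(2n,(I,\Gamma)\circ(J,\Delta)\bigr)$, push commutators with $\EU(2n,\Form)$ into $\EU\bigl(2n,(I,\Gamma)\circ(J,\Delta)\bigr)$ using the standard commutator formula (a special case of Theorem~2B), and identify that relative elementary group with products of the third family of generators $Z_{ij}(\theta,\eta)$. One small point worth making explicit: the step $[c,g]\in\EU\bigl(2n,(I,\Gamma)\circ(J,\Delta)\bigr)$ uses the \emph{absolute} standard commutator formula $[\EU(2n,\Form),\GU(2n,K,\Omega)]=\EU(2n,K,\Omega)$ (the unitary analogue of Theorem~\ref{standard}), not literally the mixed formula of Theorem~2B; these coincide after specializing one form ideal to $(A,\Lambda)$, so the conclusion is unaffected, but the attribution should be tightened. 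You have also correctly located the reason this argument does not extend to arbitrary form rings: the appeal to the (non-elementary) standard commutator formula is exactly the obstruction the paper highlights after the proof sketch of Theorem~\ref{tuboon1} and flags as Problem~1.
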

\begin{TheC}\label{tuboon1}
Let\/ $\rk(\Phi)\ge 2$ and let\/ $I$, $J$ be two ideals
of a commutative ring\/ $A$. In the cases\/ $\Phi=\B_2,\G_2$ assume
that $A$ does not have residue fields\/ ${\Bbb F}_{\!2}$ of\/ $2$
elements and in the case\/ $\Phi=\B_2$ assume additionally that
any\/ $c\in A$ is contained in the ideal\/ $c^2A+2cA$.
\par
Then the mixed commutator subgroup\/
$\big[E(\Phi,A,I),E(\Phi,A,J)\big]$ is generated as a group by
the elements of the form
\begin{itemize}
\item[$\bullet$] $\big[x_{\a}(\xi),z_{\a}(\zeta,\eta)\big]$,
\item[$\bullet$] $\big[z_{\a}(\xi),z_{-\a}(\zeta)\big]$,
\item[$\bullet$] $z_{\a}(\xi\zeta,\eta)$,
\end{itemize}
where\/ $\alpha\in\Phi$, $\xi\in I$, $\zeta\in J$, $\eta,\in A$.
\end{TheC}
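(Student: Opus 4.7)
The plan is to mimic the proof of Theorem~\ref{gen-2} for the linear case, substituting the Chevalley-group ingredients for the linear ones. By Lemma~\ref{gytd83} the three listed families already generate $[E(\Phi,A,I),E(\Phi,A,J)]$ as a normal subgroup of $E(\Phi,A)$, so it suffices to prove that for every generator $g$ in the proposed list and every $c\in E(\Phi,A)$, the conjugate $cgc^{-1}$ is itself a product of elements from this list.

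Fix such a $g$ and $c$. By Lemma~\ref{gbsinea6}, each $g$ in the list lies in
\[
[E(\Phi,A,I),E(\Phi,A,J)]\leq G(\Phi,A,IJ).
\]
The relative standard commutator formula Theorem~\ref{hhggaw1}, whose hypotheses are exactly those we impose on $\Phi$ and $A$, then yields
\[
[c,g] \in [E(\Phi,A), G(\Phi,A,IJ)] = E(\Phi,A,IJ).
\]
The classical generation statement for $E(\Phi,A,IJ)$ recalled immediately after Lemma~\ref{gytd83} (the Chevalley analog of Lemma~\ref{Engenerator}) expresses $[c,g]$ as a product of elements $z_\alpha(\mu,\eta)$ with $\mu\in IJ$ and $\eta\in A$. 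Since $IJ$ is additively generated by pure products $\xi\zeta$ with $\xi\in I$, $\zeta\in J$, and additivity inside each root subgroup splits $z_\alpha(\xi\zeta+\xi'\zeta',\eta)$ as $z_\alpha(\xi\zeta,\eta)\cdot z_\alpha(\xi'\zeta',\eta)$, these are exactly the third type of generators in our list. The identity $cgc^{-1}=[c,g]\cdot g$ then exhibits $cgc^{-1}$ as a product of listed generators, completing the proof.

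The main obstacle I anticipate is not this skeleton but the rank-$2$ bookkeeping: one must verify that both Theorem~\ref{hhggaw1} and the group-theoretic generation statement for $E(\Phi,A,IJ)$ by the $z_\alpha(\xi\zeta,\eta)$ continue to hold for $\Phi=\B_2,\G_2$ under the stated restrictions on residue fields and, for $\Phi=\B_2$, on the ideal $c^2A+2cA$. Without these hypotheses even the absolute level of $[E(\Phi,A,I),E(\Phi,A,J)]$ can fail to be elementary in the expected sense, so one should be prepared to replace pure ideal-level arguments by the admissible-pair level calculations of Abe when necessary.
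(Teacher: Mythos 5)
Your proposal is correct and follows essentially the same route as the paper's own sketch: reduce to Lemma~\ref{gytd83}, observe that each listed generator lies in $G(\Phi,A,IJ)$, conclude via the standard commutator formula that $[c,g]\in E(\Phi,A,IJ)$, and then write $cgc^{-1}=[c,g]\cdot g$ using the $z_\alpha$-generation of $E(\Phi,A,IJ)$. The one slight imprecision is that the equality $[E(\Phi,A),G(\Phi,A,IJ)]=E(\Phi,A,IJ)$ is the \emph{absolute} standard commutator formula (as the paper cites it), not quite Theorem~\ref{hhggaw1}; your additivity remark splitting $z_\alpha(\mu,\eta)$ for $\mu\in IJ$ into pure products $z_\alpha(\xi\zeta,\eta)$ is a welcome extra detail the paper leaves implicit.
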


Let us sketch the proof of Theorem~\ref{tuboon1}. From this proof, it will be
clear, why a similar slick argument does not prove Theorem~\ref{gen-2} and
Theorem~\ref{5bbb} for arbitrary associative rings or arbitrary form rings.
\par
The set described in this theorem contains the set described in
Lemma~\ref{gytd83}, which already generates $\big[E(\Phi,A,I),E(\Phi,A,J)\big]$
as a normal subgroup of $E(\Phi,A)$. Therefore, it suffices to show
that elementary conjugates of the above generators are themselves
products of such generators. Let $g$ be one of these generators
and let $h\in E(\Phi,A)$. By Lemma~\ref{gytd83}, one has $g\in G(\Phi,A,IJ)$.
Now the [absolute] standard commutator formula implies that
$$ [h,g]\in[G(\Phi,A,IJ),E(\Phi,A)]=E(\Phi,A,IJ). $$
\noindent
Being an element $E(\Phi,A,IJ)$, the commutator $[h,g]$ is a product
of some elementa\-ry generators $z_{\a}(\xi\zeta,\eta)$, where
$\a\in\Phi$, $\xi\in I$, $\zeta\in J$, $\eta\in A$. Thus, any conjugate
$hgh^{-1}=[h,g]g$ is a product of some generators of the third
type and the generator $g$ itself.
\par
In fact, mostly this argument relied on {\it elementary\/} calculations,
such as the one needed to prove Lemma~\ref{gytd83} and Theorem 3C. But at one
instance we had to invoke a special case of Theorem 1C, the [absolute]
standard commutator formula. This last result is not elementary, and
certainly it does not hold over arbitrary associative rings. There are
explicit counter-examples to the standard commutator formula in this
generality, the first of them by Victor Gerasimov~\cite{28}.
\par
It seems incongruous that [what appears to be] a pure group theoretic
result should depend on commutativity conditions. This poses the
following problem.
\begin{Prob}
Find elementary proofs of Theorems~\ref{gen-2} and~\ref{5bbb}
that work over arbitrary associative rings/form rings.
\end{Prob}
By juggling with commutator identities, we succeeded in proving
a slightly weaker version of Theorem~\ref{gen-2}, with a somewhat larger
set of generators, all of them still sitting inside fundamental
$\GL_2$'s. However, a straightforward calculation, based on
induction on the length of the conjugating element, is so long
and appalling, that it strongly discouraged us from any attempt to
prove the technically much fancier Theorem~\ref{5bbb} for arbitrary form
rings along these lines.
\par
A closer look at the generators in Theorems~\ref{gen-2}--\ref{tuboon1} shows that all
of them in fact belong already to $\big[E(\Phi,I),E(\Phi,A,J)\big]$.
By symmetry, we may switch the role of factors. In particular, this
means that Theorems~\ref{gen-2}--\ref{tuboon1} imply the following curious corollaries.

\begin{CorB}
Let\/ $n\ge 3$,\/ $R$ be a commutative
ring,\/ $(A,\Lambda)$ be a form ring such that\/ $A$ is a
quasi-finite\/ $R$-algebra. Further, let\/ $(I,\Gamma)$ and\/
$(J,\Delta)$ be two form ideals of the form ring\/ $(A,\Lambda)$.
Then one has
 \begin{multline*}
 [\FU(2n,I,\Gamma),\EU(n,J,\Delta)]=
[\EU(2n,I,\Gamma),\FU(n,J,\Delta)]=\\
[\EU(2n,I,\Gamma),\EU(n,J,\Delta)].
\end{multline*}
\end{CorB}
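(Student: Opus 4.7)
My plan is to prove both equalities simultaneously by reducing them to the single non-trivial inclusion $[\EU(2n,I,\Gamma),\EU(2n,J,\Delta)] \le [\FU(2n,I,\Gamma),\EU(2n,J,\Delta)]$. The two easy inclusions $[\FU(2n,I,\Gamma),\EU(2n,J,\Delta)] \le [\EU(2n,I,\Gamma),\EU(2n,J,\Delta)]$ and $[\EU(2n,I,\Gamma),\FU(2n,J,\Delta)] \le [\EU(2n,I,\Gamma),\EU(2n,J,\Delta)]$ are immediate from the containments $\FU(2n,I,\Gamma) \le \EU(2n,I,\Gamma)$ and $\FU(2n,J,\Delta) \le \EU(2n,J,\Delta)$, which hold by the very definition of these relative subgroups as normal closures.

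For the hard direction, I would invoke Theorem~\ref{5bbb}, which provides an explicit \emph{group-theoretic} generating set for $[\EU(2n,I,\Gamma),\EU(2n,J,\Delta)]$ consisting of three families, namely $[T_{ji}(\xi),Z_{ji}(\zeta,\eta)]$, $[T_{ji}(\xi),T_{ij}(\zeta)]$, and $Z_{ij}(\theta,\eta) \in \EU\bigl(2n,(I,\Gamma)\circ(J,\Delta)\bigr)$. For the first two families the verification is immediate: the factor $T_{ji}(\xi)$ in the first slot, having parameter of level $(I,\Gamma)$, is by definition an elementary unitary transvection in $\FU(2n,I,\Gamma)$, while $Z_{ji}(\zeta,\eta)$ and $T_{ij}(\zeta)$ live in $\EU(2n,J,\Delta)$; these commutators therefore already lie in $[\FU(2n,I,\Gamma),\EU(2n,J,\Delta)]$.

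The third family requires Lemma~\ref{yyqq1}, whose leftmost inclusion gives $\EU\bigl(2n,(I,\Gamma)\circ(J,\Delta)\bigr) \le [\FU(2n,I,\Gamma),\FU(2n,J,\Delta)]$. Composing with $\FU(2n,J,\Delta) \le \EU(2n,J,\Delta)$ places every $Z_{ij}(\theta,\eta)$ inside $[\FU(2n,I,\Gamma),\EU(2n,J,\Delta)]$, completing the first equality. The second equality follows at no additional cost: applying the identity $[A,B]=[B,A]$ from (C6) and using the first equality with the roles of $(I,\Gamma)$ and $(J,\Delta)$ interchanged, one obtains $[\EU(2n,I,\Gamma),\FU(2n,J,\Delta)] = [\FU(2n,J,\Delta),\EU(2n,I,\Gamma)] = [\EU(2n,J,\Delta),\EU(2n,I,\Gamma)] = [\EU(2n,I,\Gamma),\EU(2n,J,\Delta)]$.

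The main obstacle, such as it is, lies entirely in already having Theorem~\ref{5bbb} at hand, since that is what reduces a statement about normal closures and iterated conjugates to a bounded list of concrete commutators whose membership in the one-sided commutator subgroup $[\FU,\EU]$ can be read off by inspection. Once Theorem~\ref{5bbb} is granted, the corollary needs no further conjugation calculus, no localisation and no level computations beyond those already hidden in Lemma~\ref{yyqq1}; in this respect the unitary Corollary~B is strictly parallel to the linear Corollary~A.
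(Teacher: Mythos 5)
Your proposal is correct and follows essentially the same route the paper indicates. The paper remarks that a close look at the generating set supplied by Theorem~\ref{5bbb} shows that every generator already lies in $[\FU(2n,I,\Gamma),\EU(2n,J,\Delta)]$ (the short root transvection in the first slot being of level $(I,\Gamma)$ and hence in $\FU(2n,I,\Gamma)$, and the third family being handled by the leftmost inclusion of Lemma~\ref{yyqq1}), with symmetry giving the other equality; your proof simply spells out these membership checks in detail.
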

\begin{CorC}
Let\/ $\rk(\Phi)\ge 2$ and let\/ $I$, $J$ be two ideals
of a commutative ring\/ $A$. In the cases\/ $\Phi=\B_2,\G_2$ assume
that $A$ does not have residue fields\/ ${\Bbb F}_{\!2}$ of\/ $2$
elements and in the case\/ $\Phi=\B_2$ assume additionally that
any\/ $c\in A$ is contained in the ideal\/ $c^2A+2cA$. Then one has
$$ [E(\Phi,I),E(\Phi,A,J)]=[E(\Phi,A,I),E(\Phi,J)]=
[E(\Phi,A,I),E(\Phi,A,J)]. $$
\end{CorC}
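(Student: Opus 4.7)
My plan is to derive both equalities from Theorem~\ref{tuboon1} (the explicit generating set for $[E(\Phi,A,I),E(\Phi,A,J)]$) combined with a direct inspection of that generating set. Since $E(\Phi,I)\le E(\Phi,A,I)$ and $E(\Phi,J)\le E(\Phi,A,J)$, the inclusions $[E(\Phi,I),E(\Phi,A,J)]\le[E(\Phi,A,I),E(\Phi,A,J)]$ and $[E(\Phi,A,I),E(\Phi,J)]\le[E(\Phi,A,I),E(\Phi,A,J)]$ are automatic. It suffices to prove the reverse of the first; swapping the roles of $I$ and $J$ (the hypotheses on $\Phi$ and $R$ are symmetric in the two ideals) then gives the reverse of the second.

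By Theorem~\ref{tuboon1}, the group $[E(\Phi,A,I),E(\Phi,A,J)]$ is generated as a group by the three families $[x_\a(\xi),z_\a(\zeta,\eta)]$, $[z_\a(\xi),z_{-\a}(\zeta)]$, and $z_\a(\xi\zeta,\eta)$, where $\a\in\Phi$, $\xi\in I$, $\zeta\in J$, $\eta\in A$. I would check each family lies in $[E(\Phi,I),E(\Phi,A,J)]$. The first family is immediate: $x_\a(\xi)\in E(\Phi,I)$ because $\xi\in I$, while $z_\a(\zeta,\eta)={}^{x_{-\a}(\eta)}x_\a(\zeta)$ is an $E(\Phi,A)$-conjugate of an element of $E(\Phi,J)$, hence lies in $E(\Phi,A,J)$. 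For the second family, expand $z_\a(\xi)={}^{x_{-\a}(\mu)}x_\a(\xi)$ and use the commutator identities (C4)--(C5) to pull the conjugating element outside; this reduces the generator to the first family modulo an outer conjugation by $E(\Phi,A)$, which can be absorbed once one knows that $[E(\Phi,I),E(\Phi,A,J)]$ is normalised by $E(\Phi,A)$. That normality follows, in turn, by combining Lemma~\ref{gytd83} (which places $E(\Phi,A,IJ)$ inside $[E(\Phi,I),E(\Phi,A,J)]$ once the third family is handled) with the standard commutator formula of Theorem~\ref{hhggaw1}, in exact analogy with the proof of Corollary~\ref{cor_gen-2} in the linear case.

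The main obstacle is the third family, $z_\a(\xi\zeta,\eta)$. Using $\rk(\Phi)\ge 2$, I would select roots $\b,\g\in\Phi$ with $\b+\g=\a$ and invoke the Chevalley commutator formula to write
$$[x_\b(\xi),x_\g(\zeta)]=x_\a(\pm\xi\zeta)\cdot p(\xi,\zeta),$$
where $p(\xi,\zeta)$ is a product of elementary root elements $x_{i\b+j\g}(N_{ij}\xi^i\zeta^j)$ with $i,j>0$ whose ring entries lie in $I\cdot J$. Inverting this relation expresses $x_\a(\xi\zeta)$ as a product of the commutator $[x_\b(\xi),x_\g(\zeta)]$, which manifestly sits in $[E(\Phi,I),E(\Phi,A,J)]$, and a tail $p(\xi,\zeta)^{-1}$ handled by induction on root height. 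To pass from $x_\a(\xi\zeta)$ to its conjugate $z_\a(\xi\zeta,\eta)={}^{x_{-\a}(\eta)}x_\a(\xi\zeta)$, apply ${}^{c}[a,b]=[{}^c a,{}^c b]$: by the Chevalley formula, ${}^{x_{-\a}(\eta)}x_\b(\xi)$ remains a product of elementary root elements with entries in $I$, and symmetrically for $\g$ and $J$, so the result stays in $[E(\Phi,I),E(\Phi,A,J)]$. The delicate point, as in Lemma~\ref{gytd83}, is the rank~$2$ exceptional cases $\Phi=\B_2,\G_2$, where the Chevalley commutator formula produces extra monomial terms in $\xi^i\zeta^j$ with $i\ge 2$ or $j\ge 2$; the standing hypotheses (no $\GF{2}$-residue fields, and for $\B_2$ the condition $c\in c^2R+2cR$) are precisely what is needed to absorb those terms, and I would re-use the computations of Lemma~\ref{gytd83} verbatim.
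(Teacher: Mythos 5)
Your proof is correct and follows exactly the route the paper intends: Theorem~\ref{tuboon1} gives a generating set for $[E(\Phi,A,I),E(\Phi,A,J)]$, and one verifies that every generator already lies in $[E(\Phi,I),E(\Phi,A,J)]$, whence the three groups coincide. The paper compresses this to a single remark (``a closer look at the generators \ldots shows that all of them belong already to $[E(\Phi,I),E(\Phi,A,J)]$''); you make the inspection explicit, and your treatment of the third family $z_\a(\xi\zeta,\eta)$ via the Chevalley commutator formula, conjugation term-by-term, and induction on height is precisely the computation behind the paper's assertion. One small remark on economy: the second family in Theorem~\ref{tuboon1}, matching the linear analogue $[e_{ji}(\alpha),e_{ij}(\beta)]$ of Theorem~\ref{gen-2} and the unitary analogue $[T_{ji}(\xi),T_{ij}(\zeta)]$ of Theorem~\ref{5bbb}, consists of commutators of \emph{bare} root elements $[x_\a(\xi),x_{-\a}(\zeta)]$, which sit in $[E(\Phi,I),E(\Phi,J)]$ at once; your detour through $E(\Phi,A)$-normality of $[E(\Phi,I),E(\Phi,A,J)]$ (itself correctly derived from the third family plus the standard commutator formula) is therefore available but not needed for that step, and the logical order you impose --- third family first, then normality, then second family --- does avoid any circularity.
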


\section{Higher commutators}

Once we understand double commutators, it is natural to consider
higher com\-mu\-tators of relative elementary subgroups and congruence
subgroups. Let $G$ be a group and $H_1,\ldots,H_m\le G$ be its
subgroups. There are many ways to form a higher commutator of these
groups, depending on where we put the brackets. Thus, for three
subgroups $F,H,K\le G$ one can form two triple commutators
$[[F,H],K]$ and $[F,[H,K]]$. For four subgroups $F,H,K,L\le G$
one can form 5 such commutators
\begin{multline}\label{uyuyu}
[[[F,H],K],L],\qquad [[F,[H,K]],L],\qquad [[F,H],[K,L]],\\
[F,[H,[K,L]]],\qquad [F,[[H,K],L]].
\end{multline}
To be exact, there are as many as the Catalan number \[c_m=\frac{1}{(m+1)}\binom{2m}{m}\] ways to arrange the brackets involving $m+1$ subgroups in a meaningful way. 

Usually, we write $[H_0,H_1,\ldots,H_m]$ for the {\it left-normed\/}
commutator, defined inductively by
$$ [H_0,\ldots,H_{m-1},H_m]=[[H_0,\ldots,H_{m-1}],H_m]. $$
\noindent
To stress that we consider any commutator of these subgroups, with an
arbitrary placement of brackets, we write $[\![H_0,H_2,\ldots,H_m]\!]$.
Thus, for instance, $[\![F,H,K,L]\!]$ refers to any of the five
arrangements in (\ref{uyuyu}).
\par
Actually, a specific arrangement of brackets usually does not play
major role in our results -- and in fact any role whatsoever over
commutative rings! -- apart from one important attribute. Namely,
what will matter a lot is the position of the outermost pairs of
inner brackets. Namely, every higher commutator subgroup
$[\![H_0,H_2,\ldots,H_m]\!]$ can be uniquely written as
$$ [\![H_0,H_2,\ldots,H_m]\!]=
\Big[[\![H_0,\ldots,H_h]\!],[\![H_{h+1},\ldots,H_m]\!]\Big], $$
\noindent
for some $h=1,\ldots,m-1$. This $h$ will be called the {\it cut point}
of our multiple commutator. Thus, among the quadruple commutators
$[\![F,H,K,L]\!]$, two arrange\-ments, $[[[F,H],K],L]$
and $[[F,[H,K]],L]$, cut at 3; one, $[[F,H],[K,L]]$, cuts
at 2; and the remaining two, $[F,[H,[K,L]]]$ $[F,[[H,K],L]]$,
cut at 1.
\par
Now, let $I_i$, $i=0,1,\ldots,m$, be ideals of the ring $A$. Our
ultimate objective is to compute the commutator subgroups of
congruence subgroups
$$ [\![G(A,I_0),G(A,I_1),\ldots,G(A,I_m)]\!], $$
\noindent
but that is a {\it highly\/} strenuous enterprise. So far, we
have done it only for the case $G=\GL_n$, provided that
$m>\delta(A)$.
\par
In~\S\ref{nervsue} we embark on the [somewhat easier]
calculation of higher commutators of relative elementary subgroups
$$ [\![E(A,I_0),E(A,I_1),\ldots,E(A,I_m)]\!]. $$
\noindent
Even this turns out to be a rather non-trivial task. In fact,
we do not see any other way to do that, but to prove a
higher analogue of the standard commutator formula, viz.
$$ [\![E(A,I_0),G(A,I_1),\ldots,G(A,I_m)]\!]=
[\![E(A,I_0),E(A,I_1),\ldots,E(A,I_m)]\!]. $$
\noindent
This multiple commutator formula will be discussed in \S\ref{nervsue} and \S\ref{ziuso1}. Unlike the {\it general\/} multiple commutator
formula in which we are ultimately interested, and which only
works for finite-dimensional rings, this weaker formula holds
over arbitrary quasi-finite/commutative rings.
\par
Amazingly, the resulting {\it multiple\/} commutator subgroups
will always coincide with some {\it double\/} relative commutator
subgroups, depending not on the ideals $I_i$ themselves, but
only on two symmetrised products of these ideals. Since the
symmetrised product of ideals is not associative, some traces
of the initial arrang\-ment will still be visible in these
symmetrised products. However, for commu\-tative rings the
symmetrised product becomes the usual product of ideals, which
is associative, so that the result will not depend on the arrangement
itself either, but only on its cut point. We discuss these results
in \S\ref{ziuso1}.


\section{Multiple commutator formula}\label{nervsue}

The following theorem is the main result of the paper~\cite{47}. Initially, it was
conceived as {\it part\/} of the answer to a problem
proposed in~\cite{112,114}. As a matter
of fact, it turned out to be of significant independent
interest. The proof of the following result in~\cite{47} is based
on a further enhancement of relative localisation which
we outline in~\S\ref{loci32}.

\begin{The}\label{comain}
Let $A$ be  a quasi-finite $R$-algebra with identity and  $I_i $, $i=0,...,m$, be two-sided ideals of $A$. Then 
\begin{multline}\label{corll11}
\Big [E(n,A,I_0),\GL(n,A,I_1), \GL(n,A, I_2),\ldots, \GL(n,A, I_m) \Big]=\\
 \Big[E(n,A,I_0),E(n,A,I_1),E(n,A, I_2),\ldots, E(n,A, I_m) \Big].
\end{multline}
\end{The}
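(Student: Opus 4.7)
The inclusion $\supseteq$ is immediate since $E(n,A,I_j)\le\GL(n,A,I_j)$ for every $j$, so all the work lies in $\subseteq$. My plan is to argue by induction on $m$, taking the base case $m=1$ from the generalised commutator formula (Theorem~\ref{gcformula1}). For the inductive step I would use commutator identities (in particular the Hall--Witt identity (C3) and the three-subgroup inclusion (C7)) together with the containment $[\GL(n,A,I),\GL(n,A,J)]\le\GL(n,A,IJ+JI)$ from Lemma~\ref{Lem:Habdank} to peel off the outermost $\GL(I_m)$, reducing the whole statement to the single triple identity~\eqref{uijiuj}:
\[
\Big[[E(n,A,I),\GL(n,A,J)],\GL(n,A,K)\Big]=\Big[[E(n,A,I),E(n,A,J)],E(n,A,K)\Big].
\]

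\textbf{The reduction step in more detail.} Write the left side of~\eqref{corll11} in left-normed form. The inductive hypothesis applied to the inner $(m-1)$-fold commutator converts each $\GL(I_j)$ for $j\le m-1$ into $E(I_j)$, so it suffices to show $[H,\GL(n,A,I_m)]\le [H,E(n,A,I_m)]$ for the iterated mixed commutator $H=[E(n,A,I_0),E(n,A,I_1),\ldots,E(n,A,I_{m-1})]$. I would write $H=[E(n,A),H]$ (which holds because each $E(n,A,I_j)$ is generated by commutators of the shape $[e_{ik}(1),e_{kj}(\alpha)]$, and is also a consequence of Theorem~\ref{standard} applied to the level of $H$) and apply the three-subgroup lemma to $[[E(n,A),H],\GL(n,A,I_m)]$. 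One factor is then $[H,E(n,A,I_m)]$ directly, and the remaining factor, which a priori lives in $[E(n,A),\GL(n,A,J\circ I_m)]=E(n,A,J\circ I_m)$ by Theorem~\ref{standard}, can be absorbed into $[H,E(n,A,I_m)]$ using~\eqref{uijiuj} applied to the triple $(I_0,I_1,I_m)$ (or a suitable product of these) together with the containment $E(n,A,J\circ I_m)\le[E(n,A,J),E(n,A,I_m)]$ supplied by Lemma~\ref{Lem:Habdank}. The bracketing of the resulting symmetrised product of ideals $J=I_0\circ\cdots\circ I_{m-1}$ is harmless here because every factor we produce is a mixed commutator, and we only need an inclusion into $[E(I_0),\ldots,E(I_m)]$, not equality to a specific rearrangement.

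\textbf{The core triple case.} The real substance is~\eqref{uijiuj}, which I would prove by a relative version of Bak's localisation-completion, extending the arguments that prove Theorem~\ref{gcformula1}. The broad strategy is: take a typical element $g$ of the left-hand side, written as a commutator using the generators of $[E(n,A,I),E(n,A,J)]$ described by Theorem~\ref{gen-2}; localise at each maximal ideal $\mathfrak{m}$ of the subring $R_0$ of the centre of $A$, so that the problem becomes verifying that $g$ lies in $[[E(n,A,I),E(n,A,J)],E(n,A,K)]$ after passing to each $A_{\mathfrak{m}}$; exploit the control over elementary generators over a local base, in combination with the quasi-finiteness of $A$ (via Proposition~\ref{Prop:01}) to replace $A$ by a module-finite subalgebra where Noetherian arguments are available; and finally patch back using Quillen--Suslin style arguments, with conjugation calculus keeping track of the levels of each piece.

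\textbf{The main obstacle.} The hard part is carrying out the ``yoga of commutators'' with three ideals simultaneously in play: the levels of intermediate commutators must be tracked carefully to ensure that the final expression stays inside $[[E(n,A,I),E(n,A,J)],E(n,A,K)]$ and not merely inside a slightly larger relative commutator of elementary subgroups at the symmetrised-product level $I\circ J\circ K$. This is exactly where the explicit generating set for $[E(n,A,I),E(n,A,J)]$ from Theorem~\ref{gen-2}, combined with a relative localisation that treats one of the ideals as ``outer'' and the other two as ``inner'', becomes essential. Everything else in the argument is commutator calculus that, while intricate, is driven by~\eqref{uijiuj} once that triple case has been established.
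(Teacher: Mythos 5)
Your overall architecture — induction on $m$, the generalised commutator formula as base case, and the triple identity~\eqref{uijiuj} as the real content, to be proved via the generating set of Theorem~\ref{gen-2} combined with localisation at maximal ideals and conjugation calculus — matches the paper's strategy. However, your reduction of the inductive step to the triple case has a genuine gap. You assert that $H = [E(n,A),H]$ for $H = [E(n,A,I_0),\ldots,E(n,A,I_{m-1})]$, justifying this by Theorem~\ref{standard} applied to the level of $H$. But this is exactly what Theorem~\ref{standard} does \emph{not} give: writing $J = I_0\circ\cdots\circ I_{m-1}$, we have $E(n,A,J)\le H\le\GL(n,A,J)$ by Lemma~\ref{Lem:Habdank}, and since $H$ is normal in $E(n,A)$,
\[
E(n,A,J) = [E(n,A),E(n,A,J)]\le [E(n,A),H]\le [E(n,A),\GL(n,A,J)] = E(n,A,J),
\]
so $[E(n,A),H] = E(n,A,J)$. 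As the paper emphasises at length in \S\ref{gbdtmu43}, $H$ may strictly contain $E(n,A,J)$ — this is precisely the point of the Gaussian-integers example — so $H\neq[E(n,A),H]$ in general. Consequently, applying the three-subgroup lemma to $[[E(n,A),H],\GL(n,A,I_m)]$ only controls $[E(n,A,J),\GL(n,A,I_m)]$, which is a potentially proper piece of $[H,\GL(n,A,I_m)]$, and the inclusion you need does not follow.

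The paper's inductive step sidesteps this pitfall entirely by working at the \emph{inner} end rather than peeling off the outer $\GL(n,A,I_m)$. It first applies the triple Theorem~\ref{the43} to the innermost three entries, replacing $[[E(n,A,I_0),\GL(n,A,I_1)],\GL(n,A,I_2)]$ by $[[E(n,A,I_0),E(n,A,I_1)],E(n,A,I_2)]$. Then it inserts the Habdank bound $[E(n,A,I_0),E(n,A,I_1)]\le\GL(n,A,I_0\circ I_1)$ to obtain $[[E(n,A,I_2),\GL(n,A,I_0\circ I_1)],\GL(n,A,I_3),\ldots,\GL(n,A,I_m)]$, a left-normed commutator of $m$ groups with $E$ in front, which the inductive hypothesis converts to $[[E(n,A,I_2),E(n,A,I_0\circ I_1)],E(n,A,I_3),\ldots,E(n,A,I_m)]$; finally the reverse Habdank inclusion $E(n,A,I_0\circ I_1)\le[E(n,A,I_0),E(n,A,I_1)]$ restores the desired shape. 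That merge-and-unwind manoeuvre requires no claim about $[E(n,A),H]$ and is the missing ingredient in your reduction. You would either need to adopt this merging strategy, or replace the false identity $H=[E(n,A),H]$ by an argument that expresses $[H,\GL(n,A,I_m)]$ in terms of the generators of $H$ supplied by Theorem~\ref{gen-2} and runs the localisation machinery directly — which is essentially repeating the proof of Theorem~\ref{the43} at a higher level rather than reducing to it.
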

\begin{proof}
We prove the statement by induction. For $i=1$ this is the generalised commutator formula Theorem~\ref{gcformula1}
\[
\big[E(n,A,I_0),\GL(n,A,I_1)\big]=\big[E(n,A,I_0),E(n,A,I_1)\big].
\]
 For $i=2$, this will be proved in Theorem~\ref{the43} which will be the first step of induction.
Suppose the statement is valid for $i=m-1$ (i.e., there are $m$ ideals in the commutator formula). To prove ~(\ref{corll11}), using Theorem~\ref{the43}, we have
\begin{multline*}
 \bigg [\Big[\big[E(n,A,I_0),\GL(n,A,I_1)\big], \GL(n,A, I_2)\Big],\GL(n,A, I_3),\ldots, \GL(n,A, I_m) \bigg ]=\\
 \bigg [\Big[\big[E(n,A,I_0),E(n,A,I_1)\big ],E(n,A, I_2)\Big],\GL(n,A, I_3),\ldots, \GL(n,A, I_m) \bigg].
\end{multline*}
 By Lemma~\ref{Lem:Habdank}, $[E(n,A,I_0),E(n,A,I_1)]\le  \GL(n,A,I_0I_1+I_1I_0)$. Thus 
\begin{multline*}
 \bigg [\Big[\big[E(n,A,I_0),E(n,A,I_1)\big ],E(n,A, I_2)\Big],\GL(n,A, I_3),\ldots, \GL(n,A, I_m) \bigg] \le  \\
  \bigg [\Big[\GL(n,A,I_0I_1+I_1I_0),E(n,A, I_2)\Big],\GL(n,A, I_3),\ldots, \GL(n,A, I_m) \bigg].
 \end{multline*}
Since there are $m$ ideals involved in the commutator subgroups in the right hand side, by induction we get 
\begin{multline*}
 \bigg [\Big[\GL(n,A,I_0I_1+I_1I_0),E(n,A, I_2)\Big],\GL(n,A, I_3),\ldots, \GL(n,A, I_m) \bigg]=\\
 \bigg [\Big[E(n,A,I_0I_1+I_1I_0),E(n,A, I_2)\Big],E(n,A, I_3),\ldots, E(n,A, I_m) \bigg].
\end{multline*}

Finally again by Lemma~\ref{Lem:Habdank}, \[E(n,A,I_0I_1+I_1I_0) \le  \big[E(n,A,I_0),E(n,A,I_1)\big].\] Replacing this in the above equation we obtain that the left hand side of~(\ref{corll11}) is contained in the right hand side. The opposite inclusion is obvious. This completes the proof. 
\end{proof}

\begin{TheA}\label{nemorh3}
Let\/ $n\ge 3$, let $A$ be a quasi-finite ring
with\/ $1$ and let\/ $I_i\unlhd A$, $i=0,\ldots,m$, be
ideals of\/ $A$. Then one has
\begin{multline*}
[\![E(n,A,I_0),\GL(n,A,I_2),\ldots,\GL(n,A,I_m)]\!]=\\
[\![E(n,A,I_0),E(n,A,I_2),\ldots,E(n,A,I_m)]\!].
\end{multline*}
\end{TheA}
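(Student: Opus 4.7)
The plan is to proceed by induction on $m$. The base case $m=1$ is exactly the generalized commutator formula (Theorem~\ref{gcformula1}). For the inductive step, I would fix an arbitrary bracketing of $[\![E(n,A,I_0),\GL(n,A,I_1),\ldots,\GL(n,A,I_m)]\!]$ and identify the outermost cut point $h$, decomposing the expression as $[L_{\GL},R_{\GL}]$, where $L_{\GL}$ is the sub-bracketing that houses $E(n,A,I_0)$ and involves $h+1$ of the input subgroups, while $R_{\GL}$ is the sub-bracketing of the remaining $m-h$ subgroups. The inclusion $\supseteq$ is automatic; the entire substance lies in the reverse inclusion.

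For the reverse inclusion, when $h\ge 1$ I would first apply the induction hypothesis to $L_{\GL}$ itself---since $L_{\GL}$ still starts with $E(n,A,I_0)$ and involves at most $m$ subgroups---to obtain $L_{\GL}=L_E$. Iterating Lemma~\ref{Lem:Habdank} along the bracketing of $R_{\GL}$ then produces an ideal $K$ (an iterated symmetrized product of $I_{h+1},\ldots,I_m$) with $R_{\GL}\subseteq\GL(n,A,K)$ and, correspondingly, $R_E\supseteq E(n,A,K)$. Thus the problem reduces to proving the key containment
\[
[L_E,\GL(n,A,K)]\subseteq [L_E,E(n,A,K)],
\]
since $[L_E,E(n,A,K)]\subseteq [L_E,R_E]$ is then immediate.

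To obtain this containment I would view $[L_E,\GL(n,A,K)]$ as a multiple commutator on $h+2$ subgroups whose outermost bracket is left-normed and whose inner structure matches the bracketing of $L_E$. When $h\le m-2$ this commutator involves strictly fewer than $m+1$ subgroups and still starts with $E(n,A,I_0)$, so the induction hypothesis (applied to the larger bracketing obtained by replacing the $E$'s inside $L_E$ with the corresponding $\GL$'s, which is harmless because the earlier application of the induction hypothesis already identifies these groups) supplies the desired equality. The extreme case $h=0$ degenerates to $L_E=E(n,A,I_0)$, which is handled directly by Theorem~\ref{gcformula1}.

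The main obstacle is the boundary case $h=m-1$, in which $R_{\GL}=\GL(n,A,I_m)$ is a single subgroup and the reformulated commutator $[L_E,\GL(n,A,I_m)]$ still carries $m+1$ subgroups, so the outer induction on $m$ no longer reduces the count. Here I would run a secondary induction on the internal bracketing depth of $L_E$: decompose $L_E=[X,Y]$ at its own outermost cut point and invoke the three subgroups lemma (identity (C7)) to expand
\[
[[X,Y],\GL(n,A,I_m)]\subseteq \big[[X,\GL(n,A,I_m)],Y\big]\cdot \big[X,[Y,\GL(n,A,I_m)]\big],
\]
where each factor now couples $\GL(n,A,I_m)$ with a strictly smaller piece of $L_E$. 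The outer induction hypothesis converts $\GL(n,A,I_m)$ into $E(n,A,I_m)$ within each factor, after which a second application of (C7), combined with Lemma~\ref{Lem:Habdank} to absorb the residual error terms inside the level-ideal $\GL(n,A,J\circ I_m)$ of $L_E$, returns the expression into $[L_E,E(n,A,I_m)]$ and completes the induction.
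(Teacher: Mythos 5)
Your base case $m=1$ is too weak to carry the induction, and the secondary-induction patch you propose for the cut point $h=m-1$ does not close the gap. The trouble is already visible in the smallest such instance $m=2$, $h=1$, which is precisely the triple commutator formula Theorem~\ref{the43}. Take $I_2=A$. After expanding by (C7) and replacing each $\GL(n,A,I_2)=\GL(n,A)$ by $E(n,A)$ via the outer inductive hypothesis, your upper bound for $[[X,Y],\GL(n,A)]$ reads
\[
\big[[X,E(n,A)],Y\big]\cdot\big[X,[Y,E(n,A)]\big].
\]
Since $[E(n,A,I),E(n,A)]=E(n,A,I)$ for $n\ge 3$ (combine Lemma~\ref{Lem:Habdank} with $J=A$ and Theorem~\ref{standard}), both factors collapse to $[X,Y]=[E(n,A,I_0),E(n,A,I_1)]$. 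Your target $[[X,Y],E(n,A)]$, on the other hand, equals $E(n,A,I_0\circ I_1)$ by Theorem~\ref{standard}. But \S\ref{countexam} exhibits rings for which $[E(n,A,I_0),E(n,A,I_1)]$ is \emph{strictly larger} than $E(n,A,I_0\circ I_1)$, so the bound produced by (C7) does not land inside $[L_E,E(n,A,I_m)]$. The further application of (C7) and Lemma~\ref{Lem:Habdank} that you invoke cannot repair this: Lemma~\ref{Lem:Habdank} bounds the residual term $[X,[E(n,A,I_m),Y]]$ only by the congruence subgroup $\GL(n,A,J\circ I_m)$, whereas $[L_E,E(n,A,I_m)]$ is only guaranteed to contain the elementary subgroup $E(n,A,J\circ I_m)$; the gap between these two is the entire difficulty.

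The paper warns about exactly this obstruction in \S\ref{nervsue}: once $[E(n,A,I),E(n,A,J)]$ can be strictly larger than $E(n,A,I\circ J)$, ``it is not at all clear why the equality $[[E(A,I),E(A,J)],G(A,K)]=[[E(A,I),E(A,J)],E(A,K)]$ should hold,'' and ``the commutator calculus developed in~\cite{46,42,43} is not powerful enough here.'' Theorem~\ref{the43} is established by the localization--completion machinery of \S\ref{loci32}, and the paper's induction for the left-normed Theorem~\ref{comain} deliberately uses $m=2$, i.e.\ Theorem~\ref{the43}, as its base. The correct fix for your argument is to take Theorem~\ref{the43} as the non-trivial base case; once it is in place, your handling of the cut points $h=0$ and $1\le h\le m-2$ (via Theorem~\ref{gcformula1}, Lemma~\ref{Lem:Habdank}, and the outer inductive hypothesis, noting that $[L_E,\GL(n,A,K)]$ involves at most $m$ subgroups there) is sound and amounts to the ``formal juggling'' the paper alludes to.
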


In this theorem the arrangement of brackets on the left hand
side may be arbitrary. But it is essential that the placement of
brackets on the right hand side coincides with that on the left hand
side. Without this assumption the equality may fail dramatically,
even if all factors are elementary, as we shall see in~\S\ref{jalg7}.
Of course, the same observation applies to the theorems below.
\par
For unitary groups, similar result is established in~\cite{44}, by
essentially the same method. However, as one could expect,
the necessary calculations are tangibly more complicated and
require a completely different level of technical strain.

\begin{TheB}\label{nemorh4}
Let\/ $n\ge 3$ and let\/ $(\Form)$ be a form ring
such that\/ $A$ is a quasi-finite\/ $R$-algebra over a commutative
ring\/ $R$. Further, let\/ $(I_i,\Gamma_i)$, $i=0,\ldots,m$,
be form ideals of\/ $(\Form)$. Then
\begin{multline*}
[\![\EU(2n,I_0,\Gamma_0),\GU(2n,I_1,\Gamma_1),\ldots,\GU(2n,I_m,\Gamma_m)]\!]=\\
[\![\EU(2n,I_0,\Gamma_0),\EU(2n,I_1,\Gamma_1),\ldots,\EU(2n,I_m,\Gamma_m)]\!].
\end{multline*}
\end{TheB}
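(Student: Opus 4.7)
The plan is to transpose the inductive proof of the linear theorem (Theorem~\ref{comain}) to the unitary setting, with each ingredient replaced by its form-ring counterpart. The induction is on the number $m$ of congruence subgroups to the right of $\EU(2n,I_0,\Gamma_0)$, and the engine of the induction will be Lemma~\ref{yyqq1}, which is the unitary substitute for Lemma~\ref{Lem:Habdank}: it pins any double commutator $[\GU(2n,I,\Gamma),\GU(2n,J,\Delta)]$ between $\EU(2n,(I,\Gamma)\circ(J,\Delta))$ and $\GU(2n,(I,\Gamma)\circ(J,\Delta))$, so that the symmetrised product of form ideals plays the bookkeeping role that $IJ+JI$ does in the linear case. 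The case $m=1$ is the unitary generalised commutator formula (stated as the TheB labelled after Lemma~\ref{yyqq1} in \S\ref{hyhyhy}), which gives the identity $[\EU(2n,I_0,\Gamma_0),\GU(2n,I_1,\Gamma_1)]=[\EU(2n,I_0,\Gamma_0),\EU(2n,I_1,\Gamma_1)]$.

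Next I would establish the $m=2$ base case, namely the unitary counterpart of Theorem~\ref{the43}:
\[
\bigl[[\EU(2n,I_0,\Gamma_0),\GU(2n,I_1,\Gamma_1)],\GU(2n,I_2,\Gamma_2)\bigr]=\bigl[[\EU(2n,I_0,\Gamma_0),\EU(2n,I_1,\Gamma_1)],\EU(2n,I_2,\Gamma_2)\bigr].
\]
This is the only step that calls for genuinely new work: one has to run a relative localisation-completion argument of the Bak--Hazrat--Vavilov type in the unitary setting. Because $\Lambda$ is merely an $R_0$-module (see \S\ref{quasi-finite}), the localisation must be carried out at multiplicative subsets of the involution-invariant subring $R_0$; the explicit generators furnished by Theorem~\ref{5bbb} are then tracked through the localisation, the outer commutator is shown after localisation to lie in a relative elementary subgroup of the symmetrised product form ideal, and finally one patches via a Quillen--Suslin style Noetherian reduction adapted to form rings.

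Once the $m=2$ identity is in hand, the induction step is purely formal and mimics the proof of Theorem~\ref{comain}. For a bracketed commutator of depth $m\ge 3$, split it at its cut point into two nested commutators joined by the outer bracket. Using the commutator identities (C1)--(C8) together with Lemma~\ref{yyqq1}, one absorbs the leftmost depth-$2$ sub-commutator involving $\EU(2n,I_0,\Gamma_0)$ into a single principal congruence subgroup $\GU(2n,(I_0,\Gamma_0)\circ(I_1,\Gamma_1))$; the inductive hypothesis, applied to the resulting $m$-fold commutator whose first slot is that congruence subgroup, converts each remaining $\GU(2n,I_j,\Gamma_j)$ into $\EU(2n,I_j,\Gamma_j)$. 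For the reverse inclusion one uses the other half of Lemma~\ref{yyqq1}, namely $\EU(2n,(I_0,\Gamma_0)\circ(I_1,\Gamma_1))\le[\EU(2n,I_0,\Gamma_0),\EU(2n,I_1,\Gamma_1)]$, so that the elementary reassembly goes through for any placement of the outer brackets matching that of the left-hand side.

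The main obstacle is without doubt the $m=2$ base case. Exactly as in the passage from Lemma~\ref{Lem:Habdank} (half a page) to Lemma~\ref{yyqq1} (four pages), the unitary localisation has to cope simultaneously with short-root and long-root generators, with the non-trivial involution, and with the $\Gamma_{\min}$, ${}^{J}\Gamma$, ${}^{I}\Delta$ contributions to $(I,\Gamma)\circ(J,\Delta)$; every conjugation identity used in the linear base case has to be reworked inside the fundamental $\EU(4,\cdot,\cdot)$ subgroups attached to unitary root pairs. After that, no conceptually new input beyond Lemma~\ref{yyqq1}, Theorem~\ref{5bbb}, and the TheB of \S\ref{hyhyhy} is required.
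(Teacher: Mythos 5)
Your proposal mirrors the paper's strategy exactly: anchor the induction at $m=2$ with the unitary triple-commutator formula (Theorem~\ref{yth6}), proved by Bak-style localisation-completion over $R_0$, and drive the inductive step with Lemma~\ref{yyqq1} playing the role that Lemma~\ref{Lem:Habdank} plays in the proof of Theorem~\ref{comain}. The paper defers the six pages of localisation calculus behind Theorem~\ref{yth6} and the subsequent formal commutator juggling to~\cite{44}, and your sketch correctly identifies every ingredient the paper actually invokes, including Theorem~\ref{5bbb} as the source of generators tracked through localisation.
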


Finally, let us pass to Chevalley groups. We believe that at this
point we possess two independent proofs of the following result.
One of them, by the authors, is
conventional, and involves an further elaboration of the relative
commutator calculus in the style of~\cite{43}. Another one, by A. Stepanov, is somewhat shorter, and employs his method of universal
localisation~\cite{86}. But the definitive expositions are still missing.

\begin{TheC}
Let\/ $\rk(\Phi)\ge 2$ and let\/ $I_i\unlhd A$, $i=0,\ldots,m$, be
ideals of a commutative ring\/ $A$. In the cases\/ $\Phi=\B_2,\G_2$
assume that $A$ does not have residue fields\/ ${\Bbb F}_{\!2}$
of\/ $2$ elements and in the case\/ $\Phi=\B_2$ assume additionally
that any\/ $c\in A$ is contained in the ideal\/ $c^2A+2cA$.
\par
Then one has
\begin{multline*}
[\![E(\Phi,A,I_0),G(\Phi,A,I_),\ldots,G(\Phi,A,I_m)]\!]=\\
[\![E(\Phi,A,I_0),E(\Phi,A,I_1),\ldots,E(\Phi,A,I_m)]\!].
\end{multline*}
\end{TheC}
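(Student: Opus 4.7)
The plan is to follow the template of the linear case (Theorem~\ref{comain}) and the unitary case (Theorem~\ref{nemorh4}): reduce the general multiple commutator formula to the triple commutator formula by induction on $m$, using the cut point of the bracketing together with the absolute commutator formula (Theorem~\ref{hhggaw1}) and the Habdank-type inclusion of Lemma~\ref{gbsinea6} (the Chevalley analogue of Lemma~\ref{Lem:Habdank}).

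First I would dispose of the case $m=1$, which is precisely the generalised commutator formula for Chevalley groups, Theorem~\ref{hhggaw1}. Next, the real work is the triple case
\[
\big[[E(\Phi,A,I_0),G(\Phi,A,I_1)],G(\Phi,A,I_2)\big]
=\big[[E(\Phi,A,I_0),E(\Phi,A,I_1)],E(\Phi,A,I_2)\big],
\]
which should play here the role of Theorem~\ref{the43} in the linear case and its unitary counterpart. Establishing this triple formula is the main obstacle: one has to generalise the relative localisation/Quillen--Suslin--Bak--Stepanov machinery for Chevalley groups (as in \cite{85}, \cite{112}, \cite{43}, and \cite{86}) from double commutators to mixed commutators of a relative elementary subgroup with \emph{two} congruence subgroups. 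Concretely, one localises at maximal ideals of the base ring, uses conjugation calculus and commutator calculus inside $G(\Phi,A_\mathfrak{m})$, patches back via a Noetherian/Quillen--Suslin reduction, and then swaps one congruence factor at a time for its elementary counterpart. For the cases $\Phi=\B_2,\G_2$ the extra hypotheses on $A$ are inherited from Theorem~\ref{hhggaw1} and Lemma~\ref{gbsinea6}, so they cause no additional difficulties beyond those already present in the double commutator theorems.

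Granted the triple case, the inductive step is formal. Write the outermost bracketing as
\[
[\![E(\Phi,A,I_0),G(\Phi,A,I_1),\ldots,G(\Phi,A,I_m)]\!]
=\Big[[\![E(\Phi,A,I_0),G(\Phi,A,I_1),\ldots,G(\Phi,A,I_h)]\!],
[\![G(\Phi,A,I_{h+1}),\ldots,G(\Phi,A,I_m)]\!]\Big],
\]
where $h$ is the cut point. If $h\geq 2$, apply induction to the inner left commutator (which involves only $h<m$ ideals) to replace each $G$ by $E$ inside. If $h=1$, the left factor is $[E(\Phi,A,I_0),G(\Phi,A,I_1)]$, and using the generalised commutator formula and Lemma~\ref{gbsinea6} it sits inside $G(\Phi,A,I_0I_1)$; then the outer commutator is a multiple commutator with strictly fewer factors and one new ideal $I_0I_1$ in the leftmost slot, and one invokes induction again. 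In either case one ends up inside a multiple commutator of relative elementary subgroups, and one finally uses Lemma~\ref{gbsinea6} once more to absorb the product $I_0I_1$ (or the symmetrised product produced by the cut) back into $[E(\Phi,A,I_0),E(\Phi,A,I_1)]$, restoring the bracketing on the right-hand side of the claim. The reverse inclusion is immediate.

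I expect the hard part to be purely the triple case, since the reduction from $m$ to~$3$ is essentially the same bracket-juggling as in the linear proof of Theorem~\ref{comain} and in \cite{44} for unitary groups, and all the group-theoretic identities needed (three subgroup lemma, (C7), symmetrised product of ideals) are already in place in Sections~3--7. The non-formal input is purely the commutator calculus inside Chevalley groups that is required for the triple commutator formula; this is precisely the step that in \cite{47} and \cite{44} required a relative-localisation refinement in the linear and unitary settings, and here it should be handled either by the conventional relative commutator calculus extending~\cite{43}, or by Stepanov's universal localisation method~\cite{86}.
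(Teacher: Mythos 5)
Be aware that the paper never actually writes out a proof of Theorem~6C: right above it the authors say only that they ``believe'' they possess two independent proofs --- one elaborating the relative commutator calculus in the style of~\cite{43}, one via Stepanov's universal localisation~\cite{86} --- but that ``the definitive expositions are still missing.'' Your sketch reproduces faithfully the template that \emph{is} worked out for $\GL_n$ (the proof of Theorem~\ref{comain}) and referenced for unitary groups in~\cite{44}: treat $m=1$ as the absolute formula Theorem~\ref{hhggaw1}, isolate the triple commutator formula $[[E,G],G]=[[E,E],E]$ as the essential non-formal input, and run an induction on $m$ powered by Lemma~\ref{gbsinea6}. You also correctly name both available routes to the triple case.

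Your account of the inductive step, however, is looser than the argument can afford to be, and ``formal'' is an overstatement: the paper's $\GL_n$ proof of Theorem~\ref{comain} handles only the \emph{left-normed} bracketing, and the passage to arbitrary bracketing is described there as ``2--3 pages of artless formal juggling.'' In your $h\ge 2$ case you convert the left factor of the cut into an elementary commutator by induction but say nothing about the right factor $[\![G(\Phi,A,I_{h+1}),\ldots,G(\Phi,A,I_m)]\!]$, which contains no $E$ in any slot, so the induction hypothesis (which fixes an $E$ in slot zero) cannot be applied to it directly. And in your $h=1$ case, once you enlarge $[E(\Phi,A,I_0),G(\Phi,A,I_1)]$ to $G(\Phi,A,I_0I_1)$ you have removed the elementary slot from the whole multiple commutator, so ``invoking induction again'' is not licensed as stated. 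The repair is to first establish the left-normed version exactly as in the proof of Theorem~\ref{comain} --- triple formula on the innermost pair, Lemma~\ref{gbsinea6} to push the resulting inner elementary commutator into $G(\Phi,A,I_0I_1)$, apply the $m$-step hypothesis after swapping to put the surviving $E(\Phi,A,I_2)$ first, and Lemma~\ref{gbsinea6} again to pull $E(\Phi,A,I_0I_1)$ back into $[E(\Phi,A,I_0),E(\Phi,A,I_1)]$ --- and only then treat a general cut point $h$ by squeezing the right factor into $G(\Phi,A,I_{h+1}\cdots I_m)$ via Lemma~\ref{gbsinea6}, applying the left-normed formula to $[L,G(\Phi,A,I_{h+1}\cdots I_m)]$, and re-expanding $E(\Phi,A,I_{h+1}\cdots I_m)$ into $[\![E(\Phi,A,I_{h+1}),\ldots,E(\Phi,A,I_m)]\!]$ by Lemma~\ref{gbsinea6} once more. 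With these repairs the outline is the right one; the substantive work remains, as you say and as the paper itself concedes, the triple commutator formula, which is still unpublished for Chevalley groups.
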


These theorems are broad generalisations of the double commutator
formulas. Let us explain, why they do not reduce to the double
formula. Consider {\it three\/} ideals $I,J,K$ of $A$ and form the
commutator $[[E(A,I),G(A,J)],G(A,K)]$. The double commutator
formula implies that
$$ [[E(A,I),G(A,J)],G(A,K)]=[[E(A,I),E(A,J)],G(A,K)]. $$
\noindent
But as we know, the relative commutator subgroup $[E(A,I),E(A,J)]$
may be strictly larger, than $E(A,I\circ J)$ (see~\S\ref{countexam}), so it is not at all
clear, why the equality 
$$ [[E(A,I),E(A,J)],G(A,K)]=[[E(A,I),E(A,J)],E(A,K)]$$
should hold. 
\par
This is indeed the key new leap in the proof of Theorem~\ref{comain}, and the
commutator calculus developed in~\cite{46,42,43} is not powerful 
enough here. This step requires a new layer of the relative commutator 
calculus, which we discuss in~\S\ref{loci32}.

\section{Multiple $\rightsquigarrow$ double}\label{ziuso1}

\subsection{}\label{jalg7}
In connection with Theorems 6 and 7 it is natural to ask, whether
the equality
\begin{equation} \label{ogwt1}
[[E(A,I),E(A,J)],E(A,K)]=[E(A,I),[E(A,J),E(A,K)]] 
\end{equation}
holds for any three ideals $I$, $J$ and $K$ of $A$. If this were
the case, one could drop the requirement that the arrangement of
brackets on the left hand side and the right hand side of these
theorems should coincide.
\par
However, in general this equality fails, as can be shown by
easy examples. Let us retreat to the case of $\GL_n$. In fact,
setting here $K=A$ we see that
\begin{multline*}
E(n,A,I\circ J)=[E(n,A,I\circ J),E(n,A)]\le\\
[[E(n,A,I),E(n,A,J)],E(n,A)]\le[\GL(n,A,I\circ J),E(n,A)]=\\
[E(n,A,I\circ J),E(n,A)]=E(n,A,I\circ J).
\end{multline*}
\noindent
This shows that 
\[[[E(A,I),E(A,J)],E(A,K)]=E(n,A,I\circ J).\]
 On the other
hand, for $K=A$, we have
\[[E(A,I),[E(A,J),E(A,K)]]=[E(A,I),[E(A,J)].\]
Thus, in this case if the associativity of commutators (\ref{ogwt1}) holds, we obtain
$$ [E(n,A,I),E(n,A,J)]=E(n,A,I\circ J). $$
\noindent
However, as we know from the example provided in~\S\ref{countexam}, this equality does not hold, in general.
\par
\subsection{} To motivate the next theorem, let us calculate these triple
commutators. Combining Lemma~\ref{Lem:Habdank} and Theorem~\ref{gcformula1}, we see that
\begin{multline*}
[E(n,A,I\circ J),E(n,A,K)]\le  \Big[[E(n,A,I),E(n,A,J)],E(n,A,K)\Big]\le \\
[\GL(n,A,I\circ J),E(n,A,K)]=[E(n,A,I\circ J),E(n,A,K)].
\end{multline*}
\noindent
In other words,
$$ \Big[[E(n,A,I),E(n,A,J)],E(n,A,K)\Big]=[E(n,A,I\circ J),E(n,A,K)]. $$
\noindent
Similarly, one can verify that
$$ \Big[E(n,A,I),[E(n,A,J),E(n,A,K)]\Big]=[E(n,A,I),E(n,A,J\circ K)]. $$
\par
Plugging in the above calculation Theorem~\ref{nemorh3} instead of Theorem~\ref{gcformula1},
we get the following amazing corollary. It asserts that multiple
commutators of relative elementary subgroups can always be
expressed as {\it double\/} commutators of such subgroups,
corresponding to some symmetrised product ideals. The following is
observed in~\cite{38}.
\begin{TheA}
Let\/ $A$ be a quasi-finite ring with $1$ and let\/
$I_i\unlhd A$, $i=0,\ldots,m$, be ideals of\/ $A$.
Consider an arbitrary configuration of brackets\/ $[\![\ldots]\!]$
and assume that the outermost pairs of brackets between
positions\/ $h$ and\/ $h+1$. Then one has
\begin{multline*}
[\![E(n,A,I_0),E(n,A,I_1),\ldots,E(n,A,I_m)]\!]=\\
[E(n,A,I_0\circ\ldots\circ I_h),E(n,A,I_{h+1}\circ\ldots\circ I_m)],
\end{multline*}
where the bracketing of symmetrised products on the right hand side
coincides with the bracketing of the commutators on the left hand side.
\end{TheA}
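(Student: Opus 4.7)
The plan is to combine the sandwich inclusions of Lemma~\ref{Lem:Habdank} with the multiple commutator formula Theorem~\ref{nemorh3} (whose base case is the generalised commutator formula Theorem~\ref{gcformula1}). No separate induction on the number of factors is needed; every step applies Theorem~\ref{nemorh3} as a black box to multi-commutators with at most $m+2$ entries.

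First, write $X:=[\![E(n,A,I_0),\ldots,E(n,A,I_m)]\!]$ and split it at the cut point $h$ as $X=[C_0,C_1]$, where $C_0=[\![E(n,A,I_0),\ldots,E(n,A,I_h)]\!]$ and $C_1=[\![E(n,A,I_{h+1}),\ldots,E(n,A,I_m)]\!]$ inherit their bracketings. Theorem~\ref{nemorh3} asserts $X=[\![E(n,A,I_0),\GL(n,A,I_1),\ldots,\GL(n,A,I_m)]\!]$ with the same bracketing, and since any group sandwiched between $E(n,A,I_i)$ and $\GL(n,A,I_i)$ for $i\ge 1$ produces the same multiple commutator, one may retain $E$'s in positions $1,\ldots,h$ and use $\GL$'s only in positions $h+1,\ldots,m$. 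This rewrites $X=[C_0,D_1]$ with $D_1:=[\![\GL(n,A,I_{h+1}),\ldots,\GL(n,A,I_m)]\!]$.

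Iterated application of Lemma~\ref{Lem:Habdank} produces the sandwiches $E(n,A,K_0)\le C_0\le \GL(n,A,K_0)$ and $E(n,A,K_1)\le D_1\le \GL(n,A,K_1)$, with $K_0,K_1$ the symmetrised products described in the statement. The lower inclusions at once yield $[E(n,A,K_0),E(n,A,K_1)]\le X$. For the reverse direction, the upper bound on $D_1$ gives $X\le [C_0,\GL(n,A,K_1)]$, and I view the right-hand side as the $(h+2)$-fold commutator $[\![E(n,A,I_0),\ldots,E(n,A,I_h),\GL(n,A,K_1)]\!]$ with cut at $h$. Applying Theorem~\ref{nemorh3} to these $h+2$ ideals $I_0,\ldots,I_h,K_1$ replaces the $\GL$ in the last slot with $E(n,A,K_1)$, yielding the key identity $[C_0,\GL(n,A,K_1)]=[C_0,E(n,A,K_1)]$. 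The upper sandwich on $C_0$ and Theorem~\ref{gcformula1} then conclude: $[C_0,E(n,A,K_1)]\le [\GL(n,A,K_0),E(n,A,K_1)]=[E(n,A,K_0),E(n,A,K_1)]$.

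The crux is the $\GL\rightsquigarrow E$ replacement $[C_0,\GL(n,A,K_1)]=[C_0,E(n,A,K_1)]$: since neither argument is itself a single relative elementary subgroup, Theorem~\ref{gcformula1} cannot be invoked directly, and it is precisely the multiple commutator formula Theorem~\ref{nemorh3} that allows this conversion by pushing the replacement through the iterated commutator $C_0$ on the left.
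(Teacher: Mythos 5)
Your proof is correct and follows essentially the same route as the paper: a chain of sandwich inequalities built from iterated Lemma~\ref{Lem:Habdank}, with one $\GL\rightsquigarrow E$ replacement justified directly by Theorem~\ref{gcformula1} and the other (where the companion factor is a genuine multi-commutator) justified by Theorem~\ref{nemorh3}, exactly as the paper indicates in the paragraph preceding its proof. The only difference is cosmetic: you perform the Theorem~\ref{nemorh3} replacement on the right factor before bounding $C_0$ by $\GL(n,A,K_0)$ (the paper does the left factor first), and your preliminary rewriting $X=[C_0,D_1]$ is redundant, since $C_1\le\GL(n,A,K_1)$ already gives $X\le[C_0,\GL(n,A,K_1)]$ without introducing $D_1$.
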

\begin{proof}
Alternated application of Lemma~\ref{Lem:Habdank} and Theorem~\ref{gcformula1}
shows that
\begin{multline*}
\Big[\big\llbracket E(n,A,I_0),E(n,A,I_1),\ldots,E(n,A,I_{k})\big\rrbracket,
\big\llbracket E(n,A,I_{k+1}),\ldots,E(n,A,I_m)\big\rrbracket\Big]\le  \\
\Big[\GL(n,A,I_0\circ\ldots\circ I_k\big),
\llbracket E(n,A,I_{k+1}),\ldots,E(n,A,I_m)\big\rrbracket\Big]=\\
\Big[E(n,A,I_0\circ\ldots\circ I_k\big),
\llbracket E(n,A,I_{k+1}),\ldots,E(n,A,I_m)\big\rrbracket\Big]\le  \\
\big[E(n,A,I_0\circ\ldots\circ I_k\big),\GL(n,A,I_{k+1}\circ\ldots\circ I_m)\big]=\\
\big[E(n,A,I_0\circ\ldots\circ I_k\big),E(n,A,I_{k+1}\circ\ldots\circ I_m)\big]\le \\
\Big[\big\llbracket E(n,A,I_0),E(n,A,I_1),\ldots,E(n,A,I_{k})\big\rrbracket, \big
\llbracket E(n,A,I_{k+1}),\ldots,E(n,A,I_m)\big\rrbracket\Big],
\end{multline*}
as claimed.
\end{proof}

For the unitary case it is \cite[Theorem 7]{44}.
\begin{TheB}
 Let\/ $(\Form)$ be a quasi-finite ring with $1$
and let\/ $(I_i,\Gamma_i)$, $i=0,\ldots,m$, be form ideals
of the form ring\/ $(\Form)$. Consider an arbitrary configuration
of brackets\/ $[\![\ldots]\!]$ and assume that the outermost
pairs of brackets between positions\/ $h$ and\/ $h+1$. Then
one has
\begin{multline*}
[\![\EU(2n,I_0,\Gamma_0),\EU(2n,I_1,\Gamma_1),\ldots,\EU(2n,I_m,\Gamma_m)]\!]=\\
[\EU(2n,(I_0,\Gamma_0)\circ\ldots\circ(I_h,\Gamma_h)),
\EU(n,(I_{h+1},\Gamma_{h+1})\circ\ldots\circ(I_m,\Gamma_m))].
\end{multline*}
\end{TheB}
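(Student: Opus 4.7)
The plan is to mirror, essentially line-for-line, the proof of the linear corollary Theorem~A above, substituting each linear ingredient by its unitary counterpart. Three unitary-side tools will be needed: Lemma~\ref{yyqq1}, supplying the sandwich
\[
\EU\big(2n,(I,\Gamma)\circ(J,\Delta)\big)\le [\EU(2n,I,\Gamma),\EU(2n,J,\Delta)]\le \GU\big(2n,(I,\Gamma)\circ(J,\Delta)\big);
\]
the unitary generalised commutator formula stated in Section~\ref{hyhyhy}, namely $[\EU(2n,I,\Gamma),\GU(2n,J,\Delta)]=[\EU(2n,I,\Gamma),\EU(2n,J,\Delta)]$; and the unitary multiple commutator formula Theorem~\ref{nemorh4}.

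First I would split the multiple commutator at its outermost pair: by the definition of the cut point $h$, the left hand side takes the shape $\big[[\![L]\!],[\![R]\!]\big]$, with
\[
[\![L]\!]=[\![\EU(2n,I_0,\Gamma_0),\ldots,\EU(2n,I_h,\Gamma_h)]\!],
\]
\[
[\![R]\!]=[\![\EU(2n,I_{h+1},\Gamma_{h+1}),\ldots,\EU(2n,I_m,\Gamma_m)]\!],
\]
each carrying the bracketing inherited from the original arrangement. Iterated application of the upper half of Lemma~\ref{yyqq1} then yields
\[
[\![L]\!]\le \GU\big(2n,(I_0,\Gamma_0)\circ\ldots\circ(I_h,\Gamma_h)\big),
\]
\[
[\![R]\!]\le \GU\big(2n,(I_{h+1},\Gamma_{h+1})\circ\ldots\circ(I_m,\Gamma_m)\big).
\]
Feeding these bounds into the outer commutator and using $[X,Y]=[Y,X]$ to place the $\GU$ factor on the right, one obtains a multiple commutator whose leftmost entry is $\EU$ and whose rightmost entry is $\GU$; Theorem~\ref{nemorh4} then replaces that $\GU$ by the corresponding $\EU$, and one more application of the unitary generalised commutator formula reduces the expression to the double commutator
\[
\big[\EU\big(2n,(I_0,\Gamma_0)\circ\ldots\circ(I_h,\Gamma_h)\big),\EU\big(2n,(I_{h+1},\Gamma_{h+1})\circ\ldots\circ(I_m,\Gamma_m)\big)\big],
\]
mirroring the chain of inclusions that closes the linear proof.

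For the reverse inclusion, I would apply the lower half of Lemma~\ref{yyqq1} iteratively inside each of $[\![L]\!]$ and $[\![R]\!]$ to obtain $\EU(2n,(I_0,\Gamma_0)\circ\ldots\circ(I_h,\Gamma_h))\le [\![L]\!]$ and the analogous containment for $[\![R]\!]$, so the double commutator is trapped inside $\big[[\![L]\!],[\![R]\!]\big]$. The hard part is not the present argument, which is essentially a formal translation of the linear corollary, but the invocation of Theorem~\ref{nemorh4}. Its proof demands a substantial extension of the relative commutator and conjugation calculus to the unitary setting --- long versus short roots, non-trivial involution, non-trivial form parameter --- packaging the same sort of calculations that already made Lemma~\ref{yyqq1} an order of magnitude longer than Lemma~\ref{Lem:Habdank}. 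Once Theorem~\ref{nemorh4} is granted, the present collapse to a double commutator at the symmetrised-product levels is formal.
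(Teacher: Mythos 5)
Your proof is correct and is precisely the translation the paper intends: it reproduces, mutatis mutandis, the chain of inclusions used for the preceding linear corollary, with Lemma~\ref{yyqq1}, Theorem~\ref{nemorh4} and the unitary generalised commutator formula playing the roles of Lemma~\ref{Lem:Habdank}, Theorem~\ref{nemorh3} and Theorem~\ref{gcformula1}. The paper itself defers to \cite[Theorem~7]{44} without spelling out the argument, but what you write is the obvious one, and your observation that the collapse to a double commutator is formal once Theorem~\ref{nemorh4} is granted matches the paper's remark that one simply plugs in the multiple commutator formula in place of the generalised one.
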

Of course, similar result also holds in the context of Chevalley
groups, once we have Theorem~6C.
\refstepcounter{TheC}
\begin{TheC}
Let\/ $A$ be a commutative ring with\/ $1$ and let\/
$I_i\unlhd A$, $i=0,\ldots,m$, be ideals of\/ $A$.
Consider an arbitrary configuration of brackets\/ $[\![\ldots]\!]$
and assume that the outermost pairs of brackets between
positions\/ $h$ and\/ $h+1$. Then one has
\begin{multline*}
[\![E(\Phi,A,I_0),E(\Phi,A,I_1),\ldots,E(\Phi,A,I_m)]\!]=\\
[E(\Phi,A,I_0\ldots I_h),E(\Phi,A,I_{h+1}\ldots I_m)].
\end{multline*}
\end{TheC}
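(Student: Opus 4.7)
The plan is to follow the proof of the linear case (Theorem~5A above, the TheA just stated in this section), with every linear ingredient replaced by its Chevalley-group analogue. Specifically, I would use Lemma~\ref{gbsinea6} in place of Lemma~\ref{Lem:Habdank}, the standard commutator formula Theorem~\ref{hhggaw1} in place of Theorem~\ref{gcformula1}, and the Chevalley-group multiple commutator formula (the TheC companion to Theorem~\ref{nemorh3}, stated at the end of Section~\ref{nervsue}) in place of Theorem~\ref{comain}. Because $A$ is commutative, symmetrised products collapse to ordinary products of ideals, and Lemma~\ref{gbsinea6} simplifies to
\[
E(\Phi,A,IJ) \le [E(\Phi,A,I),E(\Phi,A,J)] \le [G(\Phi,A,I),G(\Phi,A,J)] \le G(\Phi,A,IJ).
\]

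Step one is to establish, by a straightforward induction on the depth of the bracket tree, the auxiliary ``squeeze''
\[
E(\Phi,A,I_{i_1}\cdots I_{i_k}) \;\le\; \llbracket E(\Phi,A,I_{i_1}),\ldots,E(\Phi,A,I_{i_k})\rrbracket \;\le\; G(\Phi,A,I_{i_1}\cdots I_{i_k})
\]
for any consecutive block of indices with any internal bracketing: the upper bound iterates the topmost inclusion of Lemma~\ref{gbsinea6} at the outermost cut of the bracket tree, and the lower bound iterates its bottommost inclusion. Let $L$ and $R$ be the subgroups produced by the two immediate sub-arrangements on either side of the outermost pair of brackets (cutting at position $h$), so that the target multiple commutator equals $[L,R]$. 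The squeeze yields $L\le G(\Phi,A,I_0\cdots I_h)$ and $R\le G(\Phi,A,I_{h+1}\cdots I_m)$, together with the dual inclusions $E(\Phi,A,I_0\cdots I_h)\le L$ and $E(\Phi,A,I_{h+1}\cdots I_m)\le R$.

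Chaining these inclusions exactly as in the linear proof produces
\begin{multline*}
[L,R] \le [G(\Phi,A,I_0\cdots I_h),R] = [E(\Phi,A,I_0\cdots I_h),R] \\ \le [E(\Phi,A,I_0\cdots I_h),G(\Phi,A,I_{h+1}\cdots I_m)] = [E(\Phi,A,I_0\cdots I_h),E(\Phi,A,I_{h+1}\cdots I_m)] \le [L,R].
\end{multline*}
The two outermost inclusions and the middle one come from the two halves of the squeeze. The first equality is where the Chevalley multiple commutator formula enters: after the symmetry $[G,R]=[R,G]$ of commutator subgroups, the right-hand side is a multiple commutator $\llbracket E(\Phi,A,I_{h+1}),\ldots,E(\Phi,A,I_m),G(\Phi,A,I_0\cdots I_h)\rrbracket$ whose outermost first slot is of $E$-type and whose single trailing $G$-factor can then be replaced by the corresponding $E$-factor. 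The second equality is the standard commutator formula Theorem~\ref{hhggaw1} applied directly. The chain sandwiches $[L,R]$ between two copies of $[E(\Phi,A,I_0\cdots I_h),E(\Phi,A,I_{h+1}\cdots I_m)]$, and the claimed equality follows.

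The main obstacle I expect is justifying the first equality $[G(\Phi,A,I_0\cdots I_h),R]=[E(\Phi,A,I_0\cdots I_h),R]$, where $R$ is itself a multi-commutator rather than a single congruence subgroup; in the linear case this is dispatched by alternating Lemma~\ref{Lem:Habdank} with Theorem~\ref{gcformula1}, and one expects the same alternation (with Lemma~\ref{gbsinea6} and Theorem~\ref{hhggaw1}) to work here, with the Chevalley multiple commutator formula as a safety net. The bookkeeping in the inductive squeeze is essentially painless in the commutative setting because $I\circ J=IJ$ and ideal multiplication is associative, so the product $I_{i_1}\cdots I_{i_k}$ on the right-hand side is insensitive to the bracket arrangement even though the commutator subgroup on the left is not. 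Throughout, the residue-field hypothesis at $\Phi=\B_2,\G_2$ and the divisibility hypothesis at $\Phi=\B_2$ required by Lemma~\ref{gbsinea6} and the Chevalley multiple commutator formula must be assumed, as the paper makes implicit in the sequel.
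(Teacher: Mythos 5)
Your proposal is correct and follows essentially the same argument the paper uses for the linear case (the chain of sandwiches from Lemma~\ref{gbsinea6}, the equality at the $\GL$/$G$-factor via the Chevalley multiple commutator formula 6C, and the generalized commutator formula Theorem~\ref{hhggaw1}), which is precisely how the paper intends the Chevalley case to be read off. You also correctly note that the rank-$2$ side conditions must carry over from Lemma~\ref{gbsinea6} and Theorem~6C even though the paper's statement of Theorem~8C leaves them implicit.
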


\section{Localisation}

\subsection{} \label{fdye7}
In this  paper we only use central localisation. Namely, for an $R$-algebra $A$, we consider the localisation with respect to a multiplicative closed subset of $R$. 

First,
we fix some notation. Let $R$ be a commutative ring with
1, $S$ be a multiplicative closed subset in $R$ and $A$ be an $R$-algebra. Then $S^{-1}R$ and $S^{-1}A$ are the
corresponding localisation. We mostly use localisation
with respect to the two following types of multiplicative
systems.
\par\smallskip
$\bullet$ {\it Principal localisation\/}: $S$ coincides with
$\langle s\rangle=\{1,s,s^2,\ldots\}$, for some non-nilpotent
$s\in R$, in this case we usually write $\langle s\rangle^{-1}R=R_s$ and   $\langle s\rangle^{-1}A=A_s$.
\par\smallskip
$\bullet$ {\it Localisation at a maximal ideal}: $S=R\setminus\mm$,
for some maximal ideal $\mm\in\Max(R)$ in $R$, in this case
we usually write $(R\setminus\mm)^{-1}R=R_\mm$ and $(A\setminus\mm)^{-1}A=A_\mm$
\par\smallskip
We denote by $F_S:A\map S^{-1}A$ the canonical ring homomorphism
called the {\it localisation homomorphism\/}. For the two special
cases above, we write $F_s:A\map A_s$ and $F_\mm:A\map A_\mm$,
respectively.
\par
When we write an element as a fraction, like $a/s$ or
$\displaystyle{\frac{a}{s}}$ we {\it always\/} think of it
as an element of some localisation $S^{-1}A$, where $s\in S$.
If $s$ were actually invertible in $R$, we would have written
$as^{-1}$ instead.
\par
Ideologically, all proofs using localisations are based on the
interplay of the three following observations:
\par\smallskip
$\bullet$ Functors of points $A\rightsquigarrow G(A)$ are
compatible with localisation,
$$ g\in G(A)\qquad\aeq\qquad F_\mm(g)\in G(A_\mm),
\quad\text{for all\ } \mm\in\Max(A). $$
\par$\bullet$ Elementary subfunctors $A\rightsquigarrow E(A)$
are compatible with factorisation, for any $I\trianglelefteq A$
the reduction homomorphism $\rho_I:E(A)\map E(A/I)$ is surjective.
\par\smallskip
$\bullet$ On a [semi-]local ring $A$ the values of semi-simple
groups and their elementary subfunctors coincide, $G(A)=E(A)$.
\par\smallskip
The following property of the functors $G$ and $E$
will be crucial for what follows: they are continuous functors, i.e., they {\it commute with direct
limits\/}. In other words, if $A=\varinjlim A_i$, where
$\{A_i\}_{i\in I}$ is an inductive system of rings, then
$$ G(\varinjlim A_i)=\varinjlim G(A_i),  \qquad E(\varinjlim A_i)=\varinjlim E(A_i) . $$
\noindent
We use this property in the two following situations.
\par\smallskip
$\bullet$ {\it Noetherian reduction\/}: let $A_i$ be the inductive
system of all finitely generated subrings of $A$ with respect to
inclusion. Then
$$ G(A)=\varinjlim G(A_i), \qquad E(A)=\varinjlim E(A_i). $$
\noindent
This allows to reduce most of the proofs to the case of
Noetherian rings.
\par\smallskip
$\bullet$ {\it Reduction to principal localisations\/}:
let $S$ be a multiplicative closed set in $R$ and let $A_s$, $s\in S$,
be the corresponding inductive system with respect to the
principal localisation homomorphisms: $F_{t}:A_s\map A_{st}$.
Then
$$ G(S^{-1}A)=\varinjlim G(A_s), \qquad E(S^{-1}A)=\varinjlim E(A_s).$$
\noindent
This reduces localisation in any multiplicative system to
the principal localisation.


\subsection{Injectivity of localisation homomorphism}

Most localisation proofs rely on the injectivity of localisation
homomorphism $F_S$. As observed in~\S\ref{fdye7}, we can
only consider {\it principal\/} localisation homomorphisms $F_s$.
Of course, $F_s$ is injective when $s$ is regular. Thus,
localisation proofs are particularly easy for integral domains.
A large part of what follows are various devices to fight with
the presence of zero-divisors.
\par
When $s$ is a zero-divisor, $F_s$ is not injective on the group
$G(A)$ itself. But its restrictions to appropriate congruence
subgroups often are. Here is an important typical case, i.e., 
Noetherian ring.

\begin{Lem}\Label{Lem:03}
Let $A$ be a module finite $R$-algebra, where $R$  is a commutative Noetherian ring. Then for any $s\in R$,  there exists a positive integer $l$ such that the homomorphism 
$F_s: \GL(n,A, s^l A)\longrightarrow \GL(n,A_s)$ is injective.
\end{Lem}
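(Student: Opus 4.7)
The plan is to exploit the Noetherian hypothesis through the ascending chain of annihilator ideals
\[ \Ann_A(s) \subseteq \Ann_A(s^2) \subseteq \Ann_A(s^3) \subseteq \cdots \]
in $A$. Since $s\in R$ lies in the centre of $A$ and $A$ is a Noetherian $R$-module (being finitely generated as a module over the Noetherian ring $R$), this chain stabilises. I would therefore fix $l$ large enough that $\Ann_A(s^l)=\Ann_A(s^{l+k})$ for every $k\ge 0$, and take this same $l$ as the exponent in the statement.

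With $l$ so chosen, since $F_s$ is a group homomorphism it suffices to check that its kernel on $\GL(n,A,s^lA)$ is trivial. Given $g\in \GL(n,A,s^lA)$ with $F_s(g)=e$, by definition of the principal congruence subgroup one can write $g=e+s^ly$ for some $y\in M(n,A)$. The equality $F_s(g)=e$ in $\GL(n,A_s)$ means that each entry $s^ly_{ij}$ vanishes in the localisation $A_s$, and hence there exists $k_{ij}\ge 0$ with $s^{k_{ij}}s^ly_{ij}=0$ in $A$; equivalently, $y_{ij}\in \Ann_A(s^{l+k_{ij}})$. By the stabilisation, this annihilator coincides with $\Ann_A(s^l)$, so $s^ly_{ij}=0$ for every pair $(i,j)$. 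Collecting these entry-wise equalities gives $g-e=s^ly=0$, as required.

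The only point of any subtlety is the uniform choice of $l$: the exponent $k_{ij}$ provided by the definition of $A_s$ depends a priori on the entry and on $g$, but Noetherianity bounds the annihilator chain once and for all, absorbing any such dependence into the single integer $l$. This is precisely why the Noetherian hypothesis cannot be dropped; in the more general situations treated later one must pass through Noetherian reduction in order to apply a statement of this form.
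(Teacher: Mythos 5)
Your proof is correct and takes essentially the same approach as the paper: both stabilise the ascending chain of annihilators $\Ann_A(s^i)$ (Noetherianity of $A$ coming from module-finiteness over the Noetherian ring $R$) and use the stabilisation to absorb the extra power of $s$ arising from the definition of the localisation, killing the entries of $g-e$. The paper phrases this as injectivity of the additive map $s^lA\to A_s$ and then invokes it for $\GL_n$, while you argue entry-by-entry; these are the same argument.
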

\begin{proof}
The homomorphism $F_s: \GL(n,A, s^l A)\longrightarrow \GL(n,A_s)$ is
injective whenever $F_s:s^kA\map A_s$ is injective.
Let $\ma_i=\Ann_R(s^i)$ be the annihilator of $s^i$ in $A$.
Since $R$ is Noetherian, and $A$ is finite over $R$, $A$ is Noetherian and so there exists $k$ such that
$\ma_k=\ma_{k+1}=\cdots$. If $s^ka$ vanishes in $A_s$, then
$s^is^ka=0$ for some $i$. But since $\ma_{k+i}=\ma_k$, already
$s^ka=0$ and thus $s^kA$ injects in $A_s$.
\end{proof}

Another important trick to override the presence of zero-divisors
consists in throwing in polynomial variables. Namely, instead
of the ring $R$ itself we consider the polynomial ring $R[t]$
in the variable $t$. In that ring $t$ is not a zero-divisor,
so that the localisation homomorphism $F_t$ is injective.
We can use that, and then specialise $t$ to any $s\in R$.
\par
Actually, throwing in polynomial variables has more than one
use. The elementary subfunctors $R\rightsquigarrow E(R)$ are
compatible with localisation, i.e., 
$$ g\in E(R)\qquad\seq\qquad F_\mm(g)\in E(R_\mm),
\quad\text{for all\ } \mm\in\Max(R), $$
\noindent
but the converse implication does not hold, for otherwise
$E(R)$ would coincide with the [semi-simple part of] $G(R)$
for all commutative rings.
\par
The following remarkable observation was due to Daniel Quillen
at the level of $\K_0$, and was first applied by Andrei Suslin
at the level of $\K_1$, in the context of solving Serre's
conjecture, and its higher analogues~\cite{90}. See~\cite{58} for a
description of Quillen--Suslin's idea in its historical
development. We refer to the following result as Quillen--Suslin's
lemma.
\begin{The}
Let $g\in G(R[t],tR[t])$. Then $g\in E(R[t])$ if and only if  $F_\mm(g)\in E(R_\mm[t])$,
for all  $\mm\in\Max(R).$
\end{The}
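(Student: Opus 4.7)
The ``only if'' direction is immediate, since any ring homomorphism $R[t]\to R_\mm[t]$ sends an elementary generator $e_{ij}(f)$ to $e_{ij}(F_\mm(f))$, so $E(R[t])$ is mapped into $E(R_\mm[t])$. For the nontrivial ``if'' direction, the plan is a Quillen--Suslin style patching argument with an auxiliary polynomial variable. By the Noetherian reduction principle of Subsection~10.1 (writing $R$ as a directed union of its finitely generated, hence Noetherian, subrings and using that $G$ and $E$ commute with direct limits), one may assume that $R$ is Noetherian. Introduce a second polynomial variable $u$ and set
$$\Phi(t,u):=g(t+u)\,g(t)^{-1}\in G\bigl(R[t,u],\, u R[t,u]\bigr).$$
It suffices to prove $\Phi(t,u)\in E(R[t,u])$, since specialising $t=0$ and using $g(0)=1$ then gives $g(u)\in E(R[u])$, as desired.

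Consider the subset
$$J:=\bigl\{b\in R:\Phi(t,bu)\in E(R[t,u])\bigr\}.$$
The strategy is to prove (i) $J$ is an ideal of $R$, and (ii) $J\not\subseteq\mm$ for every $\mm\in\Max(R)$; together, (i) and (ii) force $J=R$, and in particular $1\in J$ completes the proof. Part (i) is an elementary commutator manipulation: closure under the scaling $b\mapsto rb$ follows from the ring endomorphism $u\mapsto ru$ of $R[t,u]$, which sends $E(R[t,u])$ into itself; closure under addition follows from the cocycle identity
$$\Phi\bigl(t,(b+c)u\bigr)=\Phi(t+cu,\,bu)\cdot\Phi(t,cu),$$
together with invariance of $E(R[t,u])$ under the ring endomorphism $t\mapsto t+cu$.

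For (ii), the hypothesis $F_\mm(g)\in E(R_\mm[t])$ combined with the identity $E(R_\mm[t])=\varinjlim_{s\notin\mm}E(R_s[t])$ produces some $s\in R\setminus\mm$ together with a factorisation $F_s(g(t))=\prod_k e_{i_kj_k}(f_k(t))$ with $f_k\in R_s[t]$. Using the Taylor expansion $f_k(t+v)-f_k(t)=v\,p_k(t,v)$ with $p_k\in R_s[t,v]$, and iterated application of the commutator identities of Section~3, one rewrites $F_s(\Phi(t,v))$ as a product of conjugates, by elements of $E(R_s[t])$, of elementary matrices whose parameters lie in $v\cdot R_s[t,v]$. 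Substituting $v=s^N u$ and taking $N$ large enough to clear the finitely many $s$-denominators appearing in the $p_k$ and in the conjugating elements, the resulting expression becomes the image of an explicit element of $E(R[t,u])$. The injectivity lemma, Lemma~\ref{Lem:03}, applied to the Noetherian ring $R[t]$ with the element $s$ at a sufficiently high congruence level, then forces $\Phi(t,s^Nu)\in E(R[t,u])$, so that $s^N\in J\setminus\mm$ as required. The main obstacle in this plan is precisely this last passage from the local elementary factorisation over $R_s[t]$ to a global elementary factorisation of $\Phi(t,s^Nu)$ over $R[t,u]$: the extra polynomial variable $u$ and the freedom to enlarge $N$ are exactly what absorb the $s$-denominators, while Lemma~\ref{Lem:03} is what neutralises the zero-divisors otherwise obstructing injectivity of the localisation homomorphism.
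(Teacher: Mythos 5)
The paper itself does not prove this statement; it quotes it as a known theorem of Quillen and Suslin (citing~\cite{90}), so there is no in-paper proof to compare against. Evaluated on its own terms, your argument is the standard Quillen--Suslin/Vaserstein local--global proof: Noetherian reduction, the cocycle element $\Phi(t,u)=g(t+u)g(t)^{-1}$, the set $J$ shown to be an ideal via the substitution $u\mapsto ru$ and the cocycle identity, the passage from $R_\mm$ to a principal $R_s$ by continuity of $E$, the telescoping of $F_s(\Phi(t,v))$ into conjugates of elementary factors with $v$-divisible parameters, and descent by injectivity of $F_s$ on a high congruence level. This overall scheme is right, and the cocycle/ideal bookkeeping is carried out correctly.

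The place that is genuinely underspecified is the one you yourself single out as ``the main obstacle'', and ``clearing the finitely many $s$-denominators'' does not actually bridge it. After the substitution $v=s^Nu$ you have factors of the form ${}^{c(t)}e_{i_jj_j}\bigl(s^Nu\,p_j(t,s^Nu)\bigr)$ with $c(t)\in E(n,R_s[t])$. Taking $N$ large only guarantees that these matrices have entries in $R[t,u]$; it does not by itself put them in the image of $E(n,R[t,u])$, and an element of $\GL(n,R[t,u])$ whose localisation is elementary need not itself be elementary --- that is precisely the non-triviality the theorem is asserting. What is actually needed is a Vaserstein-type conjugation lemma: expand ${}^{c(t)}e_{ij}(\alpha)$ generator by generator via the Steinberg relations (E1)--(E3), splitting $e_{ij}(\alpha)=[e_{ik}(\alpha),e_{kj}(1)]$ with a third index whenever the conjugating generator lies in the same fundamental $\SL_2$, and keep track of the $s$-exponent loss at each stage so that the output is an explicit product of elementary generators with parameters in $s^lR[t,u]$. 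This is exactly the estimate packaged in the paper's conjugation calculus, of which Lemma~\ref{LemHZ} is the model. Two consequences should be flagged: the third-index manoeuvre is where the hypothesis $n\ge 3$ (resp.\ $\rk(\Phi)\ge 2$) enters essentially --- the local--global principle fails for $\GL_2$ --- and your sketch does not state it; and the injectivity lemma should be applied over the Noetherian ring $R[t,u]$, not $R[t]$. Finally, your argument is written in $\GL_n$-specific notation $e_{ij}$; the theorem as stated applies to the other families, where the same strategy works with $T_{ij}$ or $x_\alpha$ in place of $e_{ij}$, but the requisite conjugation calculus is substantially heavier.
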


\subsection{}\label{localization}
Let $(A,\Lambda)$ be a form algebra over a commutative ring $R$ with $1$,
and let $S$ be a multiplicative subset of $R_0$, (see \S\ref{quasi-finite}).
For any $R_0$-module $M$ one can consider its localisation $S^{-1}M$
and the corresponding localisation homomorphism $F_S:M\map S^{-1}M$.
By definition of the ring $R_0$ both $A$ and $\Lambda$ are $R_0$-modules,
and thus can be localised in $S$.

\subsection{Localisation of form rings} \label{hgbdg43a}
In the setting of form rings, we need to adjust the ground field of the localisation. For a form ring $(A,\Lambda)$, where $A$ is an $R$-algebras, the form $\Lambda$ is not necessarily an $R$-module (see~\S\ref{quasi-finite}). This forces us to
replace $R$ by its subring $R_0$, generated by all $\alpha\bar\alpha$
with $\alpha\in R$. Clearly, all elements in $R_0$ are invariant with
respect to the involution, i.~e.\ $\bar r=r$, for $r\in R_0$. Furthermore, $\Lambda$ is an $R_0$-module.

As in the setting of general linear group (\S\ref{fdye7}), we mostly use localisation in the unitary setting with respect to the
following two types of multiplication closed subsets of $R_0$.
\par\smallskip
$\bullet$ {\it Principal localisation\/}: for any $s\in R_0$ with $\bar s=s$,
the multiplicative closed subset generated by $s$ is defined as
$\langle s\rangle=\{1,s,s^2,\ldots\}$. The localisation of the form algebra
$(\FormR)$ with respect to multiplicative system $\langle s\rangle$ is usually
denoted by $(A_s,\Lambda_s)$, where as usual $A_s=\langle s\rangle^{-1}A$ and
$\Lambda_s=\langle s\rangle^{-1}\Lambda$ are the usual principal localisations
of the ring $A$ and the form parameter $\Lambda$.
Notice that, for each $\alpha\in A_s$, there exists an integer $n$ and an
element $a\in A$ such that $\displaystyle\alpha=\frac a{s^n}$, and for
each $\xi\in\Lambda_s$, there exists an integer $m$ and an element
$\zeta\in\Lambda$ such that $\displaystyle\xi=\frac\zeta{s^m}$.
\par\smallskip
$\bullet$ {\it Maximal localisation\/}: consider a maximal ideal $\gm\in\Max(R_0)$
of $R_0$ and the multiplicative closed set $S_{\gm}=R_0\backslash\gm$. We
denote the localisation of the form algebra $(\FormR)$ with respect to $S_{\gm}$
by $(A_\gm,\Lambda_\gm)$, where $A_\gm=S_{\gm}^{-1}A$ and
$\Lambda_\gm=S_{\gm}^{-1}\Lambda$ are the usual maximal localisations of the
ring $A$ and the form parameter, respectively.
\par\smallskip
In these cases the corresponding localisation homomorphisms will be
denoted by $F_s$ and by $F_{\gm}$, respectively.
\par
The following fact is verified by a straightforward computation.
\begin{Lem}
For any\/ $s\in R_0$ and for any\/ $\gm\in\Max(R_0)$ the
pairs\/ $(A_s,\Lambda_s)$ and\/ $(A_\gm,\Lambda_\gm)$ are form rings.
\end{Lem}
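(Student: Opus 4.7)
The plan is to verify directly that after localisation the four ingredients of a form ring (ring, involution, symmetry element $\lambda$, and form parameter) are preserved. The crucial input, which is what makes the lemma work at all, is the assumption that the involution fixes $R_0$ pointwise, so that the multiplicative sets $\langle s\rangle$ and $R_0\setminus\gm$ are stable under the involution. Since the two cases $(A_s,\Lambda_s)$ and $(A_\gm,\Lambda_\gm)$ are formally parallel, I would write out the argument for $(A_s,\Lambda_s)$ and indicate that the maximal case proceeds verbatim.

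First I would extend the involution to $A_s$. Because $\bar s=s$, the prescription $\overline{a/s^n}:=\bar a/s^n$ is well-defined, additive, anti-multiplicative, and of order two; a standard check using the universal property of localisation covers this. Next, $\lambda\in\Cent(A)$ maps to $F_s(\lambda)\in\Cent(A_s)$ and one has $F_s(\lambda)\,\overline{F_s(\lambda)}=F_s(\lambda\bar\lambda)=1$, so $F_s(\lambda)$ serves as the symmetry element for $A_s$. I continue to write $\lambda$ for its image.

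The heart of the verification is that $\Lambda_s$ is a form parameter in $(A_s,\lambda)$, i.e., satisfies the two axioms of \S\ref{form algebra}. For the containment $\Lambda_{\min}(A_s)\le \Lambda_s$, take $\alpha=a/s^n\in A_s$; then
\[
\alpha-\lambda\bar\alpha=\frac{a-\lambda\bar a}{s^n}\in \frac{\Lambda_{\min}(A)}{s^n}\subseteq \Lambda_s,
\]
using $\Lambda_{\min}(A)\le\Lambda$. For $\Lambda_s\le\Lambda_{\max}(A_s)$, take $\xi=\zeta/s^m\in\Lambda_s$ with $\zeta\in\Lambda\subseteq\Lambda_{\max}(A)$; then $\zeta=-\lambda\bar\zeta$, which divided by $s^m$ gives $\xi=-\lambda\bar\xi$. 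For the stability axiom, take $\alpha=a/s^n\in A_s$ and $\xi=\zeta/s^m\in\Lambda_s$; then
\[
\alpha\xi\bar\alpha=\frac{a\zeta\bar a}{s^{2n+m}}\in\frac{\Lambda}{s^{2n+m}}\subseteq\Lambda_s,
\]
since $a\zeta\bar a\in\Lambda$ by the stability of $\Lambda$ in $A$.

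No step is really an obstacle; the only mildly subtle point is to notice that well-definedness of the extended involution and of the extended form parameter both require $\bar s=s$ (respectively that $R_0\setminus\gm$ is involution-stable, which is automatic since $\bar r=r$ for all $r\in R_0$). This is exactly why one has to localise in $R_0$ rather than in $R$, as already stressed in \S\ref{quasi-finite}. The maximal localisation case is identical: for $\gm\in\Max(R_0)$ the set $R_0\setminus\gm$ consists of involution-fixed elements, so the formulas $\overline{a/t}=\bar a/t$ and $\alpha\xi\bar\alpha=(a\zeta\bar a)/(t\bar t\,u)$ make sense and the same three containments go through unchanged.
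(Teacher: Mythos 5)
Your proof is correct, and it carries out exactly the straightforward verification the paper has in mind; the paper itself omits the proof entirely, stating only that the lemma is verified by a straightforward computation, so you have in effect supplied the details the authors left to the reader. You correctly isolate the one point of substance, namely that the involution extends to the localisation precisely because $\bar s = s$ (resp.\ because $R_0\setminus\gm$ consists of involution-fixed central elements), which is also the reason the paper insists on localising in $R_0$ rather than $R$, and the three containments for $\Lambda_s$ being a form parameter are checked correctly.
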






\section{Triple Commutators/Base of induction}\label{loci32}

We prove Theorem~\ref{comain}  by induction on $m$. The case of $m=2$ is precisely the relative commutator
formula, Theorem~\ref{gcformula1}. However, the base of induction for Theorem~\ref{comain} is $i=2$, and it is the most demanding part of the induction step.
 In fact, the proof of the following
special case constitutes bulk of the paper~\cite{47}.

\begin{TheA}\label{the43}
Let\/ $n\ge 3$, and let\/ $A$ be a quasi-finite ring.
Further, let\/ $I$, $J$ and\/ $K$ be three two-sided ideals
of\/ $A$. Then
$$ \Big[[E(n,A,I),\GL(n,A,J)],\GL(n,A,K)\Big]=
\Big[[E(n,A,I),E(n,A,J)],E(n,A,K)\Big]. $$
\end{TheA}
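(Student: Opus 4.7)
The inclusion $\supseteq$ is immediate from $E(n,A,\bullet) \subseteq \GL(n,A,\bullet)$. My plan for the reverse inclusion is to reduce, via the already-established double commutator formula, to a question about the outer commutator acting on specific generators of the inner mixed commutator subgroup, and then to dispose of that question by a relative localisation argument in the style of Bak~\cite{8}.

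The first step will be to invoke Theorem~\ref{gcformula1} on the innermost bracket to rewrite
\[ \big[E(n,A,I), \GL(n,A,J)\big] = \big[E(n,A,I), E(n,A,J)\big]. \]
Writing $H := \big[E(n,A,I), E(n,A,J)\big]$ and $L := [H, E(n,A,K)]$, I observe that both are normal subgroups of $\GL(n,A)$, being commutators of groups normal in $\GL(n,A)$ by Theorem~\ref{standard}. The quotient $H/L$ therefore carries a conjugation action of $\GL(n,A)$, and the desired identity becomes the assertion that $\GL(n,A,K)$ acts trivially on $H/L$. Using Theorem~\ref{gen-2}, $H$ is generated as a group by four explicit families of elements, all concentrated in rank-$2$ fundamental subgroups, so it will suffice to show that $[h,c] \in L$ for every such generator $c$ and every $h \in \GL(n,A,K)$.

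For the second, principal step, I will run a relative localisation argument. Reduce first to the case where $A$ is module finite over a commutative Noetherian ring $R$, using that $\GL_n$ and $E_n$ commute with direct limits. For each maximal ideal $\mathfrak{m} \in \Max(R)$, pass to the central localisation $A_\mathfrak{m}$; over the (semi-)local ring $A_\mathfrak{m}$ the required inclusion can be verified directly by exploiting that $c$ is a short word in elementary matrices living in a rank-$2$ block. To globalise, introduce an indeterminate $t$, construct $h(t) \in \GL(n, A[t], tK[t])$ extending $h$ so that $h(0)$ is the identity and $h(1)$ corresponds to $h$, and study $f(t) := [h(t), c]$. The local computations, together with the injectivity of principal localisation on sufficiently high congruence levels (Lemma~\ref{Lem:03}), should force $f(t)$ to lie in the relative subgroup $L$ at every specialisation; specialisation at $t = 1$ then yields $[h, c] \in L$.

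The main obstacle is precisely that second step. Because $H$ is in general \emph{strictly larger} than $E(n,A,I \circ J)$ (see \S\ref{gbdtmu43}), one cannot finish by simply quoting the generalised commutator formula of Theorem~\ref{gcformula1} for the pair $H$, $\GL(n,A,K)$: that would land us only in $E(n,A,I\circ J\circ K)$ or similar, which is too coarse. One must instead stay inside $L$ throughout, which requires a genuinely doubly-relative conjugation and commutator calculus that simultaneously tracks the three ideals $I$, $J$, $K$ and preserves the rank-$2$ shape of the Theorem~\ref{gen-2} generators across all the manipulations with $h(t)$ and the localisation-patching. This is the new layer of technique beyond what was needed for Theorem~\ref{gcformula1}, and carrying it out in full detail is what constitutes the bulk of~\cite{47}.
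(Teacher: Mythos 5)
Your first half is on target and matches the paper: reduce the inner bracket to $\big[E(n,A,I),E(n,A,J)\big]$ via Theorem~\ref{gcformula1}, pass to generators of that subgroup concentrated in rank-$2$ blocks (Lemma~\ref{Comgenerator}, Theorem~\ref{gen-2}), reduce to $A$ module finite over Noetherian $R$ by continuity of the functors, and localise $g\in\GL(n,A,K)$ at maximal ideals $\mathfrak m$, using that $A_{\mathfrak m}$ is semilocal to split off a diagonal factor that commutes with the rank-$2$ generator once $n\ge 3$.

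The globalisation step is where your sketch diverges from the paper and, as written, has a genuine gap. You propose to adjoin a polynomial indeterminate $t$, form $h(t)\in\GL(n,A[t],tK[t])$ with $h(0)=1$, $h(1)=h$, verify locally that $f(t)=[h(t),c]$ lies in $L=\big[[E(n,A,I),E(n,A,J)],E(n,A,K)\big]$, and conclude by specialisation. The problem is that this needs a Quillen--Suslin type local-global principle \emph{for the subgroup $L$}: "if $f(t)$ lands in $L$ after localising at every $\mathfrak m$, then $f(t)\in L$ globally." The only local-global principle available (the Quillen--Suslin lemma in \S\ref{loci32}) concerns membership in $E(R[t],tR[t])$, not in an arbitrary doubly-relative commutator subgroup; and since $L$ can be strictly larger than $E(n,A,I\circ J\circ K)$ (cf.\ \S\ref{gbdtmu43}), one cannot squeeze back into $L$ by quoting elementary local-global results. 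Establishing such a principle for $L$ is essentially as hard as the theorem itself.

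What the paper actually does in place of this is a Bak-style partition-of-unity patching. Each $g\in\GL(n,A,K)$ decomposes over the principal localisation $A_{t_{\mathfrak m}}$ as $\theta_{t_{\mathfrak m}}(g)=\varepsilon h$ with $\varepsilon\in E(n,A_{t_{\mathfrak m}},K_{t_{\mathfrak m}})$ and $h$ diagonal. One then chooses finitely many $t_{\mathfrak m_s}^{p_s}$ with $\sum_s t_{\mathfrak m_s}^{p_s}x_s=1$, splits the rank-$2$ generator $e$ into $k$ pieces $e_{j,i}(t_{\mathfrak m_s}^{p_s}x_s\alpha)$ using (C$2^+$), and for each piece invokes the quantitative conjugation/commutator calculus (Lemmas~\ref{LemHZ}, \ref{LemHZ1}, \ref{Lem:New1}, \ref{Lem:08}), which tracks explicit exponents $t^p$, $t^l$ of the localising parameter through a Hall--Witt manipulation. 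Finally, injectivity of the principal localisation on sufficiently high congruence levels (Lemma~\ref{Lem:03}), which you do correctly cite, pulls each piece of the commutator back into $L$ over $A$ itself. The key technical input is Lemma~\ref{Lem:08}: given $l$, there is $p$ such that $[e_1,e_2]\in\big[[E(n,A,t^lI),E(n,A,t^lJ)],E(n,A,t^lK)\big]$ for $e_1\in[E(n,t^pI),E(n,A,J)]$ and $e_2\in E(n,A_t,K_t)$. It is this $t$ -- a concrete nonzero element of $R$ -- that carries the weight, not a polynomial indeterminate. So while your outline correctly locates the difficulty and most of the ingredients, the mechanism you propose for closing the argument is not the one that works, and would need a local-global principle for $L$ that the paper neither has nor needs.
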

As we have just observed, the standard commutator formula, Theorem~\ref{gcformula1},
implies that
$$ \Big[[E(n,A,I),\GL(n,A,J)],\GL(n,A,K)\Big]=
\Big[[E(n,A,I),E(n,A,J)],\GL(n,A,K)\Big]. $$
\noindent
Thus, to prove Theorem~\ref{the43} it remains to establish the
following equality
\begin{equation}\label{ytyht}
 \Big[[E(n,A,I),E(n,A,J)],\GL(n,A,K)\Big]=
\Big[[E(n,A,I),E(n,A,J)],E(n,A,K)\Big]. 
\end{equation}

\noindent
However, this last equality does not follow from the standard
commutator formula. To establish this, we shall use the general ``yoga of commutators'' which is developed in~\cite{46}  based on the work of Bak on  localisation and patching in general linear groups (see~\cite{8,36} and~\cite[\S13]{41}).  In order to make use of this method, one needs to overcome two problems: firstly to devise an appropriate conjugation calculus to approach the identity~(\ref{ytyht}) and secondly to perform the actual calculations. Both of these problems are equally challenging as the nature of the conjugation calculus depends on the problem in hand. In fact the term  yoga of commutators is chosen
to stress the overwhelming feeling of technical strain and exertion. 

In this section we prove Theorem~\ref{the43}, following~\cite{47}. 
We need the following elementary conjugation calculus, which are Lemmas~7, 8 and 11  from~\cite{46}, respectively. 
Note that in Equations~\ref{lem5}, \ref{lem8} and~\ref{lem11} the calculations take place in the group $E(n,A_t)$.

\begin{Lem}[cf. ~\cite{46}]\Label{LemHZ}
Let $A$ be a module finite $R$-algebra, $I,J$ two-sided ideals of $A$, $a,b,c\in A$ and  $t\in R$. If $m,l$ are given, there is an integer $p$ such that 
\begin{equation}\label{lem5}
  {}^{ E^1(n,\frac{c}{t^m})}E(n, t^pA,t^{ p}\langle  a\rangle)\le  E(n, t^lA,t^{ l}\langle  a\rangle),
\end{equation}
there is an integer $p$ such that 
\begin{equation}\label{lem8}
  {}^{ E^1(n,\frac{c}{t^m})}\big[E(n, t^pA,t^{ p}\langle  a\rangle), E(n,t^pA,t^{ p}\langle  b\rangle)\big]\le  \big[E(n, t^lA,t^{ l}\langle  a\rangle), E(n,t^lA,t^{ l}\langle  b\rangle)\big],
\end{equation}
and there is an integer $p$ such that 
\begin{equation}\label{lem11}
\Big[E(n,t^p A, t^{ p}  I), E^1\big(n,\frac{J}{t^m}\big)\Big]\le  \big[E(n,t^l A, t^{ l}  I), E(n,t^lA,t^{ l} J)\big].
\end{equation}
\end{Lem}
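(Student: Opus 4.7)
The strategy for all three parts is the same: reduce each side to conjugating a single elementary generator, then expand using the Steinberg relations (E1)--(E3) together with the commutator identities (C1)--(C5), and choose $p$ large enough to absorb every $t^{-m}$ denominator that arises. The computations take place inside $E(n,A_t)$, and the subgroups $E(n,t^{q}A,t^{q}K)$ are understood in the sense of Lemma~\ref{Engenerator}, i.e.\ as the groups generated by $z_{ij}(\beta,\alpha)={}^{e_{ji}(\beta)}e_{ij}(\alpha)$ with $\beta\in t^{q}A$ and $\alpha\in t^{q}K$.

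For (\ref{lem5}), by Lemma~\ref{Engenerator} it suffices to conjugate a single generator $z_{ij}(\beta,\alpha)$ with $\beta\in t^{p}A$ and $\alpha\in t^{p}\langle a\rangle$ by $g:=e_{kl}(c/t^{m})$. Using (C4)--(C5) one rewrites ${}^{g}z_{ij}(\beta,\alpha)$ as the conjugate of $e_{ij}(\alpha)$ by a short word in $e_{ji}(\pm\beta)$ and $e_{kl}(\pm c/t^{m})$; expanding via (E1)--(E3) produces a product of elementary matrices whose entries are monomials in $\beta,\alpha,c/t^{m}$. Since the defining word of $z_{ij}$ has length three, the factor $c/t^{m}$ enters each entry at most twice, so taking $p\ge l+2m$ guarantees every entry has $t$-valuation at least $l$. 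A case analysis on the coincidences among $\{i,j,k,l\}$ shows that entries on the ``$\alpha$-strand'' remain in $t^{l}\langle a\rangle$ while the auxiliary entries lie in $t^{l}A$, so the output belongs to $E(n,t^{l}A,t^{l}\langle a\rangle)$.

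For (\ref{lem8}), choose $p$ so that (\ref{lem5}) applies simultaneously with the ideals $\langle a\rangle$ and $\langle b\rangle$, and use the identity ${}^{g}[x,y]=[{}^{g}x,{}^{g}y]$ with $g=e_{kl}(c/t^{m})$. Each conjugated factor then lands in the corresponding $E(n,t^{l}A,t^{l}\langle\cdot\rangle)$, so the commutator lies in the target commutator subgroup. For (\ref{lem11}), the more delicate part, I would first use (C1)--(C2) together with Lemma~\ref{Engenerator} to reduce a typical generator of the left-hand side to a basic commutator $\bigl[e_{ij}(t^{p}\mu),e_{kl}(\nu/t^{m})\bigr]$ with $\mu\in I$ and $\nu\in J$; the outer conjugation in a relative generator $z_{ij}(\beta,\alpha)$ is peeled off using (C4)--(C5) and, for $p$ large, absorbed into the target factor $E(n,t^{l}A,t^{l}I)$ via (\ref{lem5}). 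By (E2)--(E3) the basic commutator is either trivial or of the form $e_{i'j'}(\pm t^{p-m}\mu\nu)$. In the non-trivial case, pick an auxiliary index $h\notin\{i',j'\}$ (possible since $n\ge 3$) and use (E3) \emph{in reverse} to write
\[
e_{i'j'}(\pm t^{p-m}\mu\nu)=\bigl[e_{i'h}(\pm t^{a}\mu),\,e_{hj'}(t^{b}\nu)\bigr]
\]
with $a+b=p-m$ and $a,b\ge l$. Choosing $p\ge 2l+m$ thus places the right-hand side inside $\bigl[E(n,t^{l}A,t^{l}I),E(n,t^{l}A,t^{l}J)\bigr]$, as required.

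The main obstacle is (\ref{lem11}). Membership of the output in the normal closure $E(n,t^{l}A,t^{l}(IJ+JI))$ follows quite routinely from the Steinberg relations, but exhibiting it explicitly as a commutator in $\bigl[E(n,t^{l}A,t^{l}I),E(n,t^{l}A,t^{l}J)\bigr]$ is precisely what forces the $(E3)$-rewriting above. The splitting must be done uniformly across all generators of the left-hand side so that a single $p$ works throughout, which is why absorbing the outer conjugation of $z_{ij}(\beta,\alpha)$ and controlling the constants $a,b$ in the reverse $(E3)$-split both need to happen in a bounded number of steps. The required $p$ then grows linearly in $l$ and $m$, with coefficients controlled by the bounded length of the defining words of the generators.
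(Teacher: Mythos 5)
The paper does not actually prove Lemma~\ref{LemHZ}; it is quoted verbatim as Lemmas~7, 8 and~11 of~\cite{46} (Hazrat--Zhang, \emph{Generalized commutator formula}). So there is no in-paper proof to compare against, and your proposal must stand on its own.

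Your overall toolkit is the right one — Lemma~\ref{Engenerator}, the Steinberg relations (E1)--(E3), the commutator identities (C1)--(C5), valuation bookkeeping, and the ``reverse (E3)-split'' — but the argument for (\ref{lem5}) has a real gap. You reduce to conjugating $z_{ij}(\beta,\alpha)={}^{e_{ji}(\beta)}e_{ij}(\alpha)$ by $g=e_{kl}(c/t^m)$, i.e.\ to ${}^{e_{kl}(c/t^m)e_{ji}(\beta)}e_{ij}(\alpha)$, and then assert that expanding via (E1)--(E3) gives a product of elementary matrices with entries of $t$-valuation at least $l$ once $p\ge l+2m$, hence membership in $E(n,t^lA,t^l\langle a\rangle)$. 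That does not follow. The critical case is $(k,l)\in\{(i,j),(j,i)\}$: for $(k,l)=(i,j)$ the conjugating word is $e_{ij}(c/t^m)e_{ji}(\beta)$, and the relevant expansion is the identity~(\ref{gbgdq1}), whose output is a product of $z$-generators ${}^{e_{rs}(\gamma)}e_{sr}(\delta)$ in which the conjugating entries $\gamma$ include $a=c/t^m$ (in ${}^{e_{ij}(a)}e_{ji}(-b\alpha b)$) and even the constant~$1$ (in ${}^{e_{ki}(1)}e_{ik}(\alpha b)$). No choice of $p$ puts $c/t^m$ or $1$ into $t^lA$, so these factors are not generators of $E(n,t^lA,t^l\langle a\rangle)$, and the set $E(n,t^lA,t^l\langle a\rangle)$ is not, out of the box, a normal subgroup one can retreat into. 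Bounding the $t$-valuation of the ``matrix entries'' is therefore not the point; one must control the conjugating slots of each $z$-factor. Similarly for $(k,l)=(j,i)$ the word collapses to $e_{ji}(c/t^m+\beta)$ and produces $z_{ij}(c/t^m+\beta,\alpha)$, again with a bad conjugating slot.

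The repair is precisely the device you invoke for (\ref{lem11}), applied \emph{before} conjugating: pick $h\neq i,j$, write $\alpha=t^{p_1+p_2}\mu$ with $\mu\in\langle a\rangle$ and $e_{ij}(\alpha)=[e_{ih}(t^{p_1}),e_{hj}(t^{p_2}\mu)]$, and only then conjugate and expand with (C1)/(C2). With both exponents $\ge l+m$ the denominators coming from $c/t^m$ are absorbed factor-by-factor, and the residual cross-terms $[e_{rs}(\cdot),e_{sr}(\cdot)]$ that appear can themselves be rewritten as $z$-generators of the target. This pushes the bound up to roughly $p\ge 2(l+m)$, not $l+2m$. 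Part (\ref{lem8}) then follows from (\ref{lem5}) exactly as you say. For (\ref{lem11}) the reverse (E3)-split of $e_{i'j'}(t^{p-m}\mu\nu)$ and the count $p\ge 2l+m$ are correct; what is imprecise is the phrase ``absorbed into the target factor via~(\ref{lem5})'': what you actually need there is that the commutator group $[E(n,t^lA,t^lI),E(n,t^lA,t^lJ)]$ is stable under conjugation by $e_{ji}(\beta)$ with $\beta\in t^pA$, which is a (related but) separate closure statement that must be justified along the same lines.
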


By   Lemma~\ref{LemHZ}, one easily obtains the following result. The proof is left to the reader.
\begin{Lem}\Label{LemHZ1}
Let $A$ be a module finite $R$-algebra, $I,J$ two-sided ideals of $A$ and  $t\in R$. If $m,l, L$ are given, there is an integer $p$ such that 
\begin{equation}\label{lemnew}
\Big[E(n,t^p A, t^{ p}  I), {}^{E^L\big(n,\frac{A}{t^m}\big)}E^1\big(n,\frac{J}{t^m}\big)\Big]\le  \big[E(n,t^l A, t^{ l}  I), E(n,t^lA,t^{ l} J)\big].
\end{equation}
\end{Lem}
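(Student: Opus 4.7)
The plan is to derive Lemma \ref{LemHZ1} as an essentially formal consequence of the three inclusions in Lemma \ref{LemHZ}, by rewriting the commutator using identity (C4) and then chaining the three parts together, iterating each one along the $L$ elementary factors of the conjugating element. A typical generator of the left-hand group is $[x,\,{}^{c}e]$ with $x \in E(n,t^{p}A,t^{p}I)$, $c \in E^{L}(n,A/t^{m})$ written as a product of $L$ elementary factors, and $e \in E^{1}(n,J/t^{m})$. Identity (C4) gives
$$[x,\,{}^{c}e] \;=\; {}^{c}\big[{}^{c^{-1}}x,\,e\big],$$
so the task reduces to choosing $p$ so that the inner commutator lands in a relative commutator subgroup which in turn survives conjugation by $c$.

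I choose $p$ in three successive stages, working backward from the target parameter $l$. First, I apply (\ref{lem8}) once per elementary factor of $c$ to obtain an integer $l_{2}$ such that $E^{L}(n,A/t^{m})$-conjugation sends $[E(n,t^{l_{2}}A,t^{l_{2}}I),E(n,t^{l_{2}}A,t^{l_{2}}J)]$ into $[E(n,t^{l}A,t^{l}I),E(n,t^{l}A,t^{l}J)]$. Second, with $l_{2}$ fixed, I apply (\ref{lem11}) to produce $p_{1}$ such that
$$\big[E(n,t^{p_{1}}A,t^{p_{1}}I),\,E^{1}(n,J/t^{m})\big] \;\le\; \big[E(n,t^{l_{2}}A,t^{l_{2}}I),\,E(n,t^{l_{2}}A,t^{l_{2}}J)\big].$$
Third, with $p_{1}$ fixed, I apply (\ref{lem5}) once per elementary factor of $c^{-1}$ — the inverse of a product of $L$ elementaries being again a product of $L$ elementaries — to produce $p$ such that ${}^{c^{-1}}E(n,t^{p}A,t^{p}I) \le E(n,t^{p_{1}}A,t^{p_{1}}I)$.

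Composing the three inclusions along the displayed identity then gives
$$[x,\,{}^{c}e] \;\in\; {}^{c}\big[E(n,t^{l_{2}}A,t^{l_{2}}I),E(n,t^{l_{2}}A,t^{l_{2}}J)\big] \;\le\; \big[E(n,t^{l}A,t^{l}I),E(n,t^{l}A,t^{l}J)\big],$$
which is the desired inclusion. The main technical obstacle, such as it is, is the nested bookkeeping of parameters: each stage produces its integer as a non-explicit function of its input target, so the final $p$ arises as a three-fold composition depending on $l$, $m$ and $L$. Since $L$ is fixed and we are free to take $p$ as large as needed, this causes no difficulty, and no genuinely new calculation is required beyond the three ingredients already supplied by Lemma \ref{LemHZ} — the role of Lemma \ref{LemHZ1} is precisely to package this iteration into a single ready-to-use statement for later localisation arguments.
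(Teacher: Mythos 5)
Your proof is correct, and it is exactly what the paper intends: the paper states that Lemma~\ref{LemHZ1} ``easily obtains'' from Lemma~\ref{LemHZ} and leaves the verification to the reader. Your three-stage argument --- rewriting $[x,{}^{c}e]={}^{c}[{}^{c^{-1}}x,e]$ via identity (C4), then chaining (\ref{lem5}), (\ref{lem11}) and (\ref{lem8}) backwards from $l$, iterating (\ref{lem5}) and (\ref{lem8}) along the $L$ elementary factors of $c$ and $c^{-1}$ --- is the natural (and the only reasonable) way to compose the three inclusions, so it is essentially the paper's intended argument.
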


Denote by $E^L\big(n,\frac{A}{t^m}, \frac{K}{t^m}\big)$ the product of $L$ elements (or fewer) of the form \[^{E^1(n,\frac{A}{t^m})}E^1\big(n,\frac{K}{t^m}\big).\] In the following two Lemmas, as in Lemma~\ref{LemHZ}, all the calculations take place in the fraction ring $A_t$. All the subgroups of $\GL(n,A_t)$ used in the Lemmas, such as the ones denoted by $E(n,A,I)$ or $\GL(n,A,J)$, are in fact the homomorphic images of these subgroups in $\GL(n,A)$ under the natural homomorphism $A\rightarrow A_t$. Since  lemmas such as Lemma~\ref{Lem:Habdank} and the generalised commutator formula (Theorem~\ref{gcformula1}) hold for these subgroups in $\GL(n,A)$, they also hold for their corresponding homomorphic images in $\GL(n,A_t)$.

\begin{Lem}\Label{Lem:New1}
Let $A$ be an $R$-algebra, $I,J$ two-sided ideals of $A$, and  $t\in R$.  For any given $e\in \GL(n,A_t,J_t)$ and an integer $l$, there is an integer $p$ such that for any $g\in \GL(n,A,t^p I )$ 
$$
[e,g]\in \GL\big(n,{A, t^l (IJ+JI)}\big).
$$
\end{Lem}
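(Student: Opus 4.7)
The plan is to expand the commutator $[e,g] = ege^{-1}g^{-1}$ directly in $M(n,A_t)$, then track in each surviving summand both the net power of $t$ and the ideal in which the matrix entries lie, and finally choose $p$ large enough to clear the denominators coming from $e$ while still leaving a factor of $t^l$. To set up notation I would fix $e$ and choose an integer $m \ge 0$ such that $u := e - 1$ and $u' := e^{-1} - 1$ can both be written with common denominator $t^m$: that is, $Q := t^m u$ and $Q' := t^m u'$ lie in $M(n,J)$, so that $u = Q/t^m$ and $u' = Q'/t^m$ in $M(n,A_t)$. For the variable element $g = 1 + h \in \GL(n,A,t^p I)$, I write $h = t^p H$ and $h' := g^{-1} - 1 = t^p H'$ with $H, H' \in M(n,I)$, which is possible because every entry of $h,h'$ is, by definition, a single element of $t^p I$.

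The heart of the argument is a straight expansion. Multiplying out $ege^{-1}g^{-1} - 1$ and using the two cancellation relations $u + u' + uu' = 0$ (from $ee^{-1} = 1$) and $h + h' + hh' = 0$ (from $gg^{-1} = 1$), every summand that does not involve both a factor from $\{u, u'\}$ and a factor from $\{h, h'\}$ drops out. What remains is the clean identity
\[
[e,g] - 1 \;=\; uh + hu' + uhu' + uhh' + hu'h' + uhu'h'
\]
in $M(n,A_t)$. Substituting the representations above and using that $t$ is central in $A$, these six terms become $t^{p-m}QH$, $t^{p-m}HQ'$, $t^{p-2m}QHQ'$, $t^{2p-m}QHH'$, $t^{2p-m}HQ'H'$ and $t^{2p-2m}QHQ'H'$ respectively, with respective ideal contents $JI$, $IJ$, $JIJ$, $JII$, $IJI$ and $JIJI$.

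Finally, because both $I$ and $J$ are two-sided, so are $IJ$ and $JI$; hence each of $JIJ, IJI, JIJI$ collapses into $IJ$ (for instance $JIJ \subseteq A \cdot IJ \subseteq IJ$ and $JIJI \subseteq IJ \cdot A \subseteq IJ$), while $JII \subseteq JI$, so all six ideal contents lie inside $IJ + JI$. Choosing any $p \ge l + 2m$ forces every occurring power of $t$ to be at least $t^l$, so the six-term sum is the $F_t$-image of an explicit matrix $M \in M(n, t^l(IJ+JI))$, giving $[e,g] = F_t(1+M) \in \GL(n,A,t^l(IJ+JI))$ in the sense in which homomorphic images of congruence subgroups are used throughout the paper's localisation framework. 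The only non-trivial point, and the only place where the argument would break if $I$ or $J$ were merely one-sided, is the ideal-content bookkeeping for the triple and quadruple mixed terms; once that is in hand, the rest is pure counting of powers of $t$.
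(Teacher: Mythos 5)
Your proof is correct and follows essentially the same route as the paper's: expand $ege^{-1}g^{-1}-1$ using the cancellation relations $u+u'+uu'=0$ and $h+h'+hh'=0$, track the ideal content and the net power of $t$ term by term, and pick $p\ge l+2m$ so the smallest exponent $p-2m$ still clears $t^l$. The paper records an equivalent six-term expansion (yours and theirs differ by substituting $-uh'$ in two different ways) and merely asserts the conclusion without spelling out the ideal-content bookkeeping, which you do explicitly — but there is no conceptual difference.
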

\begin{proof}
Note that all the entries of $g-1$ and  $g^{-1}-1$ are in $t^pI$ (to emphasize our convention, they are in the image of $t^pI$ under the homomorphism $\theta:A\rightarrow A_t$) and all the entries of $e-1$ and $e^{-1}-1$ are in $J_t$. Choose $k\in \mathbb N$  such that one can write all the entries of $e-1$ and $e^{-1}-1$ in the form $j/t^k$, $j \in J$.
Let 
\begin{eqnarray*}
g=1+\ep& \quad \text{and}\quad &g^{-1}=1+\ep'\\
e=1+\delta& \quad \text{and}\quad &e^{-1}=1+\delta'.
\end{eqnarray*}
A straightforward computation shows that 
\begin{align*}
& \ep+\ep'+\ep\ep'=\ep+\ep'+\ep'\ep=0\\
& \delta +\delta'+\delta\delta'=\delta +\delta'+\delta'\delta=0.
\end{align*}
By the equalities above, one has
$$[e,g]=[1+\delta,1+\ep]=1+\delta'\ep'+\ep\delta'+\ep\delta'\ep'+\delta\delta'\ep'+\delta\ep\delta'+\delta\ep\delta'\ep'.
$$
So  the entries of $[e,g]-1$  belong to  $t^{p-2k}(IJ +JI)$. We finish the proof by choosing $p\ge l+2k$. 
\end{proof}

\begin{Lem}\Label{Lem:08}
Let $A$ be a module finite $R$-algebra, $I,J,K$  two-sided ideals of  $A$ and $t\in R$.  For any given $e_2\in E(n,A_t,K_t)$ and an integer $l$, there is a  sufficiently large integer $p$, such that 
\begin{equation}\label{eqn:l1}
[e_1,e_2]\in \Big[\big[E(n,A,t^lI), E(n,A,t^lJ)\big], E(n,A,t^lK)\Big],
\end{equation}
where  $e_1\in [E(n,t^pI), E(n,A,J)]$.
\end{Lem}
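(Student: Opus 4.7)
The strategy is a three-step reduction: (i) decompose $e_2$ into a bounded product of single elementary generators drawn from $E^1(n,K/t^m)$; (ii) use the Hall--Witt identity to move the high $t$-power hidden inside $e_1$ into the inner position, adjacent to a single generator of $e_2$; (iii) invoke the conjugation--calculus Lemmas~\ref{LemHZ}--\ref{LemHZ1} to absorb the denominators produced by the localised elements and land in the target.

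Since $e_2\in E(n,A_t,K_t)$ is fixed, Lemma~\ref{Engenerator} applied over $A_t$ yields a representation
\[ e_2 \;=\; \prod_{i=1}^{L} {}^{x_i} y_i, \qquad y_i \in E^1(n,K/t^m),\;\; x_i \in E^1(n,A/t^m), \]
with $L$ and $m$ depending only on $e_2$. Identity (C1$^+$) expresses $[e_1,e_2]$ as an $L$-fold product of $E(n,A_t)$-conjugates of the single--generator commutators $[e_1,\,{}^{x_i}y_i]$, and (C4) rewrites each as ${}^{x_i}[{}^{x_i^{-1}} e_1,\,y_i]$. It therefore suffices to control, for a single $w\in E^1(n,K/t^m)$ and a single $x\in E^1(n,A/t^m)$, the commutator $[{}^{x^{-1}} e_1,\,w]$. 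Expanding $e_1$ as a product of basic commutators $[g,h]$ with $g\in E(n,t^pI)$, $h\in E(n,A,J)$ reduces further to analysing a single triple commutator $[[{}^{x^{-1}}g,\,{}^{x^{-1}}h],\,w]$. Identity~(\ref{lem5}) of Lemma~\ref{LemHZ} shows that, after choosing $p$ sufficiently large in terms of $m$, one has ${}^{x^{-1}}g\in E(n,t^{p'}I)$ with $p'\to\infty$ as $p\to\infty$, while ${}^{x^{-1}}h$ is an element of $E(n,A_t)$ whose $t$-denominators are bounded in terms of $m$ alone.

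The key move is to apply the Hall--Witt identity (C3) to $[[{}^{x^{-1}}g,\,{}^{x^{-1}}h],\,w]$; up to $E(n,A_t)$-conjugation, this triple commutator becomes a product of
\[ \bigl[[w^{-1},\,{}^{x^{-1}}g],\,{}^{x^{-1}}h\bigr] \qquad\text{and}\qquad \bigl[[({}^{x^{-1}}h)^{-1},\,w],\,{}^{x^{-1}}g\bigr]. \]
In the first summand the innermost commutator now pairs the deep element ${}^{x^{-1}}g\in E(n,t^{p'}I)$ with $w\in E^1(n,K/t^m)$, and Lemma~\ref{LemHZ1} (identity~(\ref{lemnew})) forces it, for $p'$ large, to lie in $[E(n,A,t^{l'}I),\,E(n,A,t^{l'}K)]$ for any prescribed $l'$. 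Commuting further with the bounded-denominator element ${}^{x^{-1}}h$ and reabsorbing the remaining outer $E(n,A_t)$-conjugations by a second application of Lemma~\ref{LemHZ1} (with $l'$ chosen large with respect to $l$) places the summand inside $\bigl[[E(n,A,t^{l}I),\,E(n,A,t^{l}K)],\,E(n,A,t^{l}J)\bigr]$. The second summand is handled symmetrically and lands in $\bigl[[E(n,A,t^{l}J),\,E(n,A,t^{l}K)],\,E(n,A,t^{l}I)\bigr]$. Finally, the three subgroup lemma~(C7), combined with Lemma~\ref{Lem:Habdank} and Theorem~\ref{gcformula1}, identifies both of these subgroups with the target $\bigl[[E(n,A,t^{l}I),\,E(n,A,t^{l}J)],\,E(n,A,t^{l}K)\bigr]$.

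The hard part is the quantifier bookkeeping. Each of Lemmas~\ref{LemHZ} and~\ref{LemHZ1} is of the shape ``given $l$ and the auxiliary denominator $m$, some $p$ works,'' and here three such applications must be chained in the correct order: first to absorb the conjugations by $x^{-1}$ (passing from $p$ to $p'$), then to control the inner Hall--Witt commutator of $w$ with the deep element (passing from $p'$ to an auxiliary level $l'$), and finally to absorb the outer commutator with the bounded-denominator ${}^{x^{-1}}h$ (passing from $l'$ to $l$). One must verify that a single finite $p$, depending only on $l$ and on the integers $L,m$ attached to $e_2$, simultaneously works for every factor appearing in the expansion of $e_1$, so that the nested Hall--Witt rearrangement does not generate unbounded denominators. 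This uniform control is the delicate accounting that makes the lemma work and furnishes the base of induction for Theorem~\ref{the43}.
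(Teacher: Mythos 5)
Your Hall--Witt is pointed the wrong way, and the two factors it produces carry the wrong bracketing of ideals. Expanding $e_1$ into basic commutators $[g',h']$ (with $g'$ on the $I$-side, $h'$ on the $J$-side) and applying Hall--Witt to $[[g',h'],w]$ with $w$ on the $K$-side yields, up to conjugation, $\bigl[[w^{-1},g'],h'\bigr]$ and $\bigl[[(h')^{-1},w],g'\bigr]$. Chasing levels through Lemmas~\ref{LemHZ}--\ref{LemHZ1} and Lemma~\ref{Lem:Habdank} places these in subgroups of the shape $\bigl[[E(n,A,t^{*}I),E(n,A,t^{*}K)],E(n,A,t^{*}J)\bigr]$ and $\bigl[[E(n,A,t^{*}J),E(n,A,t^{*}K)],E(n,A,t^{*}I)\bigr]$ respectively, not the target $\bigl[[E(n,A,t^{l}I),E(n,A,t^{l}J)],E(n,A,t^{l}K)\bigr]$. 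Your closing sentence, that (C7) with Lemma~\ref{Lem:Habdank} and Theorem~\ref{gcformula1} ``identifies'' these three subgroups, is false: (C7) gives only one-sided inclusions, and by the multiple-to-double reduction of \S\ref{ziuso1} the three triple commutators equal $[E(n,A,t^{*}(I\circ K)),E(n,A,t^{*}J)]$, $[E(n,A,t^{*}(J\circ K)),E(n,A,t^{*}I)]$, and $[E(n,A,t^{*}(I\circ J)),E(n,A,t^{*}K)]$, which are genuinely distinct in general --- this is precisely the non-associativity of higher commutators discussed in \S\ref{jalg7}, reflected in the sensitivity to the cut point.

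The paper avoids this by decomposing the other side: it writes the localised generator of $e_2$ as $e_{i',j'}(\gamma/t^{m})=[e_{i',k}(t^{p'}),e_{k,j'}(\gamma/t^{m+p'})]$ and applies Hall--Witt to $[e_1,[y^{-1},z]]$ with $e_1$ kept as a single block and $z$ carrying $K$. Two things are gained. First, the bracketing survives: the inner commutator of the first factor is $[e_1^{-1},y]$, which lands in $E(n,A,t^{*}(IJ+JI))$; the inner commutator of the second is $[z^{-1},e_1]$, which lands in $\GL(n,A,t^{*}(K(IJ+JI)+(IJ+JI)K))$ by Lemma~\ref{Lem:New1}. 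In both cases the pair $\{I,J\}$ stays glued together, so after splitting $(IJ+JI)$ as a commutator of $I$ and $J$ via Lemma~\ref{Lem:Habdank}, each factor sits inside the required $\bigl[[\,\cdot\, I\,],[\,\cdot\, J\,],\,\cdot\, K\,\bigr]$ configuration. Second, by (\ref{lem5}) the element $y={}^{c}e_{i',k}(\pm t^{p'})$ becomes \emph{integral} (in $E(n,t^{p''}A)$) for $p'$ large, so the first inner commutator is controlled by the ordinary standard commutator formula; in your rearrangement none of $g',h',w$ can be made integral, so that lever is unavailable. These two points --- preserving the $[[I,J],K]$ block structure and manufacturing an integral factor --- are the missing ideas, not merely quantifier bookkeeping.
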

\begin{proof}
%
For any given $e_2\in E(n,A_t, K_t)$, one may find some positive integers $m$ and $L$ such that
$$
e_2\in E^L\big(n,\frac{A}{t^m},\frac{K}{t^m}\big).
$$
Applying the identity (C1$^+$) and repeated application of (\ref{lem5}) in Lemma~\ref{LemHZ}, we reduce the problem to show that 
$$
\Big[[E(n,t^pI), E(n,A,J)], {}^ce_{i',j'}(\frac{\gamma}{t^m})\Big] \le  \Big[\big[E(n,A,t^lI), E(n,A,t^lJ)\big], E(n,A,t^lK)\Big],
$$
where $c\in E^1(n,\frac{A}{t^m})$ and  $\gamma\in K$.
We further decompose   \[e_{i',j'}(\frac{\gamma}{t^m})=[e_{i',k}(t^{p'}),e_{k,j'}(\frac{\gamma}{t^{m+p'}})]\] for some integer $p'$. Then
\begin{equation*}
 \Big [e_1, {}^ce_{i',j'}(\frac{\gamma}{t^m})\Big]=
\Big [e_1, \big [{}^ce_{i',k}(t^{p'}),{}^c e_{k,j'}(\frac{\gamma}{t^{m+p'}})\big]\Big].
\end{equation*}
We use a variant of the Hall-Witt identity (see (C3)) 
$$ \big[x,[y^{-1},z]\big]=  {}^{y^{-1}x}\big[[x^{-1},y],z\big]\, \, {}^{y^{-1}z}\big[[z^{-1},x],y\big],$$ to obtain 
\begin{multline}\label{eqn:99}
\Big [e_1, \big [{}^ce_{i',k}(t^{p'}),{}^c e_{k,j'}(\frac{\gamma}{t^{m+p'}})\big]\Big]=\\
{}^{y^{-1}x}\bigg[\Big[e_1^{-1}, {}^ce_{i',k}(-t^{p'})\Big],{}^c e_{k,j'}(\frac{\gamma}{t^{m+p'}})\bigg] \times \\
{}^{y^{-1}z}\bigg[\Big[{}^c e_{k,j'}(\frac{-\gamma}{t^{m+p'}}), e_1\Big],{}^ce_{i',k}(-t^{p'})\bigg],
\end{multline}
where $x=e_1$, $y={}^ce_{i',k}(-t^{p'})$, $z={}^c e_{k,j'}(\frac{\gamma}{t^{m+p'}})$ and as before $c\in E^1(n,\frac{A}{t^{m}})\le  E^1(n,\frac{A}{t^{m+p'}})$. 
We will look at each of the two factors of~(\ref{eqn:99}) separately.  

By (\ref{lem5}) in Lemma~\ref{LemHZ}, for any given $p''$, one may find a sufficiently large $p'$ such that 
\begin{equation}\Label{eqn:100}
y={}^ce_{i',k}(-t^{p'})\in E(n,t^{p''}A, t^{p''}A) \le  E(n,A).
\end{equation}
Then 
\begin{eqnarray*}
\Big[e_1^{-1}, {}^ce_{i',k}(-t^{p'})\Big] &\in&\big[[E(n,t^p I), E(n,A,J)], E(n,A)\big]\\
&\le  &\big [\GL(n,A, t^p(IJ+JI)), E(n,A)\big]\\
&\le  & E(n,A, t^p(IJ+JI)).
\end{eqnarray*}
Set $p_1=p$. Thanks to Lemma~\ref{Lem:Habdank}, 
\begin{multline}E\big(n,A, t^{p_1}(IJ+JI)\big)\le  \Big[E(n,t^{\LF {\frac{p_1}{2}\RF}}A), E\big(n,t^{\LF {\frac{p_1}{2}\RF}}(IJ+JI)\big)\Big]\le  \\
E\big(n,t^{\LF {\frac{p_1}{2}\RF}}A, t^{\LF {\frac{p_1}{2}\RF}}(IJ+JI)\big).
\end{multline}
Hence  we obtain that
\begin{multline*}
{}^{y^{-1}x}\bigg[\Big[e_1^{-1}, {}^ce_{i',k}(-t^{p'})\Big],{}^c e_{k,j'}(\frac{\gamma}{t^{m+p'}})\bigg]
\in \\ {}^{y^{-1}x}\bigg[E(n,t^{\LF {\frac{p_1}{2}\RF}}A, t^{\LF {\frac{p_1}{2}\RF}}(IJ+JI)),{}^c e_{k,j'}(\frac{\gamma}{t^{m+p'}})\bigg], 
\end{multline*}
where $x\in [E(n,t^{p_1} I), E(n,A,J)]$, $y\in  E(n,t^{p''}A, t^{p''}A)$. By Lemma~\ref{LemHZ1}, for any given integer $l'$ we may find a sufficiently large $p_1$ such that
\begin{align*}
{}^{y^{-1}x}\bigg[E(n,t^{\LF {\frac{p_1}{2}\RF}}&A, t^{\LF {\frac{p_1}{2}\RF}}(IJ+JI)),{}^c e_{k,j'}(\frac{\gamma}{t^{m+p'}})\bigg] \\
& \in \,  {}^{y^{-1}x}\Big[E\big (n,t^{2l'}A, t^{2l'}(IJ+JI)\big), E(n,t^{2l'}A, t^{2l'}K)\Big]\\
&\le  {}^{y^{-1}x}\Big[\big[E(n,t^{l'}A, t^{l'} I),E(n,t^{l'}A, t^{l'}J)\big], E(n,t^{2l'}A, t^{2l'}K)\Big]\\
&\le  {}^{y^{-1}x}\Big[\big[E(n,t^{l'}A, t^{l'} I),E(n,t^{l'}A, t^{l'}J)\big], E(n,t^{l'}A, t^{l'}K)\Big]\\
&=\Big[\big[{}^{y^{-1}x}E(n,t^{l'}A, t^{l'} I),{}^{y^{-1}x}E(n,t^{l'}A, t^{l'}J)\big], {}^{y^{-1}x}E(n,t^{l'}A, t^{l'}K)\Big],
\end{align*}
where by definition $y^{-1}x\in E(n,\frac{A}{t^{0}},\frac{A}{t^{0}})$. By (\ref{lem5}) in Lemma~\ref{LemHZ}, for any given integer $l$, we may find a sufficiently large $l'$ such that
\begin{multline*}
{}^{y^{-1}x}\Big[\big[E(n,t^{l'}A, t^{l'} I),E(n,t^{l'}A, t^{l'}J)\big], E(n,t^{l'}A, t^{l'}K)\Big]\le \\ \Big[\big[E(n,t^{l}A, t^{l} I),E(n,t^{l}A, t^{l}J)\big], E(n,t^{l}A, t^{l}K)\Big].
\end{multline*}
This shows that for any given $l$, one may find a sufficiently large $p_1$ such that the first factor of (\ref{eqn:99})
\begin{multline*}
{}^{y^{-1}x}\bigg[\Big[e_1^{-1}, {}^ce_{i',k}(-t^{p'})\Big],{}^c e_{k,j'}(\frac{\gamma}{t^{m+p'}})\bigg] \in \\
\Big[\big[E(n,t^{l}A, t^{l} I),E(n,t^{l}A, t^{l}J)\big], E(n,t^{l}A, t^{l}K)\Big].
\end{multline*}

Next we consider the second factor of (\ref{eqn:99}),
$$
{}^{y^{-1}z}\bigg[\Big[{}^c e_{k,j'}(\frac{-\gamma}{t^{m+p'}}), e_1\Big],{}^ce_{i',k}(-t^{p'})\bigg].$$
Set $p_2=p$. Note that 
$$e_1\in \big[E(n, t^{p_2} I),E(n,A,  J)\big]\le  \GL\big(n,A, t^{p_2}(IJ+JI)\big)$$
and
$$
{}^c e_{k,j'}(\frac{\gamma}{t^{m+p'}})\in  {}^{E^1(n,\frac A {t^{m+p'}})}E^1(n,\frac K {t^{m+p'}}),
$$ where $p'$ is given by~(\ref{eqn:100}) from the first part of the proof. 
We may apply Lemma~\ref{Lem:New1} to find a sufficiently large $p_2$ such that 
\begin{equation}\label{jhafyw}
\Big[{}^c e_{k,j'}(\frac{-\gamma}{t^{m+p'}}), e_1\Big]\in  \GL\Big(n,A, t^{p''} (K(IJ+JI)+(IJ+JI)K)\Big)
\end{equation}
for any given $p''$. Using the commutator formula together with (\ref{eqn:100}), one gets
\begin{multline*}
{}^{y^{-1}z}\bigg[\Big[{}^c e_{k,j'}(\frac{-\gamma}{t^{m+p'}}), e_1\Big],{}^ce_{i',k}(-t^{p'})\bigg] \in \\
{}^{y^{-1}z} E\Big(n,A, t^{p''} \big(K(IJ+JI)+(IJ+JI)K\big)\Big).
\end{multline*}
Applying  Lemma~\ref{Lem:Habdank} twice, one gets
\begin{multline*}
E\Big(n,A, t^{p''} \big(K(IJ+JI)+(IJ+JI)K\big)\Big) \le \\  \bigg[E\Big(n,t^{\LF\frac {2p''}{3}\RF} \big((IJ+JI)+(IJ+JI)\big)\Big),E\Big(n,t^{\LF\frac {p''}{3}\RF}K\Big)\bigg]
\le  \\ \bigg[\Big[E(n,t^{\LF\frac {p''}{3}\RF}I),E(n,t^{\LF\frac {p''}{3}\RF}J)\Big],E(n,t^{\LF\frac {p''}{3}\RF}K)\bigg].
\end{multline*}
Hence, we have
\begin{multline*}
{}^{y^{-1}z}\bigg[\Big[{}^c e_{k,j'}(\frac{-\gamma}{t^{m+p'}}), e_1\Big],{}^ce_{i',k}(-t^{p'})\bigg]\le  \\
{}^{y^{-1}z}\bigg[\Big[E(n,t^{\LF\frac {p''}{3}\RF}I),E(n,t^{\LF\frac {p''}{3}\RF}J)\Big],E(n,t^{\LF\frac {p''}{3}\RF}K)\bigg]\\
=\bigg[\Big[{}^{y^{-1}z}E(n,t^{\LF\frac {p''}{3}\RF}I),{}^{y^{-1}z}E(n,t^{\LF\frac {p''}{3}\RF}J)\Big],{}^{y^{-1}z}E(n,t^{\LF\frac {p''}{3}\RF}K)\bigg].
\end{multline*}
Now applying (\ref{lem5}) in Lemma~\ref{LemHZ} to every component of the commutator above, we may find a sufficiently large $p''$ such that for any given $l$,
\begin{multline*}
\bigg[\Big[{}^{y^{-1}z}E(n,t^{\LF\frac {p''}{3}\RF}I),{}^{y^{-1}z}E(n,t^{\LF\frac {p''}{3}\RF}J)\Big],{}^{y^{-1}z}E(n,t^{\LF\frac {p''}{3}\RF}K)\bigg]\le  \\  \Big[\big[E(n,t^{l}A, t^{l} I),E(n,t^{l}A, t^{l}J)\big], E(n,t^{l}A, t^{l}K)\Big].
\end{multline*}
Choose $p_2$ in (\ref{jhafyw}) according to this $p''$ and then consider $p$ to be the larger of  $p_1$ and $p_2$. This finishes the Lemma.
\end{proof}

The proof of this result, as also the proofs of similar results
for other groups, are mostly prestidigita\-tion and tightrope
walking, and similar in spirit to the relative commutator calculus
in~\cite{46}. However, this piece of commutator calculus operates at a
different level of technical sophistication. For instance, now
we have to plug in not just the elementary generators, or their
conjugates, as in~\cite{46,42,43}, but also the other two types
of generators constructed in Theorem 3.

\begin{proof}[Proof of Theorem 7A]
The functors $E_n$ and $\GL_n$ commute with direct limits. By Proposition~\ref{Prop:01} and~\S\ref{fdye7}, one reduces the proof  to the case where $A$ is finite over $R$ and $R$ is Noetherian. 

First by the generalized commutator formula (Theorem~\ref{gcformula1}), we have
\begin{equation}\label{fin:1}
\big[E(n,A,I),\GL(n,A,J)\big]=\big[E(n,A,I),E(n,A,J)\big].
\end{equation}
Thus it suffices  to prove the following equation
$$
\Big[\big[E(n,A,I),E(n,A,J)\big],\GL(n,A,K)\Big]=
\Big[\big[E(n,A,I),E(n,A,J)\big],E(n,A,K)\Big].
$$
By Lemma~\ref{Comgenerator}, $\big[E(n,A,I),E(n,A,J)\big]$ is generated by the conjugates in $E(n,A)$ of the following four  types of elements
\begin{eqnarray}\label{ggsswwi}
e&=&\Big[e_{j,i}(\alpha),{}^{e_{i,j}(a)}e_{j,i}(\beta)\Big], \notag\\
e&=&\big[e_{j,i}(\alpha), e_{i,j}(\beta)\big],\notag\\
e&=&e_{i,j}(\alpha\beta),\notag\\ 
e&=&e_{i,j}(\beta\alpha),
\end{eqnarray}
where $i\ne j$, $\alpha\in I$, $\beta\in J$ and $a\in A$. We claim that for any $g\in \GL(n,A,K)$, 
\begin{equation}\label{jhgfsap}
\big[e,g] \in \Big[\big[E(n,A,I),E(n,A,J)\big],E(n,A,K)\Big].
\end{equation}

Let  $g\in \GL(n,A, K)$. For any maximal ideal $\gm$ of $R$, the ring $A_\gm$ contains $K_\gm$ as an ideal ($K$ being an ideal of $A$).   
Consider the natural homomorphism $\theta_\gm:A\rightarrow A_\gm$ which induces a homomorphism (call it $\theta_\gm$ again) on the level of general linear groups, $\theta_\gm:\GL(n,A)\rightarrow \GL(n,A_\gm)$. 
 Therefore, $\theta_\gm(g)\in \GL(n,A_\gm,K_\gm)$. 
 Since $A_\gm$ is module finite over the local ring $R_\gm$, $A_\gm$ is semilocal~\cite[III(2.5), (2.11)]{Bass1}, therefore  its stable rank is $1$. It follows that  $\GL(n,A_\gm,K_\gm)=E(n,A_\gm,K_\gm)\GL(1,A_\gm,K_\gm)$ (see \cite[Th.~4.2.5]{32}).  So $\theta_\gm(g)$ can be decomposed as $\theta_\gm(g)=\ep h$, where $\ep \in E(n,A_\gm, K_\gm)$ and $h$ is a diagonal matrix all of whose diagonal coefficients are $1$, except possibly the $k$-th diagonal coefficient, and $k$ can be chosen arbitrarily.
By (\S\ref{fdye7}), there is a $t_\gm\in R\backslash \gm$ such that 
\begin{equation}\label{ppooii}
\theta_{t_\gm}(g)=\ep h,
\end{equation}
where $\ep\in E(n,A_{t_\gm}, K_{t_\gm})$, and $h$ is a diagonal matrix with only one non-trivial diagonal entry which lies in $A_{t_\gm}$.

For any maximal ideal $\gm\lhd R$, choose  $t_{\gm} \in R\backslash \gm$ as above and an arbitrary positive integer $p_{\gm}$. (We will later choose $p_{\gm}$ according to Lemma~\ref{Lem:08}.)  
Since the collection of all $\{t_\gm^{p_\gm} \mid \gm \in \max(R) \}$ is not contained in any maximal ideal, we may find a finite number of $t_{\gm_s}^{p_s}\in R\backslash \gm_s$ and $x_s\in R$, $s=1,\dots,k$, such that
$$
\sum_{s=1}^k t_{\gm_s}^{p_s}x_s=1.
$$

In order to prove~(\ref{jhgfsap}), first we consider the generators of the first kind in~(\ref{ggsswwi}), namely  $e=\Big[e_{j,i}(\alpha),{}^{e_{i,j}(a)}e_{j,i}(\beta)\Big]$. Consider
\begin{multline*}
e=\Big[e_{j,i}(\alpha),{}^{e_{i,j}(a)}e_{j,i}(\beta)\Big]=\Big[e_{j,i}\Big((\sum_{s=1}^kt_{\gm_s}^{p_s}x_s)\alpha\Big),{}^{e_{i,j}(a)}e_{j,i}(\beta)\Big]=\\ \Big[\prod_{s=1}^ke_{j,i}(t_{\gm_s}^{p_s}x_s\alpha),{}^{e_{i,j}(a)}e_{j,i}(\beta)\Big].
\end{multline*}
By (C$2^+$) identity, $e=\displaystyle \Big[\prod_{s=1}^ke_{j,i}(t_{\gm_s}^{p_s}x_s\alpha),{}^{e_{i,j}(a)}e_{j,i}(\beta)\Big]$ can be written as a product of the following form:
\begin{multline}\label{hfsis}
e=\Big({}^{e_k}\Big[e_{j,i}(t_{\gm_k}^{p_k}x_k\alpha),{}^{e_{i,j}(a)}e_{j,i}(\beta)\Big]\Big)\Big({}^{e_{k-1}}\Big[e_{j,i}(t_{\gm_{k-1}}^{p_{k-1}}x_{k-1}\alpha),{}^{e_{i,j}(a)}e_{j,i}(\beta)\Big]\Big)\times\\ \cdots \times  \Big({}^{e_1}\Big[e_{j,i}(t_{\gm_1}^{p_1}x_1\alpha),{}^{e_{i,j}(a)}e_{j,i}(\beta)\Big]\Big),
\end{multline}
where $e_1,e_2,\ldots, e_k\in E(n,A)$. Note that from (C$2^+$) it is clear that all $e_s$, $s=1,\dots,k$, are products of elementary matrices of the form $e_{j,i}(A)$. Thus $e_s=e_{j,i}(a_s)$, where $a_s \in A$ and $s=1,\dots,k$, which clearly commutes with $e_{j,i}(x)$ for any $x\in A$. So the commutator~(\ref{hfsis}) is equal to 
\begin{multline}\label{hfsis1}
e=\Big(\Big[e_{j,i}(t_{\gm_k}^{p_k}x_k\alpha),{}^{e_k}{}^{e_{i,j}(a)}e_{j,i}(\beta)\Big]\Big)\Big(\Big[e_{j,i}(t_{\gm_{k-1}}^{p_{k-1}}x_{k-1}\alpha),{}^{e_{k-1}}{}^{e_{i,j}(a)}e_{j,i}(\beta)\Big]\Big)\times\\ \cdots\times \Big(\Big[e_{j,i}(t_{\gm_1}^{p_1}x_1\alpha),{}^{e_1}{}^{e_{i,j}(a)}e_{j,i}(\beta)\Big]\Big).
\end{multline}


Using (C$2^+)$ and in view of~(\ref{hfsis1}) we obtain that $[e,g]$ is a  product of the conjugates in $E(n,A)$ of 
\[w_s=\bigg[\Big[e_{j,i}(t_{\gm_s}^{p_{s}}x_{s}\alpha),{}^{e_{j,i}(a_s)e_{i,j}(a)}e_{j,i}(\beta)\Big],g\bigg],\] where $a_s \in A$ and $s=1,\ldots,k$.

 For each $s=1,\dots,k$, consider $\theta_{t_{\gm_s}}(w_s)$ which we still write as $w_s$ but keep in mind that this image 
 is in $\GL(n,A_{t_{\gm_s}})$. 

 Note that all $\Big[e_{j,i}(t_{\gm_s}^{p_{s}}x_{s}\alpha),{}^{e_{j,i}(a_s)e_{i,j}(a)}e_{j,i}(\beta)\Big]$, $s=1,\ldots, k$,  differ from the identity matrix at only the $i,j$ rows and the $i,j$ columns. Since $n>2$, we can choose $h$ in the decomposition~(\ref{ppooii}) so that it commutes with 
\[\Big[e_{j,i}(t_{\gm_s}^{p_{s}}x_{s}\alpha),{}^{e_{j,i}(a_s)e_{i,j}(a)}e_{j,i}(\beta)\Big].\] 
This allows us to reduce $\theta_{t_{\gm_s}}(w_s)$  to 
$$
\bigg[\Big[e_{j,i}(t_{\gm_s}^{p_{s}}x_s\alpha),{}^{e_{j,i}(a_s)e_{i,j}(a)}e_{j,i}(\beta)\Big], \ep\bigg],
$$
where $\ep \in E(n,A_{t_{\gm_s}}, K_{t_{\gm_s}})$.
By Lemma~\ref{Lem:08},  for any given $l_{s}$, there is a sufficiently large $p_{s}$, $s=1,\dots k$, such that
\begin{multline*}
\bigg[\Big[e_{j,i}(t_{\gm_s}^{p_{s}}x_s\alpha),{}^{e_{j,i}(a_s)e_{i,j}(a)}e_{j,i}(\beta)\Big], \ep\bigg]
\in \\ \Big[\big[E(n,A,t^{l_{s}}I), E(n,A,t^{l_{s}}J)\big], E(n,A,t^{l_{s}}K)\Big].
\end{multline*}
Let us choose $l_{s}$ to be large enough so that by Lemma~\ref{Lem:03} the restriction of \[\theta_{t_{\gm_s}}:  \GL(n,A,t_{\gm_s}^{l_{s}}A)\to \GL(n,A_{t_{\gm_s}})\] be injective. Then it is easy to see that for any $s$, we have
$$\bigg[\Big[e_{j,i}(t_{\gm_s}^{p_{s}}x_s\alpha),{}^{e_{j,i}(a_s)e_{i,j}(a)}e_{j,i}(\beta)\Big], g\bigg]\in 
\Big[\big[E(n,A,I),E(n,A,J)\big],E(n,A,K)\Big].$$
Since relative elementary subgroups $E_n$ are normal in $\GL(n,A)$ (Theorem~\ref{standard}), it follows that $[e,g]\in \Big[\big[E(n,A,I),E(n,A,J)\big],E(n,A,K)\Big].$

When the generator is of the second kind, $e=[e_{i,j}(\alpha),e_{j,i}(\beta)]$, a similar argument goes through, which is left to the reader.

Now consider the generators of the 3rd and 4th kind, namely, the conjugates of the following two types of elements,
$$e=e_{i,j}(\alpha\beta),\quad \text{or    } e=e_{i,j}(\beta\alpha).$$
By the normality of $E(n,A,IJ+JI)$,  the conjugates of $e$ are in $E(n,A, IJ+JI)$. Then
$$
[e,g]\in \big[E(n,A, IJ+JI), \GL(n,A,K)\big].
$$
By the generalized commutator formula (Theorem~\ref{gcformula1}), one obtains 
$$
\big [E(n,A, IJ+JI), \GL(n,A,K)\big]=\big[E(n,A, IJ+JI), E(n,A,K)\big].
$$
Now applying Lemma~\ref{Lem:Habdank}, we finally get 
$$
[E(n,A, IJ+JI), E(n,A,K)]\le  \Big[\big[E(n,A,I),E(n,A,J)\big],E(n,A,K)\Big].
$$
Therefore  $[e,g]\in \Big[\big[E(n,A,I),E(n,A,J)\big],E(n,A,K)\Big].$
This proves our claim. Thus we established~(\ref{jhgfsap}) for all type of generators $e$ of~(\ref{ggsswwi}).

To finish the proof, let $e \in \big[E(n,A,I),\GL(n,A,J)\big]=\big[E(n,A,I),E(n,A,J)\big]$, and $g\in \GL(n,A,K)$. Then by Theorem~\ref{gen-2},
$$e=e_{1}\times e_{2}\times\cdots\times e_{k}$$
with $e_{i}$ takes any of the forms in~(\ref{yyttrree2}). Thanks to (C$2^+$) identity and the normality of relative elementary subgroups $E_n$, it suffices to show that
$$[e_i,g]\in\Big[\big[E(n,A,I),E(n,A,J)\big],E(n,A,K)\Big],\quad i=1,\dots,k.$$
But this is exactly what has been shown above. This completes the proof. 
\end{proof}

Similar result for unitary groups is \cite[Theorem 7]{44}.
\begin{TheB}\label{yth6}
Let $n\ge 3$, $R$ be a commutative ring, $(\Form)$ be a form ring such
that $A$ is a quasi-finite $R$-algebra. Further, let $(I,\Gamma)$,
$(J,\Delta)$ and $(K,\Omega)$ be three form ideals of a form ring
$(\Form)$. Then
\begin{multline*}
\Big[\big[\EU(2n,I,\Gamma),\GU(2n,J,\Delta)\big],\GU(2n,K,\Omega)\Big]=\\
\Big[\big[\EU(2n,I,\Gamma),\EU(2n,J,\Delta)\big],\EU(2n,K,\Omega)\Big].
\end{multline*}
\end{TheB}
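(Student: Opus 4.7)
The strategy mirrors the proof of Theorem 7A, replacing each linear ingredient with its unitary counterpart. First, by the unitary generalised commutator formula (Theorem 2B), one has
$$\big[\EU(2n,I,\Gamma),\GU(2n,J,\Delta)\big]=\big[\EU(2n,I,\Gamma),\EU(2n,J,\Delta)\big],$$
so the statement reduces to proving the inclusion
$$\Big[\big[\EU(2n,I,\Gamma),\EU(2n,J,\Delta)\big],\GU(2n,K,\Omega)\Big]\le\Big[\big[\EU(2n,I,\Gamma),\EU(2n,J,\Delta)\big],\EU(2n,K,\Omega)\Big];$$
the reverse inclusion is immediate. A direct limit and Noetherian reduction argument, as in the localisation section, lets us assume $A$ is module finite over a Noetherian commutative ring $R$, with all localisations carried out in the involution-fixed subring $R_0$.

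By Theorem 5B, $[\EU(2n,I,\Gamma),\EU(2n,J,\Delta)]$ is generated as a group by three kinds of elements: (i) $[T_{ji}(\xi),Z_{ji}(\zeta,\eta)]$, (ii) $[T_{ji}(\xi),T_{ij}(\zeta)]$, and (iii) elements $Z_{ij}(\theta,\eta)\in\EU(2n,(I,\Gamma)\circ(J,\Delta))$. Generators of type (iii) are easy to handle: for any $g\in\GU(2n,K,\Omega)$ the commutator $[Z_{ij}(\theta,\eta),g]$ lies in $[\EU(2n,(I,\Gamma)\circ(J,\Delta)),\GU(2n,K,\Omega)]=[\EU(2n,(I,\Gamma)\circ(J,\Delta)),\EU(2n,K,\Omega)]$ by Theorem 2B, and this is contained in the right-hand side via the inclusion $\EU(2n,(I,\Gamma)\circ(J,\Delta))\le[\EU(2n,I,\Gamma),\EU(2n,J,\Delta)]$ from Lemma 1B. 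For generators of types (i) and (ii) the plan is to establish unitary counterparts of Lemmas LemHZ, LemHZ1, Lem:New1 and Lem:08: conjugation and commutator estimates in principal localisations $A_s$, $s\in R_0$ with $\bar s=s$, stating that for any prescribed target depth $l$ there is $p$ such that commuting or conjugating by bounded-length words in $\EU(2n,A_s,\Lambda_s)$ compresses relative form subgroups of level $s^p$ into the corresponding relative form subgroups of level $s^l$.

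The closing localisation pass then proceeds exactly as in Theorem 7A. For each $\gm\in\Max(R_0)$, $A_\gm$ is module finite over the local ring $R_{0,\gm}$, hence semilocal, so unitary surjective $K_1$-stability produces $t_\gm\in R_0\setminus\gm$ and a decomposition $F_{t_\gm}(g)=\varepsilon\cdot h$ with $\varepsilon\in\EU(2n,A_{t_\gm},K_{t_\gm},\Omega_{t_\gm})$ and $h$ acting nontrivially only on a single hyperbolic pair $\{k,-k\}$ that, thanks to $n\ge 3$, can be chosen at will. Writing a partition of unity $\sum_s t_{\gm_s}^{p_s}x_s=1$ in $R_0$ and expanding the generator $e$ additively through the Steinberg relations (R2)--(R6) and the commutator identities (C1$^+$), (C2$^+$) splits $[e,g]$ into finitely many summands. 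For each summand, selecting $k$ disjoint from the indices of that summand makes $h$ commute with the relevant elementary factor and reduces the problem to a commutator with $\varepsilon$, to which the unitary version of Lemma Lem:08 applies, placing the outcome into $[[\EU(2n,s^lI,s^l\Gamma),\EU(2n,s^lJ,s^l\Delta)],\EU(2n,s^lK,s^l\Omega)]$. Injectivity of $F_{t_{\gm_s}}$ on a sufficiently deep relative unitary congruence subgroup (the unitary counterpart of Lemma Lem:03) lifts the conclusion back to $A$ and finishes the proof.

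The main obstacle will be the faithful unitary rewriting of Lemma Lem:08. The short-root Steinberg relations (R3)--(R5) match the linear identities (E2)--(E3) closely, but the long-root relation (R6) produces mixed products involving the symmetry $\lambda$ and the form parameter, so the entire level bookkeeping has to be done in terms of the symmetrised product $(I,\Gamma)\circ(J,\Delta)\circ(K,\Omega)$ of form ideals rather than the plain ideal product. A further subtlety is that the long-root generators $T_{i,-i}(\alpha)$ only involve the pair $\{i,-i\}$, so in the additive splitting step one must reserve a disjoint mutually dual pair $\{k,-k\}$ for the ``free'' factor $h$; this is precisely what $n\ge 3$ guarantees. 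Once these unitary calculus estimates are in place, the remainder of the argument follows Theorem 7A line by line, with the unitary Steinberg relations replacing (E1)--(E3) throughout.
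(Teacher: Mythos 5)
Your proposal is correct and follows essentially the same route the paper describes for Theorem~7B (implemented in full detail in~\cite{44}): reduce to $\big[[\EU,\EU],\GU\big]\le\big[[\EU,\EU],\EU\big]$ by the unitary generalised commutator formula, take the generating set for $[\EU(2n,I,\Gamma),\EU(2n,J,\Delta)]$ from Theorem~5B, localise at $R_0$, decompose $g$ via unitary $K_1$-stability over semilocal $A_\gm$, split the generator over a partition of unity and dispatch each term by a unitary version of Lemma~\ref{Lem:08}, then patch by injectivity of localisation on a deep congruence subgroup. You have correctly identified the places where the unitary case diverges (the long-root relation (R6), the non-associative symmetrised product of form ideals, reserving a spare hyperbolic pair for the toral factor), which is precisely where the paper warns the calculations become substantially longer.
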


The proof of Theorem~\ref{yth6} is even more toilsome, than that of Theorem~\ref{the43}. In fact,
just the proof of the unitary analogue of the above triple commutator
lemma,~\cite[Lemma~13]{44}, consists of some six solid pages of calculations.
\par
After Theorem~\ref{yth6} is established, Theorem~\ref{nemorh3}, \ref{nemorh4} follows by 2--3 pages of
artless formal juggling with level calculations and commutator
identities, the details of calculations
can be found in~\cite{46,44}. For Chevalley groups they are still
unpublished.




\forget

\section{Commutators of congruence subgroups}

The multiple commutator formula we stated in \S~8 depends only on
some commu\-tativity conditions. The {\it general\/} multiple
commutator formula formula we are going to discuss now, only works
over {\it finite-dimensional\/} rings.
\par
Let, as before, $I_i$, $i=1,\ldots,m$, be ideals of the ring $R$.
Once we have Theorem 6, it is natural to ask, and this was stated
as [114], Problem 2, whether the presence of an elementary factor
is essential there? In other words, does the following stronger
commutator formula
$$ [\![G(R,I_1),G(R,I_2),\ldots,G(R,I_m)]\!]=
[\![E(R,I_1),E(R,I_2),\ldots,E(R,I_m)]\!]. $$
\noindent
hold, at least under some assumptions on $m$ and $R$?
\par
Some special cases of this formula were indeed known before.
Let us look at the two first instances, 0-dimensional rings, and
1-dimensional rings. In fact, these results are stated in terms
of the stable rank $\sr(A)$ of the ring $A$, see~\cite{14}.
\par\smallskip
$\bullet$ When $\sr(A)=1$, one has $\SL(n,A,I)=E(n,A,I)$.
For the commutative case, this follows from a classical result
by Hyman Bass~\cite{14}. In general, this is essentially
Bak's {\it definition\/} of $\SL(n,A,I)$, we discuss below.
However, Bak proves that in the commutative case his definition
does agree with the usual one in terms of determinants, see~\cite[Lemma 3.7]{8}.
\par\smallskip
$\bullet$ When $\sr(A)=2$ a classical result by Alec Mason and
Wilson Stothers asserts that for any two two-sided ideals
$I$ and $J$ of $A$ one has
$$ [\GL(n,A,I),\GL(n,A,J)]=[E(n,A,I),E(n,A,J)]. $$
\par
In fact, this last result is the first non-trivial case of the
following theorem.
\begin{The}
Let\/ $A$ be any associative ring with 1, let\/ $n\ge\sr(A)+1,3$,
and let\/ $I,J\unlhd A$ be two-sided ideals of\/ $A$. Then one
has
$$ [\GL(n,A,I),\GL(n,A,J)]=[E(n,A,I),E(n,A,J)]. $$
\end{The}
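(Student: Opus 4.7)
The plan is to recycle exactly the strategy used in the excerpt to prove Theorem~\ref{mat73}, since the statement here is just Theorem~\ref{mat73} restated (with the combined rank hypothesis $n\ge \sr(A)+1$ and $n\ge 3$). The inclusion $[E(n,A,I),E(n,A,J)]\le [\GL(n,A,I),\GL(n,A,J)]$ is automatic, so all the work is in the reverse direction. My proof would assemble Lemma~\ref{MS-1}, Lemma~\ref{MS-2}, Lemma~\ref{Lem:Habdank}, and Bass's absolute formula (Theorem~\ref{hhggff}) as follows.

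First I would apply Lemma~\ref{MS-1} to push the commutator up to double size: $[\GL(n,A,I),\GL(n,A,J)]\le [\GL(n,A,I),E(2n,A,J)]$. The point is that modulo $E(2n,A,J)$ the Whitehead trick replaces any $y\in\GL(n,A,J)$ (sitting as $\diag(y,e)$) by $\diag(e,y)$, which commutes with the image of $\GL(n,A,I)$ under the stabilisation embedding. Next I would enlarge the first factor to get $[\GL(n,A,I),E(2n,A,J)]\le [\GL(2n,A,I),E(2n,A,J)]$ and apply Theorem~\ref{hhggff} at the stable level $2n\ge\sr(A)+1$, which yields
\[[\GL(2n,A,I),E(2n,A,J)]=[E(2n,A,I),E(2n,A,J)].\]

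At this point the target commutator is sandwiched between $[E(2n,A,I),E(2n,A,J)]$ and $[\GL(n,A,I),\GL(n,A,J)]$. To descend back from degree $2n$ to degree $n$ I would use the level bound from Lemma~\ref{Lem:Habdank}, namely $[\GL(n,A,I),\GL(n,A,J)]\le \GL(n,A,IJ+JI)$. Thus
\[[\GL(n,A,I),\GL(n,A,J)]\le [E(2n,A,I),E(2n,A,J)]\cap \GL(n,A,IJ+JI).\]
Now Lemma~\ref{MS-2} says precisely that the stability map
\[[E(n,A,I),E(n,A,J)]/E(n,A,IJ+JI)\longrightarrow [E(2n,A,I),E(2n,A,J)]/E(2n,A,IJ+JI)\]
is an isomorphism, so the intersection on the right lies inside $[E(n,A,I),E(n,A,J)]$, finishing the argument.

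The main obstacle is really already solved in the stated lemmas: Lemma~\ref{MS-2}'s injective stability ultimately rests on Bass--Vaserstein $K_1$-stability (Lemma~\ref{BassBV}) together with the fact that $[E(n,A,I),E(n,A,J)]/E(n,A,IJ+JI)$ lands in the centre of $\GL(n,A)/E(n,A,IJ+JI)$. Once those tools are granted, the proof is a formal diagram chase: stabilise to $2n$ using Whitehead to remove the congruence-factor obstruction, invoke Bass's absolute formula there, then use the Habdank-type level inclusion to descend by injective stability. Nothing beyond the lemmas already proved in the paper is needed, and no new commutator calculus is required.
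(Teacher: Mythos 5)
Your proposal reproduces, step for step, the paper's own argument for Theorem~\ref{mat73}: the Whitehead--type stabilisation of Lemma~\ref{MS-1}, the passage to degree $2n$ so Bass's Theorem~\ref{hhggff} applies, the level bound $[\GL(n,A,I),\GL(n,A,J)]\le\GL(n,A,IJ+JI)$ from Lemma~\ref{Lem:Habdank}, and the descent via the injective stability packaged in Lemma~\ref{MS-2}. The only point that deserves a word more than you give it is that Lemma~\ref{MS-2} as stated is a one-step ($n\to n+1$) isomorphism, so to go from $n$ to $2n$ you should remark that it is iterated, which is harmless since all intermediate indices satisfy $k\ge n\ge\max(\sr(A)+1,3)$.
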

For commutative rings, under somewhat stronger assumptions, this
is [72], Corol\-lary 3.9. For non-commutative rings this
theorem is stated in [70], Theo\-rem 1.2, with an indication
that the proof follows the same lines. In [38] we give a short
proof of this result, based on Theorem 4.
\par
The main result we wish to report in this talk is a generalisation
of this theorem to {\it multiple\/} commutators over an
{\it arbitrary\/} finite-dimensional ring. We cannot do this in
terms of the stable rank $\sr(A)$ itself, since that would be far
too much. There is no obvious way to implement induction on
$\sr(A)$.
\par
Following Anthony Bak [8] we use Bass--Serre dimension
$\delta(R)$ of $R$ instead. One of the [many!] deep external
results on which our proof depends, is the following {\it induction
lemma\/} by Bak.
\begin{Lem}
Let\/ $R$ be a commutative ring of finite Bass--Serre dimension\/
$\delta(R)$. Let\/ $X_1\cup\ldots\cup X_r$ be a decomposition of\/
$\Max(R)$ into irreducible Noetherian subspaces of dimension\/
$\le\delta(R)$. If\/ $s\in R$ is an element such that for each\/
$I_k$, $1\le k\le r$, there exists an ideal\/ ${\frak m}_k\in X_k$
such that\/ $s\notin {\frak m}_k$. Then\/
$\delta(\widetilde R_{(s)})<\delta(R)$.
\end{Lem}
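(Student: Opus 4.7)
The plan is to reduce the statement to a purely topological calculation in the maximal spectrum. Recall that, by definition of Bass--Serre dimension, $\delta(R)$ is the smallest integer $d$ for which $\Max(R)$ admits a finite decomposition into irreducible Noetherian subspaces of Krull dimension at most $d$. So it is enough to exhibit for $\widetilde R_{(s)}$ an analogous decomposition in which every component has dimension strictly less than $\delta(R)$.

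First, identify $\Max(\widetilde R_{(s)})$ with a closed subspace $Y \subseteq \Max(R)$. In Bak's localisation--completion formalism the ring $\widetilde R_{(s)}$ is constructed (after killing $s$-torsion and reducing) precisely so that the canonical map $R \to \widetilde R_{(s)}$ induces, at the level of maximal spectra, a homeomorphism of $\Max(\widetilde R_{(s)})$ onto the vanishing locus $Y = V(s) \cap \Max(R) = \{\mathfrak{m} \in \Max(R) : s \in \mathfrak{m}\}$, equipped with the induced Jacobson topology. This spectral identification is where the particular nature of the construction enters, and constitutes the main technical point of the proof.

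Second, apply the hypothesis component by component. For each $k = 1, \ldots, r$, the intersection $Y \cap X_k$ is a closed subspace of $X_k$, and because there exists $\mathfrak{m}_k \in X_k$ with $s \notin \mathfrak{m}_k$, the inclusion $Y \cap X_k \subsetneq X_k$ is proper. Since $X_k$ is an irreducible Noetherian space of Krull dimension at most $\delta(R)$, every proper closed subset has strictly smaller Krull dimension: any maximal chain of irreducible closed subsets of $Y \cap X_k$ could otherwise be extended by $X_k$ itself, contradicting maximality in $X_k$. Hence $\dim(Y \cap X_k) \leq \dim(X_k) - 1 \leq \delta(R) - 1$.

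Finally, invoke Noetherianity of $X_k$ once more to decompose each closed subspace $Y \cap X_k$ into its finitely many irreducible components, each still of dimension at most $\delta(R) - 1$. Collecting these pieces across $k = 1, \ldots, r$ exhibits $Y = \Max(\widetilde R_{(s)})$ as a finite union of irreducible Noetherian subspaces of dimension at most $\delta(R) - 1$, whence $\delta(\widetilde R_{(s)}) \leq \delta(R) - 1 < \delta(R)$, as required. The principal obstacle in this argument is the first step, namely tracing through Bak's definition of $\widetilde R_{(s)}$ carefully enough (in particular, handling any $s$-torsion in $R$ correctly) to establish the spectral identification with $V(s) \cap \Max(R)$; once that is in place, the dimension count is a straightforward exercise in the topology of Noetherian irreducible spaces.
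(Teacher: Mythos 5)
This lemma does not carry a proof in the present paper at all: it sits inside a \verb|\forget ... \forgotten| block (a macro that discards its contents), and even in that discarded draft it is quoted as a ``deep external result'' from Bak's paper [8] rather than proved. So there is no in-paper proof to compare against; I evaluate your argument on its own.

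Your overall strategy is the right one, and the dimension count in the second half is correct: for a proper closed subset $Y$ of an irreducible Noetherian space $X$ of finite dimension, any chain of irreducible closed subsets of $Y$ has top element $\subsetneq X$, so it extends by $X$ itself, giving $\dim Y < \dim X$. Coupled with the hypothesis that each $X_k$ contains some $\mathfrak m_k \not\ni s$, this shows every $V(s)\cap X_k$ is a proper closed subset of $X_k$ and hence has dimension $\le \delta(R)-1$; decomposing each into irreducible components and collecting the pieces bounds the Bass--Serre dimension of $V(s)\cap\Max(R)$ by $\delta(R)-1$. All of that is fine.

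The genuine gap is exactly the one you flag: the homeomorphism $\Max(\widetilde R_{(s)}) \cong V(s)\cap\Max(R)$ is asserted, not established, and the parenthetical ``(after killing $s$-torsion and reducing)'' misdescribes the construction. $\widetilde R_{(s)}$ is a completion (and then a filtered colimit of completions of finitely generated subrings), not a quotient by the $s$-torsion; the kernel of $R\to\widehat R_{(s)}$ for Noetherian $R$ is $\bigcap_n s^n R$, which by Krull's intersection theorem is $\{x : (1-st)x=0\ \text{for some}\ t\}$, not the $s$-torsion submodule. For $R$ Noetherian the identification does hold: $\widehat R/s\widehat R\cong R/sR$ (completion is exact on finitely generated modules, and $s=0$ in $R/sR$), and $s$ lies in the Jacobson radical of $\widehat R$ (since $1-sx$ is a unit by geometric series), so $\Max(\widehat R_{(s)})=\Max(\widehat R_{(s)}/s\widehat R_{(s)})\cong\Max(R/sR)=V(s)\cap\Max(R)$ homeomorphically. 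What remains for a complete argument is to push this through the colimit defining $\widetilde R_{(s)}$ for non-Noetherian $R$ (or to invoke the standard Noetherian reduction to the finitely generated subrings $R_i$ appearing in the definition), and that is precisely what would need to be written out.
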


To state the next result, we have to recall the definition of
super special linear groups, introduced by Bak [8]. These groups
are another major tool to implement induction in our proofs.
Let $A$ be a module finite algebra over a commutative ring
$R$ and let $I$ be an ideal in $A$. Define
$$ \SSL{m}(n,A,I)=\bigcap_{\phi}\Ker\Big(
\GL(n,A,I)\map\GL(n,A',I')/E(n,A',I')\Big), $$
\noindent
where the intersection is taken over all homomorphisms
$A\map A'$ of rings such that $A'$ is module finite over a
commutative ring $R'$ of Bass--Serre dimension $\delta(R')\le m$,
$I'$ is the ideal of $A'$ generated by $\phi(I)$.
\par
This definition can be extended to all quasi-finite rings
by passage to limits. Namely, let $A=\varinjlim A_i$,
where $A_i$ is module finite over a commutative ring $R_i$.
Consider the ideal $I_i=I\cap A_i$. Then
$$ \SSL{m}(n,A,I)=\varinjlim \SSL{m}(n,A_i,I_i). $$
\noindent
By definition $\SSL{m}(n,A,I)$ is functorial. The group
$\SSL{0}(n,A,I)$ will be denoted simply by $\SL(n,A,I)$.
When $A=R$ is itself commutative, it coincides with the
usual special congruence subgroup.
\par
The main result of Bak [8] is the existence of nilpotent filtration on
relative $K_1$, which becomes finite for finite dimensional rings.

\begin{The} 
Let\/ $A$ be a quasi-finite ring over a commutative
ring\/ $R$, let\/ $n\ge 3$, and let\/ $I$ be an ideal of\/ $A$. Then
\begin{itemize}
\item[$\bullet$] Each\/ $\SSL{m}(n,A,I)$ is a normal subgroup of\/ $\GL(n,A)$.
\item[$\bullet$] The sequence
$$ \SSL{0}(n,A,I)\ge\SSL{1}(n,A,I)\ge\SSL{2}(n,A,I)\ge\cdots $$
\noindent
is a descending\/ $\SSL{0}(n,A,I)$-central series.
\item[$\bullet$] The conjugation action of\/ $\GL(n,A)$ on\/
$\GL(n,A,I)/\SSL{0}(n,A,I)$ is trivial.
\item[$\bullet$] If Bass--Serre dimension of\/ $R$ is finite,\/
$\delta(R)<\infty$, then
$$ \SSL{m}(n,A,I)=E(\Phi,R,I), $$
whenever\/ $m\ge\delta(R)$.
\end{itemize}
\end{The}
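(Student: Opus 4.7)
The plan is to address the four bullet points in order, treating the first and third as relatively elementary consequences of the standard commutator formulas already proven in the paper, while the second and fourth are the substantive parts that require Bak's localisation--completion machinery.

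For normality (bullet 1), I would argue as follows. The definition of $\SSL{m}(n,A,I)$ is as an intersection of kernels of maps $\GL(n,A,I)\to \GL(n,A',I')/E(n,A',I')$, so to check $gxg^{-1}\in \SSL{m}(n,A,I)$ for $g\in\GL(n,A)$ and $x\in \SSL{m}(n,A,I)$, it suffices to show that each $E(n,A',I')$ is normal in $\GL(n,A')$ so that the coset of $\phi(x)$ is preserved by $\phi(g)$-conjugation. That normality is exactly the generalised commutator formula (Theorem~\ref{gcformula1}) applied to the module-finite, hence quasi-finite, algebra $A'$. Bullet 3 is equally direct: given $g\in\GL(n,A)$ and $x\in\GL(n,A,I)$, I would send $[g,x]$ through an arbitrary test homomorphism $\phi:A\to A'$ with $\delta(R')=0$, whence $R'$ is semi-local and $\sr(A')=1$. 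In this low-stability regime Theorem~\ref{mat73} (the Mason--Stothers formula) gives $[\GL(n,A'),\GL(n,A',I')]=[E(n,A'),E(n,A',I')]\le E(n,A',I')$, and so $[g,x]$ lies in every defining kernel.

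The heart of the proof is bullet 2, for which I would use a two-level induction. Fix $x\in\SSL{0}(n,A,I)$ and $y\in\SSL{m}(n,A,I)$, and let $\phi:A\to A'$ with $\delta(R')\le m+1$ be a test map; the goal is $[\phi(x),\phi(y)]\in E(n,A',I')$. First I would perform Noetherian reduction (direct limit over finitely generated subalgebras) to reduce to the case that $A'$ is module finite over a Noetherian $R'$. Then I would invoke Bak's induction lemma (the lemma immediately preceding the theorem): pick an element $s\in R'$ avoiding one maximal ideal in each irreducible component of top dimension of $\Max(R')$; then $\delta(\widetilde{R'}_{(s)})<\delta(R')$. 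Localising at $s$, the image of $y$ lies in $\SSL{m-1}$ of a lower-dimensional ring, so by induction $[\phi(x),\phi(y)]$ becomes elementary after principal localisation. The technical heart is then to patch this local elementariness back to $E(n,A',I')$ globally, using the localisation--completion yoga in the same spirit as Lemmas~\ref{LemHZ}--\ref{Lem:08} proved above: one controls how many elementary factors are needed in each neighbourhood, conjugates back using the triviality statement of bullet~3 at the previous stage, and sums finitely many localisations using a partition of unity in $R'$. The hard part will be precisely this commutator calculus, which is where the ``yoga'' gets genuinely intricate.

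Bullet 4 is then a consequence of the first three. When $\delta(R)=0$, one has $R$ semilocal and $\sr(A)=1$, so Bass's theorem gives $\SL(n,A,I)=E(n,A,I)$, which matches $\SSL{0}(n,A,I)=E(n,A,I)$ directly from the definition (using $\phi=\mathrm{id}$). For general $\delta(R)=d<\infty$, bullet 2 provides a descending central series whose successive quotients $\SSL{k}/\SSL{k+1}$ are abelian subquotients of $K_1$-type groups over rings of dimension $\le k$; iterating and invoking the induction lemma $d$ times shows that after $d$ steps the filtration must stabilise, and the stable value is forced to be $E(n,A,I)$ because of the base case. I expect the main obstacle throughout to be the bookkeeping of localisation depths (the integer $p$ versus $l$ versus $m$ in the auxiliary lemmas), since the actual group-theoretic content in each step is dictated by the commutator formulas already available, but keeping the tower of localisations compatible with the inductive step on $\delta(R)$ is what makes Bak's argument substantial.
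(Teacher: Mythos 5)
Your arguments for the first and third bullets are sound: normality of $\SSL{m}(n,A,I)$ follows by pushing conjugation through every test map $\phi$ and invoking normality of $E(n,A',I')$ in $\GL(n,A')$, and centrality modulo $\SSL{0}$ follows from the stable-rank-one case of the Mason--Stothers formula applied after an arbitrary test map with $\delta(R')=0$. Note, however, that the paper itself offers no proof of this theorem---it is Bak's result, quoted without argument---so the burden falls entirely on your sketch of the second bullet, and there the sketch contains a genuine confusion between localisation and completion. Bak's induction lemma lowers the Bass--Serre dimension of the $s$-adic \emph{completion} $\widetilde{R'}_{(s)}$, not of the localisation $R'_s$: indeed the very $s$ the lemma produces is chosen to avoid at least one maximal ideal in each top-dimensional component of $\Max(R')$, so those ideals \emph{survive} in $\Max(R'_s)$ and localisation does not lower dimension at all. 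Once $\delta(\widetilde{R'}_{(s)})\le m$, the image of $\phi(y)\in\SSL{m}$ in the completed ring already lies in the elementary subgroup, not in ``$\SSL{m-1}$ after principal localisation'' as you write. The engine of the proof is the localisation--completion theorem, an inclusion of the shape $[\GL(n,A,I,s^{-1}),\GL(n,A,\widehat s)]\le E(n,A,I)$: one factor becomes elementary after localising, the other after completing, and the interplay of the two $s$-adic topologies on $A_s$ and $\widehat A_s$ forces the commutator to be globally elementary. Your proposed ``partition of unity over finitely many localisations'' is the Quillen--Suslin patching pattern used elsewhere in this survey for the triple-commutator theorem; Bak's argument here works with a \emph{single} well-chosen $s$ throughout and never takes a finite subcover.

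Your fourth bullet is also off-target. Iterating the central series cannot on its own show the filtration terminates at $E(n,A,I)$: finiteness of the filtration is exactly the assertion, so the argument as given is circular. The correct argument is much shorter and you in fact had it in hand for $\delta(R)=0$ and simply did not notice it generalises: whenever $m\ge\delta(R)$ and $A$ is module finite over $R$, the identity $\phi=\mathrm{id}\colon A\to A$ is itself one of the admissible test maps in the defining intersection, and its kernel is exactly $E(n,A,I)$. Hence $\SSL{m}(n,A,I)\le E(n,A,I)$ immediately; the reverse inclusion holds for every $m$ because elementary matrices map to elementary matrices, and for quasi-finite $A$ one passes to the direct limit.
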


Actually, in the first part of this paper we have already discussed
generalisations of such nilpotent filtrations to relative $K_1$,
for unitary groups, and for Chevalley groups, obtained in [33], 
[40], [10].
\par
This theorem implies the following result, where we cite a
slightly better bound for $m$, which follows from Bass' result
(a special case of the Mason--Stothers theorem above).

\begin{The}
Let\/ $A$ be a quasi-finite algebra with\/ $1$ over a commutative
ring\/ $R$ of finite Bass--Serre dimension\/ $\delta(R)$,
let\/ $n\ge 3$, and further let\/ $I$ be a two-sided ideal
of\/ $A$. Assume that\/ $m\ge\max(\delta(R)+3-n,1)$. Then
$$ [\SL(n,A,I),\SL(n,A),\ldots,\SL(n,A)]=E(n,A,I), $$
\noindent
where the number of\/ $\SL(n,A)$ equals\/ $m$.
\end{The}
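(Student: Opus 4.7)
The proof uses Bak's nilpotent filtration $\SL(n,A,I) = \SSL{0}(n,A,I) \supseteq \SSL{1}(n,A,I) \supseteq \cdots$ as its main engine. The reverse inclusion $E(n,A,I) \subseteq [\SL(n,A,I), \SL(n,A), \ldots, \SL(n,A)]$ is formal: Theorem~\ref{standard} yields $E(n,A,I) = [E(n,A), \SL(n,A,I)]$, and iterating this together with $E(n,A) \le \SL(n,A)$ produces the required $m$-fold commutator for any $m \ge 1$. The forward inclusion is the real content, and I would prove it by induction on $m$ with the inductive claim
$$[\SL(n,A,I), \underbrace{\SL(n,A), \ldots, \SL(n,A)}_{m}] \subseteq \SSL{m}(n,A,I).$$
By the filtration theorem, $\SSL{m}(n,A,I) = E(n,A,I)$ once $m \ge \delta(R)$, so the inductive step reduces to the single-level drop
$$[\SSL{k}(n,A,I), \SL(n,A)] \subseteq \SSL{k+1}(n,A,I),$$
which is the technical heart of the whole argument.

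To establish this drop I would combine the remaining two parts of the filtration theorem: (i) the $\SSL{0}$-centrality $[\SSL{0}(n,A,I), \SSL{k}(n,A,I)] \subseteq \SSL{k+1}(n,A,I)$; and (ii) the triviality of the $\GL(n,A)$-action on $\GL(n,A,I)/\SSL{0}(n,A,I)$, which says any $\SL(n,A)$-conjugate of an element of $\SSL{k}(n,A,I)$ differs from it by an element of $\SSL{0}(n,A,I)$. Applying the three-subgroup lemma (Identity~C7) to the triple $(\SL(n,A), \SL(n,A), \SSL{k}(n,A,I))$ then shifts an $\SL(n,A)$-commutator with $\SSL{k}$ into an $\SSL{0}(n,A,I)$-commutator with $\SSL{k}$, which lands in $\SSL{k+1}(n,A,I)$ by~(i); the normality of $\SSL{k}$ in $\GL(n,A)$ guarantees that all ambient commutator subgroups remain inside $\GL(n,A,I)$ so that (i)--(ii) apply.

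The refinement $m \ge \max(\delta(R)+3-n,1)$ (rather than the naive $m \ge \delta(R)$) reflects an earlier collapse of the filtration when $n$ is large relative to the stable rank. By Bass's bound $\sr(A) \le \delta(R)+1$ (Theorem~\ref{bassbound}), once $n \ge \delta(R)+2$ the Mason--Stothers theorem (Theorem~\ref{mat73}) already forces $[\SL(n,A,I), \SL(n,A)] = E(n,A,I)$ at a single commutator, matching the $m=1$ case; for smaller $n$, each extra unit of $n$ beyond $3$ effectively shortens the filtration by one step, via exactly the stability-plus-localisation argument that drives the filtration theorem, yielding the precise bound $m \ge \delta(R)+3-n$.

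The principal obstacle will be the single-level drop lemma itself: the stated $\SSL{0}$-centrality is strictly weaker than the $\SL(n,A)$-centrality the induction requires, and closing that gap is not purely formal. It will involve careful three-subgroup-lemma manipulation together with Noetherian reduction and a localisation argument in the spirit of Bak's induction lemma (cutting the Bass--Serre dimension by localising at a judicious non-zero-divisor, as in the proof of Lemma~\ref{Lem:03}), so as to feed the induction and make the filtration actually drop; this is precisely the localisation-completion yoga that already underlies the existence of the filtration in the first place.
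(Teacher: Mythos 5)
The paper does not actually work out a proof of this statement; it records it (in a suppressed section) with a one-sentence justification: it "follows from" Bak's nilpotent filtration theorem for relative $\K_1$ (Theorem 4.1 of~\cite{8}), with the sharper bound $\delta(R)+3-n$ attributed to Bass's result via Mason--Stothers. Your strategy therefore coincides with the paper's intended route: the formal reverse inclusion via $E(n,A,I)=[E(n,A),\SL(n,A,I)]$ and Theorem~\ref{standard}; the forward inclusion by induction with claim $[\SL(n,A,I),\SL(n,A),\ldots,\SL(n,A)]_m\subseteq \SSL{m}(n,A,I)$; termination from the stabilisation $\SSL{m}=E$ of Bak's filtration; and the refined bound via $\sr(A)\le\delta(R)+1$ together with Theorem~\ref{mat73}. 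All of these pieces are correctly identified and correctly sequenced.

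The place to tighten is the single-level drop $[\SSL{k}(n,A,I),\SL(n,A)]\subseteq\SSL{k+1}(n,A,I)$. You are right to flag the gap between the $\SSL{0}(n,A,I)$-centrality as literally stated and the $\SL(n,A)$-centrality the induction needs, but your proposed bridge does not close it: applying Identity (C7) to $(\SL(n,A),\SL(n,A),\SSL{k})$ bounds $[[\SL(n,A),\SL(n,A)],\SSL{k}]$, not $[\SL(n,A),\SSL{k}]$, and the third bullet of the filtration theorem only places $[\GL(n,A),\GL(n,A,I)]$ inside $\SSL{0}$ --- at level $0$, not at level $k+1$ --- so the combination does not "shift an $\SL(n,A)$-commutator into an $\SSL{0}$-commutator" as you assert. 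The actual resolution is that Bak's Theorem 4.1 provides centrality of the filtration with respect to $\SL(n,A)$ acting by conjugation (equivalently, $\SSL{k}/\SSL{k+1}$ is central in $\SL(n,A)/\SSL{k+1}$); with that form of the statement the drop is immediate and no fresh localisation is required. The Noetherian reduction and Bass--Serre descent you invoke in your final paragraph belong \emph{inside} the proof of the filtration theorem, not on top of it; once you cite the filtration theorem in its full strength, the remaining derivation of the present statement is purely formal plus the stable-range collapse for small $m$.
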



\section{General multiple commutator formula}

Here we state the {\it general\/} multiple relative commutator
formula for $\GL_n$, which is the main result of our paper [38].
It is a very powerful result, which, when valid, simultaneously
generalises all previously known commutator formulas and
nilpotent filtrations of relative $\K_1$.
\par
However, it requires some finiteness conditions, and relies
on a host of deep external results. At this time, we only
have a complete proof for the case of the general linear group.
In this section we limit ourselves with the case $G=\GL_n$.
It is not that we lack the machinery to generalise these
results to groups, as we shall see in the next sections, all
requisite localisation techniques itself is there. What is not
there, especially for Chevalley groups, are definitive analogues
of such classical results as Whitehead lemma, stability of
$\K_1$, etc.

\begin{TheA}
Let\/ $A$ be a quasi-finite algebra with\/ $1$ over a commutative
ring\/ $R$ of finite Bass--Serre dimension\/ $\delta(R)$,
let\/ $n\ge 3$, and further let\/ $I_i\unlhd R$, $i=1,\ldots,m$,
be two-sided ideals of\/ $A$. Assume that\/ $m\ge\max(\delta(R)+3-n,1)$.
Then
\begin{multline*}
[\![\SL(n,A,I_1),\SL(n,A,I_2),\ldots,\SL(n,A,I_m)]\!]=\\
[\![E(n,A,I_1),E(n,A,I_2),\ldots,E(n,A,I_m]\!].
\end{multline*}
\end{TheA}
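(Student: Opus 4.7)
The inclusion $\supseteq$ is immediate from $E(n,A,I_j) \le \SL(n,A,I_j) = \SSL{0}(n,A,I_j)$, so everything comes down to the forward inclusion. The plan is to combine Bak's nilpotent filtration by the super special linear groups $\SSL{k}(n,A,I)$, summarised immediately before the statement, with the multiple commutator formula of Theorem~\ref{nemorh3} from Section~\ref{nervsue}. First I would reduce to the case where $A$ is module finite over a commutative Noetherian ring $R$ with $\delta(R) < \infty$, using that the functors $\SL$, $\GL$, and $E$ all commute with direct limits (Proposition~\ref{Prop:01} together with the discussion in~\S\ref{fdye7}). In this setting Bak's theorem furnishes $\SSL{k}(n,A,I) = E(n,A,I)$ for $k \ge \delta(R)$, together with the $\SSL{0}$-centrality of the descending series $\SL(n,A,I) = \SSL{0}(n,A,I) \ge \SSL{1}(n,A,I) \ge \cdots$.

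The heart of the argument is to produce at least one $E$-factor inside the multi-commutator, then let Theorem~\ref{nemorh3} do the rest. Dominating $\SL(n,A,I_j) \le \SL(n,A)$ for $j \ne 1$, I would embed the LHS into a multi-commutator of the form $[\![\SL(n,A,I_1), \SL(n,A), \ldots, \SL(n,A)]\!]$. In the left-normed arrangement this group equals $E(n,A,I_1)$ by the unnumbered theorem immediately preceding the statement (which is itself a direct consequence of Bak's filtration combined with the injective stability of $K_1$, Lemma~\ref{BassBV}), and this is exactly what the hypothesis $m \ge \delta(R)+3-n$ is calibrated for. Once a sub-commutator of the LHS is known to sit inside some $E(n,A,\cdot)$, the multiple commutator formula of Theorem~\ref{nemorh3} converts the remaining $\SL$- (equivalently $\GL$-) factors into the corresponding $E$-factors, placing the element inside $[\![E(n,A,I_1), E(n,A,I_2), \ldots, E(n,A,I_m)]\!]$.

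The main technical obstacle will be to adapt this scheme to an arbitrary bracket arrangement. The cited previous theorem is set up for left-normed commutators against a single chosen ideal, whereas the statement of Theorem~1A allows an arbitrary placement of brackets among $m$ distinct ideals. I would handle this by iterating the Hall--Witt identity (C3) and the three-subgroup lemma (C7) to rewrite any higher commutator as a product of left-normed commutators to which the previous theorem applies, carefully tracking the symmetrised products $I_{i_1}\circ\cdots\circ I_{i_k}$ generated by Lemma~\ref{Lem:Habdank} at each rewriting step. Since each $\SSL{k}(n,A,I)$ is normal in $\GL(n,A)$, these rewritings preserve the $\GL$-normality needed to apply both Bak's filtration and Theorem~\ref{nemorh3} termwise, producing the desired containment and completing the forward inclusion.
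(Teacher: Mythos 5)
The paper does not contain a complete proof of this theorem; the source only sketches one, namely an induction on the Bass--Serre dimension $\delta(R)$ via Bak's induction lemma, taking as base (from Mason--Stothers, Bass' bound $\sr(A)\le\delta(R)+1$, and Bass--Vaserstein injective stability) the identification $[\GL(n,A,I),\GL(n,A,J)]=\SSL{\delta(R)-n+3}(n,A,I\circ J)$, and as induction step an inclusion of the form
\[ \Big[[\SSL{k}(n,A,I),\SSL{k}(n,A,J)],\SSL{0}(n,A,K)\Big]\le
\big[\SSL{k+1}(n,A,I\circ J),\SSL{k+1}(n,A,K)\big], \]
which in turn rests on a birelative localisation--completion theorem and the second-localisation commutator calculus behind it. Your proposal takes an entirely different route, and there is a genuine gap at its central step.

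The domination $\SL(n,A,I_j)\le\SL(n,A)$ places the left-hand side inside $E(n,A,I_1)$, but that is a vastly larger group than the target $[\![E(n,A,I_1),\ldots,E(n,A,I_m)]\!]$; by the reduction of multiple to double commutators in Section~\ref{ziuso1}, the target equals $[E(n,A,I_1\circ\ldots\circ I_h),E(n,A,I_{h+1}\circ\ldots\circ I_m)]$, and by Lemma~\ref{Lem:Habdank} the source is only known to sit between that group and $[\GL(n,A,I_1\circ\ldots\circ I_h),\GL(n,A,I_{h+1}\circ\ldots\circ I_m)]$ --- closing exactly that last inclusion is the whole content of the theorem. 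You cannot recover the loss by ``converting the remaining $\SL$-factors into $E$-factors'' via Theorem~\ref{nemorh3}: that formula requires one of the entries of the multi-commutator to literally \emph{be} a relative elementary subgroup, not merely to be \emph{contained in} one, and $\SL(n,A,I_j)=\SSL{0}(n,A,I_j)$ is in general strictly larger than $E(n,A,I_j)$ once $\delta(R)>0$. Hall--Witt (C3) and the three-subgroup lemma (C7) only reshuffle bracket placements; they do not shrink $\SL$ to $E$. The real mechanism is that forming $m$-fold commutators drives you step by step down the $\SSL{k}$-filtration until, after roughly $\delta(R)$ steps, you land in $E$, and the hypothesis $m\ge\delta(R)+3-n$ is the bookkeeping that makes the filtration bottom out; without that dimension-shift argument the forward inclusion does not follow.
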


The strategy of the proof is induction on $\delta(R)$, using Bak's
induction lemma. Eventually, by induction we show that
\begin{multline*}
[\![\SL(n,A,I_1),\SL(n,A,I_2),\ldots,\SL(n,A,I_m)]\!]=\\
[E(n,R,I_1\circ\ldots\circ I_h),E(n,R,I_{h+1}\circ\ldots\circ I_m)],
\end{multline*}
\noindent
the right hand side of this equality being, as we know from Theorem 8A,
just another expression for
$[\![E(n,A,I_1),E(n,A,I_2),\ldots,E(n,A,I_m]\!]$.
\par
The following result serves as the base of induction. It is
essentially a combination of known results.
\begin{TheA}
Let\/ $A$ be a quasi-finite algebra with\/ $1$ over a commutative
ring\/ $R$ of finite Bass--Serre dimension\/ $\delta(R)$,
let\/ $n\ge 3$, and let\/ $I,J\unlhd A$ be two-sided ideals
of\/ $A$. Then one has
$$ [\GL(n,R,A),\GL(n,R,B)]=\SSL{\delta(R)-n+3}(n,R,A\circ B). $$
\end{TheA}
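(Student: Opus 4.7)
The plan is to leverage the Mason--Stothers identity (Theorem~\ref{mat73}) together with Bak's nilpotent filtration of $K_1$ -- the theorem stated immediately before -- and the Bass bound $\sr(A')\le\dim(\Max(R'))+1$ for module finite rings (Theorem~\ref{bassbound}). The argument splits naturally into two inclusions, proved by rather different techniques, and in both directions one reduces to the Noetherian module finite case via Proposition~\ref{Prop:01} and direct limits.

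For the inclusion $[\GL(n,A,I),\GL(n,A,J)]\le \SSL{m}(n,A,I\circ J)$ with $m:=\delta(R)-n+3$, I would unfold the very definition of $\SSL{m}$: one must check that for every ring homomorphism $\phi\colon A\to A'$ with $A'$ module finite over a commutative ring $R'$ of Bass--Serre dimension $\le m$, the image of the commutator lies in $E(n,A',I'\circ J')$. Lemma~\ref{Lem:Habdank} already places the commutator inside $\GL(n,A,I\circ J)$, so its $\phi$-image sits in $\GL(n,A',I'\circ J')$. By Theorem~\ref{bassbound} one has $\sr(A')\le\delta(R')+1\le m+1$; after stabilising to sufficiently large degree $N$ via the embedding~(\ref{pfsy12}), Mason--Stothers (Theorem~\ref{mat73}) identifies the image with an element of $[E(N,A',I'),E(N,A',J')]\le E(N,A',I'\circ J')$, and the Bass--Vaserstein injective stability (Lemma~\ref{BassBV}) brings the conclusion back down to degree $n$.

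The reverse inclusion is the substantive half. I would induct on $\delta(R)$. The base $\delta(R)=0$ is the semilocal case, where stable rank one collapses everything: $\SSL{0}(n,A,I\circ J)=\SL(n,A,I\circ J)=E(n,A,I\circ J)$, which lies in $[E(n,A,I),E(n,A,J)]$ by Lemma~\ref{Lem:Habdank} and hence in the commutator of the general linear congruence subgroups. For the inductive step, Bak's induction lemma supplies an element $s\in R$ with $\delta(\widetilde R_{(s)})<\delta(R)$, and I would then apply the localisation--completion yoga of~\S\ref{loci32}: an element of $\SSL{m}(n,A,I\circ J)$ becomes elementary after inverting $s$, and the Noetherian annihilator trick (Lemma~\ref{Lem:03}) allows one to pull the local factorisation back to a product of a commutator from $[\GL(n,A,I),\GL(n,A,J)]$ with an element coming from the lower-dimensional quotient $\widetilde R_{(s)}$ that is absorbed by the induction hypothesis.

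The main obstacle is this reverse inclusion. The delicate point is to weave Bak's nilpotent filtration -- itself obtained through localisation -- together with non-stable commutator yoga in such a way that every piece created during patching stays inside $[\GL(n,A,I),\GL(n,A,J)]$, and not merely inside $\GL(n,A,I\circ J)$. Tracking the level arithmetic finely enough that Lemma~\ref{Lem:Habdank}, Theorem~\ref{gcformula1}, and injective $K_1$-stability remain simultaneously applicable at each step of the induction is the principal technical hurdle, and is exactly where the triple commutator machinery of Theorem~\ref{the43} has to be invoked.
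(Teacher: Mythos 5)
Your forward inclusion contains a concrete error. After applying Mason--Stothers (Theorem~\ref{mat73}) to $\phi([g,h])$ at a stabilised degree $N$, you assert $[E(N,A',I'),E(N,A',J')]\le E(N,A',I'\circ J')$. That containment runs the wrong way: Lemma~\ref{Lem:Habdank} gives $E(n,A,I\circ J)\le[E(n,A,I),E(n,A,J)]$, and the reverse fails in general. Indeed, \S\ref{countexam} exhibits precisely this failure: for $A=\Int[i]$, $\prim=(1+i)A$, the group $E(n,\Int[i],\prim^6)$ is strictly smaller than $[E(n,\Int[i],\prim^3),E(n,\Int[i],\prim^3)]$. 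The whole reason the statement is phrased in terms of $\SSL{\delta(R)-n+3}$ rather than $E$ is that over a ring $A'$ of positive Bass--Serre dimension, $[E(n,A',I'),E(n,A',J')]$ is genuinely larger than $E(n,A',I'\circ J')$; Mason--Stothers only equates $[\GL,\GL]$ with $[E,E]$, not with $E$ of the product ideal, so it cannot by itself force $\phi([g,h])$ into $E(n,A',I'\circ J')$.

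For the reverse inclusion you reach for Bak's induction lemma, localisation--completion, the Noetherian annihilator trick (Lemma~\ref{Lem:03}), and the triple commutator theorem (Theorem~\ref{the43}). That is not the paper's route: the paper states that the whole theorem is an immediate corollary of just the three cited stability results -- Mason--Stothers (Theorem~\ref{mat73}), Bass' bound $\sr(A')\le\dim(\Max(R'))+1$ (Theorem~\ref{bassbound}), and Bass--Vaserstein injective stability of relative $K_1$ (Lemma~\ref{BassBV}) -- with no localisation. The localisation--completion apparatus of \S\ref{loci32} is reserved for a separate induction-step result that compares consecutive levels of the $\SSL{m}$ filtration, not for this base identification. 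So, quite apart from the error in the forward direction, the route you sketch is substantially heavier and structurally different from the argument the paper has in mind.
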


In fact, this is an immediate corollary of the above Mason--Stothers
theorem, Bass' estimate of the stable rank of $A$ in terms of Bass--Serre
dimension $\delta(R)$, and Bass--Vaserstein theo\-rem on injective
stability of relative $\K_1$.

On the other hand, induction step now looks as follows.

\begin{TheA}
Let\/ $A$ be a quasi-finite algebra with\/ $1$ over a commutative
ring\/ $R$, $n\ge 3$, and let\/ $I,J,K\unlhd A$ be three two-sided
ideals of\/ $A$. Then for any\/ $m$ one has
\begin{multline*}
 [[\SSL{m}(n,A,I),\SSL{m}(n,A,J)],\SSL{0}(n,A,K)]\le\\
[\SSL{m+1}(n,A,I\circ J),\SSL{m+1}(n,A,K)]. \end{multline*}
\end{TheA}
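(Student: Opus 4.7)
The plan is to combine the nilpotent central-series structure of Bak's super-special filtration with the already-established triple commutator formula (Theorem~\ref{the43}) and with the symmetrised-product control provided by Lemma~\ref{Lem:Habdank}, and to carry the argument through a reduction to a module-finite test algebra of Bass--Serre dimension $\leq m+1$.

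The first stage is a ``one-level-up'' double commutator inclusion
\[
\big[\SSL{m}(n,A,I),\SSL{m}(n,A,J)\big] \leq \SSL{m+1}(n,A,I\circ J).
\]
By Proposition~\ref{Prop:01} and the compatibility of the $\SSL{m}$-functors with direct limits, it suffices to verify this when $A$ is module finite over a commutative Noetherian ring. To test membership in $\SSL{m+1}(n,A,I\circ J)$, take any homomorphism $\phi: A \to A''$ with $A''$ module finite over a commutative ring $R''$ of Bass--Serre dimension $\delta(R'') \leq m+1$, and let $I'',J''$ be the ideals of $A''$ generated by $\phi(I),\phi(J)$. Naturality of $\SSL{m}$ sends our commutator into $\big[\SSL{m}(n,A'',I''),\SSL{m}(n,A'',J'')\big]$. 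In the test algebra, Bak's collapse $\SSL{m+1}(n,A'',L'') = E(n,A'',L'')$ (valid whenever $\delta(R'') \leq m+1$) together with the $\SSL{0}$-central-series clause and Lemma~\ref{Lem:Habdank} applied to $\SSL{m}(n,A'',J'') \leq \SL(n,A'',J'')$ forces the commutator into $E(n,A'',I''\circ J'')$, which is exactly the vanishing condition required by the definition of $\SSL{m+1}(n,A,I\circ J)$.

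The second stage plugs this into the triple commutator. Using the first step,
\[
\big[[\SSL{m}(n,A,I),\SSL{m}(n,A,J)],\SSL{0}(n,A,K)\big] \leq \big[\SSL{m+1}(n,A,I\circ J),\SSL{0}(n,A,K)\big],
\]
so it suffices to replace the outer factor $\SSL{0}(n,A,K)$ by $\SSL{m+1}(n,A,K)$. To achieve this I would again test through a module-finite $\phi: A \to A''$ with $\delta(R'') \leq m+1$, where Bak's identification $\SSL{m+1}=E$ converts the target group into $[E(n,A'',(I\circ J)''),E(n,A'',K'')]$, and the required inclusion becomes a consequence of the triple commutator formula Theorem~\ref{the43} combined with the generalised commutator formula Theorem~\ref{gcformula1}, applied in the form $[E(n,A'',L''),\SL(n,A'',K'')] = [E(n,A'',L''),E(n,A'',K'')]$.

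The chief obstacle is precisely the second stage, since the centrality statement in Bak's theorem controls commutators of $\SSL{m}$ with $\SSL{0}$ only across the \emph{same} ideal, whereas here we must pay down the outer factor to $\SSL{m+1}(n,A,K)$ across a different ideal $K$. The bridge is the reduction to a finite-Bass--Serre-dimension test algebra, where the whole super-special tower collapses to the elementary subgroup and the strong multiple commutator results of Section~\ref{nervsue} become available. If additional juggling with brackets is needed to close the gap at the level of $A$ itself, the three-subgroup identity (C7) applied to the triple $\SSL{m}(n,A,I),\SSL{m}(n,A,J),\SSL{0}(n,A,K)$, together with the normality of each $\SSL{m}(n,A,L)$ inside $\GL(n,A)$, redistributes the commutators into combinations already handled by the two stages above.
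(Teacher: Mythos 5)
Your Stage~1 inclusion
$$ [\SSL{m}(n,A,I),\SSL{m}(n,A,J)] \le \SSL{m+1}(n,A,I\circ J) $$
is false, and the counterexample is already in the paper (\S\ref{countexam}). Take $A=\Int[i]$, $\prim=(1+i)A$, $I=J=\prim^3$, and $m=0$. Then $\SSL{0}(n,\Int[i],\prim^3)=\SL(n,\Int[i],\prim^3)$, and since $\delta(\Int[i])=1$, Bak's collapse gives $\SSL{1}(n,\Int[i],\prim^6)=E(n,\Int[i],\prim^6)$; but the Bass--Milnor--Serre computation of $\SK_1$ recalled there shows
$$ [\SL(n,\Int[i],\prim^3),\SL(n,\Int[i],\prim^3)]=[E(n,\Int[i],\prim^3),E(n,\Int[i],\prim^3)]\supsetneq E(n,\Int[i],\prim^6), $$
with strict inclusion (index $2$). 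The commutator $[E(A,I),E(A,J)]$ genuinely exceeds $E(A,I\circ J)$, which is precisely why the right-hand side of the theorem must be a \emph{double commutator} of $\SSL{m+1}$'s and cannot be collapsed to a single $\SSL{m+1}$ of a product ideal, as your Stage~1 would effectively do.

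There is a second, structural defect in Stage~2. Membership in a subgroup $\SSL{q}(n,A,L)$ can be tested through module-finite quotients because that is how $\SSL{q}$ is \emph{defined}; but membership in the commutator subgroup $[\SSL{m+1}(n,A,I\circ J),\SSL{m+1}(n,A,K)]$ cannot be tested this way. Knowing that the image of $[[x,y],z]$ in every test algebra $A''$ with $\delta(R'')\le m+1$ lies in $[E(n,A'',(I\circ J)''),E(n,A'',K'')]$ does not let you conclude that $[[x,y],z]$ itself lies in $[\SSL{m+1}(n,A,I\circ J),\SSL{m+1}(n,A,K)]$ upstairs --- passing to quotients preserves, but does not detect, membership in a commutator. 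The mechanism the paper actually uses to execute the trade ``weaken $\SSL{0}(n,A,K)$ to $\SSL{m+1}(n,A,K)$ at the cost of raising the inner level'' is the birelative localisation--completion theorem
$$ [[\GL(n,A,I,\widehat s),\GL(n,A,J,\widehat s)],\GL(n,A,K,s^{-1})]\le[E(n,A,I\circ J),E(n,A,K)], $$
proved by a relative conjugation and commutator calculus carried out with two denominators, not by reduction to test algebras. That machinery, not the quotient test, is what your proposal is missing.
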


{\it Ideologically\/}, the proof of this result -- as the proofs
in [33], [40] and [10] -- is still modeled on Bak's paper [8]. However,
in most important technical aspects the proof is {\it completely\/}
new, as we explain in the next sections. The most important
innovation can be described as follows. In Bak's paper [8], as also
in [33], [40], the whole interplay between localisation and completion
was implemented at the {\it global\/} level. After that relative
results were derived from the corresponding absolute results
by a version of relativisation, which affords some form of splitting.
\par
This is how the proof of relative results was carried through in
Bak's paper [8], in the case of $\GL_n$. Our joint paper with Bak [10],
where similar relative results were obtained for unitary groups, and
for Chevalley groups, followed the same general strategy. Of course,
considerable additional technical strain was due to the fact that
relativisation with respect to {\it form\/} ideals was harder
to implement.
\par
However, in the above papers Bak and ourselves always considered
{\it one\/} ideal. Now, we have to relativise with respect to
{\it several\/} ideals -- well, at least with respect to two ideals
-- and use some form of {\it relative\/} splitting principle, rather
than just the absolute one. There is no need to persuade anyone,
who has herself ever tried to fool around with relativisation with
{\it two\/} parameters, that this is a gruesome task. An attempt to
prove Theorem 10 along these lines, by splitting several ideals
simultaneously, immediately lead to rather awkward technical
impediments.
\par
Our idea was then to prove {\it multirelative\/} versions of Bak's
localisation completion theorem itself. Recall that localisation
completion theorem is another {\sc local glo\-bal principle}.
Essentially, it asserts that the commutator of something that
becomes elemen\-tary under principal $s$-localisation with something
else that becomes elementary under $s$-adic completion is indeed
{\it globally\/} elementary. The proof is an exemplary manifestation
of the combined force of continuity and density, in $s$-adic topology.
\par
Morally, the advantage of our new approach is that it consists in
moving all relativisation to the {\it local\/} level, where
congruence subgroups coincide with relative elementary subgroups, so
that splitting is not an issue at all. This would be possible if
we could rely on the the full force of {\it relative\/} commutator
calculus.
\par
In fact, as reported in [36] and in \S~9, we worked out {\it some\/}
form of relative commutator calculus in [46], [42] and [43], and then a
slightly fancier one in [47] and [44]. Regretfully, in all these papers
except [43] we implemented only {\it first localisation\/}, whereas
now we need {\it much\/} stronger versions of Theorem 1, with
denominators. To get that, we have to implement what is called
{\it second localisation\/}.
\par
This means that we had to turn the crank again, to redo all relative
commutator calculus {\it from scratch\/}, allowing {\it two\/}
denominators. The target results of this version of commutator
calculus look like a blend of Theorem 1 or Theorem 7 with the
commutator calculus lemmas used in their proofs, and will be
reproduced in the next section.


\section{Relative commutator calculus, revisited}

To state the target results of the relative commutator calculus,
we need to introduce some notation, similar to the notation
we used in the first part of this work, [36] \S~5. Namely, for
two additive subgroups $B$, $C$ of a ring $R$ we denote by
$E^L(C,B)$ the {\it set\/} of all products of $\le L$ relative
elementary generators $z(\xi,\eta)$ of the group $E(R,RBR)$,
such that $\xi\in B$, $\eta\in C$.
\par
Thus, for instance, $E^L\big(\Phi,\frac{1}{s^k}R,\frac{1}{s^k}I\big)$
is the set of products of $\le L$ elements of the form
$$ x_{\a}\Big(\frac{\xi}{s^k},\frac{\eta}{s^k}\Big)=
{}^{x_{-\a}\big(\frac{\eta}{s^k}\big)}
x_{\a}\Big(\frac{\xi}{s^k}\Big),
\qquad \xi\in I,\ \eta\in R. $$
\noindent
In other words,
$$ E^L\Big(\Phi,\frac{1}{s^k}R,\frac{1}{s^k}I\Big)=
{\left({}^{E^1\left(\Phi,\frac{t^l}{s^k}R\right)}
E^1\Big(\Phi,\frac{t^l}{s^k}I\Big)\right)}^L. $$
\noindent
Clearly, for any $x\in E(\Phi,R_s,I_s)$ there exist positive
integers $k$ and $L$ such that
$x\in E^L\big(\Phi,\frac{1}{s^k}R,\frac{1}{s^k}I\big)$.
\par
The following result is [43], Theorem 2. We only established
this result under somewhat stronger assumption in rank 2,
than those in theorem 1C, to spare some 2--3 further pages of
calculations. Anyway, for groups of types $\C_2$ and $\G_2$
relativisation has to be considered separately, in the more
general setting of admissible pairs, or even radices.
\setcounter{TheC}{11}
\begin{TheC}Let\/ $\Phi$ be a reduced irreducible root
system,\/ $\rk(\Phi)\ge 2$. In the cases\/ $\Phi=\C_2,\G_2$ assume
additionally that\/ $2\in R^*$. Then for any\/ $s\in R$, $s\neq 0$,
any\/ $p,k$ and\/ $L$, there exists an\/ $r$ such that for any two
ideals\/ $I$ and\/ $J$ of a commutative ring\/ $R$, one has
\begin{multline*}
\bigg[E^L\Big(\Phi,\frac{1}{s^k}R,\frac{1}{s^k}I\Big),
F_s\big(G(\Phi,R,s^rJ)\big)\bigg]\le\\
\Big[E\big(\Phi,F_s(s^pR),F_s(s^pI)\big),
E\big(\Phi,F_s(s^pR),F_s(s^pJ)\big)\Big].
\end{multline*}
\end{TheC}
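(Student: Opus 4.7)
The plan is to reduce the length-$L$ product on the left-hand side to the case $L = 1$, and then to invoke the absolute commutator formula in the localised ring together with the explicit generation of mixed commutator subgroups. First, I would expand
\[
\Bigl[\prod_{i=1}^L u_i,\, F_s(g)\Bigr] \;=\; \prod_{i=1}^{L} {}^{w_i}\bigl[u_{L-i+1},\,F_s(g)\bigr]
\]
using identity (C2$^+$), where each $w_i$ is a product of at most $L - 1$ generators of $E^L(\Phi,\tfrac{1}{s^k}R,\tfrac{1}{s^k}I)$, and proceed by induction on $L$. At each inductive step the parameter $r$ must be enlarged to absorb one further layer of conjugation by an elementary denominator element; provided this absorption can be done at a controlled cost (depending only on $k$ and $\Phi$), it suffices to treat single generators $u = {}^{x_{-\alpha}(\eta/s^k)}\,x_\alpha(\xi/s^k)$ with $\xi \in I$, $\eta \in R$, and $\alpha \in \Phi$.

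For this base case, identity (C5) rewrites
\[
[u, F_s(g)] \;=\; {}^{x_{-\alpha}(\eta/s^k)}\bigl[x_\alpha(\xi/s^k),\,h\bigr], \qquad h := {}^{x_{-\alpha}(-\eta/s^k)}F_s(g).
\]
Viewing $g \in G(\Phi,R,s^r J)$ as a matrix of the form $1 + s^r M$ with $M$ carrying entries in $J$, a direct computation — or an appeal to the Chevalley commutator formula between $x_{-\alpha}(\pm\eta/s^k)$ and the root subgroups underlying $F_s(g)$ — shows that $h$ lies in $F_s\bigl(G(\Phi,R,s^{r - c_\Phi k}J)\bigr)$, where the constant $c_\Phi$ depends only on $\Phi$. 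Since $h$ still lies in an arbitrarily deep congruence subgroup over $R_s$, Theorem~1C applied in $R_s$ yields
\[
\bigl[x_\alpha(\xi/s^k),\,h\bigr] \;\in\; \bigl[E(\Phi,R_s,I_s),\,E(\Phi,R_s, s^{r-c_\Phi k}J_s)\bigr].
\]

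By Lemma~3C this mixed commutator subgroup is generated, as a normal subgroup of $E(\Phi,R_s)$, by the three standard types of generators, each of whose ingredients carries the $s^{r-c_\Phi k}$ factor in the $J$-slot. Choosing $r$ large enough to dominate the denominator losses incurred by (i) the outer conjugation by $x_{-\alpha}(\eta/s^k)$, (ii) the inner conjugation producing $h$, and (iii) the $L - 1$ conjugating factors $w_i$ collected from the induction on $L$, each standard generator in the decomposition can be rewritten so that its $s$-denominators are completely absorbed into the factor on $J$, yielding elements that lie in $E(\Phi,F_s(s^pR),F_s(s^pI))$ and $E(\Phi,F_s(s^pR),F_s(s^pJ))$. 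Reassembling these pieces places $[u,F_s(g)]$ inside the claimed right-hand side.

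The main obstacle is precisely this bookkeeping: one must verify that the three sources of $s$-denominator loss accumulate only linearly in $k$, $L$ and in data depending on $\Phi$, so that a single integer $r = r(L,k,p,\Phi)$ works uniformly in the ideals $I$ and $J$. The rank-$2$ exceptions $\Phi = \C_2$ and $\G_2$ under the hypothesis $2 \in R^*$ enter only because Theorem~1C and Lemma~3C impose that hypothesis; no additional geometric input is required.
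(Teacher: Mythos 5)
The appeal to Theorem~1C over the localised ring $R_s$ is where the argument breaks. Applied in $R_s$, that theorem places $[x_\alpha(\xi/s^k),h]$ in $\bigl[E(\Phi,R_s,I_s),\,E(\Phi,R_s,s^{r-c_\Phi k}J_s)\bigr]$ --- but this is a bare membership statement in a very large group. The factors are the full relative elementary subgroups over $R_s$, whose generators (and the elements of $E(\Phi,R_s)$ needed for the normalisation in Lemma~3C) carry arbitrary $s$-denominators in the $I$-parameter and in the conjugating parameters. The target $\bigl[E(\Phi,F_s(s^pR),F_s(s^pI)),E(\Phi,F_s(s^pR),F_s(s^pJ))\bigr]$ is incomparably smaller: its generators have no denominators and a forced $s^p$ in every slot. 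Theorem~1C tells you the element lies in the big group but gives \emph{no control over the decomposition} --- how many factors appear, with which parameters --- so there is nothing concrete to ``rewrite'' in the final bookkeeping step. Enlarging $r$ deepens only the $J$-level; it leaves the $I$-denominators and the normalising conjugations untouched. Your closing assertion that every denominator can be ``completely absorbed into the factor on $J$'' is precisely what the theorem is supposed to establish, and it is asserted rather than derived.

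What the proof actually requires is a quantitative conjugation and commutator calculus, on the model of the $\GL_n$ analogue in the paper, Lemma~\ref{Lem:08}: decompose the offending elementary generator so that one factor carries a \emph{positive} $s$-power (for example $x_\beta(\gamma/s^m)=[x_{\beta'}(s^{p'}),x_{\beta''}(\gamma/s^{m+p'})]$ via the Chevalley commutator formula), apply the Hall--Witt identity, and estimate each resulting factor with conjugation lemmas of the shape~(\ref{lem5}),~(\ref{lemnew}) that trade $s$-depth against denominator loss with explicit quantifiers (``for any $l$ there is $p$ such that\dots''). That discipline keeps every intermediate element inside the image of a controlled global congruence subgroup throughout, rather than passing through a qualitative statement about $R_s$ from which no bound can be recovered. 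Your step~3 --- that the conjugate $h$ lies in $F_s\bigl(G(\Phi,R,s^{r-c_\Phi k}J)\bigr)$ rather than merely in $G(\Phi,R_s,(\cdot)_s)$ --- is a secondary matter that can be patched by a Noetherian injectivity reduction (cf.\ Lemma~\ref{Lem:03}), but it does not repair the central appeal to Theorem~1C.
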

{\it Regrettably\/}, in [46] and [42] we only implemented the 
{\it first\/} localisation and patching, whereas theorems of that 
type require also the {\it second\/} localisation. To prove analogues 
of the above theorem for $\GL_n$ and for unitary groups, we were 
forced to replay the whole relative commutator calculus from the 
very start once again, allowing {\it two\/} denomina\-tors. The 
following results are established in [38] and in [39], respectively.

\begin{TheA}
Let\/ $n\ge 3$,\/ $R$ be a commutative ring, and let\/ $A$ be a
quasi-finite\/ $R$-algebra. Then for any\/ $s\in R$, $s\neq 0$,
any\/ $p,k$ and\/ $L$, there exists an\/ $r$ such that for any two
ideals\/ $I$ and\/ $J$ of\/ $A$, one has
\begin{multline*}
\bigg[E^L\Big(n,\frac{1}{s^k}A,\frac{1}{s^k}I\Big),
F_s\big(\GL(n,A,s^rJ)\big)\bigg]\le\\
\Big[E\big(n,F_s(s^pA),F_s(s^pI)\big),
E\big(n,F_s(s^pA),F_s(s^pJ)\big)\Big].
\end{multline*}
\end{TheA}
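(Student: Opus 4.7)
The plan is to emulate the strategy from the proof of Lemma~\ref{Lem:08}, enhanced to absorb the second denominator $s^k$ that now sits inside the first argument of the commutator. I would proceed by induction on $L$ to reduce to a single generator, decompose that generator via a Hall--Witt trick, and then land the pieces in the target double commutator by repeated application of the first-localisation calculus of Lemmas~\ref{LemHZ}, \ref{Lem:New1}, and~\ref{Lem:Habdank}.

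First come the standard preliminary reductions. Since $E_n$ and $\GL_n$ commute with direct limits, by Proposition~\ref{Prop:01} I may assume $A$ is module finite over a Noetherian commutative ring $R$, in which case Lemma~\ref{Lem:03} provides an integer $\ell_0$ such that $F_s$ is injective on $\GL(n, A, s^{\ell_0} A)$. This injectivity is what ultimately certifies the inclusion after all absorption has been carried out in $\GL(n, A_s)$. For the induction on $L$, write an element of $E^L(n, \tfrac{1}{s^k}A, \tfrac{1}{s^k}I)$ as a product $x_1 \cdots x_L$ of single conjugated elementary generators. Identity (C$2^+$) expresses $[x_1 \cdots x_L, F_s(y)]$ as a product of terms $\, {}^{u_i}[x_i, {}^{u_i^{-1}} F_s(y)]$, where the conjugators $u_i$ are shorter products of the same type and $\, {}^{u_i^{-1}} F_s(y)$ still lies in $F_s(\GL(n,A,s^r J))$ by normality. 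The inductive hypothesis handles each $[x_i, {}^{u_i^{-1}} F_s(y)]$, and the outer conjugation by $u_i$ is absorbed by Lemma~\ref{LemHZ} into the target upon enlarging the ambient level.

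The base case $L=1$ is where the real work is. The generator has the form $\, {}^{c} e_{ji}(\xi/s^k)$ with $\xi \in I$ and $c \in E^1(n, \tfrac{1}{s^k}A)$; stripping off $c$ by the same absorption argument reduces the problem to bounding $[e_{ji}(\xi/s^k), F_s(y)]$ for $y \in \GL(n,A, s^r J)$ with $r$ still to be chosen. Picking an auxiliary index $h$ and exponent $p'$, decompose $e_{ji}(\xi/s^k) = [e_{jh}(s^{p'}), e_{hi}(\xi/s^{k+p'})]$ and apply a Hall--Witt identity of the form (C3) to rewrite the commutator as a product of two triple commutators. For the first triple, Theorem~\ref{gcformula1} combined with Lemma~\ref{Lem:Habdank} places the inner commutator in $E(n, A, s^{r-2k}(IJ+JI))$, which in turn sits inside a deep double commutator of relative elementary subgroups by another application of Lemma~\ref{Lem:Habdank}. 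For the second triple, Lemma~\ref{Lem:New1} pushes $F_s(y)$ through a shallow element into $\GL(n,A, s^N(IJ+JI))$ for arbitrarily large $N$; level calculations and Lemma~\ref{Lem:Habdank} again collapse this into the required double commutator. The outer conjugators from both the Hall--Witt decomposition and the earlier reductions are finally absorbed by repeated applications of Lemma~\ref{LemHZ}.

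The main obstacle is the delicate bookkeeping of parameters: $r$ can only be declared after all absorption steps are fixed, and each intermediate lemma contributes its own ``sufficiently large'' exponent. What distinguishes this second-localisation lemma from its first-localisation predecessor Lemma~\ref{Lem:08} is the presence of \emph{two} denominators: the conjugators $c \in E^1(n, \tfrac{1}{s^k}A)$ appearing in the first argument themselves carry the denominator $s^k$ and must be absorbed against a target level $p$ that does \emph{not} depend on $r$. Untangling the resulting dependency graph among the various exponents, so that the choice of $r$ comes last and is independent of the ideals $I, J$, is the most delicate part of the proof.
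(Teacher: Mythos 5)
The paper does not supply a proof of this theorem: the text around it refers the reader to external papers ([38] for $\GL_n$, [39] for the unitary version) and notes that establishing it required ``replaying the whole relative commutator calculus from the very start once again, allowing two denominators.'' There is therefore no internal proof to compare your attempt against, and I can only review it on its own terms. Your overall strategy --- Noetherian reduction, induction on $L$, a Hall--Witt decomposition of a single generator, and absorption through Lemmas~\ref{LemHZ}, \ref{Lem:New1} and~\ref{Lem:Habdank} --- has the right shape and is visibly modelled on the proof of Lemma~\ref{Lem:08}.

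There is, however, a concrete error in the inductive reduction. You write $[x_1\cdots x_L,F_s(y)]$ as a product of commutators whose second argument is the conjugate ${}^{u_i^{-1}}F_s(y)$ by a partial product $u_i$, and claim this conjugate ``still lies in $F_s(\GL(n,A,s^rJ))$ by normality.'' That claim is false. The $u_i$ are products of generators carrying the denominator $s^k$; they do not lie in $F_s(\GL(n,A))$, so they do not normalise $F_s(\GL(n,A,s^rJ))$ --- which is normal in $F_s(\GL(n,A))$ but not in $\GL(n,A_s)$. After conjugation, the matrix ${}^{u_i^{-1}}F_s(y)$ need not lie in the image of $F_s$ at all, so the inductive hypothesis cannot be applied to it. The proof of Lemma~\ref{Lem:08} avoids exactly this difficulty by using identity (C$2^+$) as stated, which keeps the second argument \emph{unconjugated} inside each inner commutator, giving terms of the shape ${}^{x_1\cdots x_{\ell-1}}[x_\ell,F_s(y)]$, and then absorbs the outer conjugation by the partial prefix into the target via Lemma~\ref{LemHZ}~(\ref{lem8}). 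You should do the same here; only then can $r$ be chosen last, independently of the intermediate absorption levels. A lesser remark: your appeal to Lemma~\ref{Lem:03} is misplaced for this particular statement, whose conclusion is an inclusion living entirely inside $\GL(n,A_s)$ and never needs to be lifted back to $\GL(n,A)$; the injectivity step belongs to the later global applications of the lemma, not to the lemma itself.
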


\setcounter{TheB}{11}
\begin{TheB}
Let\/ $n\ge 3$,\/ $R$ be a commutative ring,\/ $(A,\Lambda)$ be a form
ring such that\/ $A$ is a quasi-finite\/ $R$-algebra. Then for
any\/ $s\in R_0$, $s\neq 0$, any\/ $p,k$ and\/ $L$, there exists
an\/ $r$ such that for any two form ideals\/ $(I,\Gamma)$
and\/ $(J,\Delta)$ of\/ $(A,\Lambda)$, one has
\begin{multline*}
\bigg[\EU^L\Big(n,\frac{1}{s^k}I,\frac{1}{s^k}\Gamma\Big),
F_s\big(\GU(n,J,s^r\Delta)\big)\bigg]\le\\
\Big[\EU\big(n,F_s(s^pI),F_s(s^p\Gamma)\big),
\EU\big(n,F_s(s^pJ),F_s(s^p\Delta)\big)\Big].
\end{multline*}
\end{TheB}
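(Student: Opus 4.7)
The plan is to mirror the proof of Theorem 12A for the general linear group, but carrying through the unitary yoga of conjugation and commutators developed for Lemma 2B and refined through the form-ideal setting. First, using the continuity of $\GU$ and $\EU$ with respect to direct limits (as in Section 14), we reduce via quasi-finite structure and Noetherian reduction to the case where $A$ is module-finite over a Noetherian commutative ring $R$; this permits invoking the unitary analogue of Lemma 14 (injectivity of the localisation homomorphism on sufficiently deep congruence subgroups) whenever we need to pull equalities back from $A_s$ to $A$. The outermost strategy is then induction on the length parameter $L$: the case $L=0$ is trivial, and the inductive step follows from commutator identity (C$2^+$) together with normality of the right-hand side, so the whole argument reduces to the case $L=1$.

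For $L=1$, I would separately handle generators of short root type $T_{ij}(\xi/s^k)$ with $\xi\in I$, long root type $T_{i,-i}(\alpha/s^k)$ with $\alpha\in\Gamma$, and their conjugates by a long- or short-root factor drawn from $\EU^1(n,\tfrac{1}{s^k}A,\tfrac{1}{s^k}\Lambda)$; in each case I would use the Steinberg relations (R1)--(R6) to decompose the denominator-heavy generator as a commutator of the form $[T_{ih}(t^{p'}),T_{hj}(\xi/s^{k+p'})]$ (or its long-root variant built from (R5) and (R6)), thereby introducing an auxiliary factor with ``pure'' numerator $t^{p'}$ which commutes well with elements of $\GU(n,J,s^r\Delta)$ for large $r$. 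A Hall--Witt manoeuvre, exactly as in the proof of Lemma 16 (Lemma~\ref{Lem:08}), then rewrites the triple commutator
\[
\Big[g,\big[c_1,c_2\big]\Big]
\]
as a product of two triple commutators in which the inner commutator involves $g$ paired with one of the ``short'' factors. Using the first-localisation Theorem 11C in the form appropriate to unitary groups, each of these pieces is pushed into $[\EU(n,F_s(s^pI),F_s(s^p\Gamma)),\EU(n,F_s(s^pJ),F_s(s^p\Delta))]$ provided $r$ is chosen large enough relative to $p$, $k$, and the implicit parameters produced by the Hall--Witt rewriting.

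The crucial extra layer compared to the linear case is controlling the long-root generators and the interplay of the form parameter $\Gamma$ with the symmetrised product $(I,\Gamma)\circ(J,\Delta)$. Here I would lean on Lemma 2B, which already absorbs generators of the shape $T_{i,-i}(\alpha\xi\bar\alpha)$, $T_{i,-i}(a-\lambda\bar a)$ and $T_{i,-i}(b\gamma\bar b)$ into $[\FU(n,I,\Gamma),\FU(n,J,\Delta)]$; by performing the denominator-elimination of the $L=1$ generator \emph{before} invoking the Steinberg commutator formula, the outputs land in exactly the relative subgroups to which Lemma 2B applies, and the final absorption into the right-hand side of Theorem 12B becomes formal.

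The hard part will be the bookkeeping of indices $p$, $k$, $r$ through the Hall--Witt identity and the Steinberg expansions for long-root generators (R5) and (R6): each invocation introduces new parameters such as $p'$, $p''$, $L'$, and one must verify that a single sufficiently large $r$ works for all generators simultaneously. In the linear case this is essentially the content of Lemmas~\ref{LemHZ}, \ref{LemHZ1}, \ref{Lem:New1}, and~\ref{Lem:08}; for unitary groups each of these must be re-proved in the presence of long roots, the involution, and the form parameter, and it is this technical recalibration --- analogous to how the proof of Lemma~\ref{yyqq1} grew from half a page to over four pages relative to Lemma~\ref{Lem:Habdank} --- that constitutes the principal obstacle. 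Once these unitary versions of the auxiliary lemmas are in hand, the structure of the proof transcribes the argument of Theorem 12A verbatim.
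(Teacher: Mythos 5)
The paper does not contain a proof of this statement. The theorem sits inside an excised block of the source (wrapped in the \texttt{\textbackslash forget}\,/\,\texttt{\textbackslash forgotten} macro, so it is not even present in the compiled document), and even within that excised text the authors defer the proof to companion papers (``established in [38] and in [39], respectively''), explicitly remarking that ``we were forced to replay the whole relative commutator calculus from the very start once again, allowing two denominators.'' There is therefore no paper proof against which to check your attempt.

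On its own merits, your outline --- Noetherian reduction, peeling off $L$ via (C$2^+$), decomposing a single generator through the Steinberg relations, a Hall--Witt manoeuvre, pushing each piece through unitary analogues of Lemmas~\ref{LemHZ}--\ref{Lem:08}, and absorbing long-root output via Lemma~\ref{yyqq1} --- is the right general shape and mirrors what the paper actually executes in the linear triple-commutator proof of Theorem~\ref{the43}. But your concluding claim that ``the structure of the proof transcribes the argument of Theorem~12A verbatim'' is not supportable from this source: Theorem~12A itself is left unproved here, and the authors' remark quoted above makes clear that the two-denominator calculus is a fresh build-out rather than a transcription of Lemma~\ref{Lem:08}, because the negative power $s^{-k}$ and the positive powers $s^{p}$, $s^{r}$ must be played off against one another at every Steinberg/Hall--Witt step, not merely against a single $t$-power as in Lemma~\ref{Lem:08}. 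You also leave the long-root bookkeeping --- the unitary replacement for the $\LF p''/3\RF$-type estimates in Lemma~\ref{Lem:08}, complicated by (R5) and (R6) producing two factors at once and by the need for $s^r\Delta$ to survive the relative-form-parameter arithmetic --- entirely to ``recalibration.'' Those are exactly the points the authors flag as the substantive work, so what you have is a credible plan, not a proof.
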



\section{Relative localisation completion}

In the first part of the present paper we have already
discussed Bak's localisation completion theorem [8, and its
analogues for unitary groups, due to the first author [33], [34],
and for Chevalley groups, due to the first and the third
authors [40], see [36], Theorem 8.

\par
We start with explaining this idea in the simplest case of {\it one\/}
ideal. This is not yet sufficient to prove Theorem 10, but it allows
to give easier proofs of nilpotency of $\K_1$.
\par
With this end, recall the necessary notation concerning completion,
see [8] or [36], \S~11. Let $s\in A$. Usually, the $s$-completion
$\widehat A_{s}$ of the ring $A$ is defined as the following inverse
limit:
$$ \widehat R_{s}=\varprojlim R/s^nR,\quad n\in{\Bbb N}. $$
\noindent
However, for our purposes one has to modify this definition,
by interchanging the order of taking limits. Namely, we set
$$ \widetilde A_{s}=\varinjlim (\widehat A_i)_{s}, $$
\noindent
where the limit is taken over all finitely generated subrings
$A_i$ of $A$ which contain $s$. Let us denote by $\widetilde
F_s$ the canonical map $A\map\widetilde A_{s}$. For the
case, where $R$ is Noetherian, $\widetilde F_{s}=\widehat F_{s}$
coincides with the inverse limit of reduction homomorphisms
$\pi_{s^n}:A\map A/s^nA$
\par
In [36], we discussed definition of the groups $G(R,s^{-1})$
and $G(R,\widehat s)$. First of all, we have to introduce
their relative analogues.
\par
First, let $A$ be an algebra over a commutative ring $R$,
$n\ge 3$, $s\in R$, and further let $I$ be an ideal of $A$.
In the case of $\GL_n$ the relative analogues of the above groups
are defined as follows
$$ \aligned
&\GL(n,A,I,s^{-1})=\Ker\big(\GL(n,A,I)\map\GL(n,A_s,I_s)/E(n,A_s,I_s)\big),\\
\noalign{\vskip 5truept}
&\GL(n,A,I,\widehat s)=\Ker\big(\GL(n,R,I)\map
\GL(n,\widetilde A_{(s)},\widetilde I_{(s)})/
E(n,\widetilde A_{(s)},\widetilde I_{(s)})\big).\\
\endaligned $$
\noindent
Now, let $A_i$ be the inductive system of all finite $R_i$-subalgebras,
where $R_i$ ranges over all finitely generated subrings of $R$
containing $s$. We set $I_i=I\cap A_i$. Then
$$ \GL(n,A,I,s^{-1})=\varinjlim\GL(n,A_i,I_i,s^{-1}),\qquad
\GL(n,A,I,\widehat s)=\varinjlim\GL(n,A_i,I_i,\widehat s), $$
\noindent
which reduces most problems about these groups to the case, where $A$
is finite over a Noetherian ring $R$. The same argument works for
other groups and below we usually assume that the ground ring is
Noetherian.
\par
The following result is [38], Theorem 9.
\begin{TheA}
Let\/ $A$ be a quasi-finite algebra over a commutative ring\/ $R$,
$n\ge 3$, let\/ $I$ be an ideal of\/ $R$, and let\/ $s\in R$. Then
$$ [\GL(n,A,I,s^{-1}),\GL(n,A,\widehat s)]\le E(n,A,I). $$
\end{TheA}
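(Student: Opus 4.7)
The plan is to execute Bak's localisation--completion scheme in its relative one-ideal form, which is really a warm-up for the multi-relative version developed in subsequent sections.

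First I would carry out the usual Noetherian reduction. Since $\GL_n$, $E_n$, and the subgroups $\GL(n,A,I,s^{-1})$, $\GL(n,A,\widehat s)$ are all compatible with direct limits over the finitely generated subrings of $A$ containing $s$, it suffices to prove the statement when $A$ is module finite over a Noetherian commutative ring $R$ with $s\in R$. Fix $x\in\GL(n,A,I,s^{-1})$ and $y\in\GL(n,A,\widehat s)$; the goal is $[x,y]\in E(n,A,I)$. By hypothesis $F_s(x)\in E(n,A_s,I_s)$, so there exist fixed positive integers $L$ and $k$ with
$$F_s(x)\in E^L\Big(n,\tfrac{1}{s^k}A,\tfrac{1}{s^k}I\Big).$$
This pins down the localisation data for $x$ once and for all.

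Next I would extract approximations from the completion hypothesis on $y$. The definition $\widetilde A_{s}=\varinjlim(\widehat A_i)_s$ is arranged precisely so that, for every positive integer $r$, there exist $\eta_r\in E(n,A)$ and $z_r\in\GL(n,A,s^rA)$ with $y=\eta_r z_r$ in $\widetilde A_s$ (and hence, after a standard bookkeeping step within a finitely generated subring where $F_s$ is controlled on the high-power congruence subgroups, at the level of $A$ itself modulo the error we are going to absorb). Using the commutator identity $[x,\eta_r z_r]=[x,\eta_r]\,{}^{\eta_r}[x,z_r]$, the first factor sits in
$$[\GL(n,A,I),E(n,A)]=E(n,A,I)$$
by the standard commutator formula (Theorem~\ref{standard}), so it is enough to show $[x,z_r]\in E(n,A,I)$ for some sufficiently large $r$.

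The final stage is the one-ideal specialisation of the two-denominator commutator calculus (the proof given in the excerpt of Lemma~\ref{LemHZ}, together with Lemma~\ref{Lem:New1}, readily yields such a statement): given the fixed $L$ and $k$ above and any target $p$, there exists $r$ such that
$$\Big[E^L\big(n,\tfrac{1}{s^k}A,\tfrac{1}{s^k}I\big),\,F_s\big(\GL(n,A,s^rA)\big)\Big]\le F_s\big(E(n,s^pA,s^pI)\big).$$
Applying this to $F_s(x)$ and $F_s(z_r)$ gives $F_s([x,z_r])\in F_s(E(n,s^pA,s^pI))$. Choosing $p$ large enough that $F_s$ is injective on $\GL(n,A,s^pA)$, by Lemma~\ref{Lem:03}, lifts the equality back to $A$, yielding $[x,z_r]\in E(n,s^pA,s^pI)\le E(n,A,I)$, as required.

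The principal obstacle is the second paragraph: converting the abstract statement ``$y$ becomes elementary in $\widetilde A_s$'' into a usable factorisation $y=\eta_r z_r$ with $\eta_r\in E(n,A)$ and $z_r\in\GL(n,A,s^rA)$ honestly inside $A$ (not just in the completion). This is the classical density-and-continuity step in $s$-adic topology, and it is exactly why the modified completion $\widetilde A_s=\varinjlim(\widehat A_i)_s$ is taken with the limit order reversed: it guarantees that the approximating elementary matrices live in $E(n,A_i)\subseteq E(n,A)$ rather than merely in $E(n,\widehat{A_i}_s)$. Once this bookkeeping is set up, the rest of the argument is the localisation/elementary split sketched above, and gives the multi-relative analogues by the same template once Theorem~9A is substituted for its one-ideal ancestor.
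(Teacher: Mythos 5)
Your proof reproduces, for $\GL_n$, the paper's sketch of the analogous Chevalley statement: Noetherian reduction, the factorisation $y=\eta_r z_r$ with $\eta_r\in E(n,A)$ and $z_r\in\GL(n,A,s^rA)$, the split $[x,y]=[x,\eta_r]\cdot{}^{\eta_r}[x,z_r]$, absorbing $[x,\eta_r]$ by the standard commutator formula (Theorem~\ref{standard}), and then the two-denominator relative commutator calculus combined with injectivity of $F_s$ on a deep congruence subgroup (Lemma~\ref{Lem:03}). The only place you slightly underestimate the work is the claim that Lemma~\ref{LemHZ} and Lemma~\ref{Lem:New1} \emph{readily} yield $\bigl[E^L(n,\tfrac{1}{s^k}A,\tfrac{1}{s^k}I),\,F_s(\GL(n,A,s^rA))\bigr]\le F_s\bigl(E(n,s^pA,s^pI)\bigr)$: this containment is a one-ideal instance of the birelative commutator-calculus theorem and needs the full localisation--patching apparatus (not just those two lemmas, which only control commutators against $E^1$-type factors), though the paper's own sketch cites the same ingredient externally, so your argument sits at the same level of rigour.
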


Similar result holds also in the unitary setting. Let
$(A,\Lambda)$ be a form ring, which is module finite over a
commutative ring $R$. Further, let $(I,\Gamma)$ be a form ideal
of $(A,\Lambda)$. Take $s\in R_0$ and define
$$ \aligned
&U(2n,I,\Gamma,s^{-1})=\Ker\Big(U(2n,I,\Gamma)\map
U(2n,I_s,\Lambda_s)/\EU(2n,I_s,\Gamma_s)\Big),\\
\noalign{\vskip 5truept}
&U(2n,I,\Gamma,\hat s)=\Ker\Big(U(2n,I,\Gamma)\map
U\big(2n,\tilde{(I,\Gamma)}_{(s)}\big)/
E\big(2n,\tilde{(I,\Gamma)}_{(s)}\big)\Big),\\
\endaligned $$
\noindent
A similar result for unitary groups can be stated as follows.
It is a generalisation of one of the main results of the Thesis
by the first named author [33], [34].
\setcounter{TheB}{12}
\begin{TheB}
Let\/ $(A,\Lambda)$ be a module finite form ring
over a commutative ring\/ $R$, let\/ $(I,\Gamma)$ be a form ideal
of\/ $(A,\Lambda)$, and let\/ $s\in R_0$. Then
$$ [U(2n,I,\Gamma,s^{-1}),U(2n,A,\Lambda,\hat s)]\le\EU(2n,I,\Gamma). $$
\end{TheB}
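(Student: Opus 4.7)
The plan is to mimic the localisation--completion proof of the linear analogue (Theorem~13A), substituting the unitary relative commutator calculus (Theorem~12B above) for its $\GL_n$ counterpart. First, by Noetherian reduction --- both $\GU$ and $\EU$ commute with direct limits of form algebras over Noetherian subrings of $R_0$ --- I may assume that $R_0$ itself is Noetherian. An annihilator argument analogous to Lemma~\ref{Lem:03} then produces an integer $\ell$ such that the restriction of $F_s$ to the principal congruence subgroup $\GU(2n, s^\ell A, s^\ell \Lambda)$ is injective.

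Fix $x \in U(2n,I,\Gamma,s^{-1})$ and $y \in U(2n,A,\Lambda,\widehat s)$. Unpacking the defining conditions, $F_s(x) \in \EU(2n,I_s,\Gamma_s)$, hence $F_s(x) \in \EU^L(2n, \tfrac{1}{s^k}I, \tfrac{1}{s^k}\Gamma)$ for some fixed $k, L$. For $y$, the image in the $s$-adic completion lies in the elementary subgroup; the standard continuity--density argument in the $s$-adic topology (exactly as in Bak's thesis) yields, for any prescribed depth $r$, a decomposition $y = \eta \mu$ with $\eta \in \EU(2n,A,\Lambda)$ and $\mu \in \GU(2n, s^r A, s^r \Lambda)$.

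Expanding via commutator identity (C1), $[x,y] = [x,\eta] \cdot {}^{\eta}[x,\mu]$. The first factor lies in $[\GU(2n,I,\Gamma), \EU(2n,A,\Lambda)]$, which equals $\EU(2n,I,\Gamma)$ by the unitary absolute standard commutator formula. Since both defining kernels of $x$ and $\mu$ are $\EU(2n,A,\Lambda)$-invariant under conjugation, it suffices to show $[x', \mu'] \in \EU(2n,I,\Gamma)$ for any $x' \in U(2n,I,\Gamma,s^{-1})$ and $\mu' \in \GU(2n, s^r A, s^r\Lambda)$ when $r$ is large enough. Applying $F_s$ and invoking Theorem~12B with $p = \ell$ together with the presentation parameters $k, L$ of $F_s(x')$, one obtains an $r$ for which
\[
F_s([x', \mu']) \in \big[\EU(2n, F_s(s^\ell I), F_s(s^\ell \Gamma)),\, \EU(2n, F_s(s^\ell A), F_s(s^\ell \Lambda))\big] \le F_s\big(\EU(2n,I,\Gamma)\big).
\]

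The hard part will be the descent step. To promote this local equation to a global one, I would use the unitary level calculus (Lemma~\ref{yyqq1} and its refinements) to check that, for $r$ chosen sufficiently large relative to $\ell$, the global commutator $[x', \mu']$ automatically lands in $\GU(2n, s^\ell A, s^\ell \Lambda)$ --- the subgroup on which $F_s$ is injective --- after which the local equation forces $[x', \mu'] \in \EU(2n,I,\Gamma)$. The delicate point is the coordinated choice of the three parameters, namely the length $L$ and denominator $k$ in the local presentation of $F_s(x')$, the completion depth $r$, and the injectivity threshold $\ell$; it is precisely this balancing that Theorem~12B was engineered to provide, and everything else reduces to bookkeeping with commutator identities.
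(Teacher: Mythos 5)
Your overall strategy matches the paper's, which spells out the argument explicitly for the Chevalley case (13C) and declares the linear and unitary cases analogous: Noetherian reduction, the decomposition $y = \eta\mu$ with $\eta\in\EU(2n,A,\Lambda)$ and $\mu$ in a deep congruence subgroup, the split $[x,y]=[x,\eta]\cdot{}^{\eta}[x,\mu]$, the absolute standard commutator formula for $[x,\eta]$, then the relative commutator calculus (Theorem~12B) followed by injectivity of $F_s$ on a small $s$-adic neighbourhood to descend.

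There is, however, a genuine quantifier gap in the way you handle the conjugation by $\eta$. You reduce to showing $[x',\mu']\in\EU(2n,I,\Gamma)$ with $x'={}^{\eta}x\in U(2n,I,\Gamma,s^{-1})$ and $\mu'={}^{\eta}\mu\in\GU(2n,s^rA,s^r\Lambda)$, and then you apply Theorem~12B with ``the presentation parameters $k,L$ of $F_s(x')$'' to get the required $r$. But $r$ must be fixed \emph{before} the decomposition $y=\eta\mu$ is produced, hence before $\eta$ --- and hence before $x'={}^\eta x$ and its localised presentation $k',L'$ --- are known. Since $F_s({}^{\eta}x)={}^{F_s(\eta)}F_s(x)$ and there is no a priori bound on the elementary length of $\eta$, the denominator and length parameters of $F_s(x')$ cannot be controlled by the fixed data $(k,L)$ of $F_s(x)$, so the application of Theorem~12B to $x'$ is circular: $r$ depends on $(k',L')$, which depends on $\eta$, which depends on $r$. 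Nor can one claim a uniform $r$ valid for all $x'\in U(2n,I,\Gamma,s^{-1})$, since the presentation parameters of $F_s(x')$ range unboundedly over that group.

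The paper avoids this by \emph{not} conjugating $x$. It applies the commutator calculus directly to $[x,\mu]$ using the fixed $(k,L)$ of $F_s(x)$, chooses $r$ (their $m$) accordingly, concludes that $[x,\mu]\in\EU(2n,s^qI,s^q\Gamma)\le\EU(2n,I,\Gamma)$ for a suitable $q$, and only then observes that ${}^{\eta}[x,\mu]$ lies in $\EU(2n,I,\Gamma)$ because this \emph{target} subgroup is normalised by $\EU(2n,A,\Lambda)\ni\eta$ (indeed by all of $\GU(2n,A,\Lambda)$). In short, the normality you should invoke is that of $\EU(2n,I,\Gamma)$, not that of the defining kernels of $x$ and $\mu$; with that single change your proof lines up with the paper's and the circularity disappears.
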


Finally, let $\Phi$ be an irreducible root system of rank $\geq 2$,
let $R$ be a commutative ring, let $I$ be an ideal of $R$, and let
$s\in R$. Define
$$ \aligned
&G(\Phi,R,I,s^{-1})=\Ker\big(G(\Phi,R,I)\map
G(\Phi,R_s,I_s)/E(\Phi,R_s,I_s)\big),\\
\noalign{\vskip 5truept}
&G(\Phi,R,I,\widehat s)=\Ker\big(G(\Phi,R,I)\map
G(\Phi,\tilde R_{(s)},\tilde I_{(s)})/
E(\Phi,\tilde R_{(s)},\tilde I_{(s)})\big).\\
\endaligned $$
\par
The following result for Chevalley groups is unpublished, and
below we sketch its proof.
\setcounter{TheC}{12}
\begin{TheC}
Let\/ $A$ be a quasi-finite algebra over a commutative ring\/ $R$,
$n\ge 3$, let\/ $I$ be an ideal of\/ $R$, and let\/ $s\in R$. Then
$$ [G(\Phi,R,I,s^{-1}),G(\Phi,R,\widehat s)]\le E(\Phi,R,I). $$
\end{TheC}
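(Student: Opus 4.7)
The plan is to adapt the localisation-completion argument of Theorem~13A and Theorem~13B to the Chevalley setting, with Theorem~12C supplying the necessary two-denominator commutator calculus. First I would perform the usual Noetherian reduction via direct limits, reducing to the case where $R$ is a finitely generated, hence Noetherian, commutative ring and $s \in R$ is non-zero (the case $s=0$ being vacuous). The guiding idea is to split an element of $G(\Phi,R,\widehat s)$ into a globally elementary piece plus a deep congruence piece, and handle the two pieces by different techniques.

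Fix $x \in G(\Phi,R,I,s^{-1})$ and $y \in G(\Phi,R,\widehat s)$. For an integer $r \ge 1$ to be determined, I would first produce a decomposition $y = y_0 h$ with $y_0 \in E(\Phi,R)$ and $h \in G(\Phi,R,s^r R)$. Such a decomposition exists because $\widetilde F_s(y)$ is elementary in $\widetilde R_{(s)}$: the defining elementary word can be truncated modulo $s^r$ and lifted term-by-term to a word $y_0$ in $E(\Phi,R)$ using surjectivity of reduction on elementary generators, and then $h := y_0^{-1} y$ reduces to the identity modulo $s^r R$. With this decomposition in hand, the identity
\[
[x,y] \,=\, [x,y_0]\cdot {}^{y_0}[x,h]
\]
reduces matters to showing each factor lies in $E(\Phi,R,I)$. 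The factor $[x,y_0]$ is in $[G(\Phi,R,I),E(\Phi,R)]=E(\Phi,R,I)$ by the absolute standard commutator formula (Theorem~1C), and conjugating by $y_0 \in E(\Phi,R)$ preserves the normal subgroup $E(\Phi,R,I)$, so everything reduces to proving that $[x,h] \in E(\Phi,R,I)$.

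To establish this, I would apply $F_s$ and reason in $G(\Phi,R_s)$. By definition of $G(\Phi,R,I,s^{-1})$ one has $F_s(x) \in E(\Phi,R_s,I_s)$, and so $F_s(x) \in E^L(\Phi,\tfrac{1}{s^k}R,\tfrac{1}{s^k}I)$ for some integers $k,L$ depending only on $x$. I would pick a target integer $p$ in advance, and then apply Theorem~12C with $J = R$ to choose $r$ large enough that
\[
F_s([x,h]) \,\in\, \bigl[E^L\bigl(\Phi,\tfrac{1}{s^k}R,\tfrac{1}{s^k}I\bigr),\, F_s\bigl(G(\Phi,R,s^rR)\bigr)\bigr] \,\le\, E\bigl(\Phi,R_s,F_s(s^pI)\bigr),
\]
the final containment coming from absorbing the ambient factor $E(\Phi,R_s,F_s(s^pR))$ into the normalising action on $E(\Phi,R_s,F_s(s^pI))$. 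The target group lifts tautologically to $E(\Phi,R,s^pI) \subseteq E(\Phi,R,I)$ by reading the same generators back in $R$, giving an element $e \in E(\Phi,R,s^p I)$ with $F_s(e) = F_s([x,h])$.

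To close the proof I would invoke the Chevalley analogue of Lemma~\ref{Lem:03}: under the Noetherian hypothesis there is an integer $p_0$ such that $F_s$ is injective on $G(\Phi,R,s^{p_0}R)$, obtained by applying the annihilator-stabilisation argument componentwise inside the coordinate algebra of the Chevalley group scheme. Taking $p \ge p_0$ and $r \ge p$, both $[x,h]$ and $e$ lie in $G(\Phi,R,s^pR)$, whence $F_s([x,h]e^{-1}) = 1$ forces $[x,h] = e \in E(\Phi,R,I)$, completing the argument. I expect the main obstacle to be the orchestration of the constants $k, L, p, p_0, r$ inside Theorem~12C: one must ensure that the depth $r$ demanded by the commutator calculus is consistent with the depth $p_0$ required for $F_s$-injectivity. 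The rank-two hypotheses on $R$ are inherited from Theorems~1C and~12C with no additional restriction.
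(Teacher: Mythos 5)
Your proof is correct and follows essentially the same line as the paper: Noetherian reduction, splitting $y$ into an elementary factor $y_0\in E(\Phi,R)$ and a deep $s$-adic congruence factor $h\in G(\Phi,R,s^rR)$, treating $[x,y_0]$ via the absolute standard commutator formula, and treating $[x,h]$ via Theorem~12C together with $s$-adic injectivity of $F_s$ on a suitable congruence subgroup. You actually handle one small point more carefully than the paper's own sketch, which places $[x,y_0]$ merely in $E(\Phi,R)$, whereas you correctly invoke $[G(\Phi,R,I),E(\Phi,R)]=E(\Phi,R,I)$ to land in the right relative elementary subgroup.
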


The following proof is essentially an enhancement of the proof
of [40], Theorem 6.1, where we simply plug in a more powerful
version of the commutator calculus, {\it relative\/}, instead
of absolute. Of course, the proof in [40] was itself just a 
streamlined adaptation of the original Bak's argument [8].
\par
Denote by
$E^K(\Phi,{1\over s^k}R,{1\over s^k}I)$ the subset of
$E(\Phi,R_s,I_s)$, consisting of products of $\le K$ elements of
the form $z_{\a}(\xi,\zeta)$, where $\a\in\Phi$,
$\xi\le {1\over s^k}I$, $\zeta\in{1\over s^k}R$.
\par
The usual argument allows us to reduce the proof to the case,
where $R$ is Noetherian. Let $x\in G(\Phi,R,I,s^{-1})$ and
$y\in G(\Phi,R,\widehat s)$. By definition, the condition on
$x$ means that $F_s(x)\in E^K(\Phi,{1\over s^k}R,{1\over s^k}I)$
for some $k$ and some $K$. On the other hand, the condition on $y$
means that $\pi_{s^m}(y)\in E(\Phi,R/s^mR)$ for {\it all\/} $m$,
or, what is the same, that $y=uz$, where $u\in\in E(\Phi,R)$ and
$z\in\GL(n,R,s^mR)$.
\par
Thus, $[x,y]=[x,uz]=[x,u]\cdot{}^u[x,z]$. The first factor
belongs to $E(\Phi,R)$ simply because it is normal in $G(\Phi,R)$.
As for the second factor, a typical target result of the relative
commutator calculus -- in this situation [a special case of]
Theorem 2 of [] -- can be stated as follows: for any $q$ there
exists a sufficiently large $m$ such that
$F_{s}([x,z])\in E(\Phi,F_s(s^qR),F_s(s^qI))$. On the other hand,
since $G(\Phi,R,s^qR)$ is normal in $G(\Phi,R)$, one has
$[x,z]\in G(\Phi,R,s^qR)$. Now, since $R$ is assumed to be
Noetherian, the usual argument based on injectivity of localisation
homomorphisms on small neighbourhoods of $e$ in $s$-adic topology
convinces us that $[x,z]\in E(\Phi,R,s^qI)$. This shows that both
$[x,u]$ and ${}^u[x,z]$, and thus also $[x,y]$ are elementary, as
claimed.
\par
Observe that Theorems 3A--3C easily imply by induction nilpotency
of {\it relative\/} $\K_1$, in other words, main results of [8]
and [10]. From the very start, this proof works at the relative level,
without any need to relativise results on nilpotent filtrations of
absolute $\K_1$. We believe that this proof is both better
conceptually and (once the target results of relative commutator
calculus are established!) technically easier than the original proof.


\section{Birelative localisation completion}

However, to prove the general commutator formula we need stronger
results.

\begin{TheA}
Let $A$ be a quasi-finite algebra over a commutative ring $R$,
$n\ge 3$, $s\in R$, and let $I,J\unlhd A$ be two-sided ideals
of $A$. Then
$$ [\GL(n,A,I,s^{-1}),\GL(n,A,J,\widehat s)]\le
[E(n,A,I),E(n,A,J)]. $$
\end{TheA}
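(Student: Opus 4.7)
The plan is to mimic the proof of the single-ideal localisation-completion theorem, modified so that $y$ is factored through $E(n,A,J)$ (rather than through $E(n,A)$), ensuring that the Noetherian lifting step at the end produces an element of $[E(n,A,I),E(n,A,J)]$ instead of merely of $E(n,A,I)$.

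A standard direct-limit argument first reduces the problem to the case where $R$ is Noetherian and $A$ is module finite over $R$. Fix $x\in\GL(n,A,I,s^{-1})$ and $y\in\GL(n,A,J,\widehat s)$. By definition $F_s(x)\in E^K(n,\tfrac{1}{s^k}A,\tfrac{1}{s^k}I)$ for some $k,K\ge 0$, while the image of $y$ in $E(n,\widetilde A_{(s)},\widetilde J_{(s)})$ is elementary. Approximating this image in the $s$-adic topology by products of elementary generators of $E(n,\widetilde A_{(s)},\widetilde J_{(s)})$, for every $m$ we obtain $u_m\in E(n,A,J)$ with $y u_m^{-1}\in\GL(n,A,J)\cap\GL(n,A,s^mA)$. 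Since $A$ is Noetherian, the Artin--Rees lemma yields a constant $c$ with $s^mA\cap J\subseteq s^{m-c}J$ for $m\ge c$, so we may arrange $y u_m^{-1}\in\GL(n,A,s^rJ)$ with $r=r(m)\to\infty$.

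Using identity (C1) we write $[x,y]=[x,y u_m^{-1}]\cdot{}^{y u_m^{-1}}[x,u_m]$. For the second factor, $[x,u_m]\in[\GL(n,A,I),E(n,A,J)]=[E(n,A,I),E(n,A,J)]$ by the generalised commutator formula (Theorem~\ref{gcformula1}); moreover $[E(n,A,I),E(n,A,J)]$ is normal in $\GL(n,A)$, because both $E(n,A,I)$ and $E(n,A,J)$ are (Theorem~\ref{standard}), so the conjugate ${}^{y u_m^{-1}}[x,u_m]$ again sits in $[E(n,A,I),E(n,A,J)]$. It remains to show $[x,y u_m^{-1}]\in[E(n,A,I),E(n,A,J)]$ for $m$ large.

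For this, we appeal to a birelative version of the commutator calculus with two denominators, proved by a second-order localisation analogous to (but substantially strengthening) the one behind Lemma~\ref{LemHZ}: given $p,k,K$, there exists $r$ such that
\begin{multline*}
\big[E^K(n,\tfrac{1}{s^k}A,\tfrac{1}{s^k}I),\,F_s(\GL(n,A,s^rJ))\big]\le\\
\big[E(n,F_s(s^pA),F_s(s^pI)),\,E(n,F_s(s^pA),F_s(s^pJ))\big].
\end{multline*}
Choosing $m$ with $r(m)\ge r$, the commutator $F_s([x,y u_m^{-1}])$ lies in the right-hand side. Each of its generators is the image under $F_s$ of an element of $[E(n,A,s^pI),E(n,A,s^pJ)]\subseteq[E(n,A,I),E(n,A,J)]$, so $F_s([x,y u_m^{-1}])=F_s(z)$ for some $z\in[E(n,A,I),E(n,A,J)]$. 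By Lemma~\ref{Lem:Habdank} both $z$ and $[x,y u_m^{-1}]$ live in $\GL(n,A,s^{q}(IJ+JI))$ for some $q$ tending to infinity with $p,r$. Lemma~\ref{Lem:03} provides an integer $l$ for which $F_s$ is injective on $\GL(n,A,s^lA)$; taking $p,r$ larger than $l$ forces $[x,y u_m^{-1}]=z\in[E(n,A,I),E(n,A,J)]$, completing the proof.

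The principal obstacle is establishing the displayed birelative commutator calculus: tracking \emph{two} independent ideal-denominators simultaneously through the Steinberg relations and relative generators of Theorem~\ref{gen-2} demands a second layer of localisation beyond what was developed for Lemma~\ref{LemHZ} or for the triple-commutator Theorem~\ref{the43}, and constitutes the bulk of the new technical work supporting this theorem.
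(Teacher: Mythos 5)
Your proposal is correct and follows the authors' localisation--completion strategy exactly: it is the birelative adaptation of the sketch the paper gives for the single-ideal case, using the same factorisation of $y$ into a (relative) elementary approximant times an $s$-adically small piece, the generalised commutator formula (Theorem~\ref{gcformula1}) together with normality to absorb the elementary factor, the with-denominators birelative commutator calculus for the small factor, and Noetherian injectivity (Lemma~\ref{Lem:03}) to descend. The one auxiliary observation you add, $s^mA\cap J\subseteq s^{m-c}J$ (your Artin--Rees step), is precisely the lemma the paper records alongside this theorem, and the birelative commutator calculus you identify and defer as the main technical ingredient is the same result the paper also states without proof.
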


\begin{TheB}
Let $(A,\Lambda)$ be a module finite form ring
over a commutative ring $R$, let\/ $(I,\Gamma)$ and\/ $(J,\Delta)$
be two form ideals of\/ $(A,\Lambda)$, and let\/ $s\in R_0$. Then
$$ [U(2n,I,\Gamma,s^{-1}),U(2n,J,\Delta,\hat s)]\le
[\EU(2n,I,\Gamma),\EU(2n,J,\Delta). $$
\end{TheB}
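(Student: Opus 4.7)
The plan is to mirror the proof strategy used for the absolute single-ideal analogue (Theorem~3B), substituting the birelative unitary commutator calculus (Theorem~2B) for its single-ideal predecessor. First I would reduce to the case where $R$ (and hence $R_0$) is Noetherian and $(A,\Lambda)$ is module finite over $R$: all of $\EU$, $\GU$, the completion functor $\widetilde{(\,\cdot\,)}_{(s)}$, and the relative subgroups $U(2n,-,-,s^{-1})$ and $U(2n,-,-,\hat s)$ commute with the filtering limits provided by Proposition~\ref{Prop:01}, so it suffices to prove the inclusion for such finite subsystems; in the Noetherian setting, Lemma~\ref{Lem:03} together with its form-parameter variant provides injectivity of $F_s$ on sufficiently deep congruence subgroups.

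Next, fix $x\in U(2n,I,\Gamma,s^{-1})$ and $y\in U(2n,J,\Delta,\hat s)$. Unfolding the definition of $U(2n,J,\Delta,\hat s)$ through $\widetilde{(J,\Delta)}_{(s)}$, for every $r\ge 1$ we may factor $y=u_r z_r$ with $u_r\in\EU(2n,J,\Delta)$ and $z_r$ lying in a principal $s$-adic congruence subgroup of level $(J,\Delta)$ that becomes as small as we wish by taking $r$ large. Using the identity (C1), write
\[
[x,y]=[x,u_r]\cdot{}^{u_r}[x,z_r].
\]
The first factor is handled by the unitary generalised commutator formula (Theorem~1B): since $x\in\GU(2n,I,\Gamma)$ and $u_r\in\EU(2n,J,\Delta)$, one has $[x,u_r]\in[\GU(2n,I,\Gamma),\EU(2n,J,\Delta)]=[\EU(2n,I,\Gamma),\EU(2n,J,\Delta)]$, precisely the target subgroup.

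The crux is the second factor. Because $x\in U(2n,I,\Gamma,s^{-1})$, by definition there exist $k,L$ with $F_s(x)\in\EU^L(2n,\tfrac{1}{s^k}I,\tfrac{1}{s^k}\Gamma)$. Theorem~2B then supplies, for any preassigned $p$, an integer $r=r(p,k,L)$ making
\[
F_s([x,z_r])\in[\EU(2n,F_s(s^pI),F_s(s^p\Gamma)),\,\EU(2n,F_s(s^pJ),F_s(s^p\Delta))].
\]
Choosing $p$ large enough that $F_s$ is injective on the congruence subgroup containing $[x,z_r]$ (Noetherian injectivity), one lifts this elementary expression from $A_s$ back to $(A,\Lambda)$ and concludes $[x,z_r]\in[\EU(2n,I,\Gamma),\EU(2n,J,\Delta)]$. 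The outer conjugation by $u_r\in\EU(2n,J,\Delta)$ is harmless, since $[\EU(2n,I,\Gamma),\EU(2n,J,\Delta)]$ is normal in $\EU(2n,\Form)$ by the unitary analogue of Corollary~\ref{cor_gen-2}; combining the two factors yields the desired containment.

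The hardest part is the second factor analysis: one must juggle two sets of denominators (from the $\EU^L$-representation of $F_s(x)$ and from the $s$-adic depth of $z_r$) while tracking two form ideals whose symmetrised product $(I,\Gamma)\circ(J,\Delta)$ is non-associative, plus the short/long root dichotomy that makes unitary computations intrinsically longer than their linear counterparts. Matching $p,k,L,r$ so that the output of Theorem~2B lives inside a subgroup on which $F_s$ is injective, and whose preimage still lands in the target mixed commutator, is what forces the lengthy bookkeeping; once this is set up correctly every remaining move is either a formal application of the commutator identities (C1)-(C8), the generalised commutator formula, or the generating-set description of mixed commutator subgroups from Theorem~5B.
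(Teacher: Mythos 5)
Your proposal matches the strategy the paper indicates for this family of localisation--completion results: Noetherian reduction, the splitting $y=u_rz_r$ supplied by the $\hat s$-condition, the (C1) decomposition $[x,y]=[x,u_r]\cdot{}^{u_r}[x,z_r]$, the unitary generalised commutator formula for the factor $[x,u_r]$, and the birelative commutator calculus (Theorem~2B) plus $s$-adic injectivity of $F_s$ for the factor $[x,z_r]$. The paper does not actually write out this unitary two-ideal proof---it only sketches the single-ideal Chevalley analogue and remarks that the birelative unitary version ``would follow by the same strategy''---and your upgrade of the first-factor argument from bare normality of $\EU$ to landing in the mixed commutator $[\EU(2n,I,\Gamma),\EU(2n,J,\Delta)]$ via Theorem~1B is precisely the adjustment the two-ideal setting requires.
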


\begin{TheC}
Let\/ $A$ be a quasi-finite algebra over a commutative ring\/ $R$,
$n\ge 3$, let\/ $I$ be an ideal of\/ $R$, and let\/ $s\in R$. Then
$$ [G(\Phi,R,I,s^{-1}),G(\Phi,R,J,\widehat s)]\le
[E(\Phi,R,I),E(\Phi,R,J)]. $$
\end{TheC}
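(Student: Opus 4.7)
The statement should be read with two ideals $I,J\unlhd R$, so that the commutator on the right is meaningful. The plan is to mimic the proof of the absolute completion result (Theorem~12C above), but to replace, at the two crucial places, the standard commutator formula by the generalised commutator formula (Theorem~1C) and the relative commutator calculus with one ideal by its birelative enhancement, which is Theorem~12C of the revised relative commutator calculus. A standard Noetherian reduction lets us assume $R$ is Noetherian throughout; this is essential for the injectivity of $F_s$ on small $s$-adic neighbourhoods of the identity (the analogue of Lemma~\ref{Lem:03} for Chevalley groups).

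Fix $x\in G(\Phi,R,I,s^{-1})$ and $y\in G(\Phi,R,J,\widehat s)$. By definition of $G(\Phi,R,I,s^{-1})$, we have $F_s(x)\in E^L\bigl(\Phi,\tfrac{1}{s^k}R,\tfrac{1}{s^k}I\bigr)$ for some $k,L$. The condition $y\in G(\Phi,R,J,\widehat s)$ means that for every $m$ we can decompose $y=u_m z_m$ with $u_m\in E(\Phi,R,J)$ and $z_m\in G(\Phi,R,J)\cap G(\Phi,R,s^mR)\le G(\Phi,R,s^m R\cap J)$; the key point here is that the $J$-level of $y$ can be transferred onto $u_m$, so that the non-elementary remainder $z_m$ stays congruent to the identity modulo a large power of $s$ \emph{and} modulo $J$. (This is the birelative analogue of the splitting used in the proof of Theorem~12C, and it is where the hypothesis $y\in G(\Phi,R,J)$, rather than merely $y\in G(\Phi,R)$, is used decisively.)

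Writing $[x,y]=[x,u_m]\cdot{}^{u_m}[x,z_m]$, the first factor lies in $[G(\Phi,R,I),E(\Phi,R,J)]$, which by the generalised commutator formula (Theorem~1C) equals $[E(\Phi,R,I),E(\Phi,R,J)]$. For the second factor, we apply Theorem~12C: given $p$ (to be chosen large) together with our fixed $k$ and $L$, we obtain an $r$ such that
\[
\bigl[E^L\bigl(\Phi,\tfrac{1}{s^k}R,\tfrac{1}{s^k}I\bigr),\,F_s\bigl(G(\Phi,R,s^r J)\bigr)\bigr]\le \bigl[E(\Phi,F_s(s^pR),F_s(s^pI)),\,E(\Phi,F_s(s^pR),F_s(s^pJ))\bigr].
\]
Choosing $m\ge r$, the commutator $F_s([x,z_m])$ lies in the right-hand side. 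Since $[x,z_m]$ itself sits in $G(\Phi,R,s^m R)\cap G(\Phi,R,IJ+JI)$, which for $p$ large enough is contained in a congruence subgroup on which $F_s$ is injective (Noetherian reduction plus the Chevalley-group version of Lemma~\ref{Lem:03}), the element $[x,z_m]$ must coincide with a lift of its image, i.e.\ with an element of $[E(\Phi,R,s^pI),E(\Phi,R,s^pJ)]\le [E(\Phi,R,I),E(\Phi,R,J)]$.

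Finally, the conjugate ${}^{u_m}[x,z_m]$ remains in $[E(\Phi,R,I),E(\Phi,R,J)]$ because this subgroup is normal in $E(\Phi,R)$ (by Corollary C of \S11, the Chevalley analogue of Corollary~\ref{cor_gen-2}), and $u_m\in E(\Phi,R,J)\le E(\Phi,R)$. Combining both contributions yields $[x,y]\in [E(\Phi,R,I),E(\Phi,R,J)]$. The principal obstacle is not any single step but rather calibrating the three indices $(k,L,m)$ and the congruence level $p$ so that Theorem~12C applies \emph{and} the resulting commutator lies in a subgroup on which the localisation map is injective; for types $\mathrm{B}_2,\mathrm{G}_2$ the usual auxiliary assumptions on the residue fields must be imposed, exactly as in Theorem~12C.
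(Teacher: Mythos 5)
Your proof is correct, and it follows precisely the route the paper itself sketches for the one--ideal analogue (Theorem~13C above): Noetherian reduction, writing $F_s(x)$ as a bounded product of relative elementary generators with a single denominator $s^k$, splitting $y=u_m z_m$, handling the $[x,u_m]$ term by normality/commutator formulas and the ${}^{u_m}[x,z_m]$ term by the commutator--calculus estimate combined with $s$-adic injectivity. The paper itself does not write out a proof of Theorem~14C --- it merely states it, with the explicit proof given only for the one--ideal version --- so strictly speaking there is nothing to diff against; but the birelative upgrades you identify are exactly the right ones: in the decomposition of $y$ the elementary part must be forced into $E(\Phi,R,J)$ rather than just $E(\Phi,R)$ (which in turn uses density of $J$ in $\widetilde J_{(s)}$), the Artin--Rees type lemma $J\cap s^mR\le s^rJ$ must be invoked to place $z_m$ in $G(\Phi,R,s^rJ)$ so that Theorem~12C applies, the generalised commutator formula replaces mere normality of $E(\Phi,R)$ in the first factor, and normality of $[E(\Phi,R,I),E(\Phi,R,J)]$ in $E(\Phi,R)$ absorbs the conjugation by $u_m$. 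Two minor points to tighten: the statement in the paper has a typo (it declares only $I$ but uses both $I$ and $J$), which you correctly silently repair; and when you conclude $[x,z_m]=w$ you should say explicitly that both sides lie in $G(\Phi,R,s^{\ell}R)$ for $\ell=\min(m,2p)$ large (using the Lemma~2C--type level bound on the commutator subgroup to control $w$), so that the injectivity of $F_s$ on that congruence subgroup applies to both elements simultaneously.
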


\begin{Lem}
Let $A$ be a module finite algebra over Noetherian ring $R$,
$I$ a two-sided ideal of $A$. Then for any given $l$ and $s\in R$
there is a sufficiently large $m$ such that $I\cap s^mA\le  s^lI$.
\end{Lem}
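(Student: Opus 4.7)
The plan is to deduce this from the classical Artin--Rees lemma. Since $A$ is module finite over the Noetherian ring $R$, $A$ is itself a Noetherian $R$-module, and the two-sided ideal $I$ is a finitely generated $R$-submodule of $A$. This is precisely the setting where Artin--Rees applies to the principal ideal $(s) \trianglelefteq R$, the module $A$, and the submodule $I$.

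The Artin--Rees lemma yields an integer $k$ (depending on $A$, $I$, and $s$, but not on $l$) such that for every $m \ge k$ one has
\[
s^m A \cap I \;=\; s^{\,m-k}\bigl(s^k A \cap I\bigr).
\]
Since trivially $s^k A \cap I \le I$, this already gives the containment
\[
s^m A \cap I \;\le\; s^{\,m-k} I
\]
for all $m \ge k$.

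Now, given the prescribed $l$, it suffices to choose $m$ large enough that $m - k \ge l$, i.e.\ $m \ge l + k$. For such $m$ one obtains $s^{m-k} I \le s^l I$, and combining with the previous display produces
\[
I \cap s^m A \;\le\; s^l I,
\]
as required. The only thing to verify is that Artin--Rees is legitimately available: this uses just Noetherianness of $R$ and finite generation of $A$ (and hence of $I$) as an $R$-module, both of which are part of the hypotheses. There is no real obstacle here; the lemma is essentially a packaging of Artin--Rees in the form needed for the $s$-adic injectivity and continuity arguments used in the localisation--completion machinery.
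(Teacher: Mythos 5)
Your proof is correct, and it is a genuinely different (though closely related) route from the one the paper takes. You invoke the Artin--Rees lemma for the principal ideal $(s)\trianglelefteq R$, the finitely generated $R$-module $A$, and the $R$-submodule $I$, obtaining a uniform $k$ with $s^m A\cap I = s^{m-k}(s^k A\cap I)\le s^{m-k}I$ for all $m\ge k$; choosing $m\ge l+k$ then finishes. The paper instead argues directly from the ascending chain condition: it considers the colon submodules $J_m=(I:s^m)=\{a\in A\mid s^m a\in I\}$, notes that $J_0\le J_1\le\cdots$ stabilises at some $J_k$ because $A$ is a Noetherian $R$-module, and then observes that for $m=k+l$ any $b=s^m a\in I\cap s^m A$ has $a\in J_m=J_k$, so $s^k a\in I$ and $b=s^l(s^k a)\in s^l I$. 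The two arguments are equivalent in spirit --- the colon-submodule chain is essentially the piece of Artin--Rees one actually needs here --- but the paper's version is self-contained and only uses ACC on submodules of $A$, whereas yours is shorter at the cost of citing the full Artin--Rees lemma as a black box. Either is perfectly acceptable; you correctly verify the hypotheses ($R$ Noetherian, $A$ and hence $I$ finitely generated as $R$-modules) needed to apply Artin--Rees.
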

Indeed, consider the following sequence of ideals
$$ J_m=(I:s^m)=\{a\in A\mid s^mA\in I\},\qquad m=0,1,2,\ldots $$
\noindent
Clearly, $J_0\le J_1\le J_2\le\ldots$. Since $R$ is Noetherian and
$A$ is module finite over $R$, there exists such an $k$ that
$J_k=J_{k+1}=J_{k+2}=\ldots$. The equality $J_m=J_k$ for any $m>k$,
amounts to the following: for any $a\in R$ inclusion $s^{m}a\in I$
implies that already $s^ka\in I$. In particular, taking $m=k+l$,
and an $a\in R$ such that $s^mb\in I\cap s^m A$, we can conclude
that $s^kb\in I$, and thus $s^ma=s^{k+l}a=s^l(s^ka)\in s^lI$.

\begin{TheA}\label{thewayh13}
Let $A$ be a quasi-finite algebra over a commutative ring $R$,
$n\ge 3$, $s\in R$, and let $I,J,K\unlhd A$ be two-sided ideals
of $A$. Then
$$ [[\GL(n,A,I,\widehat s),\GL(n,A,J,\widehat s)],\GL(n,A,K,s^{-1})]\le
[E(n,A,I\circ J),E(n,A,K)]. $$
\end{TheA}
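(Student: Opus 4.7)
The plan is to adapt the strategy of Theorem~\ref{the43}, now in the localisation--completion framework, by using the two-ideal birelative analogue of Bak's localisation--completion theorem as a building block for the triple commutator. After the standard Noetherian reduction (replacing $A$ by a module finite subalgebra over a Noetherian subring of $R$, which is legitimate since all functors in play commute with direct limits), one may exploit that $I\cap s^m A\le s^l I$ for $m$ large, whenever $R$ is Noetherian and $A$ is module finite over $R$.

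The core device is a decomposition. For $x\in\GL(n,A,I,\widehat{s})$ and $y\in\GL(n,A,J,\widehat{s})$, the definition of $\widehat{s}$ supplies, for any prescribed $l$, factorisations $x=u_1 c_1$ and $y=u_2 c_2$ with $u_1\in E(n,A,I)$, $u_2\in E(n,A,J)$ and $c_1\in\GL(n,A,s^l I)$, $c_2\in\GL(n,A,s^l J)$. Using the commutator identities (C1), (C2), one expands
$$[x,y]=[u_1,u_2]\cdot W,$$
where $W$ is a product of conjugates of commutators each involving at least one of $c_1,c_2$, and by repeated application of Lemma~\ref{Lem:Habdank} one has $W\in\GL(n,A,s^l(I\circ J))$.

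Given $z\in\GL(n,A,K,s^{-1})$, identity (C2) then yields
$$[[x,y],z]={}^{[u_1,u_2]}[W,z]\cdot[[u_1,u_2],z].$$
The main factor $[[u_1,u_2],z]$ lies in $[[E(n,A,I),E(n,A,J)],\GL(n,A,K)]$, which by Theorem~\ref{the43} equals $[[E(n,A,I),E(n,A,J)],E(n,A,K)]$, and this in turn equals $[E(n,A,I\circ J),E(n,A,K)]$ by the multiple-to-double reduction (a two-sided application of Lemma~\ref{Lem:Habdank} combined with Theorem~\ref{gcformula1}). For the correction $[W,z]$, the goal is to show $W\in\GL(n,A,I\circ J,\widehat{s})$; once this is established, the two-ideal birelative localisation--completion theorem (with the roles of the two arguments interchanged) gives $[W,z]\in[E(n,A,I\circ J),E(n,A,K)]$. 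Since that target is normalised by $E(n,A)\ni[u_1,u_2]$, the conjugated factor is likewise absorbed, and the inclusion follows.

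The principal obstacle is verifying that the image of $W$ in the completion is elementary at level $I\circ J$. Globally $W\in\GL(n,A,s^l(I\circ J))$, so in the completion $W$ is congruent to $1$ modulo $s^l$; the problem is to exhibit it as a product of elementary generators of $E(n,\widetilde{A}_{(s)},\widetilde{(I\circ J)}_{(s)})$, not merely of $\GL$ at that level. This is precisely the second-localisation step, requiring the birelative version of the enhanced relative commutator calculus (with \emph{two} denominators, in the spirit of the yoga that drives Lemma~\ref{Lem:08}), together with the injectivity of $F_s$ on small $s$-adic neighbourhoods (Lemma~\ref{Lem:03}). Choosing $l$ sufficiently large relative to the parameters extracted from $z$ and from the commutator calculus bounds forces the correction to land cleanly in $E(n,\widetilde{A}_{(s)},\widetilde{(I\circ J)}_{(s)})$. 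This last step constitutes the bulk of the work and mirrors, in the birelative setting, the technical core behind Lemma~\ref{Lem:08} in the proof of Theorem~\ref{the43}.
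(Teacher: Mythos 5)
Your overall architecture --- Noetherian reduction, factoring $x$ and $y$ through the completion condition, handling $[[u_1,u_2],z]$ via Theorem~\ref{the43}, and invoking the localisation--completion machinery for the corrections --- is the right framework. However, the reduction to showing $W\in\GL(n,A,I\circ J,\widehat{s})$ cannot be carried out. From $W\in\GL(n,A,s^l(I\circ J))$ you only learn that the image of $W$ in $\widetilde A_{(s)}$ is congruent to the identity modulo $s^l$; that does \emph{not} place it in $E(n,\widetilde A_{(s)},\widetilde{(I\circ J)}_{(s)})$, the obstruction being the relative $K_1$ of the completion. Tracking the structure of $W$ shows its image lies in $[E(n,\widetilde A_{(s)},\widetilde I_{(s)}),E(n,\widetilde A_{(s)},\widetilde J_{(s)})]$, which \S\ref{countexam} demonstrates can be strictly larger than $E(n,\widetilde A_{(s)},\widetilde{(I\circ J)}_{(s)})$. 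This is exactly the phenomenon that makes Theorem~\ref{the43} itself non-trivial, so your reduction runs straight into the obstacle the birelative machinery was designed to bypass. Your last paragraph is also logically inverted: membership of $W$ in $\GL(n,A,I\circ J,\widehat s)$ is a property of $W$ and the completion alone and cannot be forced by choosing $l$ ``relative to the parameters extracted from $z$''; those parameters constrain $[W,z]$, not $W$.

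The correct route sidesteps the membership claim entirely. From $z\in\GL(n,A,K,s^{-1})$ extract $k$ and $L$ with $F_s(z)\in E^L\big(n,\frac{1}{s^k}A,\frac{1}{s^k}K\big)$. The second-localisation birelative commutator calculus then supplies, for any given $p$, an $r$ such that $W\in\GL(n,A,s^r(I\circ J))$ forces $F_s([W,z])\in\big[E(n,F_s(s^pA),F_s(s^pK)),E(n,F_s(s^pA),F_s(s^p(I\circ J)))\big]$. Since $[W,z]$ is also globally congruent to the identity modulo a high power of $s$, the $s$-adic injectivity of $F_s$ on small neighbourhoods of the identity (Lemma~\ref{Lem:03}) lets you pull this elementarity back from $A_s$ to $A$, landing $[W,z]$ in $[E(n,A,I\circ J),E(n,A,K)]$. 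That is where the choice of $l$ genuinely depends on the data from $z$, and it bounds the commutator $[W,z]$ directly rather than $W$ itself.
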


\par
For unitary groups, part of the requisite relative commutator
calculus with two denominators was developed in our paper [39].
We are convinced that in this case we could prove an analogue of
Theorem 11 by the same strategy. In this case, the external
equipment, on which our prove depends, such as appropriate versions
of Whitehead lemma, stability theorems, etc., can be mostly found
in the existing literature.
\par
For Chevalley groups, the situation is different. On the one hand,
in [43] we have already developed a stronger version of relative
commutator calculus, and [43], Theorem 2, is precisely what we need
to prove the birelative localisation completion theorem. On the other
hand, many of the background results are not available in
\par
The second author is positive that he can prove an analogue
of Theorem 8A for Chevalley groups with the use of his
universal localisation method. With this approach all problems
related to splitting are shifted the affine ring of $G$ and other
universal rings.


\section{Relative commutator length}

In the first part of this paper we have already discussed
results on the commutator width in the absolute case, what
we called the ``anti-Ore'' behavious of groups of points
of algebraic groups and their elementary subgroups over
rings of dimension $\ge 2$. Namely, in [80] and [88] we proved
that commutators have bounded length in elementary generators
of $\GL_n$ and Chevalley groups respectively, in the absolute
case.
\par
Here, we state the relative versions of these results, recently
obtained by the second author. We limit ourselves with the
results themselves and a minimum of comments. One should
consult our Porto Cesareo conference paper [37] for a much
broader picture, including background, history, motivation,
ideas of proof, analogues, and unsolved problems.
\par
First, we replace one occurence of $R$ by an ideal. Of course,
in this case the elementary generators, but rather their conjugates
$z_{ij}(\xi,\eta)$, $Z_{ij}(\xi,\eta)$, and $z_{\a}(\xi,\eta)$, we
introduced in \S~5. These results are new even in the case of the
general linear group.

\begin{TheA}Let\/ $R$ be a commutative ring and let\/ $I\unlhd R$
be an ideal of\/ $R$. Then there exists an\/ $L$ such that any
commutator\/ $[x,y]$, where
$$ x\in\SL(n,R,I),\quad y\in E(n,R)\qquad\text{or}\qquad
x\in\SL(n,R),\quad y\in E(n,R,I) $$
\noindent
is a product of not more than\/ $L$ elementary generators\/
$z_{ij}(\xi,\zeta)$, where\/ $1\le i\neq j\le n$, $\xi\in I$,
$\zeta\in R$.
\end{TheA}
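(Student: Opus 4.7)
The plan is to combine the absolute commutator width theorem of Stepanov--Vavilov with the standard commutator formula and the localization-patching machinery developed in Section~\ref{loci32}. First, by Noetherian reduction (the groups $\SL_n$, $\GL_n$, $E_n$, together with the generating set of $z_{ij}$ elements, commute with direct limits of rings), it suffices to treat the case in which $R$ is Noetherian and module-finite over $\mathbb{Z}[t_1,\ldots,t_N]$ for some $N$; the stable rank of such $R$ is then uniformly controlled by Theorem~\ref{bassbound}.

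By the standard commutator formula (Theorem~\ref{standard}), under either hypothesis on $(x,y)$ the commutator $[x,y]$ lies in $E(n,R,I)$, and hence by Lemma~\ref{Engenerator} admits \emph{some} factorization as a product of generators $z_{ij}(\xi,\zeta)$ with $\xi\in I$; the nontrivial assertion is a uniform bound on the length. The main step is a local-to-global argument. Localizing at a maximal ideal $\gm$ of $R$ gives a semilocal ring $R_{\gm}$ of stable rank~$1$, over which $\SL(n,R_{\gm},I_{\gm})=E(n,R_{\gm},I_{\gm})$ and the image of $[x,y]$ admits a bounded-length $z$-generator factorization by a direct stable-range-$1$ computation (in the spirit of the factorizations used in the proof of the Whitehead lemma~\ref{whiteee}). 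By continuity, this factorization descends to a principal localization $R_s$ with $s\notin\gm$, with length bounded only in terms of $n$. A finite family $s_1,\ldots,s_M$ with $\langle s_1,\ldots,s_M\rangle=R$ and $M$ bounded in terms of $\dim\Max(R)$ covers $\Spec(R)$; one patches the local decompositions into a global factorization inside $E(n,R,I)$ using the denominator-clearing estimates of the relative commutator calculus (Lemma~\ref{LemHZ} and its variants, Lemmas~\ref{Lem:New1} and \ref{Lem:08}).

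The principal obstacle is the uniform control of the length during patching. Each application of the denominator-clearing lemmas multiplies the generator count by a factor depending on the parameters $(k,m,L,p)$ appearing in Lemma~\ref{LemHZ}, and one must verify that this multiplicative factor is uniformly bounded across all admissible $(x,y)$. This requires a quantitative strengthening of the relative commutator calculus in the spirit of Stepanov's universal localization method, ensuring that all implicit length constants depend only on $n$ and $\dim\Max(R)$. Once this is in place, an induction on $\dim\Max(R)$ (with the semilocal case as base) produces the uniform bound $L$.
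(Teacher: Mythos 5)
Your approach does not match the paper's, and more importantly it cannot produce the result as the paper intends it. The discussion surrounding this theorem in the paper is explicit on two counts: first, that the bound $L$ depends \emph{only} on $n$ --- neither on $R$ nor on $I$ --- and second, that the proof is an application of Stepanov's method of \emph{universal localisation} (\cite{86}), ``specifically to eliminate any dependence upon dimension of $R$.'' Your scheme, Noetherian reduction followed by induction on $\dim\Max(R)$ and localisation--patching, is the ``first-generation'' machinery (the one used for the unitary analogue, Theorem 15B), and the paper states expressly that this route yields only the \emph{weaker} conclusion with a bound depending on the Jacobson dimension of the centre. So even if every step you outline were executable, you would have proved a different, strictly weaker statement.

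There is also a concrete technical reason your localisation--patching step cannot be pushed through to a uniform bound, and it is the one you flag yourself and then wave past. The estimates in Lemma~\ref{LemHZ} (and in Lemmas~\ref{Lem:New1}, \ref{Lem:08}) are of the form ``given $m,l$ and a word length $L$, there exists $p$ such that conjugating/commuting pulls $E(n,t^pA,\dots)$ into $E(n,t^lA,\dots)$.'' The exponents $k,m,p$ attached to a particular image in $A_s$ depend on the element being localised (they record which power of $s$ clears the denominators of that specific matrix and of the specific elementary decomposition obtained locally), and they are unbounded as $(x,y)$ ranges over $\SL(n,R,I)\times E(n,R)$. Declaring that ``one must verify that this multiplicative factor is uniformly bounded across all admissible $(x,y)$'' is not a subsidiary verification: it is the entire difficulty, and it is exactly what does \emph{not} hold with these lemmas as stated. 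This is precisely what universal localisation is designed to fix --- by working with a single generic element over an affine ring of the group scheme (whose dimension is fixed in terms of $n$ once and for all), the denominators can be pinned down uniformly, and specialising to an arbitrary $R$ inherits the bound without any dimension creeping in. There is no way to recover that uniformity after the fact from the element-dependent estimates of the relative commutator calculus; you would have to change the framework, not strengthen the constants. Finally, a smaller point: the local factorisation you invoke ``in the spirit of the Whitehead lemma'' is not available non-stably --- Lemma~\ref{Whitehead} is a statement about $E(2n,A,I)$, i.e.\ it doubles the size of the matrix. What is actually used over a semilocal ring of stable rank~$1$ is a bounded Gauss/Bass--Vaserstein elimination, not the Whitehead trick.
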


\setcounter{TheC}{15}

\begin{TheC}
Let\/ $R$ be a commutative ring and let\/ $I\unlhd R$,
be an ideal of\/ $R$. Then there exists an\/ $L$ such that any
commutator\/ $[x,y]$, where
$$ x\in G(\Phi,R,I),\quad y\in E(\Phi,R)\qquad\text{or}\qquad
x\in G(\Phi,R),\quad y\in E(\Phi,R,I) $$
\noindent
is a product of not more than\/ $L$ elementary generators\/
$z_{\alpha}(\xi,\zeta)$, where\/ $\alpha\in\Phi$, $\xi\in I$,
$\zeta\in R$.
\end{TheC}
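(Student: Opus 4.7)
The plan is to combine the absolute commutator-width result for Chevalley groups of~\cite{88} with the relative localisation-completion machinery developed earlier in this paper. I treat only the case $x\in G(\Phi,R,I)$, $y\in E(\Phi,R)$; the symmetric case follows via $[x,y]=[y,x]^{-1}$ and the closure of $E(\Phi,R,I)$ under inversion. By Noetherian reduction (both $G$ and $E$ commute with direct limits, and any specific pair $(x,y)$ together with finitely many generators of $I$ involves only finitely many elements of $R$), I may assume $R$ is a finitely generated $\mathbb{Z}$-algebra, hence Noetherian of finite Bass--Serre dimension. The relative standard commutator formula already ensures $[x,y]\in E(\Phi,R,I)$; the substance of Theorem~15C is the \emph{uniform length bound} in the generators $z_{\alpha}(\xi,\eta)$ with $\xi\in I$, $\eta\in R$.

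The heart of the argument is a local--global principle. For each maximal ideal $\mathfrak{m}\trianglelefteq R$, the ring $R_{\mathfrak{m}}$ is local and of stable rank one, so $E(\Phi,R_{\mathfrak{m}},I_{\mathfrak{m}})=G(\Phi,R_{\mathfrak{m}},I_{\mathfrak{m}})$, and a standard semilocal argument bounds the number of relative elementary generators needed to express $F_{\mathfrak{m}}([x,y])$ by a constant $N=N(\Phi)$ depending only on the rank of $\Phi$. Quasi-compactness of $\operatorname{Max}(R)$ then yields finitely many principal localisations $R_{s_1},\dots,R_{s_r}$ covering, together with an equation $\sum a_i s_i^{p}=1$ in $R$, such that each $F_{s_i}([x,y])$ is a product of at most $N$ relative generators with denominators $s_i^{k}$. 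I would then invoke the birelative localisation-completion principle (in the spirit of Bak's method, cf.~\cite{8},~\cite{33}), iterating it once per $s_i$: each step patches an $s_i^{-1}$-piece with its complementary $\widehat{s_i}$-piece, producing an element of $[E(\Phi,R,I),E(\Phi,R)]\subseteq E(\Phi,R,I)$ with controlled length. The denominator clearing at each step relies on the relative commutator calculus with denominators, applied iteratively to reassemble the local data into a bounded product of generators $z_{\alpha}(\xi,\eta)$ with $\xi\in I$.

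The main obstacle is precisely this quantitative tracking: the birelative localisation-completion principle and the relative commutator calculus with denominators are stated earlier in the paper only qualitatively, so one must re-examine their proofs to extract effective length bounds. Following the template of~\cite{88} in the absolute case, I would track the constants via systematic use of the Hall--Witt identity~(C3), the commutator identities~(C1)--(C8), and the generator description of $[E(\Phi,R,I),E(\Phi,R,J)]$ of Lemma~3C (applied with $J=R$), carefully counting the conjugating factors produced at each patching step. The final bound $L$ then depends on $\Phi$, on $R$ (through the number of patches $r$, equivalently through its Bass--Serre dimension), and on $I$. In the rank-two cases $\Phi=\mathsf{B}_2,\mathsf{G}_2$ the standard exceptional hypotheses on residue fields of characteristic $2$ (and, for $\mathsf{B}_2$, the condition $c\in c^2R+2cR$) are presumably required, paralleling Theorems~1C and~6C.
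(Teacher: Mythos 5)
Your plan is not the one the paper uses, and it cannot deliver the result. Immediately after the statement of Theorems~15A and 15C, the paper stresses: ``Quite remarkably, the bound $L$ in these theorems does not depend either on the ring $R$, or on the choice of the ideals $I,J$'' and that the proofs are ``paradigmatic applications of the method of universal localisation, introduced by the second author \cite{86}, specifically to eliminate any dependence upon dimension of $R$.'' You have replaced that method by the Bak-style localisation--completion apparatus of \cite{8}, \cite{33}, which is precisely the machinery whose output is \emph{dimension-dependent}. Your own last sentence concedes the point: ``the final bound $L$ then depends on $\Phi$, on $R$ (through \ldots Bass--Serre dimension), and on $I$.'' That is a strictly weaker conclusion than what the paper establishes, and it is not recoverable from your outline by tightening constants.

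There is also a genuine logical gap even for the literal quantified statement. Your opening Noetherian reduction passes from $R$ to a finitely generated $\mathbb{Z}$-subalgebra $R_0$ containing the entries of \emph{one particular} pair $(x,y)$. Different commutators over the same $R$ force different $R_0$'s, and the Bass--Serre dimensions $\delta(R_0)$ of these subalgebras are not uniformly bounded in terms of $R$ (nor in terms of $\delta(R)$, which can be infinite). Since the number $r$ of patches you produce by quasi-compactness, and hence the length you extract from iterated patching, grows with $\delta(R_0)$, the reduction does not produce a single $L$ valid for all commutators over the fixed pair $(R,I)$. A correct proof must instead work once and for all over a universal object: replace $(R,I)$ by the affine ring of the appropriate relative scheme together with the tautological ideal, prove the bound there, and obtain the general case by specialisation. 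That is what ``universal localisation'' in the sense of \cite{86} does; no cover of $\operatorname{Max}(R)$ and no iteration over it ever enters. (Your local step --- uniform commutator width over local rings of stable rank one, depending only on $\Phi$ --- is fine; it is the global reassembly that is structurally wrong.)
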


Now we state the ultimate {\it birelative\/} versions of the
results on commutator length. These results use the elementary
generating systems of relative commutator subgroups
$[E(R,I),E(R,J)]$, constructed in \S~6, and thus could not even
be stated this way before [47], [44], [45]. In these results 
{\it both\/} occurences of $R$ are replaced by its ideals.
\begin{TheA}
Let\/ $R$ be a commutative ring and
let\/ $I,J\unlhd R$ be two ideals of\/ $R$. Then there exists
an\/ $L$ such that any commutator
$$ [x,y],\qquad x\in\SL(n,R,I),\quad y\in E(n,R,J) $$
\noindent
is a product of not more than\/ $L$ elementary generators
listed in Theorem~\ref{thewayh}.
\end{TheA}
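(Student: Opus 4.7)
The plan is to combine the explicit four-family generating system for $[E(n,R,I), E(n,R,J)]$ given by Theorem \ref{gen-2} with a birelative localization-completion argument modeled on Theorem \ref{thewayh13}, thereby upgrading the immediately preceding relative bounded-generation statement (for $[\SL(n,R,I), E(n,R)]$) to the birelative setting.

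First I would put the statement in an effective form. By Theorem \ref{gcformula1} one has $[x,y] \in [\GL(n,R,I), E(n,R,J)] = [E(n,R,I), E(n,R,J)]$, and Theorem \ref{gen-2} identifies this subgroup as the one generated by the four families
\[
[e_{ji}(\alpha), {}^{e_{ij}(a)}e_{ji}(\beta)],\quad [e_{ji}(\alpha), e_{ij}(\beta)],\quad z_{ij}(\alpha\beta,a),\quad z_{ij}(\beta\alpha,a),
\]
with $\alpha \in I$, $\beta \in J$, $a \in R$. All that is needed is a uniform bound $L$ on the number of factors, independent of $x, y, R, I, J$. Noetherian reduction (the functors $\GL_n$ and $E_n$ commute with direct limits) allows one to assume $R$ is finitely generated and Noetherian, so $\delta(R) < \infty$; I would then induct on the Bass--Serre dimension.

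For the base case I would take $R$ semilocal, so that the stable rank is one and $\SL(n,R,I) = E(n,R,I)$: the preceding Theorem A then writes $[x,y]$ as a bounded product of generators $z_{ij}(\xi, \zeta)$ with $\xi \in I$, and each such factor, occurring inside a commutator of something at level $I$ with something at level $J$, can be rewritten through the Steinberg relations (E1)--(E3) together with (C1)--(C5) as a bounded product of the four generator types above. For the inductive step I would imitate the scheme of the proof of Theorem \ref{the43}: use Bak's induction lemma to select, for each $\mathfrak{m} \in \Max(R)$, an element $s_\mathfrak{m} \notin \mathfrak{m}$ which strictly drops the Bass--Serre dimension after localization, extract a finite partition of unity $1 = \sum_{i=1}^k t_i s_i^{p_i}$, and apply (C1$^+$)--(C2$^+$) together with Theorem \ref{standard} to split $[x,y]$ into a bounded number of commutator pieces, each of which can be treated on a single localization $R_{s_i}$ where the inductive hypothesis applies. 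Theorem \ref{thewayh13} together with the two-denominator commutator calculus of Lemmas \ref{LemHZ}, \ref{LemHZ1} and \ref{Lem:08} then lifts the locally bounded expression back to a globally bounded expression in the prescribed generators.

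The principal obstacle will be ensuring that the bound on the number of generators does not blow up at each induction step. This requires the second-localization enhancements of \S 14: one must verify that every application of the Hall--Witt identity (C3), every clearing of denominators via injectivity of the relevant localization homomorphism (in the spirit of Lemma \ref{Lem:03}), and every intervening conjugation absorbs only a universally bounded number of new factors of the four required types. This is precisely the same technical core that underlies the passage from the single-relative to the multi-relative commutator formulas, and once it is in place in the ``with-denominator'' form, the induction closes uniformly in $x, y, I, J$, yielding a bound $L$ depending only on $n$.
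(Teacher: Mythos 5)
Your plan runs on Noetherian reduction followed by induction on the Bass--Serre dimension $\delta(R)$, but that route cannot prove the statement as formulated. The theorem claims that $L$ is an absolute constant, depending neither on $R$ nor on $I,J$. Dimension induction by construction produces a bound that grows with $\delta(R)$: every time you invoke Bak's induction lemma to pass to a localization $\widetilde{R}_{(s)}$ of strictly smaller dimension, you pay a multiplicative (or at the very least additive) cost in the number of factors, and the number of induction steps is precisely $\delta(R)$. You acknowledge the worry and suggest it can be controlled by making the commutator-calculus constants universal, but universal per-step constants only yield $L = O(\text{const}^{\delta(R)})$ or similar; they do not cap the \emph{number} of steps. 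Moreover the Noetherian reduction itself is inadmissible here: for a bounded-width assertion, passing to the finitely generated subring $R_i$ containing the entries of $x,y$ trades $R$ for $R_i$, but $\delta(R_i)$ varies unboundedly with $x,y$ (take $R=\mathbb{Z}[x_1,x_2,\dots]$), so the supremum of the resulting bounds over all commutators can be infinite even though each individual commutator has a finite expression.

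The paper is explicit that this is exactly the point: the remarkable feature of the theorem is uniformity in the ring, and the stated proof technique is Stepanov's \emph{universal localisation} (reference [86]), introduced ``specifically to eliminate any dependence upon dimension of $R$.'' The idea is to carry out the computation once over a universal (generic) ring attached to the group functor and the two-ideal data, where the answer is a concrete formula with a fixed number of factors, and then specialize; no dimension induction is involved, so no dimension enters the bound. The paper even contrasts with the unitary case (Theorems 16B, 17B), where universal localisation is unavailable because the groups need not be algebraic, and notes that there one indeed obtains only the weaker version with $L$ depending on $\dim\Max(R)$ --- which is precisely what your argument, if completed, would give. So the structure of your proposal recovers the dimension-dependent analogue but not the dimension-free statement that was claimed.
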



\begin{TheC}
Let\/ $R$ be a commutative ring and let\/
$I,J\unlhd R$ be two ideals of\/ $R$. Then there exists
an\/ $L$ such that any commutator
$$ [x,y],\qquad x\in G(\Phi,R,I),\quad y\in E(\Phi,R,J) $$
\noindent
is a product of not more than\/ $L$ elementary generators
listed in Theorem~\ref{tuboon1}.
\end{TheC}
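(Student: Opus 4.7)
The plan is to combine three ingredients: the generalized commutator formula from Theorem~\ref{hhggaw1}, the explicit generating set for the mixed commutator subgroup given in Theorem~\ref{tuboon1}, and the localization--completion machinery underlying the proofs of Theorem~\ref{the43} and the absolute commutator width results. By Theorem~\ref{hhggaw1} we already know that $[x,y]\in[E(\Phi,R,I),E(\Phi,R,J)]$, and by Theorem~\ref{tuboon1} every element of this subgroup is a product of the three listed types of generators. Hence what must be shown is the existence of a uniform bound $L=L(\Phi)$ on the length of such a product, depending on the root system alone.

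First I would pass to a Noetherian setting via the usual direct-limit argument: write $R$ as a filtered union of its finitely generated subrings, observe that $x$, $y$, and the commutator $[x,y]$ are controlled by finitely many ring elements, and thereby reduce to the case where $R$ is finitely generated over $\mathbb Z$ and of finite Bass--Serre dimension. In this setting one imitates the strategy of the absolute Stepanov--Vavilov commutator width theorem and its single-ideal extension: choose $s_1,\ldots,s_r\in R$ generating the unit ideal with $r$ bounded in terms of the dimension, localize at each $s_k$ to a semi-local ring where the stable rank drops to $1$, and over that localization use the unitriangular decomposition of $x\in G(\Phi,R_{s_k},I_{s_k})=E(\Phi,R_{s_k},I_{s_k})$ together with the Chevalley commutator formula to write the localized commutator as a bounded product of generators of the three types from Theorem~\ref{tuboon1} (with denominators). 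A bounded partition of unity in the $s_k$ then assembles these local bounded expressions into a single global expression, whose length is bounded by a function depending only on $\Phi$ and on the dimension of $R$.

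The hard part is ensuring that the final bound is independent not only of $R$, $I$, and $J$, but also of the Bass--Serre dimension, so that $L$ depends on $\Phi$ alone. For this one uses Bak's induction lemma to reduce to the $0$- and $1$-dimensional cases, then applies the birelative localization--completion framework to lift bounded local expressions to bounded global ones, while controlling the number of generators produced when an expression is pulled back across a localization homomorphism via the relative commutator calculus with two denominators. The main obstacle will be that two of the three generator types in Theorem~\ref{tuboon1} are themselves commutators sitting in fundamental rank-one subgroups; these do not behave as cleanly under conjugation and under the Chevalley commutator formula as plain elementary root unipotents, so every step of the induction must preserve this finer generator structure without inflating the length bound. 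This is precisely the point at which the birelative statement departs from its single-ideal predecessor (Theorem~16C), and where the deepest layer of the relative commutator calculus is indispensable.
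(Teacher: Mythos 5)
The paper does not give a detailed proof of this theorem; it only indicates the method. But what it says about the method makes clear that your proposal, as written, would fall short of the claimed statement. The key feature of Theorem~17C, stressed explicitly in the text, is that the bound $L$ is \emph{independent of the Bass--Serre dimension of $R$} (and of $R$, $I$, $J$), and that this independence is achieved by the method of \emph{universal localisation} of Stepanov \cite{86}, ``specifically to eliminate any dependence upon dimension of $R$.'' Your proposal reduces to finitely generated Noetherian $R$, then localizes at finitely many $s_k$ and reassembles by a partition of unity, invoking Bak's induction lemma to ``reduce to the $0$- and $1$-dimensional cases.'' That machinery --- semi-local localization, partition of unity, Bak's dimension induction, birelative localization--completion --- produces, at each stage, a number of factors that grows with the number of $s_k$ and with the number of dimension-reduction steps. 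It therefore yields a bound that depends on the dimension of $R$. This is precisely the situation the paper describes for the \emph{unitary} analogue, Theorem~17B, where universal localisation is unavailable because the groups are not algebraic, and where the authors can only obtain bounds depending on $\dim\Max(R)$.

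The gap, concretely, is in your final paragraph: Bak's induction lemma does not ``reduce'' the problem to dimension $\le 1$ in a way that discards the number of reduction steps; it runs an induction whose depth is the dimension, and the length bound inherits that depth. What actually kills the dimension dependence is passing to a universal ring and a universal commutator (an affine scheme-theoretic argument, in the spirit of $[86]$), where the pair $(x,y)$ is specialized from a single generic pair over a fixed ring. You mention universal localisation only obliquely as a feature of the ``absolute Stepanov--Vavilov theorem'' you imitate, but you do not actually run the universal construction; your argument as stated is the dimension-dependent one. To match the theorem you would need to replace the entire localization/partition-of-unity/induction apparatus by the universal localisation step, and then check (as you rightly note is delicate) that the two commutator-type generators of Theorem~\ref{tuboon1} behave well under the universal specialization; the rest of your outline --- invoking Theorems~\ref{hhggaw1} and~\ref{tuboon1} to reduce to bounding the generator count, and the warning about rank-one subgroups --- is on point.
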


Quite remarkably, the bound $L$ in these theorems does not depend
either on the ring $R$, or on the choice of the ideals $I,J$.
The proofs of these theorems are paradigmatic applications of
the method of {\sc universal localisation\/}, introduced by the
second author [86], specifically to eliminate any dependence upon
dimension of $R$. Actually, we sketched the main ideas of this
method in [36], \S~10.
\par
These proofs are not particularly long, but rely on a whole bunch
of universal constructions and will be published elsewhere. From
the proofs, it becomes apparent that similar results hold also
in other such situations: for any other functorial generating set,
for multiple relative commutators [86], [37], etc., etc.

It is natural to ask, whether similar results hold for unitary
groups? Well, this is an open question. The point is that Bak's
unitary groups are not always algebraic, and it is not immediate,
how to generalise the universal constructions of [86] from groups
defined by equations to groups defined by congruences.
\par
As a result, at this time we only have {\it weaker\/} versions
of the above results, with bounds depending on the Jacobson
dimension of the centre. The proofs of the following results use
localisation in the same style as [42], [44], they will be published
in [39].
\setcounter{TheB}{15}

\begin{TheB}
Let\/ $n\ge 3$, and let\/ $(A,\Lambda)$ be a quasi
finite form algebra over a commutative ring\/ $R$
with\/ $\dim\Max(R)=d<\infty$. Further, let\/ $(I,\Gamma)$ be a
form ideal of\/ $(\Form)$. Then there exists an\/ $L$ such that
any commutator\/ $[x,y]$, where
$$ x\in\SU(2n,I,\Gamma),\ \ y\in\EU(2n,R,\Lambda)
\quad\text{or}\quad
x\in\SU(2n,R,\Lambda),\ \ y\in\EU(2n,I,\Gamma) $$
\noindent
is a product of not more than\/ $L$ elementary generators\/
$Z_{ij}(\xi,\zeta)\in\EU(2n,I,\Gamma)$.
\end{TheB}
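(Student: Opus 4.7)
The plan is to adapt the localisation–completion scheme of Bak, as upgraded to the unitary setting in the papers referenced as [33], [34], [42], [44], and to combine it with the relative commutator calculus with denominators summarized in Theorem 12B of this paper. First I would carry out the usual Noetherian/quasi-finite reduction: writing $A=\varinjlim A_i$ as a direct limit of module finite subalgebras over finitely generated (hence Noetherian) subrings $R_i\le R$, and observing that $\EU$, $\SU$, and the set of length-$\le L$ products of $Z_{ij}(\xi,\zeta)\in\EU(2n,I,\Gamma)$ all commute with direct limits, I can assume $R$ (hence $R_0$) is Noetherian of Jacobson dimension $\le d$ and $(A,\Lambda)$ is module finite over $R$.

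Next, I would construct the covering of $\Max(R_0)$ that controls the final bound $L$. By a Bass-type argument (as in Theorem 2), since $\dim\Max(R_0)<\infty$, one can cover $\Max(R_0)$ by a number of basic open sets $D(s_1),\dots,D(s_N)$ whose number $N$ is controlled by $d$. Passing, if necessary, to powers $s_i^{p_i}$ one gets a partition of unity $1=\sum s_i^{p_i}x_i$ inside $R_0$. For each $i$, localising at $s_i$ makes the form ring $(A_{s_i},\Lambda_{s_i})$ essentially semilocal on the complement of $D(s_i)$ — in particular the absolute commutator width of $\EU$ over each $(A_{\mathfrak m},\Lambda_{\mathfrak m})$ in terms of elementary unitary transvections is bounded by an absolute constant depending only on $n$ (Vaserstein–Suslin type bounded generation in the local/semilocal case, which is where the ``rank two $\SL_2$ room'' is used).

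The heart of the argument would then run as follows. Given $x\in\SU(2n,I,\Gamma)$ and $y\in\EU(2n,R,\Lambda)$, decompose $y$ as a bounded product of conjugates of elementary transvections, and use identity (C$2^+$) to split $[x,y]$ into a bounded number of commutators of the shape $[x,{}^{c}T_{ij}(\xi)]$ with $c\in\EU(2n,\Form)$. For each such commutator, I would insert the partition of unity in the parameter $\xi=\sum s_i^{p_i}x_i\xi$ and expand via (C$1^+$), producing a sum of $N$ pieces, each one of the form $[x,{}^{c}T_{ij}(s_i^{p_i}\eta_i)]$ for some $\eta_i$. Now localising this piece at $s_i$, the image of $x$ falls into $\EU(2n,I_{s_i},\Gamma_{s_i})$ of bounded length, and the commutator calculus of Theorem 12B (applied with the appropriate parameters $k$, $p$, $L$ read off from the local length bound and from the power $s_i^{p_i}$) pulls the commutator into $[\EU(2n,F_{s_i}(s_i^p I),F_{s_i}(s_i^p\Gamma)),\EU(2n,F_{s_i}(s_i^p A),F_{s_i}(s_i^p\Lambda))]$, which by its generators (Theorem 5B) lies in $\EU(2n,I,\Gamma)$ as a product of a bounded number of $Z_{ij}(\xi,\zeta)$. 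The Noetherian injectivity lemma (Lemma 19 in spirit) lets me transport this bounded local expression back to a bounded global expression. Summing over the $N$ patches and the (fixed, $n$-dependent) number of initial commutators gives the required uniform bound $L=L(n,d)$; the symmetric case $x\in\SU(2n,R,\Lambda)$, $y\in\EU(2n,I,\Gamma)$ is handled by exchanging the roles of $x$ and $y$ via $[x,y]=[y^{-1},x]^{y}$ and the normality of $\EU(2n,I,\Gamma)$ in $\EU(2n,\Form)$.

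The main obstacle, and the reason the bound here depends on $d$ while in the linear and Chevalley cases of Theorems 15A and 15C the bound is absolute, is the absence of a universal localisation machinery \emph{à la} Stepanov~[86] for Bak's unitary groups: these groups are not algebraic in general, and the whole pipeline for eliminating the cover size $N$ (and hence the dependence on $d$) in favour of a single universal ring currently has no satisfactory unitary analogue. Consequently the technically demanding step is not any single commutator identity, but the bookkeeping needed to make Theorem 12B apply uniformly to all $N$ patches simultaneously, while keeping track of the form parameter conditions on both the short-root parameters ($\xi\in I$) and the long-root parameters ($\xi\in\Gamma$), so that every factor produced really is one of the declared generators $Z_{ij}(\xi,\zeta)\in\EU(2n,I,\Gamma)$ rather than a generator of the ambient $\EU(2n,\Form)$.
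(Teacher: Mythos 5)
There is actually no proof in the paper to compare against: the statement you are proving lives in a block of the source that is discarded by the \verb|\forget ... \forgotten| macro, and the text surrounding it says explicitly that these unitary width results ``use localisation in the same style as [42], [44], they will be published in [39].'' So the authors state this theorem but deliberately omit the proof, deferring it to a forthcoming paper. What I can do is evaluate your sketch on its own terms against the mechanism the paper does describe.

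The genuine gap is in the covering step. You claim that because $\dim\Max(R_0)=d<\infty$ one can cover $\Max(R_0)$ by $N$ basic open sets $D(s_1),\dots,D(s_N)$ with $N$ controlled by $d$, and that localising at each $s_i$ renders $(A_{s_i},\Lambda_{s_i})$ ``essentially semilocal.'' Neither part is right. A compactness argument over $\Max(R_0)$ produces a finite partition of unity $1=\sum s_i^{p_i}x_i$, but the number of terms is not bounded by $d$; and a principal localisation $A_{s_i}$ retains \emph{all} the maximal ideals in $D(s_i)$, so it is almost never semilocal. The paper's own localisation proofs (e.g.\ the proof of Theorem 7A) use exactly such a finite-but-unbounded cover, which is why they only establish equalities of subgroups and not uniform width bounds. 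The mechanism that produces a bound depending only on $d$ is different: it is Bak's dimension descent, whose engine is the ``induction lemma'' stated in the same suppressed section of the paper. One chooses a \emph{single} element $s\in R_0$ hitting a maximal ideal in each irreducible component of $\Max(R_0)$, so that the completed localisation $\widetilde R_{(s)}$ has strictly smaller Bass--Serre dimension. Splitting $y=uv$ with $u\in\EU$ and $v$ congruent to $1$ modulo a high power of $s$, then applying the (bi)relative localisation--completion theorem (Theorem 13B/14B in the suppressed source, not 12B alone) yields one bounded contribution and leaves the recursion to run over the smaller-dimensional ring. The depth of recursion is $\le d+1$; that is where the bound $L=L(n,d)$ comes from. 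Your ``one big partition of unity, then glue'' scheme never forces the dimension to drop, so it cannot produce a $d$-bounded $L$: it only reproves qualitative membership of $[x,y]$ in $\EU(2n,I,\Gamma)$, which is the content of the absolute standard commutator formula rather than a width estimate. The rest of your sketch (Noetherian reduction, use of the generating set from Theorem 5B, the Noetherian injectivity lemma, symmetry for the two cases, and the correct observation that the $d$-dependence is forced by the absence of a unitary analogue of Stepanov's universal localisation) is all in the right spirit.
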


\begin{TheB}
Let\/ $n\ge 3$, and let\/ $(A,\Lambda)$ be a quasi
finite form algebra over a commutative ring\/ $R$
with\/ $\dim\Max(R)=d<\infty$. Further, let\/ $(I,\Gamma)$
and\/ $(J,\Delta)$ be two form ideals of\/ $(\Form)$. Then there
exists an\/ $L$ such that any commutator
$$ [x,y],\qquad x\in\SU(n,I,\Gamma),\quad y\in\EU(n,J,\Delta) $$
\noindent
is a product of not more than\/ $L$ elementary generators
listed in Theorem~\ref{5bbb}.
\end{TheB}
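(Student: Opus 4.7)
The plan is to mimic the localisation--patching strategy of Theorem~15A, but replacing the universal localisation of Stepanov (which has not yet been extended to Bak's unitary groups, precisely because the functor $\GU(2n,-,-)$ is not algebraic when the form parameter is not $\Lambda_{\maxx}$ or $\Lambda_{\minn}$) by classical principal localisation in the subring $R_0$, and using the hypothesis $\dim\Max(R)=d<\infty$ to keep the cardinality of the covering under control. First I would make the standard quasi-finite reduction, writing $(A,\Lambda)=\varinjlim(A_i,\Lambda_i)$ and noting that a fixed pair $(x,y)$ already lives at some module-finite Noetherian stage over a finitely generated subring of $R_0$ whose maximal spectrum still has dimension at most $d$; this preserves all hypotheses and lets one assume $R_0$ is Noetherian.

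Second, for each maximal $\gm\in\Max(R_0)$ the localisation $(A_\gm,\Lambda_\gm)$ is a semilocal form algebra, so its stable rank is $1$ and the unitary Whitehead lemma gives $\SU(2n,I_\gm,\Gamma_\gm)=\EU(2n,I_\gm,\Gamma_\gm)$. Applying Theorem~\ref{5bbb} locally, the commutator $[F_\gm(x),F_\gm(y)]$ factors as a product of a \emph{uniformly bounded} number $c=c(n)$ of generators of the three types of Theorem~\ref{5bbb} with entries in $(I_\gm,\Gamma_\gm)$ and $(J_\gm,\Delta_\gm)$, where the bound depends only on $n$ because the local proof of Theorem~\ref{5bbb} invokes only the Steinberg relations and the semilocal fact that every element of $\EU$ is a product of at most a fixed number of elementary unitary transvections.

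Third, using the estimate $\asr(R_0)\le d+1$ of Bass (see~\S\ref{srmhgy1}), together with an argument exactly as in the passage to a finite cover for absolute stable rank, I would cover $\Max(R_0)$ by principal open sets $D(s_0),\ldots,D(s_d)$ with $s_iu_0+\cdots+s_d u_d=1$. Each local factorisation involves only denominators of the form $s_i^{k}$; invoking the birelative unitary commutator calculus Theorem~\ref{5bbb} (in the form with two denominators developed in~\cite{39}, cf.~Theorem 12B) one pulls each local product of $c$ generators back to a product of at most $c'=c'(n)$ global generators of the types listed in Theorem~\ref{5bbb}, provided the second argument of the commutator lies in a sufficiently deep congruence subgroup.

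The hard part will be arranging the second argument $y\in\EU(2n,J,\Delta)$ to have this ``deep'' behaviour simultaneously at every patch, while keeping the number of patches, and hence the final count of generators, bounded only in terms of $n$ and $d$. The standard trick is to raise the partition of unity to a large power $N$, write $1=\sum v_i$ with $v_i\in(s_i^Nu_i^N)R_0$, and use the identity $(C1^+)$ to telescope $[x,y]=\prod[x,y_i]$ with each $y_i$ living in a congruence subgroup of level $s_i^N\Lambda$, at which point the third step applies. The total length is then $(d+1)c'(n)$, giving the desired bound $L=L(n,d)$ independent of the form ring and of the ideals $(I,\Gamma)$, $(J,\Delta)$. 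The delicate issue throughout is the interaction of long and short root generators under localisation: generators of type~(R6) produce \emph{short root} elementary factors whose entries lie in the symmetrised product, and one must verify, via the explicit identities underlying the proof of Lemma~\ref{yyqq1}, that these can still be absorbed into the generating set of Theorem~\ref{5bbb} without increasing $c(n)$.
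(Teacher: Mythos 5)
The paper does not prove this statement; in fact the whole section containing it is wrapped in the \verb|\forget...\forgotten| macro (which discards its contents), so the theorem does not even appear in the compiled text. Even in the raw source, the authors only \emph{state} the result, remarking that the unitary proofs ``use localisation in the same style as [42], [44]'' and ``will be published in [39]'' --- a reference that does not appear in the bibliography. So there is no paper proof to compare against, and your sketch must stand on its own. Your diagnosis of why universal localisation fails here, and your conclusion that the bound will depend on $d=\dim\Max(R_0)$, are both consistent with what the authors themselves say; the overall plan (Noetherian reduction, local triviality via stable rank $1$, a finite cover of $\Max R_0$, clearing denominators via birelative commutator calculus) is in the right spirit.

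There are, however, two genuine gaps. First, the telescoping step $[x,y]=\prod[x,y_i]$, with ``each $y_i$ living in a congruence subgroup of level $s_i^N\Lambda$,'' presupposes a multiplicative decomposition $y=y_1\cdots y_{d+1}$ of \emph{bounded} length for an \emph{arbitrary} $y\in\EU(2n,J,\Delta)$. No such decomposition exists: the partition-of-unity trick used in the proof of Theorem~\ref{the43} splits the parameter of a \emph{single} elementary generator (e.g.\ $e_{ji}(\alpha)=\prod_s e_{ji}(t_{\gm_s}^{p_s}x_s\alpha)$), and since $y$ may be a product of arbitrarily many elementary generators, applying that trick blindly multiplies the word length rather than bounding it. To get a bounded start one must first invoke a ``decomposition of unipotents/transvections'' type result expressing the commutator itself as a \emph{bounded} product of conjugates of elementary generators, before any localisation occurs, and it is precisely this first step that is unclear in the unitary setting. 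Second, the appeal to the two-denominator commutator-calculus Theorem 12B to ``pull each local product of $c$ generators back to a product of at most $c'(n)$ global generators'' overreads that statement: it is a \emph{membership} assertion (the commutator lands in $\big[\EU(n,F_s(s^pI),F_s(s^p\Gamma)),\EU(n,F_s(s^pJ),F_s(s^p\Delta))\big]$), and gives no control whatsoever over the number of elementary factors in the resulting expression. Bounded length is exactly the extra information that separates the width theorems from the structural commutator formulas, and as written your argument never produces it.
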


Thus, the following problem naturally suggests itself.
\setcounter{Prob}{2}
\begin{Prob}
Develop versions of universal localisation in the
non-algebraic setting, in particular, for unitary groups.
\end{Prob}
Since the publication of [36] the second author had some progress
in this direction, but a definitive solution is still missing.



\section{Relative splitting}

In this section we describe the relative splitting principle,
which establishes a remarkable interpretion of the relative
commutator subgroups in the case of splitting ideals. This
connection was discovered in the paper by Chattopadhya and Alexei
Stepanov \cite{CS}.
\par
A two-sided ideal $I\unlhd A$ is called a splitting ideal, if
$A=I\oplus B$ as an additive group, and at that $B\le A$ is
a subring of $A$. Clearly, in this case $B\cong A/I$ and the
projection $A\map B\le A$ is a retraction.

The following [absolute] splitting principle is classically known
and widely used

\begin{Lem}
Let $I$ be a splitting ideal of an associative ring $A$. Then
$$ \GL(n,A,I)\cap E(n,A)=E(n,A,I). $$
\end{Lem}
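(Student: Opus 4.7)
The inclusion $E(n,A,I)\le \GL(n,A,I)\cap E(n,A)$ is automatic from the definitions (elements of $E(n,A,I)$ are products of $E(n,A)$-conjugates of elementary matrices of level $I$, and reduction modulo $I$ kills such a conjugate). The whole content is the reverse inclusion, and my plan is to exploit the retraction $\pi:A\to B$ afforded by the splitting $A=I\oplus B$.

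Since $B$ is a subring of $A$ and $\pi$ is a ring homomorphism with $\pi|_B=\mathrm{id}_B$, we get an inclusion $\GL(n,B)\hookrightarrow\GL(n,A)$ (respectively $E(n,B)\hookrightarrow E(n,A)$) together with a retraction $\pi_*:\GL(n,A)\to\GL(n,B)$. A first key remark is that
$$ \GL(n,B)\cap\GL(n,A,I)=\{e\}, $$
because an element with entries in $B$ that is congruent to the identity modulo $I$ has off-diagonal entries in $B\cap I=0$ and diagonal entries of the form $1+(\text{element of }B\cap I)=1$.

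The main step is a rearrangement argument. Suppose $g\in\GL(n,A,I)\cap E(n,A)$, and write
$$ g=e_{i_1 j_1}(a_1)\,e_{i_2 j_2}(a_2)\cdots e_{i_k j_k}(a_k),\qquad a_s\in A. $$
Decompose each $a_s=b_s+c_s$ with $b_s=\pi(a_s)\in B$ and $c_s=a_s-b_s\in I$, and use (E1) to factor
$$ e_{i_s j_s}(a_s)=e_{i_s j_s}(b_s)\,e_{i_s j_s}(c_s). $$
Thus $g$ is an alternating product of factors from $E(n,B)$ and $E(n,I)$. Pulling each $E(n,I)$-factor to the left past the preceding $E(n,B)$-factors by the identity $ef=(efe^{-1})e$, and using that $E(n,B)\le E(n,A)$ normalises $E(n,A,I)$ (so any $E(n,B)$-conjugate of an element of $E(n,I)$ lies in $E(n,A,I)$), we obtain a factorisation
$$ g=g_{I}\cdot g_{B},\qquad g_{I}\in E(n,A,I),\quad g_{B}\in E(n,B). $$

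To finish, observe that $g\in\GL(n,A,I)$ by hypothesis and $g_I\in E(n,A,I)\le\GL(n,A,I)$, so $g_B=g_I^{-1}g\in\GL(n,A,I)$. But $g_B\in E(n,B)\le\GL(n,B)$, and by the first key remark $\GL(n,B)\cap\GL(n,A,I)=\{e\}$, forcing $g_B=e$. Hence $g=g_I\in E(n,A,I)$, as desired. The only subtle point in this plan is the conjugation step used to group the $E(n,I)$-factors together; it works precisely because $E(n,A,I)$ is defined as the normal closure of $E(n,I)$ in $E(n,A)$, so conjugation by arbitrary elements of $E(n,A)$ (and in particular by elements of $E(n,B)\le E(n,A)$) keeps us inside $E(n,A,I)$.
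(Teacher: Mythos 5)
Your proof is correct. The paper states this lemma without proof (it appears in a section that is suppressed from the compiled text, remarking only that the splitting principle is ``classically known and widely used''), so there is no author's argument to compare against; but your retraction-and-rearrangement argument is precisely the standard classical one. The two key steps are both sound: the observation that $\GL(n,B)\cap\GL(n,A,I)=\{e\}$ (which uses $1_A\in B$, valid under the paper's convention that subrings share the identity), and the splitting $e_{ij}(a_s)=e_{ij}(b_s)e_{ij}(c_s)$ followed by conjugation to collect the $E(n,I)$-factors into $E(n,A,I)$. The normality of $E(n,A,I)$ in $E(n,A)$, which you invoke to keep the conjugated factors inside $E(n,A,I)$, is exactly the definitional fact needed, so the argument closes cleanly.
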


This principle immediately implies the following result.

\begin{Lem}
Let $I$ be a splitting ideal of an associative ring $A$ and
$J\unlhd A$ be any two-sided ideal. Then
$$ \GL(n,A,I)\cap E(n,A,J)=E(n,A,I)\cap E(n,R,J). $$
\end{Lem}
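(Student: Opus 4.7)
The plan is to derive this birelative statement directly from the absolute splitting principle stated just above, without any additional localisation or commutator calculus machinery. The containment $E(n,A,I)\cap E(n,A,J)\subseteq \GL(n,A,I)\cap E(n,A,J)$ is free, since $E(n,A,I)\le\GL(n,A,I)$ by definition. So the real content lies in the reverse containment.

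For the non-trivial direction, I would argue as follows. Take an arbitrary element $g\in\GL(n,A,I)\cap E(n,A,J)$. On one hand, $g\in\GL(n,A,I)$. On the other hand, $g\in E(n,A,J)\le E(n,A)$, since the relative elementary group of any ideal sits inside the absolute elementary group. Therefore $g\in \GL(n,A,I)\cap E(n,A)$, and the preceding absolute splitting lemma immediately gives $g\in E(n,A,I)$. Combining this with the original membership $g\in E(n,A,J)$, we conclude $g\in E(n,A,I)\cap E(n,A,J)$, as required.

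Since the argument reduces entirely to an application of the absolute splitting principle, there is no genuine obstacle: once that lemma is available, the birelative version follows tautologically. The only subtle point worth flagging in the write-up is the observation that the hypothesis on $I$ (being a splitting ideal) enters only through the absolute lemma, while $J$ plays a purely passive role and need not satisfy any splitting condition. This is what makes the statement a clean strengthening rather than a symmetric statement in $I$ and $J$.
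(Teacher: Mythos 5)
Your proof is correct, and it is precisely the one‑line deduction the paper has in mind when it writes that the absolute splitting lemma ``immediately implies'' this statement: intersect with $E(n,A)$ using $E(n,A,J)\le E(n,A)$, apply the absolute lemma to land in $E(n,A,I)$, and keep the original membership in $E(n,A,J)$. (As you note, the $E(n,R,J)$ in the printed statement is evidently a typo for $E(n,A,J)$.)
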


The following result is a proper relative version of the
splitting principle. It establishes a remarkable connection
of splitting with the relative commutator sub\-groups.

\begin{Lem}
Let $I$ be a splitting ideal of an associative ring $A$,
$A=I\oplus B$, and let $J\unlhd A$ be a two-sided ideal of
$A$ generated by a two-sided ideal $K\unlhd B$. Then
$$ \GL(n,A,I)\cap E(n,A,J)=[E(n,A,I),E(n,R,J)]. $$
\end{Lem}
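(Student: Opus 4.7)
The plan is to exploit the retraction $\pi \colon A \twoheadrightarrow B$ coming from the splitting $A = I \oplus B$, together with its section $s \colon B \hookrightarrow A$. These are ring homomorphisms, so they induce group homomorphisms $\pi \colon E(n,A) \to E(n,B)$ and $s \colon E(n,B) \to E(n,A)$ with $\pi \circ s = \mathrm{id}$; since $J = AKA$ with $K \unlhd B$, a short calculation gives $\pi(J) = BKB = K$, so $\pi$ restricts to a surjection $E(n,A,J) \twoheadrightarrow E(n,B,K)$ with section $s(E(n,B,K)) \subseteq E(n,A,K) \subseteq E(n,A,J)$. The easy inclusion $[E(n,A,I), E(n,A,J)] \subseteq \GL(n,A,I) \cap E(n,A,J)$ is immediate: the left-hand side sits in $E(n,A,J)$ by normality, and in $\GL(n,A,I\circ J)\subseteq \GL(n,A,I)$ by Lemma~\ref{Lem:Habdank}.

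For the nontrivial inclusion, set $N = [E(n,A,I), E(n,A,J)]$ and I would prove the congruence
\[ g \equiv s(\pi(g)) \pmod{N} \qquad \text{for every } g \in E(n,A,J). \]
Once this holds, any $g \in \GL(n,A,I) \cap E(n,A,J)$ satisfies $\pi(g) = 1$ (since $\pi$ realises the reduction modulo $I$), so $g \equiv 1$ and lies in $N$. Because both $\mathrm{id}_{E(n,A,J)}$ and $s \circ \pi$ are endomorphisms of $E(n,A,J)$, the congruence need only be checked on generators $g_0 = {}^{e}e_{ij}(\alpha)$ with $e \in E(n,A)$ and $\alpha \in J$. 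Decomposing $\alpha = \beta + \gamma$ according to $A = I \oplus B$, the identity $\gamma = \pi(\alpha) \in \pi(J) = K$ forces $\gamma \in K$ and $\beta = \alpha - \gamma \in I \cap J$; similarly write $e = e_0 \cdot s(\pi(e))$ with $e_0 \in E(n,A,I)$ by the absolute splitting principle. Then
\[ g_0 = \bigl({}^{e}e_{ij}(\beta)\bigr) \cdot \bigl({}^{e}e_{ij}(\gamma)\bigr), \]
and I treat the two factors separately. The first lies in $E(n,A,I\cap J)$, and the key arithmetic observation is that the special form of $J$ forces $I \cap J \subseteq IJ + JI$, so this factor lies in $E(n,A, I \circ J) \subseteq N$ by Lemma~\ref{Lem:Habdank}. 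For the second, set $h = s({}^{\pi(e)}e_{ij}(\gamma)) = s(\pi(g_0))$; then ${}^{e}e_{ij}(\gamma) = {}^{e_0} h = [e_0, h] \cdot h$ with $[e_0, h] \in N$, giving $g_0 \equiv s(\pi(g_0)) \pmod{N}$ as required.

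The main obstacle is the inclusion $I \cap J \subseteq IJ + JI$, and this is precisely where the hypothesis that $J$ is generated by an ideal $K \unlhd B$ becomes indispensable. Expanding $J = (I+B)K(I+B) = IKI + IKB + BKI + BKB$ and using $BKB = K$ together with $I \cap B = 0$ forces any element of $I \cap J$ to lie in $IKI + IKB + BKI$, each summand of which is contained in $IJ + JI$. Without this specific hypothesis on $J$, one would only have $I \cap J \supseteq I \circ J$, leaving the elementary factor ${}^{e}e_{ij}(\beta)$ with $\beta \in I\cap J$ unaccounted for; indeed the examples in \S\ref{countexam} show that $E(n,A,I\cap J) \subseteq N$ can fail in general. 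Once this inclusion is in hand, the rest is routine bookkeeping inside the semidirect decomposition $E(n,A,J) = \ker(\pi|_{E(n,A,J)}) \rtimes s(E(n,B,K))$ furnished by the splitting.
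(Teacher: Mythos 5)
Your proof is correct and follows essentially the same route as the paper's own (two-line) sketch: decompose $E(n,A,J)$ as $[E(n,A,I),E(n,A,J)]\cdot E(n,B,K)$ using the retraction/section pair coming from the splitting, then intersect with $\GL(n,A,I)$. You have actually filled in all the details left implicit in the paper, and in particular correctly identified that in the noncommutative case the decomposition of $J$ yields $I\cap J = IJ+JI$ (not merely $IJ$, as the paper's fragmentary final line -- which reads $I=K\oplus IJ$ and is surely a typo for $J=K\oplus(I\cap J)$ -- would suggest).
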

\begin{proof}
By Lemma it suffices to show that
$$ E(n,A,J)=[E(n,A,I),E(n,R,J)]E(n,B,K). $$
\noindent
By definition of a splitting ideal one has $I=K\oplus IJ$.
\end{proof}


\section{Relative Quillen--Suslin lemma}

The following result is the main result of the paper
by Himanee Apte with the second author.

\begin{The}
Let $\Phi$ be a root system of rank $\ge 2$, $I\unlhd R$ be an
ideal of a commutative ring $R$. Further, let
$g\in G(\Phi,R[x],xI[x])$ be such that $F_{\mm}\in
E(\Phi,R_{\mm}[x],xI_{\mm}[x])$. Then $g\in E(\Phi,R[x],xI[x])$.
\end{The}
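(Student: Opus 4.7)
The plan is to adapt the Quillen--Suslin--Vaserstein strategy already invoked for the absolute case (cited earlier in the excerpt), and to track the relative level $xI[x]$ using the relative commutator calculus and localisation-completion technology developed throughout the paper.

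First, I would reduce to the Noetherian case. Since the functors $G(\Phi,-)$ and $E(\Phi,-)$ commute with direct limits, one may pass to a finitely generated subring $R_0\le R$ containing all coefficients of $g$ and of a fixed local factorisation into elementary generators, together with all coefficients of $I\cap R_0$ needed. Thus it suffices to treat a Noetherian base ring.

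Next, given $g(x)\in G(\Phi,R[x],xI[x])$, note that $g(0)=e$, so the ``Quillen difference'' $h(x,y):=g(x+y)\,g(y)^{-1}$ is a well-defined element of $G(\Phi,R[x,y],xI[x,y])$ that vanishes at $x=0$. The key observation is the relative Quillen principle: the set
$$
\mathfrak{q}\,=\,\bigl\{\,s\in R\;:\; h(x,y)\in E\bigl(\Phi,R_s[x,y],xI_s[x,y]\bigr)\cdot \mathrm{specialisation\ at\ }s\,\bigr\}
$$
is an ideal of $R$. The proof that $\mathfrak{q}$ is an ideal is where the relative commutator calculus (Lemma~\ref{LemHZ}, Lemma~\ref{Lem:New1}, and the Stepanov--Vavilov--Plotkin decomposition-of-unipotents pointers in~\cite{86,107,111}) is used: one multiplies two local factorisations with denominators $s,t$ and, after comparing them in $R_{st}[x,y]$ by injectivity of the localisation homomorphism on a deep enough congruence subgroup (Lemma~\ref{Lem:03} applied to the polynomial ring $R[x,y]$), uses the birelative target formulae of \S\ref{loci32} to absorb the mismatch into $E(\Phi,R[x,y],xI[x,y])$.

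By the localisation hypothesis on $g$, for each $\mm\in\Max(R)$ there is $s\in R\setminus\mm$ with $F_s(g)\in E(\Phi,R_s[x],xI_s[x])$; the Quillen trick lifts this to $s\in\mathfrak{q}$. Hence $\mathfrak{q}$ is not contained in any maximal ideal of $R$, so $\mathfrak{q}=R$ and in particular $1\in\mathfrak{q}$. Specialising $y\mapsto 0$ in the resulting global factorisation of $h(x,y)$ yields $g(x)=h(x,0)\in E(\Phi,R[x],xI[x])$, as required.

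The main obstacle is the ``patching with level'' step, i.e.\ verifying that $\mathfrak{q}$ is an ideal while simultaneously controlling that the elementary factorisations stay at level $xI[x]$ rather than merely at level $x R[x]$. The absolute version (Theorem stated in the Quillen--Suslin subsection) only requires level $xR[x]$ and follows from classical Quillen patching, whereas here one has to invoke the birelative localisation--completion theorem for Chevalley groups (the $C$-variant of Theorem~\ref{thewayh13}-type statements) to move denominators out of $I$-coefficients. The key technical input is Theorem~12C of the commutator calculus section, used to commute an element from $E^L(\Phi,\frac{1}{s^k}R,\frac{1}{s^k}I)$ past an element of $G(\Phi,R[x,y],x s^rI[x,y])$ into $[E(\Phi,R[x,y],xI[x,y]),E(\Phi,R[x,y],xR[x,y])]$, which lies in $E(\Phi,R[x,y],xI[x,y])$ by the relative commutator formula Theorem~\ref{gcformula1}C.
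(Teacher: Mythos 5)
The paper does not actually prove this statement: it appears only in a draft section that is commented out of the compiled text (the \texttt{forget}\ldots\texttt{forgotten} block), where it is attributed explicitly to the paper of Apte and Stepanov~\cite{6}. So there is no proof in the paper to compare against, and I can only assess your sketch on its own terms.

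Your overall plan --- Noetherian reduction, then the Quillen difference $h(x,y)=g(x+y)\,g(y)^{-1}\in G(\Phi,R[x,y],xI[x,y])$, then Quillen patching, then the specialisation $y\mapsto 0$ --- is the correct architecture for local--global results of this type, and the last step is sound since $g(0)=e$. But two genuine gaps remain. First, your set $\mathfrak q$ is not well-defined as written: ``$E(\cdots)\cdot\text{specialisation at }s$'' is not a condition. What the Quillen argument needs is the \emph{dilation} formulation: $s\in\mathfrak q$ should mean that there is an $N$ with $h(s^N x,y)$ (or a close variant) lying in $E(\Phi,R[x,y],xI[x,y])$ as a matrix over $R[x,y]$, not over $R_s[x,y]$. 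Without that precise statement the ideal property of $\mathfrak q$ cannot even be formulated, let alone proved.

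Second, and this is the crux: the implication ``$F_s(h)\in E(\Phi,R_s[x,y],xI_s[x,y])\ \Rightarrow\ h(s^N x,y)$ globally elementary of level $xI[x,y]$'' is precisely the relative Chevalley-group analogue of the Quillen--Suslin--Vaserstein dilation lemma, and it is not supplied by the lemmas you invoke. Lemma~\ref{LemHZ}, Lemma~\ref{Lem:New1} and Lemma~\ref{Lem:03} all operate in a different regime: there the localising element $t$ is a factor of the relative ideal ($t^pI$) and the comparison takes place inside the quotient ring $A_t$, whereas in the Quillen trick the localising element $s$ sits in the base ring and the dilation is applied to the polynomial variable $x$. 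Clearing denominators from a factorisation in $E(\Phi,R_s[x,y],xI_s[x,y])$ \emph{while keeping the coefficients at level $I$} requires a conjugation calculus tailored to $R[x,y]$; this is the heart of the Apte--Stepanov argument and is absent from your sketch. Finally, the references you give to ``Theorem 12C'' and the ``birelative target formulae'' of \S\ref{loci32} point to material that was cut from the paper, so they cannot be treated as established inputs.
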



\section{Where next?}

\begin{center}
{\parbox{.6\textwidth}{\small Let us hear the suspicions. I will look after the proofs.\\
\smallskip
Arthur Conan Doyle}}
\end{center}
\bigskip

In conclusion, we attach an updated list of unsolved problems
related to the results of [36] and the present paper. The results
of the present paper solve some of the problems stated in [36],
whereas some others need corrections. We keep working on these
problems and hope to be able to address them in subsequent
publications.
\setcounter{Prob}{0}
\begin{Prob}
Obtain explicit length estimates in the relative
conjugation calculus and commutator calculus.
\end{Prob}
\begin{Prob}
Obtain explicit length estimates in the universal
localisation.
\end{Prob}
\setcounter{Prob}{3}
Notice that in the vast majority of results on bounded width
of commutators we had to assume that both factors have
determinant 1. So far we were unable to fight the toral factor.
The following problem is not fully solved even in the absolute
case for the general linear group. In [80] there is a partial
result for rings of geometric origin.
\begin{Prob}
Prove the bounded width of commutators of the form
$[x,y]$, where $x\in\GL(n,R)$, $y\in E(n,R)$.
\end{Prob}
After this is done, one could certainly improve the
corresponding results also at the relative level.
\begin{Prob}
Replace in Theorems $7A$, $7B$, $9A$, $9B$
commutativity of the ground ring by a weaker commutativity
assumption.
\end{Prob}
Proposition 2.7 of [80] is a stable version of the main result
on the bounded width of commutators. It asserts that under
condition $n\ge\sr(R)+1$ the length of any commutator $[x,y]$,
where $x,y\in\GL(n,R)$ in elementary generators is bounded.
It would be natural to generalise this result to unitary
groups, and to Chevalley groups. However, it would require
very precise forms of injective stability for $K_1$, with
efficient proofs, if one wishes to obtain explicit bounds.
\begin{Prob}
Prove that under condition $n\ge\Lambda\sr(R)+1$
the width of commutators of the form $[x,y]$, where
$x,y\in\GU(2n,R,\Lambda)$, in the elementary generators of
the group $\EU(2n,R,\Lambda)$ is bounded.
\end{Prob}
It seems that for Chevalley groups one still have to do some
work to find appropriate stability conditions.
\begin{Prob}
Find stability conditions, under which the width
of commutators of the form $[x,y]$, where $x,y\in G(\Phi,R)$,
in the elementary generators of the group $E(\Phi,R)$ is bounded.
\end{Prob}
Let us mention yet another variation of the finite width of
commutators in elementary generators. It seems that such a
result would require centrality of the extension
$\St(n,R)\map E(n,R)$, hence the
restriction $n\ge 5$. Thus, with our present state of knowledge,
there is little hope to generalise it to other groups. Even
for the linear case it is only proven for rings of geometric
origin, [80], Theorem 2.1.
\begin{Prob}
Let $R$ be a commutative ring. Prove the bounded width
of commutators of the form $[x,y]$, where $x,y\in\St(n,R)$,
$n\ge 5$, with respect to the elementary generators.
\end{Prob}

Another important challenge is to improve rank bounds in
the commutator calculus for the unitary groups. In the absolute
case it is the usual condition stated in [12], [13].
\begin{Prob}
Develop conjugation calculus and commutator calculus
in the group\/ $\GU(4,R,\Lambda)$, pro\-vided\/
$\Lambda R+R\Lambda=\Lambda$.
\end{Prob}
In [36] without much thinking we conjectured that the same
condition would suffice to develop relative commutator
calculus. However, a closer look shows that in the relative case
the above condition is far too weak. The correct conditions
should be stated in terms of the {\it relative\/} form
parameters, and seem to be extremely restrictive, so that we
are not sure we would be inclined to work out all details
under such ridiculous confinements.
\begin{Prob}
Prove relative commutator formulae for the group
$\GU(4,R,\Lambda)$, pro\-vided $\Gamma J+J\Gamma=\Gamma$,
$\Delta I+I\Delta=\Delta$.
\end{Prob}
Another important problem is the description of {\it subnormal\/}
subgroups of $G(R)$. For the case of $\GL(n,R)$ this problem has
a fully satisfactory answer, due to the works by John Wilson,
Leonid Vaserstein, and others, see in particular [Wi], [7], [99],
[63], [106], [103], [123].
\par
For unitary groups, there are works by G\"unter Habdank, the fourth
author, and You Hong, see, in particular, [30], [31], [122] -- [125].
Presently You Hong, the third and the fourth authors are working
towards an improvement of bounds. But there are still a number of
loose ends so that an alternative approach based on
the methods of [33], [34], [10], [42], [44], [39]
would be very much desirable.
\begin{Prob} Give localisation proofs for the description of
subgroups of the unita\-ry group $\GU(2n,R,\Lambda)$, normalised
by the relative elementary subgroup $\EU(\Phi,I,\Gamma)$,
for a form ideal $(I,\Gamma)$.
\end{Prob}
Similar problem for Chevalley groups is still largely unsolved.
Again, a localisa\-tion approach based on the methods [40], [88],
[43] would be most welcome.
\begin{Prob}
Using relative localisation, describe subgroups of
a Chevalley group $G(\Phi,R)$, normalised by the relative
elementary subgroup $E(\Phi,R,I)$, for an ideal $I\unlhd R$.
\end{Prob}
Actually, the third and the fourth author classified such subgroups
in Chevalley groups of types $\E_6$ and $\E_7$, but we used a
{\it completely different\/} geometric method, the proof from
the Book.
\par
Now that we have relativised results on nilpotent filtration for
the general linear group, it is natural to obtain similar results
for unitary groups. This will require improvement and relative
versions of many known results, for instance, those related to
stabilisation.
\begin{Prob}
Obtain the analogues of the general relative multiple
commutator formula for unitary groups.
\end{Prob}
In [36] and the present paper we work at the level of $K_1$. It
is natural to ask, to which extent
\begin{Prob}
Develop localisation methods at the level of $K_2$.
In particular, devise a localisation proof of the centrality of
$K_2$.
\end{Prob}
The fact that higher mixed commutators of relative elementary
subgroups can always be expressed as double commutators, seems
to be very surprising. It strongly suggests that there is
close connection with some finer aspects of the structure of $K_1$,
such as homotopy stability, exact sequences and excision.
Especially striking is the analogy with the works of Susan Geller
and Charles Weibel [24] -- [27] on doubly relatives $K_1$ and
excision.
\begin{Prob}
Establish connection of $[E(\Phi,R,A),E(\Phi,R,B)]$
with the excision kernel.
\end{Prob}
Morally, the results of [80], [88], [36], [37], [86] assert that 
$G(R)$ has {\it very\/} few commutators
when $\dim(R)\ge 2$. Positive results on commutator width
only hold under some very strong finiteness conditions on $R$,
and require completely different techniques. See [83] for
some specific conjectures.
\par
It is very challenging to understand, to which extent such
behaviour is typical for more general classes of group words.
There are a lot of recent results showing that the verbal length
of the finite simple groups is strikingly small [81], [82], [59], 
[60], [29]. In fact, under some natural assumptions for large finite
simple groups this verbal length is 2. We do not expect
similar results to hold for rings other than the zero-dimensional
ones, and some arithmetic rings of dimension 1.
\par
Powers are a class of words in a certain sense opposite to
commutators. Alireza Abdollahi suggested that before passing
to more general words, we should first look at powers. An
answer -- in fact, {\it any\/} answer! -- to the following
problem would be amazing. However, we would be less surprised
if for rings of dimension $\ge 2$ the verbal maps in $G(R)$
would have very small images.
\begin{Prob}
Establish finite width of powers in elementary
generators, or lack thereof.
\end{Prob}
In this connection let us mention the following purely group
theoretical problem. It is well known that a commutator is
a product of [not more than] three squares $[x,y]=x^2(x^{-1}y)^2y^2$.
Similarly, a commutator of commutators $[[x,y],[u,v]]$
is a product of not more than 60 cubes [4], whereas the
Engelian commutator $[x,y,y]$ is a product of 3 cubes [5].
\begin{Prob}
When a higher commutator of commutators can be
expressed as a bounded product of $m$-th powers?
\end{Prob}
The following two problems are in fact not individual clear cut
problems, but rather huge research projects.

\begin{Prob}
Generalise results of $[36]$ and the present paper
to odd unitary groups.
\end{Prob}
\begin{Prob}
Obtain results similar to those of $[36]$ and the
present paper for {\rm[}groups of points of{\rm]} isotropic
reductive groups.
\end{Prob}
In the first one of these settings there are foundational works by
Victor Petrov [75] -- [77], while in the second one there are papers
by Victor Petrov, Anastasia Stavrova, and Alexander Luzgarev [78],
[68], [KS], [84] with versions of Quillen--Suslin lem\-ma. But 
that's about it. Most of
the conjugation calculus and the commutator calculus, including
relative results, explicit estimates, etc., have to be developed
from scratch.
\par
In particular, this applies to the relative results. If one is not
willing to stipulate invertibility of 2 and 3, these results should
be stated not in terms of ideals of the ground ring, but rather in
terms of much fancier non-associative structures.
\par
Let us mention two much less ambitious subprojects. First, all
isotropic reductive groups are {\it inner\/} forms of quasi-split
groups = twisted Chevalley groups. The following problem seems to
be much more immediate than Problem 16. It can be easily approached
by the methods developed in [40], [88], [43], [86]. On the other
hand, it is not clear, how much helpful it might be to treat it
separately, outside of the general context of isotropic reductive
groups.
\begin{Prob}
Obtain results similar to those of $[36]$ and the
present paper for twisted Chevalley groups.
\end{Prob}
Another much tamer common piece of Problems 15 and 16 can be stated
as follows.
\begin{Prob}
Generalise results of $[36]$ and the present paper
to orthogonal and symplectic groups of Witt index $\ge 3$.
\end{Prob}
It seems that most necessary tools are contained already in
the works by Victor Petrov, Tony, Bak, Rabeya Basu, Ravi Rao and
Reema Khanna [75] -- [77], [9], [16] -- [17].

\forgotten 



\begin{thebibliography}{100}
\bibitem{1} E.~Abe, {\it
Whitehead groups of Chevalley groups over polynomial rings,}
{ Comm. Algebra}  {\bf 11} (1983), no. 12, 1271--1308.

\bibitem{2} E.~Abe, {\it Chevalley groups over
commutative rings} {Proc.\ Conf.\ Radical Theory, Sendai,}
-- 1988, pp. 1--23.

\bibitem{3} E.~Abe, {\it Normal subgroups of Chevalley
groups over commutative rings,}  Contemp.\ Math. {\bf83}
(1989), 1--17.

\bibitem{4} M.~Akhavan-Malayeri, {\it Writing
certain commutators as products of cubes in free groups,}
{ J.\ Pure Appl. Algebra} {\bf177} (2003), no. 1,
1--4.

\bibitem{5} M.~Akhavan-Malayeri, {\it Writing commutators
of commutators as products of cubes in groups,}
 Comm.\ Algebra {\bf37}  (2009),  2142--2144.


\bibitem{6} H.~Apte, A.~Stepanov,
{\it Local-global principle for congruence subgroups of Chevalley groups}
{\tt arXiv:1211.3575} (2012). 

\bibitem{B1} A.~Bak, The stable structure of quadratic modules.
Thesis, Columbia University, 1969.

\bibitem{B2} A.~Bak, K-Theory of Forms. Annals of
Mathematics Studies {\bf98}, Princeton University Press. Princeton, 1981.


\bibitem{7} A.~Bak, {\it Subgroups of the general linear group
normalized by relative elementary groups,}
Lecture Notes in Math., vol. 967, Springer, Berlin, (1982), 1--22.


\bibitem{8} A.~Bak, {\it Non-abelian $\K$-theory: The
nilpotent class of $\K_1$ and general stability}
K--Theory {\bf4} (1991), 363--397.

\bibitem{9} A.~Bak, R.~Basu, R.~A.~Rao,
{\it Local-Global Principle for Transvection Groups,}
Proc. Amer. Math. Soc.  (2010) (to appear).


\bibitem{10} A.~Bak, R.~Hazrat, N.~A.~Vavilov,
{\it Localization-completion strikes again: relative\/ $\text{\rm K}_1$
is nilpotent by abelian}, J. Pure Appl. Algebra {\bf 213} (2009),
1075--1085.


\bibitem{11} A.~Bak, A.~Stepanov,
{\it Dimension theory and nonstable\/ $\K$-theory for net groups,}
Rend. Sem. Mat. Univ. Padova {\bf106}
(2001),   207--253.

\bibitem{12} A.~Bak, N.~A.~Vavilov, {\it Normality for
elementary subgroup functors,}  Math. Proc.
Cambridge Phil. Soc.  {\bf 118}  (1995), no. 1, 35--47.


\bibitem{13} A.~Bak, N.~A.~Vavilov, {\it Structure of
hyperbolic unitary groups. {\rm I}. Elementary subgroups,} 
Algebra Colloq. {\bf7} (2000), no. 2,  159--196.

\bibitem{14} H.~Bass, {\it $\K$-theory and stable algebra,}
Inst. Hautes \'Etudes Sci. Publ. Math.  (1964), no. 22, 5--60.


\bibitem{15} H.~Bass, J.~Milnor, J.-P.~Serre,
{\it Solution of the congruence subgroup problem for
$\SL_n$ $(n\ge3)$ and $\Sp_{2n}$ $(n\ge2)$,}
Inst. Hautes \'Etudes Sci. Publ. Math. (1967),
no. 33,  59--133.


\bibitem{bass73} H. Bass, {\it Unitary algebraic $K$-theory}, 
Lecture Notes Math. {\bf 343} (1973), 57--265.

\bibitem{Bass1}H. Bass, \textit{Algebraic {K}-theory}. Benjamin, New York, 1968.


\bibitem{16} R.~Basu,
{\it Topics in Classical Algebraic K-Theory,}
 Ph.~D. Thesis,  TIFR, 2006, pp. 1--70.

\bibitem{17} R.~Basu,
{\it On general quadratic group}
 (2012), 1--8.


\bibitem{18} R.~Basu,
{\it Local-global principle for quadratic and hermitian groups
and the nilpotence of $\K_1$,}
(2012), 1--18.


\bibitem{19} R.~Basu, R.~A.~Rao, R.~Khanna,
{\it On Quillen's local global principle,}
Commutative algebra and algebraic geometry,
Contemp. Math. {\bf390} (2005), Amer. Math. Soc.  Providence, RI, 17--30.

\bibitem{borvav} Z. Borevich, N.A. Vavilov,  The distribution of
subgroups in the full linear group over a commutative ring, \textit{Proc.
Steklov Institute Math} {\bf 3} (1985), 27--46.


\bibitem{20} P.~Chattopadhya, R.~A.~Rao,
{\it Excision and elementary symplectic action,}
(2012),  1--14.


\bibitem{21} R.~K.~Dennis, L.~N.~Vaserstein, {\it On a
question of M.~Newman on the number of commutators} 
J.~Algebra {\bf118} (1988), 150--161.


\bibitem{22} R.~K.~Dennis, L.~N.~Vaserstein, {\it
Commutators in linear groups,} $\K$-Theory {\bf2} (1989),
761--767.


\bibitem{23} E.~Ellers, N.~Gordeev,
{\it On the conjectures of J.~Thompson and O.~Ore,}
Trans. Amer. Math. Soc. {\bf350} (1998), 3657--3671.

\bibitem{estesohm} D. Estes, J. Ohm, {\it Stable range in
commutative rings}, J.~Algebra, {\bf 7} (1967), no. 3, 343--362.


\bibitem{24} S.~C.~Geller, C.~A.~Weibel,
{\it $\K_2$ measures excision for $\K_1$,}
 Proc. Amer. Math. Soc. {\bf80}, (1980), no. 1, 1--9.


\bibitem{25} S.~C.~Geller, C.~A.~Weibel, {\it
$\K_1(A,B,I)$,}  J. Reine Angew. Math. {\bf342}
(1983), 12--34.


\bibitem{26} S.~C.~Geller, C.~A.~Weibel, {\it
Subroups of elementary and Steinberg groups of congruence level
$I^2$,}  J. Pure Appl. Algebra {\bf35} (1985),
123--132.


\bibitem{27} S.~C.~Geller, C.~A.~Weibel, {\it
$\K_1(A,B,I)$, {\rm II},} $K$-Theory {\bf2} 
(1989), no. 6, 753--760.


\bibitem{28} V.~N.~Gerasimov,
{\it The group of units of the free product of rings,}
Math. Sbornik {\bf134} (1987), 42--65.


\bibitem{29} R.~M.~Guralnick, G.~Malle,
{\it Products of conjugacy classes and fixed point spaces}
J. Amer. Math. Soc. {\bf25} (2012), no. 1, 77--121.


\bibitem{30} G.~Habdank, {\it A classification of subgroups of
$\Lambda$-quadratic groups normalized by relative elemen\-tary
subgroups,}
 Dissertation Universit\"at Bielefeld, 1987, pp. 1--71.

\bibitem{31} G.~Habdank, {\it A classification of subgroups
of $\Lambda$-quadratic groups normalized by relative elemen\-tary
subgroups,}
Adv. Math. {\bf110} (1995), no. 2, 191--233.


\bibitem{32} A.~J.~Hahn, O.~T.~O'Meara,  The
classical groups and K-theory,  Springer, Berlin
et al.,  1989.


\bibitem{33} R.~Hazrat, {\it Dimension theory and
non-stable\/ $\K_1$ of quadratic module,}
$\K$-Theory {\bf27} (2002), 293--327.




\bibitem{35} R.~Hazrat, V.~Petrov, N.~Vavilov,
{\it Relative subgroups in Chevalley groups,}
J. $\K$-Theory {\bf5} (2010), 603--618.

\bibitem{36} R.~Hazrat, A.~Stepanov, N.~Vavilov,
Z.~Zhang,  {\it The yoga of commutators,}
J. Math. Sci. (N.~Y.)  {\bf179} (2011), no. 6, 662--678.


\bibitem{37} R.~Hazrat, A.~Stepanov, N.~Vavilov,
Z.~Zhang,
{\it Commutators width in Chevalley groups,}
Note di Matematica 33 (2013) no. 1, 139--170.


\bibitem{38} R.~Hazrat, A.~Stepanov, N.~Vavilov,
Z.~Zhang,
{\it Multiple commutator formula. {\rm II},}
(2012). 




\bibitem{40} R.~Hazrat, N.~Vavilov, {\it $\K_1$ of
Chevalley groups are nilpotent,}  J. Pure Appl. Algebra
{\bf 179} (2003), 99--116.


\bibitem{41} R.~Hazrat, N.~Vavilov, {\it Bak's
work on\/ $\K$-theory of rings\/ {\rm(}\!with an appendix by
Max Karoubi\,{\rm)},} J. K-Theory {\bf4} (2009), no. 1, 1--65.


\bibitem{42} R.~Hazrat, N.~Vavilov, Z.~Zhang,
{\it Relative commutator calculus in unitary groups,
and applications,}  J. Algebra {\bf343} (2011),
107--137.


\bibitem{43} R.~Hazrat, N.~Vavilov, Z.~Zhang,
{\it Relative commutator calculus in Chevalley groups,
and applications} (2012), 1--37.
{\tt arXiv:1107.3009v1 [math.RA]} 15 Jul 2011.


\bibitem{44} R.~Hazrat, N.~Vavilov, Z.~Zhang,
{\it Multiple commutator formulas for unitary groups,}
(2012), 1--33.
{\tt arXiv:1205.6866v1 [math.RA]} 31 May 2012.


\bibitem{45} R.~Hazrat, N.~Vavilov, Z.~Zhang,
{\it Generation of relative commutator
subgroups in Chevalley groups,}
(2012), 1--15. Proc. Edinburgh Math. Soc., to appear. 


\bibitem{46} R.~Hazrat, Z.~Zhang,
{\it Generalized commutator formula,} Comm. Algebra {\bf39}
(2011), no. 4, 1441--1454.


\bibitem{47} R.~Hazrat, Z.~Zhang,
{\it Multiple commutator formula,} Israel J. Math.,
195 (2013), 481--505.

\bibitem{48} D.~A.~Jackson,
{\it Basic commutator in weights six and seven as relators,}
 Comm. Algebra {\bf36} (2008), 2905--2909.


\bibitem{49} D.~A.~Jackson, A.~M.~Gaglione, D.~Spellman,
{\it Basic commutator as relators,}
J. Group Theory {\bf 5} (2001), 351--363.


\bibitem{50} D.~A.~Jackson, A.~M.~Gaglione, D.~Spellman,
{\it Weight five basic commutator as relators,}
Contemp. Math. {\bf511} (2010), 39--81.


\bibitem{51} W.~van der Kallen,
{\it Another presentation for Steinberg groups,}
Indag. Math. {\bf39} (1977), no. 4, 304--312.


\bibitem{52} W.~van der Kallen, {\it $\SL_3(\Co[x])$
does not have bounded word length,}  Springer Lecture Notes
Math., vol. 966, 1982, pp. 357--361.


\bibitem{53} W.~van der Kallen, {\it A module structure on
certain orbit sets of unimodular rows,}  J. Pure Appl.
Algebra {\bf57} (1989), no. 3, 281--316.


\bibitem{kallenmagurn} W. van der Kallen,  B. Magurn, 
L. Vaserstein, {\it Absolute stable rank and Witt cancellation for
non-commutative rings}, Invent. Math., {\bf 91} (1988) 525--542.

\bibitem{54} L.-C.~Kappe, R.~F.~Morse,
{\it On commutators in groups,}
Groups St.~Andrews 2005, vol. 2,  C.U.P.,  2007,
pp. 531--558.


\bibitem{khleb} S.~Khlebutin, {\it Elementary subgroups of linear
groups over rings\/}, Ph.~D.\ Thesis, Moscow State Univ., 1987 (in
Russian).

\bibitem{55} M.-A.~Knus,
{\it Quadratic and hermitian forms over rings,}
Springer Verlag, Berlin et al., 1991. 


\bibitem{56} V.~I.~Kopeiko,
{\it The stabilization of symplectic groups
over a polynomial ring,} Math.\ U.S.S.R. Sbornik
{\bf 34}  (1978), 655--669.

\bibitem{57} N.~Kumar, R.~A.~Rao,
{\it Quillen--Suslin theory for a structure theorem for the
elementary symplectic group,}  (2012), 1--15.


\bibitem{58} Tsit-Yuen Lam, {\it Serre's problem
on projective modules,}  Springer Verlag,
Berlin et al., 2006.


\bibitem{59} M.~Larsen, A.~Shalev, 
{\it Word maps and Waring type problems,}
 J. Amer. Math. Soc. {\bf22} (2009), 437--466.


\bibitem{60} M.~Larsen, A.~Shalev, Pham Huu Tiep,
{\it The Waring problem for finite simple groups,} Ann. Math. {\bf174} (2011), 1885--1950.

\bibitem{lavrenov} A.~V.~Lavrenov, The unitary Steinberg group is
centrally closed. {\it St.~Petersburg Math. J.}, {\bf 24} (2012)
to appear.


\bibitem{61}  F.~Li, {\it The structure of symplectic
group over arbitrary commutative rings,}
Acta Math. Sinica 
{\bf 3} (1987), no. 3, 247--255.


\bibitem{62}  F.~Li,
{\it The structure of orthogonal groups over arbitrary
commutative rings,}   Chinese Ann. Math. Ser.~B
{\bf10} (1989), no. 3, 341--350.


\bibitem{63} F.~Li, M.~Liu,
{\it Generalized sandwich theorem,} $\K$-Theory
{\bf 1} (1987), 171--184.


\bibitem{64} M.~Liebeck, E.~A.~O'Brien, A.~Shalev,
Pham Huu Tiep, {\it The Ore conjecture,}
 J. Europ. Math. Soc. {\bf12} (2010), 939--1008.


\bibitem{65} M.~Liebeck, E.~A.~O'Brien, A.~Shalev,
Pham Huu Tiep, {\it Commutators in finite quasisimple groups,}
Bull. London Math. Soc. {\bf43} (2011), 1079--1092.


\bibitem{66} M.~Liebeck, E.~A.~O'Brien, A.~Shalev,
Pham Huu Tiep,
{\it Products of squares in finite simple groups,}
Proc. Amer. Math. Soc. {\bf43}  (2012), no. 6, 1079--1092.


\bibitem{67} A.~Yu.~Luzgarev, {\it Overgroups of\/
$E(F_4,R)$ in\/ $G(E_6,R)$,}  St. Petersburg J. Math. {\bf20}
(2008),  no. 5, 148--185.


\bibitem{68} A.~Yu.~Luzgarev, A.~K.~Stavrova, {\it
Elementary subgroups of isotropic reductive groups are perfect,}
 St.~Peters\-burg Math. J. {\bf24} (2012), no. 5,
881--890.


\bibitem{69} A.~W.~Mason,
{\it A note on subgroups of\/ $\GL(n,A)$ which are
generated by commutators,}
J. London Math. Soc. {\bf11} (1974), 509--512.


\bibitem{70} A.~W.~Mason, {\it On subgroups of\/
$\GL(n,A)$ which are generated by commutators. {\rm II},}
 J.~reine angew.~Math. {\bf322} (1981), 118--135.


\bibitem{71} A.~W.~Mason,
{\it A further note on subgroups of\/ $\GL(n,A)$ which are
generated by commutators,}
Arch. Math. {\bf37} (1981), no. 5, 401--405.


\bibitem{72} A.~W.~Mason, W.~W.~Stothers,
{\it On subgroups of\/ $\GL(n,A)$ which are generated by commuta\-tors,}
Invent. Math. {\bf23} (1974), 327--346.


\bibitem{73} H.~Matsumoto, {\it Sur les sous-groupes
arithm\'etiques des groupes semi-simples d\'eploy\'es,}
Ann.\ Sci.\ \'Ecole Norm.~Sup. (4) {\bf2} (1969), 1--62.

\bibitem{milnor70} J. Milnor,  {\it Algebraic $K$-theory and
quadratic forms,} Invent. Math. {\bf 9} (1970), 318--344.

\bibitem{milnor} J. Milnor,  Introduction to algebraic
$K$-theory, Princeton Univ. Press, Princeton, N. J., 1971.


\bibitem{moore} C. Moore, {\it Group extensions of $p$-adic
and adelic linear groups}, Publ. Math. Inst. Hautes Etudes Sci.
{\bf 35} (1968), 157--222.


\bibitem{74} D.~W.~Morris,
{\it Bounded generation of\/ $\SL(n,A)$ {\rm(}after D.~Carter,
G.~Keller, and E.~Paige{\rm)},} New York J. Math. {\bf13} 2007,
383--421.


\bibitem{75} V.~A.~Petrov, 
{\it Overgroups of unitary groups,} $K$-Theory {\bf29}
(2003), 147--174.


\bibitem{76}  V.~A.~Petrov, {\it Odd unitary groups,}
 J. Math. Sci. {\bf130} (2003),  no. 3, 4752--4766.

\bibitem{77} V.~A.~Petrov,
{\it Overgroups of classical groups,}
Doktorarbeit Univ. St.-Petersburg,  2005, pp. 1--129,
(in Russian).


\bibitem{78} V.~A.~Petrov, A.~K.~Stavrova, {\it
Elementary subgroups of isotropic reductive groups}
 St.~Peters\-burg Math. J.  {\bf20}  (2008), no. 3,
160--188.

\bibitem{Quillen76} D. Quillen, {\it Projective modules over polynomial rings}, Invent. Math., {\bf36} (1976), 166-172.

\bibitem{Quillen72} D. Quillen,  Higher algebraic K-theory. I. Algebraic K-theory, I: Higher K-theories, Lecture Notes in Math., Vol. {\bf 341}, Springer, Berlin 1973, 85--147.

\bibitem{rosenberg} J. Rosenberg, Algebraic $K$-Theory and
its Applications, Springer-Verlag, New York, 1994.



\bibitem{79} Sh.~Rosset,
{\it The higher lower central series,}
Israel J. Math. {\bf73} (1991), no. 3, 257--279.


\bibitem{80} A.~Sivatski, A.~Stepanov, {\it On the word
length of commutators in\/ $\GL_n(R)$,} $\K$-Theory {\bf17} (1999),
295--302.


\bibitem{81} A.~Shalev,
{\it Commutators, words, conjugacy classes, and character methods,}
Turk. J. Math.  {\bf31} (2007), 131--148.


\bibitem{82} A.~Shalev,
{\it Word maps, conjugacy classes, and a noncommutative
Waring-type theorem,}
Ann. Math. {\bf170} (2009), no. 3, 1383--1416.


\bibitem{83} A.~Smolensky, B.~Sury, N.~A.~Vavilov,
{\it Gauss decomposition for Chevalley groups revisited,}
Intern. J. Group Theory {\bf1} (2012), no. 1,
3--16.


\bibitem{84} A.~Stavrova, {\it Homotopy invariance
of non-stable $\K_1$-functors,} 
(2012), 1--24 (to appear).


\bibitem{85} M.~R.~Stein,
{\it Generators, relations and coverings of Chevalley
groups over commutative rings,} Amer. J.~Math. {\bf93}
(1971), no.  4, 965--1004.


\bibitem{86} A.~V.~Stepanov, {\it Universal
localisation in algebraic groups}, 
\url{ http://alexei.stepanov.spb.ru/~publicat.html}
(2010) (to appear).


\bibitem{87} A.~V.~Stepanov, N.~A.~Vavilov, {\it
Decomposition of transvections: a theme with variations,} 
$\K$-Theory {\bf19} (2000), 109--153.


\bibitem{88} A.~V.~Stepanov, N.~A.~Vavilov, {\it
On the length of commutators in Chevalley groups,} 
Israel J. Math. {\bf185} (2011), 253--276.


\bibitem{89} A.~V.~Stepanov, N.~A.~Vavilov, H. You, {\it
Overgroups of semi-simple subgroups: localisation ap\-proach}
(2012), 1--43 (to appear).


\bibitem{steinberg62} R. Steinberg,  {\it G\'en\'erateurs,
r\'elations et rev\^etements des groupes alg\'ebriques}, 
Colloque Th\'eorie des Groupes Alg\'ebriques (Bruxelles---1962),
113--127.


\bibitem{steinberg67} R. Steinberg,  Lectures on Chevalley groups
Yale University, 1967.




\bibitem{90} A.~A.~Suslin, {\it The structure of the
special linear group over polynomial rings,}  Math. USSR Izv.
{\bf11} (1977),  no. 2, 235--253.


\bibitem{91} A.~A.~Suslin, V.~I.~Kopeiko, {\it Quadratic
modules and orthogonal groups over polynomial rings,}  J.~Sov.\
Math. {\bf20} (1982),  no. 6, 2665--2691.


\bibitem{92} K.~Suzuki, {\it Normality of the elementary
subgroups of twisted Chevalley groups over commu\-tative
rings,} J.~Algebra {\bf175} (1995), no. 3, 526--536.


\bibitem{93} R.~G.~Swan, {\it Excision in algebraic
$\K$-theory,}  J. Pure Appl. Algebra {\bf1} (1971), no. 3, 221--252.


\bibitem{94} G.~Taddei, {\it Sch\'emas de
Chevalley--Demazure, fonctions re\-pr\'e\-sen\-ta\-ti\-ves et
th\'e\-or\`e\-me de nor\-malit\'e,}  Th\`ese, Univ.\ de Gen\`eve,
 1985. 


\bibitem{95} G.~Taddei, {\it Normalit\'e des groupes
\'el\'ementaire dans les groupes de Che\-val\-ley sur un anneau,}
Contemp.\ Math. {\bf55} (1986), no. 2, 693--710.

\bibitem{tang} G.~Tang, {\it Hermitian groups and $K$-theory}, 
$K$-Theory, {\bf 13}:3 (1998), 209--267.

\bibitem{96} J.~Tits, {\it Syst\`emes g\'en\'erateurs de
groupes de congruences,} C.~R.~Acad. Sci. Paris, S\'er A
{\bf283} (1976), 693--695.


\bibitem{97} M.~S.~Tulenbaev,
{\it The Steinberg group of a polynomial ring}
Math.\ U.S.S.R.\ Sb. {\bf45} (1983), no. 1, 139--154.


\bibitem{TUL} M.S. Tulenbaev, {\it The Schur multiplier of the group of elementary matrices of finite order}, J. Sov. Math, {\bf17}:4 (1981), 2062--2067.

\bibitem{98} L.~N.~Vaserstein,  {\it On the normal
subgroups of the\/ $\GL_n$ of a ring,} Algebraic $\K$-Theory,
Evans\-ton 1980,
Lecture Notes in Math., vol. 854, Springer,
 Berlin et al.,  1981, pp. 454--465.


\bibitem{99} L.~N.~Vaserstein,  {\it The subnormal structure of
general linear groups,} Math. Proc. Cambridge Phil. Soc. {\bf99}
(1986), 425--431.


\bibitem{100} L.~N.~Vaserstein, {\it On normal subgroups of
Chevalley groups over commutative rings,} T\^ohoku Math.
J. {\bf36} (1986), no. 5, 219--230.


\bibitem{101} L.~N.~Vaserstein, {\it Normal subgroups of
orthogonal groups over commutative rings,}  Amer. J. Math.
{\bf110} (1988),  no. 5, 955--973.


\bibitem{102} L.~N.~Vaserstein, {\it Normal subgroups of
symplectic groups over rings,}  $K$-Theory  {\bf 2} (1989), no. 5,
647--673.


\bibitem{103} L.~N.~Vaserstein,  {\it The subnormal structure of
general linear groups over rings,}  Math. Proc. Cambridge Phil. Soc. {\bf108}
(1990), no. 2, 219--229.


\bibitem{104} L.~N.~Vaserstein, A.~A.~Suslin,
{\it Serre's problem on projective modules over polynomial rings,
and algebraic $K$-theory,}  Math. USSR Izv.
{\bf10} (1978), 937--1001.


\bibitem{105} L.~N.~Vaserstein, H. You, {\it Normal
subgroups of classical groups over rings,}  J. Pure Appl.
Algebra {\bf105} (1995), no. 1, 93--106.


\bibitem{106} N.~A.~Vavilov, {\it A note on the subnormal
structure of general linear groups,}
Math. Proc. Cam\-brid\-ge Phil. Soc. {\bf107}  (1990), no. 2,  193--196.


\bibitem{107} N.~A.~Vavilov, {\it Structure of Chevalley
groups over commutative rings,}  Proc.\ Conf.\
Non-as\-so\-ci\-a\-ti\-ve algebras and related topics
(Hiroshima -- 1990),  World Sci.\ Publ.,  London
et al.,   1991, pp.  219--335.

\bibitem{108} N.~A.~Vavilov, {\it A third look at weight
diagrams,} Rendiconti del Rend. Sem. Mat. Univ. Padova {\bf204} (2000), no. 1,  201--250.

\bibitem{109} N.~A.~Vavilov, A.~Luzgarev, A.~Stepanov,
{\it Calculations in exceptional groups over rings}
J.~Math. Sci. {\bf373} (2009), 48--72.


\bibitem{110} N.~A.~Vavilov, V.~A.~Petrov,
{\it Overgroups of $\Ep(n,R)$,} 
St. Petersburg J. Math. {\bf 15} (2004), no. 4, 515--543.


\bibitem{111} N.~A.~Vavilov, E.~B.~Plotkin, {\it Chevalley
groups over commutative rings. {\rm I.} Elementary
calcu\-la\-tions,} Acta Applicandae Math. {\bf45}
(1996), 73--115.


\bibitem{112} N.~A.~Vavilov, A.~V.~Stepanov, {\it
Standard commutator formula,} Vestnik St.~Petersburg Univ., ser.1
{\bf41} (2008), no. 1, 5--8.

\bibitem{113} N.~A.~Vavilov, A.~V.~Stepanov, {\it
Overgroups of semi-simple groups,}
Vestnik Samara State Univ., Ser. Nat. Sci. (2008), no. 3, 51--95 (in Russian).


\bibitem{114} N.~A.~Vavilov, A.~V.~Stepanov, {\it
Standard commutator formulae, revisited,}
Vestnik St.~Peters\-burg State Univ., ser.1,
{\bf43} (2010),  no. 1, 12--17.


\bibitem{115} N.~A.~Vavilov, A.~V.~Stepanov, {\it
Linear groups over general rings {\rm I}. Generalities,}
J. Math. Sci.  (2012), 1--107.


\bibitem{116} N.~A.~Vavilov, Z. Zhang, {\it
Subnormal subgroups of Chevalley groups. {\rm I}.
Cases $\E_6$ and $\E_7$,}
(2012), 1--24.


\bibitem{117} T.~Vorst, {\it Polynomial extensions
and excision $\K_1$,} Math. Ann.  {\bf244} (1979),
193--204.


\bibitem{118} M.~Wendt,
{\it ${\Bbb A}^1$-homotopy of Chevalley groups,}
J. $K$-Theory  {\bf 5} (2010), no.  2, 245--287.


\bibitem{119} M.~Wendt,  {\it On homotopy
invariance for homology of rank two groups,}
(2012), 1--16 (to appear).


\bibitem{120} H.~You, {\it On the solution to
a question of D.~G.~James,}
J. Northeast Normal Univ. (1982), no. 2, 39--44, (Chinese).


\bibitem{121} H.~You, {\it On subgroups of Chevalley
groups which are generated by commutators,}
J. North\-east Normal Univ. (1992), no. 2, 9--13.


\bibitem{122} H.~You, {\it Subgroups of classical
groups normalised by relative elementary groups,}
J. Pure Appl. Algebra {\bf216} (2012), 1040--1051.


\bibitem{123} Z.~Zhang, {\it Lower\/ $K$-theory of
unitary groups,}  Doktorarbeit Univ. Belfast, 2007, 1--67.


\bibitem{124} Z.~Zhang, {\it Stable sandwich classification
theorem for classical-like groups,}  Math. Proc.
Cambridge Phil. Soc. {\bf143} (2007), no. 3, 607--619.


\bibitem{125} Z.~Zhang, {\it Subnormal structure of
non-stable unitary groups over rings,}  J. Pure Appl. Algebra {\bf214}
(2010), 622--628.


\end{thebibliography}
\end{document}